\newtheorem{theorem}{Theorem}[section]
\newtheorem{corollary}[theorem]{Corollary}
\newtheorem{lemma}[theorem]{Lemma}
\newtheorem{problem}[theorem]{Problem}
\newtheorem{open problem}[op]{Open Problem}
\newtheorem{proposition}[theorem]{Proposition}
\newtheorem{remark}[theorem]{Remark}
\def\11{\textbf{$1$}}
\begin{document}

\title[A survey on Tingley's problem for operator algebras]{A survey on Tingley's problem for operator algebras}

\author[A.M. Peralta]{Antonio M. Peralta}

\address{Departamento de An{\'a}lisis Matem{\'a}tico, Facultad de
Ciencias, Universidad de Granada, 18071 Granada, Spain.}
\email{aperalta@ugr.es}


\subjclass[2010]{Primary 47B49, Secondary 46A22, 46B20, 46B04, 46A16, 46E40.}

\keywords{Tingley's problem; extension of isometries; von Neumann algebra; $p$-Schatten von Neumann, trace class operators}

\date{}

\begin{abstract} We survey the most recent results on extension of isometries between special subsets of the unit spheres of C$^*$-algebras, von Neumann algebras, trace class operators, preduals of von Neumann algebras, and $p$-Schatten-von Neumann spaces, with special interest on Tingley's problem.
\end{abstract}

\maketitle
\thispagestyle{empty}

\section{Introduction}

The problem of extending a surjective isometry between two subsets of the unit spheres of two operator algebras was treated in several talks during the conference on preserver problems held in Szeged in June 2017. The conference ``Preservers Everywhere'' gathered a substantial group of world experts on preservers problems. It became clear that the problems regarding the extension of this type of surjective isometries constitute an intensively studied line in recent times. Let us try to unify all these problems in the following statement.

\begin{problem}\label{problem general} Let $X$ and $Y$ be two Banach spaces whose unit spheres are denoted by $S(X)$ and $S(Y)$, respectively.
Let $\mathcal{S}_1$ and $\mathcal{S}_2$ be two subsets of $S(X)$ and $S(Y)$, respectively. Suppose $\Delta : \mathcal{S}_1 \to \mathcal{S}_2$ is a surjective isometry. Does $\Delta$ extend to a real linear isometry from $X$ onto $Y$?
\end{problem}

Henceforth, we shall write $\mathbb{T}$ for the unit sphere of $\mathbb{C}$. The complex conjugation on $\mathbb{T}$ cannot be extended to a complex linear isometry on $\mathbb{C}$. So, in the case of complex Banach spaces, a complex linear extension is simply hopeless for all cases. Similar constrains will appear in subsequent results.\smallskip

These problems, whose origins are in geometry, are nowadays a central topic for those researchers working on preservers. If in Problem \ref{problem general} we consider $\mathcal{S}_1= S(X)$ and $\mathcal{S}_2= S(Y)$ we meet the so-called \emph{Tingley's problem}. This problem was named after the contribution of D. Tingley, who established that for any two finite dimensional Banach spaces $X$ and $Y$, every surjective isometry $\Delta:S(X)\to S(Y)$ preserves antipodal points, that is, $\Delta(-x) = -\Delta(x),$ for every $x$ in $S(X)$ (see \cite[THEOREM in page 377]{Ting1987}). Tingley's problem remains open even in the case of two dimensional Banach spaces.\smallskip

Let us observe that, given a surjective isometry $\Delta:S(X)\to S(Y)$, where $X$ and $Y$ are Banach spaces, we can always consider the natural (positively) homogeneous extension $F_{\Delta} : X\to Y$ given by $F_{\Delta}(0)=0,$ and $F_{\Delta}(x) = \|x\| \Delta\left(\frac{x}{\|x\|}\right)$ for $x\neq 0$. Clearly, $F_{\Delta}$ is a bijection, however it is a hard question to decide whether $F_{\Delta}$ is an isometry. Actually, the Mazur-Ulam theorem implies that $F_{\Delta}$ is real linear as soon as it is an isometry.\smallskip

We have already found our first connection with the Mazur-Ulam theorem. Tingley's problem and Problem \ref{problem general} can be considered as generalization of this pioneering result in Functional Analysis. P. Mankiewicz established in 1972 an intermediate result which provides an useful tool for our purposes.

\begin{theorem}\label{t Mankiewicz}\cite[Theorem 5 and Remark 7]{Mank1972} Every bijective isometry between convex sets in normed linear spaces with nonempty interiors admits a unique extension to a bijective affine isometry between the corresponding spaces.
\end{theorem}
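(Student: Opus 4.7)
My plan is to show that $f$ is affine on the interior of $C$ via a local Mazur--Ulam argument, and then extend to a unique bijective affine isometry $F:X\to Y$ by standard linear-extension methods. After translating $C$ and $D$ by appropriate vectors (operations that preserve all hypotheses) I may assume $0\in\mathrm{int}(C)$ and $f(0)=0$.

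The heart of the argument is a local midpoint identity: for $x,y$ in a small enough ball inside $\mathrm{int}(C)$,
$$f\!\left(\frac{x+y}{2}\right)=\frac{f(x)+f(y)}{2}.$$
Fix $x_0\in\mathrm{int}(C)$ and $\delta>0$ with $\overline{B}(x_0,\delta)\subseteq\mathrm{int}(C)$, and restrict attention to $x,y\in B(x_0,\delta/3)$, so that the midpoint $m=(x+y)/2$ and the whole ball $\overline{B}(m,\|x-y\|)$ still lie inside $\mathrm{int}(C)$. On that ball the point reflection $\sigma_m(z)=2m-z$ is an involutive self-isometry, and the V\"ais\"al\"a/Vogt formulation of the Mazur--Ulam reflection argument---applied to the group of surjective isometries of the ball fixing $x$ and $y$, with displacement $\lambda(\varphi)=\sup_z\|\varphi(z)-z\|$---halves $\lambda$ via conjugation by $\sigma_m$ and forces every such element to fix $m$; transferring through $f$ gives the stated identity. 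Since this holds on a neighbourhood of every interior point and $f$ is continuous, $f$ is midpoint-preserving on the convex set $\mathrm{int}(C)$, hence affine there; because $f(0)=0$, the restriction is $\RR$-linear on the $0$-neighbourhood $\mathrm{int}(C)$.

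Extending by positive homogeneity yields an $\RR$-linear map $F:X\to Y$ which is isometric on a $0$-neighbourhood and therefore on all of $X$. By continuity and the fact that every point of $C$ is a limit of points in $\mathrm{int}(C)$ (a standard consequence of convexity), $F$ agrees with $f$ on all of $C$; thus $F(X)$ is a linear subspace of $Y$ containing $D=f(C)$, and since $D$ has nonempty interior we conclude $F(X)=Y$. Injectivity being automatic for isometries, $F$ is a bijective linear isometry; undoing the initial translations produces the required bijective affine isometry extending $f$, and uniqueness follows from the fact that two affine maps agreeing on a set with nonempty interior coincide everywhere. The main obstacle, in my view, is the midpoint step: the Mazur--Ulam reflection trick, natural in an ambient normed space, must be executed here inside the open convex set $\mathrm{int}(C)$, which forces one to work on small balls around interior points and then piece together the resulting local affine identities. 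Once local midpoint preservation is in hand, the rest reduces to routine affine-extension arguments.
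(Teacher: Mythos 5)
The paper offers no proof of this statement at all: it is quoted directly from Mankiewicz's 1972 article as an external tool, so your argument can only be judged on its own merits. On those merits it has a genuine gap at the decisive step. Your plan hinges on running the Mazur--Ulam reflection argument locally on a small ball $\overline{B}(m,r)\subseteq\mathrm{int}(C)$ around the midpoint $m=(x+y)/2$. On the domain side this is unproblematic: the point reflection $\sigma_m$ is an involutive self-isometry of that ball, and the displacement-halving lemma does show that every self-isometry of the ball fixing $x$ and $y$ must fix $m$. The argument collapses on the codomain side. To ``transfer through $f$'' you must manufacture a nontrivial element of that group out of $f$, and the only candidate is $\sigma_m\circ f^{-1}\circ\sigma_{m'}\circ f$ with $m'=(f(x)+f(y))/2$; for this to be a self-map of $\overline{B}(m,r)$ you need the reflection $\sigma_{m'}$ to map the set $f(\overline{B}(m,r))$ onto itself. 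But $f$ is only an isometry from $C$ onto $D$: it does not carry balls to balls, $f(\overline{B}(m,r))$ is merely some isometric copy of a ball sitting inside $D$, and there is no reason for it to be symmetric about $m'$ --- indeed that symmetry is essentially equivalent to the midpoint identity $f(m)=m'$ you are trying to prove. Knowing only that $f(m)$ is a \emph{metric} midpoint of $f(x)$ and $f(y)$ does not help, since in a general normed space the metric-midpoint set can be large.

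This is not a repairable technicality: the failure of the reflection trick for isometries that are not surjective onto a symmetric set is exactly why Mankiewicz's result is a genuine theorem rather than a local corollary of Mazur--Ulam, and his published proof is substantially more delicate and does not proceed by localizing the reflection argument. The remainder of your outline --- homogeneous extension of a map additive near $0$, density of $\mathrm{int}(C)$ in $C$, surjectivity of $F$ because a linear subspace with nonempty interior is the whole space, and uniqueness because two affine maps agreeing on an open set coincide --- is routine and correct, but it all rests on the unproved midpoint identity.
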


During the thirty years elapsed after Tingley's paper, a lot of hard efforts from many authors, especially many Chinese mathematicians, and the elite Chinese group leaded by G.G. Ding, have been conducted in the seeking of a solution to Tingley's problem in concrete spaces. The huge contribution due to mathematicians like R.S. Wang, G.G. Ding, D. Tan, L. Cheng, Y. Dong, X.N. Fang, J.H. Wang, and R. Liu, among others, have been overview in full detail in the excellent surveys published by G.G. Ding \cite{Ding2009} and X. Yang and X. Zhao \cite{YangZhao2014}.\smallskip

A reborn interest on the problems concerning extension of isometries between subsets of the unit spheres of two operator algebras has been materialized in a fruitful series of recent papers dealing with Tingley's problem and related questions for certain operator algebras, which have been published during the short interval determined by the last three years. The abundance of new results for operator algebras motivates and justifies the writing of this survey with the aim of completing and updating the surveys \cite{Ding2009,YangZhao2014}, and providing a recent state of the art of these problems. The real ``avalanche'' of recent achievements provides enough material to write a new and detailed survey on this topic.\smallskip

We strive for conciseness and for restrict the results to the setting of operator algebras, despite that some of the results have been already extended to the strictly wider setting of JB$^*$-triples (compare \cite{FerPe17,FerPe17c}). So, few or none proofs are explicitly included. The main tools and results are reviewed with a full bibliographic information. We shall also insert some new arguments to establish some additional statements.\smallskip

In section \ref{sec: geometric background} we gather some of the key tools applied in many of the proofs given to solve Tingley's problem. Most of the studies make use of a result, which was originally established by L. Cheng, Y. Dong in \cite{ChenDong2011}, and proves that a surjective isometry $\Delta: S(X) \to S(Y)$ between the unit spheres of two Banach spaces, maps maximal proper faces of the closed unit ball of $X$ to maximal proper proper faces of the closed unit ball of $Y$ (see Theorem \ref{t ChengDong for general faces}). The section also contains a recent generalization of this result due to F.J. Fen{\'a}ndez-Polo, J. Garc{\'e}s, I. Villanueva and the author of this note, which assures the following: Let $\Delta: S(X) \to S(Y)$ be a surjective isometry between the unit spheres of two Banach spaces, and suppose that these spaces satisfy the following two properties: \begin{enumerate}[$(h.1)$]\item Every norm closed face of $\mathcal{B}_X$ {\rm(}respectively, of $\mathcal{B}_Y${\rm)} is norm-semi-exposed;
\item Every weak$^*$ closed proper face of $\mathcal{B}_{X^*}$ {\rm(}respectively, of $\mathcal{B}_{Y^*}${\rm)} is weak$^*$-semi-exposed.
\end{enumerate} Then the following statements hold:
\begin{enumerate}[$(a)$]\item Let $\mathcal{F}$ be a convex set in $S(X)$. Then $\mathcal{F}$ is a norm closed face of $\mathcal{B}_{X}$ if and only if $\Delta(\mathcal{F})$ is a norm closed face of $\mathcal{B}_{Y}$;
\item Let $e\in S(X)$. Then $e\in \partial_e (\mathcal{B}_X)$ if and only if $\Delta(e)\in \partial_e (\mathcal{B}_Y)$
\end{enumerate} (see Corollary \ref{c for spaces with property of semi-exposition for faces}).\smallskip

It should be remarked that hypotheses $(h.1)$ and $(h.2)$ above hold whenever $X$ and $Y$ are C$^*$-algebras, hermitian parts of C$^*$-algebras, von Neumann algebra preduals, preduals of the hermitian part of a von Neumann algebra, JB$^*$-triples, and JBW$^*$-triple preduals  (see \cite{FerGarPeVill17} and the comments after Corollary \ref{c for spaces with property of semi-exposition for faces}).\smallskip

In section \ref{sec: geometric background} we shall also survey the main results on the facial structure of the closed unit ball of a C$^*$-algebra due to C.A. Akemann and G.K. Pedersen \cite{AkPed92} and C.M. Edwards and G.T. R\"{u}ttimann \cite{EdRutt88}.\smallskip

Section \ref{sec: Tingleys Cstar} is completely devoted to present the most recent achievements on Tingley's problem in the setting of C$^*$-algebras. In all sections we shall insert an introductory paragraph with the equivalent results in the commutative setting. We begin from the results by R. Tanaka, which assure that every surjective isometry from the unit sphere of a finite dimensional C$^*$-algebra $A$ onto the unit sphere of another C$^*$-algebra $B$ extends to a unique surjective real linear isometry from $A$ onto $B$, and the same conclusion holds when $A$ and $B$ are finite von Neumann algebras (see Theorem \ref{t Tanaka finite vN}). In Theorem \ref{thm Tingley compact Cstaralgebras} we revisit the solution to Tingley's problem for surjective isometries between the unit spheres of two compact C$^*$-algebras found by R. Tanaka and the author of this survey in \cite{PeTan16}. This solution also covers the case of a surjective isometry between the unit spheres of two $K(H)$ spaces. In this note $K(H)$ and $B(H)$ will denote the spaces of compact and bounded linear operators on a complex Hilbert space $H$, respectively.\smallskip

Accordingly to the chronological order, the next step in the study of Tingley's problem on C$^*$-algebras is a result by F.J. Fern{\'a}ndez-Polo and the author of this note, which shows that for any two complex Hilbert spaces $H_1$ and $H_2$, every surjective isometry $\Delta: S(B(H_1)) \to S(B(H_2))$ admits a unique extension to a surjective complex linear or conjugate linear surjective isometry $T$ from $B(H_1)$ onto $B(H_2)$ satisfying $\Delta(x) = T(x)$, for every $x\in S(B(K))$ (see Theorem \ref{t Tingley BH spaces}). The most conclusive result on Tingley's problem has been also obtained by the same authors in a result showing that every surjective isometry $\Delta : S(M)\to S(N)$ between the unit spheres of two von Neumann algebras admits a unique extension to a surjective real linear isometry $T: M\to N$. Furthermore, under these hypotheses, there exist a central projection $p$ in $N$ and a Jordan $^*$-isomorphism $J : M\to N$ such that defining $T : M\to N$ by $T(x) = \Delta(1) \left( p J (x) + (1-p) J (x)^*\right)$ {\rm(}$x\in M${\rm)}, then $T$ is a surjective real linear isometry and $T|_{S(M)} = \Delta$ (see Theorem \ref{t Tingley von Neumann}).\smallskip

Section \ref{sec: Tingleys predual} is devoted to survey the results on Tingley's problem for surjective isometries between the unit spheres of von Neumann algebra preduals. In \cite{FerGarPeVill17}, F.J. Fern{\'a}ndez-Polo, J. Garc{\'e}s, I. Villanueva and the author of this survey gave a complete solution to Tingley's problem for surjective isometries on the unit sphere of the space $C_1(H)$ of trace class operators on an arbitrary complex Hilbert space $H$ (see Theorem \ref{t Tingley for trace class infinite dimension}).\smallskip

It is well known that the space $C_1(H)$ identifies with the dual of the space $K(H)$ and with the predual of $B(H)$. It seems a natural question whether the previous positive solution to Tingley's problem in the setting of trace class operators remains true for preduals of general von Neumann algebras.\smallskip

When the writing of this survey was being completed (precisely, on December 27th, 2017), an alert message came to this author from arxiv. This alert was about a very recent preprint by M. Mori (see \cite{Mori2017}), which has been an impressive discovering, and made this autor change the original project to insert some nice achievements, one of them is a positive solution to Tingley's problem for preduals of general von Neumann algebras (see Theorem \ref{t Tingleys problem for preduals of vN}).\smallskip

Henceforth, the hermitian part of a C$^*$-algebra $A$ will be denoted by $A_{sa}$. As we commented before, when in Problem \ref{problem general} the subsets $\mathcal{S}_1$ and $\mathcal{S}_2$ are the unit spheres of two Banach spaces, we find the so-called Tingley's problem. Another interesting variant of Problem \ref{problem general} is obtained when $X$ and $Y$ are von Neumann algebras or C$^*$-algebras and $\mathcal{S}_1$ and $\mathcal{S}_2$ are the unit spheres of their respective hermitian parts. In Section \ref{sec: Tingleys hermitian}, we shall study the problem of extending a  surjective isometry $\Delta: S(M_{sa})\to S(N_{sa})$, where $M$ and $N$ are von Neumann algebras. In this section we shall show that the same tools given by F.J. Fern{\'a}ndez-Polo and the author of this survey in \cite{FerPe17d} can be, almost literarily, applied to find a surjective complex linear isometry $T: M \to N$ satisfying $T(a^*) = T(a)^*$ for all $a$ in $M$ and $T(x) = \Delta (x)$ for all $x$ in $S(M_{sa})$ (see Theorem \ref{t Tingley hermitian von Neumann}).\smallskip

It should be remarked here that, after completing the writing of this chapter, the preprint by M. Mori \cite{Mori2017} became available in arxiv. Section 5 in \cite{Mori2017} is devoted to the study of Tingley's problem for surjective isometries between the unit spheres of the hermitian parts of two von Neumann algebras, and our Theorem \ref{t Tingley hermitian von Neumann} is also established by M. Mori with a alternative proof.\smallskip

The sixth and final section of this paper is devoted to review the main result on a topic which had its own protagonism in the meeting held in Szeged. We are talking about the problem of extending a surjective isometry between the sets of positive norm-one operators of two type I von Neumann factors $B(H_1)$ and $B(H_2)$. During the talk presented by G. Nagy in this conference, he presented a recent achievement which shows that for a finite dimensional complex Hilbert space $H$, every isometry $\Delta : S(B(H)^+)\to S(B(H)^+)$ admits a (unique) extension to a surjective complex linear isometry $T : B(H) \to B(H)$ satisfying $T(x) = \Delta(x)$ for all $x\in S(B(H)^+)$ (see Theorem \ref{t Nagy B(H) positive finite dim}), where for a C$^*$-algebra $A$, the symbol $A^+$ will denote the cone of positive elements in $A$, and $S(A^+)$ will stand for the sphere of positive norm-one operators. It was conjectured by Nagy that the same conclusion holds for every complex Hilbert space $H$.\smallskip

We culminate this section, and the results in this note, by surveying a recent work where we provide a proof to Nagy's conjecture. The main result is treated in Theorem \ref{t positive Tigley for B(H)}, where it is shown that every surjective isometry $\Delta : S(B(H_1)^+)\to S(B(H_2)^+)$, where $H_1$ and $H_2$ are complex Hilbert spaces, admits an extension to a surjective complex linear isometry {\rm(}actually, a $^*$-isomorphism or a $^*$-anti-automorphism{\rm)} $T: B(H_1)\to B(H_2)$.\smallskip

We shall revisit one of the main tools employed to establish the above result. This tool is a geometric characterization of projections in atomic von Neumann algebras. Let us recall some notation first. Suppose that $E$ and $P$ are non-empty subsets of a Banach space $X$. Following the notation employed in the recent paper \cite{Per2017}, the \emph{unit sphere around $E$ in $P$} is the set $$Sph(E;P) :=\left\{ x\in P : \|x-b\|=1 \hbox{ for all } b\in E \right\}.$$ To simplify the notation, given a C$^*$-algebra $A$, and a subset $E\subset A$, we shall write $Sph^+ (E)$ or $Sph_A^+ (E)$ for the set $Sph(E;S(A^+))$. The geometric characterization of projections reads as follows: let $M$ be an atomic von Neumann algebra, and let $a$ be a positive norm-one element in $M$. Then the following statements are equivalent: \begin{enumerate}
[$(a)$] \item $a$ is a projection; \item $Sph^+_{M} \left( Sph^+_{M}(a) \right) =\{a\}$.
\end{enumerate} (see Theorem \ref{t characterization of projection in terms of the sphere around a positive element}).
This characterization also holds when $M$ is replaced by $K(H_3)$, where $H_3$ is a separable complex Hilbert space (Theorem \ref{t characterization of projection in terms of the sphere around a positive element k(H)}). Moreover, if $a$ is a positive norm-one element in an arbitrary C$^*$-algebra $A$ satisfying $Sph^+_{A} \left( Sph^+_{A}(a) \right) =\{a\}$, then $a$ is a projection (see \cite[Proposition 2.2]{Per2017}).\smallskip

This geometric characterization has been also applied to prove that if $H_3$ and $H_4$ are separable complex Hilbert spaces, then every surjective isometry $$\Delta : S(K(H_3)^+)\to S(K(H_4)^+)$$ admits a unique extension to a surjective complex linear isometry $T$ from  $K(H_3)$ onto $K(H_4)$ (see Theorem \ref{t Nagy for K(ell2)}).

\section{Geometric background}\label{sec: geometric background}

In this section we survey the basic geometric tools which are frequently applied in most of the studies extending isometries. The results gathered in this section are established in the general setting of Banach spaces.\smallskip

A non-empty convex subset $F$ of a convex set $C$ is said to be a \emph{face} of $C$ if $\alpha x+ (1-\alpha ) y\in F$ with $x,y\in C$ and $0<\alpha<1$ implies $x,y\in F$. An element $x$ in the unit sphere of a Banach space $X$ is an \emph{extreme point} of $\mathcal{B}_{X}$ precisely when the set $\{x\}$ is a face of $\mathcal{B}_{X}$. Accordingly to the standard notation, from now on, the extreme points of a convex set $C$ will be denoted by $\partial_e(C)$. The Krein-Milman theorem is a fantastic tool to assure the existence and abundance of extreme points in any non-empty compact convex subset of a locally convex, Hausdorff, topological vector space.\smallskip

Up to now, most of the studies on Tingley's problem are based on a good and appropriate knowledge of the geometric properties of the involved spaces. This is because the most general geometric conclusion which can be derived from the existence of a surjective isometry between the unit spheres of two Banach spaces is the following result, which was originally established by L. Cheng and Y. Dong \cite{ChenDong2011}, and later rediscovered by R. Tanaka \cite{Tan2016,Tan2014}. From now on, given a normed space $X$, the symbol $\mathcal{B}_X$ will stand for the closed unit ball of $X$.

\begin{theorem}\label{t faces ChengDong11}{\rm(}\cite[Lemma 5.1]{ChenDong2011}, \cite[Lemma 3.3]{Tan2016}, \cite[Lemma 3.5]{Tan2014}{\rm)} Let $\Delta: S(X) \to S(Y)$ be a surjective isometry between the unit spheres of two Banach spaces, and let $\mathcal{M}$ be a convex subset of $S(X)$. Then $\mathcal{M}$ is a maximal proper face of $\mathcal{B}_X$ if and only if $\Delta(\mathcal{M})$ is a maximal proper {\rm(}closed{\rm)} face of $\mathcal{B}_Y$.
\end{theorem}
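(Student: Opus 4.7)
The plan is to reduce the theorem to a purely metric characterization of maximal proper faces inside the unit sphere, which the surjective isometry $\Delta$ then transports. The first step is to identify maximal proper faces with maximal convex subsets of $S(X)$. A standard Hahn-Banach separation argument (separating the open unit ball from any convex subset of $S(X)$) shows that every convex subset of $S(X)$ is contained in a set of the form $F_{\varphi} := \{x \in \mathcal{B}_X : \varphi(x) = 1\}$ for some $\varphi \in S(X^*)$, and that each such $F_{\varphi}$ is itself a face of $\mathcal{B}_X$. Since no interior point of $\mathcal{B}_X$ lies in a proper face, one concludes that $\mathcal{M}$ is a maximal proper face of $\mathcal{B}_X$ if and only if $\mathcal{M}$ is a maximal convex subset of $S(X)$, in which case $\mathcal{M} = F_{\varphi}$ for some $\varphi \in \partial_e(\mathcal{B}_{X^*})$.

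Next I would invoke Tingley's antipodal preservation: for every surjective isometry $\Delta : S(X) \to S(Y)$ between the unit spheres of two Banach spaces, one has $\Delta(-x) = -\Delta(x)$ for every $x \in S(X)$. Together with the isometry property this yields
\begin{equation*}
\|\Delta(x) + \Delta(y)\| \;=\; \|\Delta(x) - \Delta(-y)\| \;=\; \|x - (-y)\| \;=\; \|x + y\|
\end{equation*}
for every $x, y \in S(X)$. Combined with the elementary observation that, for unit vectors $x, y$, the segment $[x, y]$ lies in $S(X)$ if and only if $\|x + y\| = 2$, this tells us that $\Delta$ preserves the relation ``the segment joining $x$ and $y$ lies on the sphere''.

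Now let $\mathcal{M}$ be a maximal convex subset of $S(X)$. By the previous paragraph, every pair $u, v \in \Delta(\mathcal{M})$ satisfies $[u, v] \subset S(Y)$. For each finite subset $\{u_1, \dots, u_n\} \subset \Delta(\mathcal{M})$, I would apply Hahn-Banach in $Y$ to produce a norm-one functional peaking at $1$ on $\{u_1, \dots, u_n\}$; the weak-$^*$ compactness of $\mathcal{B}_{Y^*}$ together with the finite intersection property of the resulting family of closed sets (whose non-emptiness at each step is ensured by the maximal convexity of $\mathcal{M}$, transported through $\Delta^{-1}$) yields a single $\psi \in \mathcal{B}_{Y^*}$ with $\psi \equiv 1$ on $\Delta(\mathcal{M})$, so $\Delta(\mathcal{M}) \subseteq F_{\psi}$. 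Applying the forward reasoning to the surjective isometry $\Delta^{-1} : S(Y) \to S(X)$ forces $\Delta^{-1}(F_{\psi})$ into a maximal convex subset of $S(X)$ containing $\mathcal{M}$; maximality of $\mathcal{M}$ gives $\Delta^{-1}(F_{\psi}) = \mathcal{M}$ and hence $F_{\psi} \subseteq \Delta(\mathcal{M})$. Thus $\Delta(\mathcal{M}) = F_{\psi}$ is a maximal proper face of $\mathcal{B}_Y$, and $\psi$ is necessarily an extreme point of $\mathcal{B}_{Y^*}$. The converse direction is obtained by applying the same argument to $\Delta^{-1}$.

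The main obstacle is precisely the extraction of the common supporting functional $\psi \in Y^*$: the pairwise condition ``$\|u + v\| = 2$'' that $\Delta$ transports is genuinely weaker than the existence of a single $\psi$ attaining $1$ on all of $\Delta(\mathcal{M})$, as three natural vertices of distinct maximal faces of $\mathcal{B}_{\ell^{3}_{\infty}}$ immediately show. The maximality of $\mathcal{M}$ as a convex subset of $S(X)$ is the crucial hypothesis that rules out such ``corner'' configurations and forces the Hahn-Banach extraction to succeed; this is the technical heart of the Cheng-Dong argument.
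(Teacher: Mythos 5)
Your opening reduction---that maximal proper faces of $\mathcal{B}_X$ coincide with maximal convex subsets of $S(X)$, each of the form $F_\varphi=\{x\in\mathcal{B}_X:\varphi(x)=1\}$, and that for unit vectors the segment $[x,y]$ lies in $S(X)$ exactly when $\|x+y\|=2$---is correct and is indeed how the argument of Cheng--Dong and Tanaka begins (the survey states the theorem with references and no proof, so the comparison is with \cite[Lemma 5.1]{ChenDong2011} and \cite[Lemma 3.5]{Tan2014}). There are, however, two genuine gaps. First, you invoke antipodal preservation $\Delta(-x)=-\Delta(x)$ for arbitrary Banach spaces; Tingley's theorem establishes this only in finite dimensions, and in general it is open (the survey states Tingley's result only for finite-dimensional spaces, and Proposition \ref{p Mori faces} obtains $\Delta(-\mathcal{M})=-\Delta(\mathcal{M})$ for maximal faces as a \emph{consequence} of the very theorem you are proving, so assuming it here is circular). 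The fact you actually need---$\|x+y\|=2$ if and only if $\|\Delta(x)+\Delta(y)\|=2$---is true, but it must be taken from the Fang--Wang/Ding result recorded as Theorem \ref{t FaWang distance 2}, whose proof does not pass through antipodal preservation; your stronger claim that $\|\Delta(x)+\Delta(y)\|=\|x+y\|$ for all $x,y$ is not available.

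Second, and more seriously, the step you yourself call ``the technical heart'' is asserted rather than proved. To apply Hahn--Banach to a finite subset $\{u_1,\dots,u_n\}\subset\Delta(\mathcal{M})$ and obtain a norm-one functional equal to $1$ on all the $u_i$, you must first know that $\mathrm{conv}\{u_1,\dots,u_n\}$ misses the open unit ball of $Y$, i.e.\ that every convex combination $\sum_i\lambda_i u_i$ has norm one. The only data $\Delta$ transports is pairwise: $\|u_i+u_j\|=2$ for all $i,j$. As your own $\ell^3_\infty$ example shows, pairwise norm-two sums do not force convex combinations of three or more points to stay on the sphere, so the ``non-emptiness at each step'' of your finite-intersection family is exactly the assertion to be established, and the sentence ``the maximality of $\mathcal{M}$ \dots forces the Hahn--Banach extraction to succeed'' restates the goal without supplying a mechanism. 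Maximality of $\mathcal{M}$ does yield the characterization $x\in\mathcal{M}\iff\|x+m\|=2$ for all $m\in\mathcal{M}$, and transporting it via Theorem \ref{t FaWang distance 2} gives $\Delta(\mathcal{M})=\{y\in S(Y):\|y+v\|=2\ \forall v\in\Delta(\mathcal{M})\}$; but sets with this self-duality property need not be convex (a maximal pairwise-norm-two family containing the three $\ell^3_\infty$ vertices is again such a set), so this does not close the gap. Proving that $\Delta(\mathcal{M})$ is in fact convex is precisely the content of \cite[Lemma 5.1]{ChenDong2011} and \cite[Lemma 3.5]{Tan2014}, and it is missing from your argument.
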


As we commented at the introduction, Tingley's problem remains open even in the case of two dimensional Banach spaces, the reason, probably, being the lacking of a concrete description of the maximal convex subsets of the unit sphere of a general Banach space.\smallskip

All strategies based on Theorem \ref{t faces ChengDong11} above require a concrete description of the maximal proper norm-closed faces of $\mathcal{B}_X$ in terms of the algebraic or geometric properties of $X$. This is the point where the results of C.A. Akemann and G.K. Pedersen \cite{AkPed92}, C.M. Edwards and G.T. R\"{u}ttimann \cite{EdRutt88}, C.M. Edwards, F.J. Fern{\'a}ndez-Polo, C. Hoskin and A.M. Peralta \cite{EdFerHosPe2010}, and F.J. Fern{\'a}ndez-Polo and A.M. Peralta \cite{FerPe10}, describing the facial structure of the closed unit ball of C$^*$-algebras, von Neumann algebra preduals, JB$^*$-triples and their dual spaces, and JBW$^*$-triples and their preduals, become an useful tool.\smallskip

We recall now the ``facear'' and ``pre-facear'' operations introduced in \cite{EdRutt88}. For each $F\subseteq \mathcal{B}_X$  and $G\subseteq \mathcal{B}_{X^*}$, we define $$ F^{\prime} = \{a \in \mathcal{B}_{X^*}:a(x) = 1\,\, \forall x \in F\},\quad
G_{\prime} = \{x \in \mathcal{B}_X :a(x) = 1\,\, \forall a \in G\}.$$ Then, $F^{\prime}$ is a weak$^*$ closed face of $\mathcal{B}_{X^*}$ and $G_{\prime}$
is a norm closed face of $\mathcal{B}_X$. The subset $F$ is said to
be a \emph{norm-semi-exposed face} of $\mathcal{B}_X$ if $F=(F^{\prime})_{\prime}$, while the subset $G$ is called a \emph{weak$^*$-semi-exposed face} of $\mathcal{B}_{X^*}$ if $G =(G_{\prime})^{\prime}$. The mappings $F \mapsto F^{\prime}$
and $G \mapsto G_{\prime}$ are anti-order isomorphisms between the
complete lattices $\mathcal{S}_n(\mathcal{B}_X)$ of norm-semi-exposed faces
of $\mathcal{B}_X,$ and $\mathcal{S}_{w^*}(\mathcal{B}_{X^*})$ of weak$^*$-semi-exposed
faces of $\mathcal{B}_{X^*}$ and are inverses of each other.\smallskip

If in Theorem \ref{t faces ChengDong11} we assume a richer geometric structure on the spaces $X$ and $Y$, then the conclusion of this result was improved in a recent paper by F.J. Fern{\'a}ndez-Polo, J. Garc{\'e}s, I. Villanueva and the author of this note in \cite{FerGarPeVill17}.\smallskip

\begin{theorem}\label{t ChengDong for general faces}{\rm\cite[Proposition 2.4]{FerGarPeVill17}} Let $\Delta: S(X) \to S(Y)$ be a surjective isometry between the unit spheres of two Banach spaces, and let $C$ be a convex subset of $S(X)$. Suppose that for every extreme point $\phi_0$ in $\partial_e(\mathcal{B}_{X^*})$, the set $\{\phi_0\}$ is a weak$^*$-semi-exposed face of $\mathcal{B}_{X^*}$. Then $C$ is a {norm-semi-exposed face} of $\mathcal{B}_X$ if and only if $\Delta(C)$ is a {norm-semi-exposed face} of $\mathcal{B}_Y$.
\end{theorem}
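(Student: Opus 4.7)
The plan is to exploit the anti-order lattice isomorphism between $\mathcal{S}_n(\mathcal{B}_X)$ and $\mathcal{S}_{w^*}(\mathcal{B}_{X^*})$ to show that every norm-semi-exposed face of $\mathcal{B}_X$ decomposes as an intersection of maximal proper ones, to identify those maximal proper ones as the pre-facears $\{\phi_0\}_{\prime}$ of extreme points $\phi_0 \in \partial_e(\mathcal{B}_{X^*})$, and then to transport the decomposition through $\Delta$ using Theorem \ref{t faces ChengDong11} together with the bijectivity of $\Delta$.

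First I would identify the maximal proper norm-semi-exposed faces of $\mathcal{B}_X$. Under the anti-order isomorphism, these correspond to the minimal non-empty weak$^*$-semi-exposed faces of $\mathcal{B}_{X^*}$. If $G$ is such a face, then $G$ is weak$^*$-compact and convex, so the Krein--Milman theorem provides an extreme point $\phi_0$ of $G$, which is automatically an extreme point of $\mathcal{B}_{X^*}$. The hypothesis forces $\{\phi_0\}$ to be itself weak$^*$-semi-exposed, and since $\{\phi_0\}\subseteq G$, minimality of $G$ gives $G=\{\phi_0\}$. Hence the maximal proper elements of $\mathcal{S}_n(\mathcal{B}_X)$ are precisely the sets $\{\phi_0\}_{\prime}$ with $\phi_0\in\partial_e(\mathcal{B}_{X^*})$.

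Next I would decompose an arbitrary norm-semi-exposed face $C=(C')_{\prime}$. The set $C'$ is weak$^*$-compact and convex, so Krein--Milman yields $C'=\overline{\operatorname{conv}}^{w^*}(\partial_e(C'))$, with $\partial_e(C')\subseteq\partial_e(\mathcal{B}_{X^*})$ because $C'$ is a weak$^*$ closed face of $\mathcal{B}_{X^*}$. Weak$^*$ continuity and affineness of evaluation at a fixed $x\in\mathcal{B}_X$ propagate the condition $\phi(x)=1$ from $\phi\in\partial_e(C')$ to all $a\in C'$, giving
\[
C \;=\; \bigcap_{\phi\in\partial_e(C')}\{\phi\}_{\prime}.
\]
Conversely, $\bigcap_i(A_i)_{\prime}=\bigl(\bigcup_i A_i\bigr)_{\prime}$ shows that any intersection of norm-semi-exposed faces is norm-semi-exposed. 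Thus the norm-semi-exposed faces of $\mathcal{B}_X$ are exactly the intersections of the maximal proper ones.

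Finally I would transport through $\Delta$. Bijectivity gives $\Delta\bigl(\bigcap_i M_i\bigr)=\bigcap_i\Delta(M_i)$, and Theorem \ref{t faces ChengDong11} sends each maximal proper norm-semi-exposed face $\{\phi\}_{\prime}$ of $\mathcal{B}_X$ to a maximal proper norm-closed face of $\mathcal{B}_Y$. Running the argument of Step 1 with $Y$ in place of $X$, under the analogous semi-exposition hypothesis on $\partial_e(\mathcal{B}_{Y^*})$ (implicit in the present setting and made explicit in the corollary via property $(h.2)$), identifies each $\Delta(\{\phi\}_{\prime})$ as a maximal proper norm-semi-exposed face of $\mathcal{B}_Y$; intersecting these shows $\Delta(C)$ is norm-semi-exposed. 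Applying the same reasoning to the surjective isometry $\Delta^{-1}$ gives the reverse implication. The main obstacle is precisely the upgrade from ``closed face'' to ``semi-exposed face'' on the $Y$ side: Theorem \ref{t faces ChengDong11} only supplies the former, and without a symmetric semi-exposition hypothesis on $\mathcal{B}_{Y^*}$ there is no abstract reason for a maximal proper closed face of $\mathcal{B}_Y$ to coincide with $((\cdot)')_{\prime}$ of itself.
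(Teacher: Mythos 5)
Your overall strategy --- write a norm-semi-exposed face as an intersection of maximal proper faces, transport the pieces through $\Delta$ via Theorem \ref{t faces ChengDong11}, and reassemble --- is the right one, and Steps 1 and 2 (Krein--Milman on $C'$, extreme points of a face being extreme in the ball, and the identity $\bigcap_i (A_i)_{\prime}=\bigl(\bigcup_i A_i\bigr)_{\prime}$) are sound. But there is a gap where you invoke Theorem \ref{t faces ChengDong11}: that theorem concerns maximal proper faces of $\mathcal{B}_X$, i.e.\ maximal convex subsets of $S(X)$, whereas your Step 1 only shows that the sets $\{\phi_0\}_{\prime}$ are the maximal proper elements of the lattice $\mathcal{S}_n(\mathcal{B}_X)$. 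These are not a priori the same notion of maximality: you must rule out a strictly larger convex subset of $S(X)$ containing $\{\phi_0\}_{\prime}$ that fails to be semi-exposed. The repair is short --- any maximal convex subset $M\supseteq\{\phi_0\}_{\prime}$ of $S(X)$ can be separated from the open unit ball, giving $M=\{f\}_{\prime}$ for some $f\in S(X^*)$; since every pre-facear satisfies $((G_{\prime})')_{\prime}=G_{\prime}$, such an $M$ is itself norm-semi-exposed, and your lattice maximality then forces $M=\{\phi_0\}_{\prime}$ --- but the step is missing as written.

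The ``main obstacle'' you flag at the end is, by the same two observations, a phantom: every maximal proper closed face of $\mathcal{B}_Y$ equals $\{f\}_{\prime}$ for some $f\in S(Y^*)$ and is therefore automatically norm-semi-exposed, with no hypothesis on $Y$ whatsoever. So the forward implication closes exactly as you want without importing property $(h.2)$ for $Y$. The genuine problem sits instead in your reverse implication: the theorem assumes the semi-exposition of extreme points only for $\mathcal{B}_{X^*}$, and ``run the argument with $\Delta^{-1}$'' requires the analogous hypothesis for $\mathcal{B}_{Y^*}$, which is not part of the statement. For that direction one needs a mechanism that stays on the $X$ side, e.g.\ the observation that under the hypothesis on $X$ the set $(C')_{\prime}$ coincides with the intersection of all maximal proper faces of $\mathcal{B}_X$ containing $C$, together with an argument that the corresponding intersection on the $Y$ side collapses to $\Delta(C)$ once $\Delta(C)$ is known to be semi-exposed. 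As written, your proposal establishes the ``only if'' half of the stated theorem (modulo the maximality lemma above) but proves the ``if'' half only under an additional, unstated hypothesis on $Y$.
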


The real interest of the previous theorem is the following corollary.

\begin{corollary}\label{c for spaces with property of semi-exposition for faces}{\rm\cite[Corollary 2.5]{FerGarPeVill17}} Let $X$ and $Y$ be Banach spaces satisfying the following two properties: \begin{enumerate}[$(1)$]\item Every norm closed face of $\mathcal{B}_X$ {\rm(}respectively, of $\mathcal{B}_Y${\rm)} is norm-semi-exposed;
\item Every weak$^*$ closed proper face of $\mathcal{B}_{X^*}$ {\rm(}respectively, of $\mathcal{B}_{Y^*}${\rm)} is weak$^*$-semi-exposed.
\end{enumerate} Let $\Delta: S(X) \to S(Y)$ be a surjective isometry. The following statements hold:
\begin{enumerate}[$(a)$]\item Let $\mathcal{F}$ be a convex set in $S(X)$. Then $\mathcal{F}$ is a norm closed face of $\mathcal{B}_{X}$ if and only if $\Delta(\mathcal{F})$ is a norm closed face of $\mathcal{B}_{Y}$;
\item Let $e\in S(X)$. Then $e\in \partial_e (\mathcal{B}_X)$ if and only if $\Delta(e)\in \partial_e (\mathcal{B}_Y)$.
\end{enumerate}\end{corollary}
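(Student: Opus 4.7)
The plan is to derive this corollary as a routine packaging of Theorem \ref{t ChengDong for general faces}, using hypotheses $(1)$ and $(2)$ to identify the class of norm closed faces of $\mathcal{B}_X$ with the class of norm-semi-exposed faces on both sides. Recall that any norm-semi-exposed set $F = (F')_{\prime}$ is automatically norm closed, being the intersection of the norm-closed affine hyperplanes $\{x \in \mathcal{B}_X : a(x) = 1\}$ as $a$ ranges over $F'$. Combined with hypothesis $(1)$, this gives, for each of $X$ and $Y$, an exact coincidence between norm closed (proper) faces of the ball and norm-semi-exposed faces.

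First I would verify the hypothesis needed to invoke Theorem \ref{t ChengDong for general faces}, both for $\Delta$ and for $\Delta^{-1}$. Let $\phi_0 \in \partial_e(\mathcal{B}_{X^*})$. Then $\{\phi_0\}$ is a weak$^*$-closed proper face of $\mathcal{B}_{X^*}$, because singletons are weak$^*$-closed in the Hausdorff weak$^*$ topology and the defining property of an extreme point is precisely that $\{\phi_0\}$ is a face. By hypothesis $(2)$ applied to $X^*$, the singleton $\{\phi_0\}$ is weak$^*$-semi-exposed, and the analogous statement for $Y^*$ follows from $(2)$ applied to $Y$. Hence Theorem \ref{t ChengDong for general faces} is available in both the $\Delta$ and the $\Delta^{-1}$ direction.

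For part $(a)$, suppose $\mathcal{F}$ is a norm closed face of $\mathcal{B}_X$ contained in $S(X)$. By hypothesis $(1)$ for $X$, the set $\mathcal{F}$ is norm-semi-exposed, and Theorem \ref{t ChengDong for general faces} applied to $\Delta$ yields that $\Delta(\mathcal{F})$ is a norm-semi-exposed face of $\mathcal{B}_Y$, hence in particular a norm closed face. Conversely, if $\Delta(\mathcal{F})$ is a norm closed face of $\mathcal{B}_Y$, then $(1)$ for $Y$ makes it norm-semi-exposed, and applying Theorem \ref{t ChengDong for general faces} to the surjective isometry $\Delta^{-1} : S(Y) \to S(X)$ (whose hypothesis is supplied by $(2)$ for $Y$ as in the preceding paragraph) gives that $\mathcal{F} = \Delta^{-1}(\Delta(\mathcal{F}))$ is a norm-semi-exposed, and hence norm closed, face of $\mathcal{B}_X$.

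For part $(b)$, I would simply specialize $(a)$ to singletons: $e \in \partial_e(\mathcal{B}_X)$ exactly when $\{e\}$ is a (norm closed) face of $\mathcal{B}_X$, and since $\Delta(\{e\}) = \{\Delta(e)\}$, part $(a)$ delivers the claimed equivalence. There is no real obstacle inside the corollary itself; all the technical content sits in Theorem \ref{t ChengDong for general faces}. The only points that need care are the Hausdorff/closedness observation that places $\{\phi_0\}$ among the weak$^*$-closed faces for $(2)$ to apply, and the fact that norm-semi-exposed faces are automatically norm closed, which is what allows the translation between the two formulations of $(a)$ to go through cleanly.
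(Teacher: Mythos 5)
Your proof is correct and follows exactly the route the paper intends: the survey gives no explicit proof but presents the corollary as a direct consequence of Theorem \ref{t ChengDong for general faces}, with hypothesis $(2)$ supplying the weak$^*$-semi-exposedness of singletons of extreme points in the dual balls and hypothesis $(1)$ (together with the automatic norm-closedness of semi-exposed faces) translating between norm closed and norm-semi-exposed faces. Your handling of the converse via $\Delta^{-1}$ and the specialization to singletons for part $(b)$ are both sound.
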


As it is observed in \cite{FerGarPeVill17}, the hypotheses of the above corollary hold whenever $X$ and $Y$ are C$^*$-algebras \cite[Theorems 4.10 and 4.11]{AkPed92}, hermitian parts of C$^*$-algebras (see \cite[Corollary 5.1]{EdRutt86} and \cite[Theorem 3.11]{AkPed92}), von Neumann algebra preduals \cite[Theorems 5.3 and 5.4]{EdRutt88}, preduals of the hermitian part of a von Neumann algebra (see \cite[Theorem 4.4]{EdRutt85} and \cite[Theorem 4.1]{EdRutt88}), or more generally, JB$^*$-triples (cf. \cite[Corollary 3.11]{EdFerHosPe2010} and \cite[Corollary 1]{FerPe10}), or JBW$^*$-triple preduals \cite[Corollaries 4.5 and 4.7]{EdRutt88}.\smallskip

By extending a result of D. Tingley \cite[\S 4]{Ting1987}, M. Mori has recently added in \cite[Proposition 2.3]{Mori2017} more information to the conclusion of the above Corollary \ref{c for spaces with property of semi-exposition for faces}. Actually with similar arguments we can deduce the following result.

\begin{proposition}\label{p Mori faces} Let $\Delta: S(X) \to S(Y)$ be a surjective isometry between the unit spheres of two Banach spaces. Then the following statements hold:\begin{enumerate}[$(a)$]\item If $\mathcal{M}$ is a maximal proper face of $\mathcal{B}_X$, then $\Delta(- \mathcal{M})= - \Delta( \mathcal{M});$
\item If $X$ and $Y$ satisfy the hypotheses of Corollary \ref{c for spaces with property of semi-exposition for faces}, then $\Delta(- F)= - \Delta(F)$ for every proper norm closed face of $\mathcal{B}_X$.
\end{enumerate}

\end{proposition}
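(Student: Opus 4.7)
My plan is to deduce (a) from an intrinsic metric description of $-\mathcal{M}$, and then to derive (b) from (a) by writing an arbitrary proper norm closed face as an intersection of maximal ones.

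For (a), first I would use Hahn--Banach to write $\mathcal{M}=F_\phi:=\{y\in\mathcal{B}_X:\phi(y)=1\}$ for some $\phi\in S(X^*)$, so that $-\mathcal{M}=\{y\in\mathcal{B}_X:\phi(y)=-1\}$. The key claim is the characterisation
\[
-\mathcal{M}=\bigl\{x\in S(X):\|x-m\|=2\text{ for all }m\in\mathcal{M}\bigr\}.
\]
The inclusion $\subseteq$ is a one-line check using $\phi(m-x)=2$. The reverse inclusion is the main obstacle: given such an $x$, the set $\mathcal{M}-x$ is a convex subset of the sphere $2\,S(X)$, hence disjoint from the open ball $B(0,2)$, and the geometric Hahn--Banach theorem followed by normalisation produces $\tilde\phi\in S(X^*)$ with $\tilde\phi(m-x)=2$ for every $m\in\mathcal{M}$, forcing $\tilde\phi(m)=1$ and $\tilde\phi(x)=-1$. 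Thus $\mathcal{M}\subseteq F_{\tilde\phi}$, and the maximality of $\mathcal{M}$ upgrades this inclusion to the equality $\mathcal{M}=F_{\tilde\phi}$, placing $x$ in $-F_{\tilde\phi}=-\mathcal{M}$. Once the characterisation is proved, (a) is obtained by transport: $\Delta(-\mathcal{M})$ is the set of $y\in S(Y)$ at distance $2$ from every element of $\Delta(\mathcal{M})$, which by Theorem \ref{t faces ChengDong11} is itself a maximal proper face of $\mathcal{B}_Y$, so the same characterisation on $Y$ identifies this set with $-\Delta(\mathcal{M})$.

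For (b), I would reduce to the maximal case. Under hypothesis $(1)$ of Corollary \ref{c for spaces with property of semi-exposition for faces}, $F=(F^{\prime})_{\prime}=\bigcap_{\phi\in F^{\prime}}F_\phi$; since $F^{\prime}$ is a weak$^*$-compact convex face of $\mathcal{B}_{X^*}$, Krein--Milman combined with the weak$^*$-continuity of evaluation at each fixed $y\in\mathcal{B}_X$ sharpens this to $F=\bigcap_{\phi\in\partial_e F^{\prime}}F_\phi$. Because $F^{\prime}$ is a face of $\mathcal{B}_{X^*}$ one has $\partial_e F^{\prime}\subseteq\partial_e\mathcal{B}_{X^*}$, and for each extreme $\phi_0$ hypothesis $(2)$ applied to the singleton face $\{\phi_0\}$ yields $(F_{\phi_0})^{\prime}=\{\phi_0\}$, which in turn forces $F_{\phi_0}$ to be a maximal proper face of $\mathcal{B}_X$ (any strictly larger proper face would produce a second norming functional). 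Thus $F$ is expressed as an intersection of maximal proper faces, and using that $\Delta$ is a bijection (so it commutes with intersections) together with (a) gives
\[
\Delta(-F)=\bigcap_{\phi\in\partial_e F^{\prime}}\Delta(-F_\phi)=\bigcap_{\phi\in\partial_e F^{\prime}}\bigl(-\Delta(F_\phi)\bigr)=-\Delta(F).
\]

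The delicate step throughout is the Hahn--Banach argument underpinning the metric characterisation in (a): the separating functional has to be normalised without losing the identity $\tilde\phi(m-x)=2$ simultaneously for \emph{every} $m\in\mathcal{M}$, and it is only after this uniform identity is secured that the maximality of $\mathcal{M}$ closes the argument. Everything after this step, including the reduction to maximal faces in (b), is a clean assembly of Krein--Milman, the semi-exposition hypotheses, and the bijectivity of $\Delta$.
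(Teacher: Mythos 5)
Your proof is correct and follows essentially the route the paper points to (Tingley's \S 4 argument as extended in \cite[Proposition 2.3]{Mori2017}): part $(a)$ rests on the metric characterisation of $-\mathcal{M}$ as the set of points of $S(X)$ at distance $2$ from all of $\mathcal{M}$, obtained by Hahn--Banach separation and the maximality of $\mathcal{M}$, and part $(b)$ reduces to $(a)$ by writing a proper norm closed face as $\bigcap_{\phi_0\in\partial_e F^{\prime}}F_{\phi_0}$ via Krein--Milman and the semi-exposure hypotheses. The paper omits the details and defers to Mori, so your write-up supplies precisely the argument being alluded to, with the separation and normalisation step handled correctly.
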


Elements $a$, $b$ in a C$^*$-algebra $A$ are said to be orthogonal if $a b^* = b^* a =0$. The set of partial isometries in $A$ can be equipped with a partial order defined by $e\leq v$ if $v-e$ is a partial isometry orthogonal to $e$, equivalently, $v= e + (1-ee^*)v(1-v^*v)$. \smallskip

This seems to be an optimal moment to recall the facial structure of the closed unit ball of a C$^*$-algebra. We recall first some basic notions required to understand the results. Let $A$ be a C$^*$-algebra. It was shown by Akemann and Pedersen in \cite{AkPed92} that norm closed faces of $\mathcal{B}_{A}$ are in one-to-one correspondence with the compact partial isometries in $A^{**}$. Let us recall that a projection $p$ in $A^{**}$ is said to be \emph{open} if $A\cap (p A^{**} p)$ is weak$^*$ dense in $p A^{**} p$, equivalently, there exists an increasing net of positive elements in $A$, all of them bounded by $p$, converging to $p$ in the strong$^*$ topology of $A^{**}$ (see \cite[\S 3.11]{Ped}, \cite[\S III.6 and Corollary III.6.20]{Tak}). A projection $p \in A^{**}$ is called \emph{closed} if $1-p$ is open. A closed projection $p$ in $A^{**}$ is called \emph{compact} if $p\leq x$ for some norm-one positive element $x \in A$.\smallskip

Compact partial isometries in the bidual of a C$^{**}$-algebra were studied by C.M. Edwards and G.T. R\"{u}ttimann in \cite[\S 5]{EdRu96} as an application of the more general notion of compact tripotent in the bidual of a JB$^*$-triple. C.A. Akemann and G.K. Pedersen consider an alternative term for the same notion. A partial isometry $v\in A^{**}$ \emph{belongs locally to $A$} if $v^*v$ is a compact projection and there exists a norm-one element $x$ in $A$ satisfying $v = x v^*v$ (compare \cite[Remark 4.7]{AkPed92}). It was shown by C.A. Akemann and G.K. Pedersen that a partial isometry $v$ in $A^{**}$ belongs locally to $A$ if and only if $v^*$ belongs locally to $A$ (see \cite[Lemma 4.8]{AkPed92}). We know from \cite[Theorem 5.1]{EdRu96} that a  partial isometry $v$ in $A^{**}$ belongs locally to $A$ if and only if it is compact in the sense introduced in \cite{EdRu96}.\smallskip

Akemann and Pedersen gave in \cite[Lemma 4.8 and Remark 4.11]{AkPed92} an interesting procedure to understand well those partial isometries in $A^{**}$ belonging locally to $A$. Borrowing a paragraph from the just quoted paper we recall that ``the partial isometries $v$ in $A^{**}$ that belong locally to $A$ are obtained by taking an element $x$ in $A$ with norm 1 and polar decomposition $x = u |x|$ (in $A^{**}$), and then letting $v = ue$ for some compact projection $e$ contained in the spectral projection $\chi_{_{\{1\}}}(|x|)$ of $|x|$ corresponding to the eigenvalue 1.'' Accordingly to most of the basic references, for each element $x$ in $A$ we set $|x| = (x^* x)^{\frac12}$. \smallskip

We are now in position to revisit the results by C.A. Akemann and G.K. Pedersen.

\begin{theorem}\label{t faces AkPed}\cite[Theorems 4.10 and 4.11]{AkPed92} Let $A$ be a C$^*$-algebra. The following statements hold:\begin{enumerate}[$(a)$]\item For each norm closed face $F$ of $\mathcal{B}_A$ there exists a unique partial isometry $v$ in $A^{**}$ belonging locally to $A$ such that $$F= F_v=\{v\}_{_{''}}=\left(v + (1 - vv^*) \mathcal{B}_{A^{**}} (1 - v^*v)\right)\cap \mathcal{B}_{A} = \{x\in \mathcal{B}_A:\ xv^* = vv^*\}.$$ Furthermore, the mapping $v\mapsto F_v$ is an anti-order isomorphism from the complete lattice of partial isometries in $A^{**}$ belonging locally to $A$ onto the complete lattice of norm closed faces of $\mathcal{B}_A$;
\item For each weak$^*$ closed face $\mathcal{G}$ of $\mathcal{B}_{A^{*}}$ there exists a unique partial isometry $v$ in $A^{**}$ belonging locally to $A$ such that $\mathcal{G} = \{v\}_{_{'}}$, and the mapping $v\mapsto \{v\}_{_{'}}$ is an order isomorphism from the complete lattice of partial isometries in $A^{**}$ belonging locally to $A$ onto the complete lattice of weak$^*$ closed faces of $\mathcal{B}_{A^*}$;
\end{enumerate}
\end{theorem}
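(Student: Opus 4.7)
The plan is to exploit the facear/pre-facear duality recalled at the start of this section together with the polar decomposition in the bidual von Neumann algebra $A^{**}$. A preliminary observation, which I would either cite or derive, is that in a C$^*$-algebra every norm closed face of $\mathcal{B}_A$ is norm-semi-exposed and every weak$^*$ closed face of $\mathcal{B}_{A^*}$ is weak$^*$-semi-exposed. With this in hand, the maps $F\mapsto F^{\prime}$ and $\mathcal{G}\mapsto \mathcal{G}_{\prime}$ become mutually inverse anti-order bijections between the lattices of norm closed faces of $\mathcal{B}_A$ and weak$^*$ closed faces of $\mathcal{B}_{A^*}$. It therefore suffices to establish one of the two parametrisations by partial isometries in $A^{**}$; the other follows by composition.

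I would first prove (b). Given a partial isometry $v \in A^{**}$, view each $\phi \in A^*$ canonically as a normal functional on $A^{**}$ and set $\{v\}_{\prime} := \{\phi \in \mathcal{B}_{A^*} : \phi(v)=1\}$. This set is visibly weak$^*$ closed, and a short convexity argument (using $|\phi(v)|\leq \|\phi\|\leq 1$) shows it is a face of $\mathcal{B}_{A^*}$. For the converse, given a weak$^*$ closed face $\mathcal{G}$ of $\mathcal{B}_{A^*}$, I would attach to $\mathcal{G}$ a partial isometry $v$ via the Sakai polar decomposition $\phi_0 = |\phi_0|(v^*\,\cdot\,)$ of a suitable $\phi_0 \in \mathcal{G}$ (for example, one in the weak$^*$-interior of the affine hull of $\mathcal{G}$, or obtained by averaging), and verify that $\mathcal{G} = \{v\}_{\prime}$ and that $v$ does not depend on the choice of $\phi_0$. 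That $v \mapsto \{v\}_{\prime}$ is order-preserving follows because $v \leq w$ in the partial isometry order translates, via polar decomposition, into $\{v\}_{\prime}\subseteq \{w\}_{\prime}$.

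Part (a) then follows by composing with the anti-order bijection $F \mapsto F^{\prime}$. Given a norm closed face $F$ of $\mathcal{B}_A$, $F^{\prime}$ is a weak$^*$ closed face of $\mathcal{B}_{A^*}$, so by (b) equals $\{v\}_{\prime}$ for a unique partial isometry $v \in A^{**}$; semi-exposedness yields $F = (F^{\prime})_{\prime} \cap \mathcal{B}_A$. The identification
\[
(F^{\prime})_{\prime} = \{x \in \mathcal{B}_{A^{**}} : xv^* = vv^*\} = \bigl(v + (1-vv^*)\mathcal{B}_{A^{**}}(1-v^*v)\bigr)\cap \mathcal{B}_{A^{**}}
\]
is a routine Peirce-corner computation: the condition $\phi(x)=1$ for every $\phi \in \{v\}_{\prime}$ forces $v^*x = v^*v$, hence $xv^*=vv^*$, and writing $x$ in the $2\times 2$ corner decomposition determined by $vv^*$ and $v^*v$ shows the remaining corner is an arbitrary contraction.

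The main obstacle, and the reason the ``belongs locally to $A$'' hypothesis is indispensable, is proving that the partial isometries arising in (a) are exactly those in $A^{**}$ which are locally in $A$. If $v$ belongs locally to $A$, then $v = xv^*v$ for some norm-one $x \in A$, so $x\in F_v$ and $F_v$ is a nonempty face of $\mathcal{B}_A$. For the converse, given a nonempty norm closed face $F\subseteq \mathcal{B}_A$, I would pick any $x\in F$, take its polar decomposition $x=u|x|$ in $A^{**}$, and show that the partial isometry $v$ produced from $F^{\prime}$ equals $u\,e$ with $e$ a compact projection contained in the spectral projection $\chi_{\{1\}}(|x|)$; compactness of $e$, the delicate point, comes from $\chi_{\{1\}}(|x|)\leq |x|\in A$ together with Pedersen's characterisation of compact projections as those dominated by a norm-one positive element of $A$. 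Uniqueness of $v$ and the anti-order isomorphism statement then follow from (b) and the anti-order character of $F\mapsto F^{\prime}$.
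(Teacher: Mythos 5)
This theorem is quoted in the survey verbatim from Akemann and Pedersen \cite[Theorems 4.10 and 4.11]{AkPed92}; the paper supplies no proof of its own, so your attempt can only be measured against the original argument. Your skeleton --- the facear/pre-facear duality, polar decomposition in $A^{**}$, and the Peirce-corner identification of $(F^{\prime})_{\prime}$ --- is the right one, and you correctly isolate the crux, namely that the partial isometries which occur are exactly those belonging locally to $A$. The Peirce computation you describe is indeed routine. But at each of the genuinely hard points the proposal has a gap.

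First, the ``preliminary observation'' that every norm closed face of $\mathcal{B}_A$ is norm-semi-exposed and every weak$^*$ closed face of $\mathcal{B}_{A^*}$ is weak$^*$-semi-exposed is not an independent lemma: it is precisely the content that the survey extracts from \cite[Theorems 4.10 and 4.11]{AkPed92} to verify hypotheses $(h.1)$ and $(h.2)$. Citing it is circular, and proving it is essentially the surjectivity half of the theorem. Second, attaching $v$ to a weak$^*$ closed face $\mathcal{G}$ via the polar decomposition of a single ``suitable'' $\phi_0\in\mathcal{G}$ fails for infinite-dimensional faces: take $\mathcal{G}=\{1\}_{\prime}$, the full state space of $B(H)$ with $H$ nonseparable --- no single state has support projection $1$, the affine hull has no weak$^*$-interior point, and no countable averaging repairs this. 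One must instead take the supremum of the support partial isometries over all of $\mathcal{G}$, prove that this supremum is again a partial isometry, and then use the weak$^*$ compactness of $\mathcal{G}$ to force compactness of $v$; this is where the noncommutative topology of \cite{AkPed92} actually enters. Third, your compactness argument for $e$ is too quick: domination by a norm-one positive element of $A$ characterises compactness only among \emph{closed} projections, so closedness of $e$ must be established separately (true for $\chi_{_{\{1\}}}(|x|)$, being the decreasing strong$^*$ limit of the powers $|x|^n$, but it needs saying), and, more seriously, a single $x\in F$ only controls its own support --- showing that the partial isometry attached to the whole face is of the form $ue$ with $e$ compact is the substance of the theorem and requires the peak-projection and Urysohn-type machinery of \cite{AkPed92}, which the proposal does not supply.
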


A non-zero projection $p$ in a C$^*$-algebra $A$ is called \emph{minimal} if $p A p = \mathbb{C} p$. A non-zero partial isometry $e$ in a C$^*$-algebra $A$ is \emph{minimal} if $ee^*$ (equivalently, $e^* e$) is a minimal projection in $A$. By Kadison's transitivity theorem minimal partial isometries in $A^{**}$ belong locally to $A$, and hence every maximal proper face of the unit ball of a C$^*$-algebra $A$ is of the form \begin{equation}\label{eq maximal closed faces} \left(v + (1 - vv^*) \mathcal{B}_{A^{**}} (1 - v^*v)\right)\cap \mathcal{B}_{A}
\end{equation} for a unique minimal partial isometry $v$ in $A^{**}$ (compare \cite[Remark 5.4 and Corollary 5.5]{AkPed92}).\smallskip

Another technical result of geometric nature, which is frequently applied in the study of Tingley's problem and should be considered in any survey on this topic, was established by X.N. Fang, J.H. Wang and G.G. Ding in \cite{FangWang06} and \cite{Ding07}, respectively.

\begin{theorem}\label{t FaWang distance 2}{\rm(\cite[Corollary 2.2]{FangWang06}, \cite[Corollary 1]{Ding07})} Let $X$ and $Y$ be normed spaces and let $\Delta : S(X) \to S(Y)$ be a surjective isometry. Then, for any $x,y$ in $S(X)$, we have $\| x + y \|= 2$ if and only if $\| \Delta(x) + \Delta(y) \|= 2$.
\end{theorem}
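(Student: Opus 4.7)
The plan is to reduce the statement to the Cheng--Dong theorem (Theorem \ref{t faces ChengDong11}), which tells us that the surjective isometry $\Delta$ sends maximal proper faces of $\mathcal{B}_X$ to maximal proper faces of $\mathcal{B}_Y$. The bridge is the purely geometric characterization that, for $x,y\in S(X)$, the condition $\|x+y\|=2$ is equivalent to the existence of a common maximal proper norm-closed face of $\mathcal{B}_X$ containing both $x$ and $y$.

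First I would prove the forward direction of this characterization. Assuming $\|x+y\|=2$, the midpoint $(x+y)/2$ is an element of $S(X)$, so the Hahn--Banach theorem supplies a functional $\phi\in S(X^*)$ with $\phi\bigl((x+y)/2\bigr)=1$; together with $\phi(x),\phi(y)\le 1$ this forces $\phi(x)=\phi(y)=1$. The set $F_\phi:=\{z\in\mathcal{B}_X:\phi(z)=1\}$ is then a norm-closed proper face of $\mathcal{B}_X$ containing $\{x,y\}$. A Zorn's lemma argument on the family of proper faces of $\mathcal{B}_X$ containing $\{x,y\}$, ordered by inclusion, produces a face $F$ which is maximal within this family; observing that any proper face of $\mathcal{B}_X$ strictly containing $F$ would again contain $\{x,y\}$ and contradict maximality, one concludes that $F$ is in fact a maximal proper face of $\mathcal{B}_X$. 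The reverse direction is easy: if $x,y$ lie in a common proper norm-closed face $F$, then $(x+y)/2\in F\subset S(X)$ and hence $\|x+y\|=2$.

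Next I would transfer the structure through $\Delta$ using Theorem \ref{t faces ChengDong11}. Since $F$ is a convex subset of $S(X)$ (proper faces of $\mathcal{B}_X$ lie in the sphere) and is a maximal proper face, $\Delta(F)$ is a maximal proper face of $\mathcal{B}_Y$; it is contained in $S(Y)$ and contains both $\Delta(x)$ and $\Delta(y)$. Convexity of $\Delta(F)$ then places the midpoint $\bigl(\Delta(x)+\Delta(y)\bigr)/2$ inside $\Delta(F)\subset S(Y)$, yielding $\|\Delta(x)+\Delta(y)\|=2$. The reverse implication is obtained by running the same argument with $\Delta^{-1}:S(Y)\to S(X)$, which is also a surjective isometry between the two spheres.

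The delicate step, which deserves explicit verification, is the Zorn enlargement in Step~1: one must check that every proper face of $\mathcal{B}_X$ is contained in the sphere $S(X)$ (an interior element $z$ of $\mathcal{B}_X$ lying in a face $F$ forces $F$ to contain a whole ball around $z$ and hence to coincide with $\mathcal{B}_X$), and that an increasing union of proper faces is itself a proper face (convexity and the face property pass to increasing unions, and the union remains inside $S(X)$, hence remains proper). Once these standard but essential facts are in place, the conclusion drops out of the Cheng--Dong preservation principle without any further analytic effort.
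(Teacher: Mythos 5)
Your argument is correct, but it follows a genuinely different route from the one behind the statement as the survey presents it: the survey gives no proof of Theorem \ref{t FaWang distance 2}, quoting it from Fang--Wang and Ding, whose original arguments are direct and elementary. The bridge you set up --- $\|x+y\|=2$ if and only if $x$ and $y$ lie on a common maximal proper face of $\mathcal{B}_X$ --- is sound: the Hahn--Banach functional, the Zorn enlargement, and the two auxiliary facts you rightly flag (every proper face of $\mathcal{B}_X$ sits inside $S(X)$; increasing unions of proper faces are proper faces) all check out, and pushing the face through Theorem \ref{t faces ChengDong11} and then repeating the argument for $\Delta^{-1}$ gives both implications. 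The cited original proofs instead work directly with the metric and convexity consequences of $\|x+y\|=2$ (for instance, that the whole segment $[x,y]$ lies in $S(X)$, equivalently that $t\mapsto\|x+ty\|$ is affine on $[0,\infty)$) without any facial machinery. What that buys is logical priority: the distance-two lemma historically precedes the Cheng--Dong face-preservation theorem and is typically one of the primitive ingredients in arguments of that kind, so deriving it from Theorem \ref{t faces ChengDong11} is perfectly legitimate inside this survey, where that theorem is taken as a black box, but would risk circularity if one unfolded the proofs in the underlying literature. What your approach buys is conceptual clarity: it exhibits the distance-two relation as exactly the facial relation ``lying on a common maximal proper face,'' which is precisely why the lemma is useful in the applications the survey describes.

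Two small points to tidy. For complex scalars the inequality $\phi(x),\phi(y)\le 1$ should read $\mathrm{Re}\,\phi(x)\le|\phi(x)|\le 1$, after which $\phi\bigl((x+y)/2\bigr)=1$ still forces $\phi(x)=\phi(y)=1$. Also, Theorem \ref{t faces ChengDong11} is stated in the survey for Banach spaces, while Theorem \ref{t FaWang distance 2} is asserted for normed spaces; none of your own steps uses completeness, but the black box you invoke is, as stated, slightly less general than the conclusion you are proving.
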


This result plays a role, for example, in some of the proofs in \cite{FerGarPeVill17,Per2017}.

\subsection{A taste of Jordan structures}\label{subsec: Jordan}

Many recent advances on Tingley's problem and it's variants on C$^*$-algebras make an explicit use of the Jordan theory of JB$^*$-triples (see, for example, the proofs in \cite{PeTan16,FerPe17b,FerPe17c,FerPe17d} and \cite{FerGarPeVill17}). Although we are not going to enter in the deep details of the proofs, it seems convenient to recall the basic notions and connections with this theory.\smallskip

We recall that, accordingly to the definition introduced in \cite{Ka83}, a \emph{JB$^*$-triple} is a complex Banach space $E$ admitting a continuous triple product $\{a,b,c\}$ which is conjugate linear in $b$ and linear and symmetric in $a$ and $c$, and satisfies the following axioms:\begin{enumerate}[(JB$^*$1)]\item $L(a,b) L(c,d) - L(c,d) L(a,b) = L(L(a,b) (c),d) - L(c,L(b,a)(d))$, for every $a,b,c,d$ in $E$, where $L(a,b)$ is the operator on $E$ defined by $L(a,b) (x) =\{a,b,x\}$;
\item $L(a,a)$ is a hermitian operator on $E$ with non-negative spectrum;
\item $\|\{a,a,a\}\| = \|a \|^3$, for every $a\in E$.
\end{enumerate}

Examples of JB$^*$-triples include the spaces $B(H,H^\prime)$ of bounded linear operators and the spaces $K(H,H^\prime)$ of all compact operators between two complex Hilbert spaces, complex Hilbert spaces, and all C$^*$-algebras when equipped with the triple product defined by $\{x,y,z\} := \frac12 ( xy^* z + z y^*x)$. JB$^*$-triples constitute a category which produces a Jordan model valid to generalize C$^*$-algebras. Every JB$^*$-algebra is a JB$^*$-triple under the triple product $$\{a,b,c\}= (a \circ b^*)\circ c + (c\circ b^*)\circ a - (a\circ c) \circ b^*.$$ For the basic notions and results on JB$^*$-triples the reader is referred to the monograph \cite{Chu2012}.\smallskip

A linear mapping between JB$^*$-triples is called a triple homomorphism if it preserves triple products. Surjective real linear isometries between C$^*$-algebras and JB$^*$-triples are deeply connected to triple isomorphisms (see \cite{Da,ChuDaRuVen} and \cite{FerMarPe}). Many of the results in this survey can be complemented with a good description of the real triple isomorphisms between von Neumann algebras. Let us add that real linear triple isomorphisms play a fundamental role in the original proofs of the main results in \cite{PeTan16, FerPe17b, FerPe17c, FerPe17d}.

\section{Tingley's problem on C$^*$-algebras}\label{sec: Tingleys Cstar}

Tingley's problem for surjective isometries between the unit spheres of two commutative C$^*$-algebras are completely covered by the results for $C_0(L)$-spaces \cite{Wang}, $\ell^\infty (\Gamma)$-spaces \cite{Di:8}, and $L^{p}(\Omega, \Sigma, \mu)$ spaces \cite{Ta:8}. It should be remarked that in \cite{Di:8} and \cite{Ta:8} the authors only consider real sequences and real valued measurable functions, respectively, that is, their results are restricted to the hermitian parts of the corresponding C$^*$-algebras.\smallskip

According to the chronological order, and for our own convenience, we highlight a pioneering result due to R.S. Wang. Let us recall the prototype example of commutative C$^*$-algebras. Given a locally compact Hausdorff space $L$, we shall write $C_0(L)$ for the commutative C$^*$-algebra of all complex valued continuous functions on $L$ which vanish at infinite.

\begin{theorem}\label{t Wang C0(L)}{\rm \cite{Wang}} Let $L_1$ and $L_2$ be two locally compact Hausdorff spaces, and let $\Delta : S(C_0(L_1)) \to S(C_0(L_2))$ be a surjective isometry. Then there exists a real linear surjective isometry $T: C_0(L_1) \to C_0(L_2)$ satisfying $T|_{S(C_0(L_1))} = \Delta$. Furthermore, there exist two disjoint subsets $A$ and $B$ of $L_1$ such that $A\cup B = L_1$, $T|_{C_0(A)}$ is complex linear, and $T|_{C_0(B)}$ is conjugate linear, where $C_0(A)=\{f\in C_0(L_1) : f|_{B} \equiv 0\},$ and $C_0(B)=\{f\in C_0(L_1) : f|_{A} \equiv 0\}.$
\end{theorem}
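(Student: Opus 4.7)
The plan is to reconstruct the Banach--Stone-type description of a real-linear surjective isometry between $C_0$-spaces purely from the facial data encoded by $\Delta$. In $\mathcal{B}_{C_0(L)}$ the maximal proper norm-closed faces are exactly the sets
\[
F^{L}_{(t,\lambda)}:=\{f\in \mathcal{B}_{C_0(L)}: f(t)=\lambda\}, \qquad (t,\lambda)\in L\times \mathbb{T},
\]
and they correspond bijectively to the weak$^*$ extreme points $\lambda\delta_t$ of $\mathcal{B}_{C_0(L)^*}$. By Theorem~\ref{t faces ChengDong11}, $\Delta$ induces a bijection between the families of maximal proper faces of $\mathcal{B}_{C_0(L_1)}$ and $\mathcal{B}_{C_0(L_2)}$, hence a bijection $\Psi: L_1\times\mathbb{T}\to L_2\times\mathbb{T}$. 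A short calculation (using Urysohn--Tietze extensions) shows that the Hausdorff distance between $F^{L}_{(t_1,\lambda_1)}$ and $F^{L}_{(t_2,\lambda_2)}$ equals $|\lambda_1-\lambda_2|$ when $t_1=t_2$ and equals $2$ otherwise. Since $\Delta$ is a global sphere isometry, Hausdorff distances between faces are preserved, and one deduces that $\Psi$ factorizes as $\Psi(t,\lambda)=(\varphi(t),h_t(\lambda))$ for a bijection $\varphi:L_1\to L_2$ and surjective isometries $h_t:\mathbb{T}\to\mathbb{T}$. Since every surjective isometry of $\mathbb{T}$ is of the form $\lambda\mapsto\alpha\lambda$ or $\lambda\mapsto\alpha\bar\lambda$, one gets $h_t(\lambda)=\alpha_t\lambda$ or $h_t(\lambda)=\alpha_t\overline{\lambda}$ for some $\alpha_t\in\mathbb{T}$.

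The next stage promotes $\varphi$ to a homeomorphism and regularizes $t\mapsto\alpha_t$ together with the ``type'' of $h_t$. For this I would exploit the abundance of Urysohn--Tietze peak functions in $C_0(L_1)$: for each $t_0\in L_1$ and each neighbourhood $U$ of $t_0$ there is $f\in S(C_0(L_1))$ with $f(t_0)=1$ and $\mathrm{supp}\,f\subset U$, and feeding such $f$ into $\Delta$ and testing $\Delta(f)$ against the faces $F^{L_2}_{(s,\mu)}$ yields the required continuity information. Setting $A:=\{t\in L_1: h_t(\lambda)=\alpha_t\lambda\}$ and $B:=L_1\setminus A$, one then verifies that $A$ and $B$ are clopen in $L_1$, that $\varphi$ is a homeomorphism $L_1\to L_2$, and that $t\mapsto\alpha_t$ is continuous on each of $A$ and $B$. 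Define $T:C_0(L_1)\to C_0(L_2)$ by
\[
T(f)(\varphi(t))=\begin{cases}\alpha_t\,f(t),& t\in A,\\[1mm] \alpha_t\,\overline{f(t)},& t\in B.\end{cases}
\]
The clopen-ness of $A$, $B$ together with the continuity of $t\mapsto\alpha_t$ on each piece guarantee that $T$ is a well-defined surjective real-linear isometry, complex linear on $C_0(A)$ and conjugate linear on $C_0(B)$.

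Finally one has to verify $T|_{S(C_0(L_1))}=\Delta$. For each $f\in S(C_0(L_1))$ and each $t\in L_1$ with $|f(t)|=1$ the values $T(f)(\varphi(t))$ and $\Delta(f)(\varphi(t))$ coincide, since both $T(f)$ and $\Delta(f)$ lie in the maximal face $F^{L_2}_{(\varphi(t),h_t(f(t)))}$. The delicate point, and the main obstacle of the whole argument, is matching $T(f)$ and $\Delta(f)$ at points $\varphi(t)$ with $|f(t)|<1$, where the facial dictionary is silent. I would handle this by considering the positively homogeneous extension $F_\Delta:C_0(L_1)\to C_0(L_2)$ of $\Delta$ and proving, via distance and face computations grounded on the already-established splitting of $\Psi$, that $F_\Delta|_{\mathcal{B}_{C_0(L_1)}}$ is an isometry; Mankiewicz's Theorem~\ref{t Mankiewicz} then upgrades $F_\Delta$ to a real-linear surjective isometry. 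Since $F_\Delta$ and $T$ induce the same facial bijection $\Psi$, they must coincide on $\mathcal{B}_{C_0(L_1)}$, and in particular $T|_{S(C_0(L_1))}=\Delta$. A more pedestrian alternative is to recover $\Delta(f)(\varphi(t))$ at non-peak points as a limit of values at peak points using peak functions of shrinking support, but the topological book-keeping is usually more painful than the Mankiewicz route.
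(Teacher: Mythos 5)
First, a caveat on the comparison: the paper contains no proof of this theorem; it is a survey item quoted from Wang's paper \cite{Wang}, accompanied only by the remark that the proof rests on Urysohn's lemma and fine geometric arguments. Measured against that description, the first two thirds of your argument are sound and in exactly the right spirit. The maximal proper norm-closed faces of $\mathcal{B}_{C_0(L)}$ are indeed the sets $F_{(t,\lambda)}=\{f\in\mathcal{B}_{C_0(L)}: f(t)=\lambda\}$, the Hausdorff distance is $|\lambda_1-\lambda_2|$ on a common fibre (the rotation $g=(\lambda_2/\lambda_1)f$ gives the upper bound) and $2$ across fibres (Urysohn produces $f$ with $f(t_1)=\lambda_1$, $f(t_2)=-\lambda_2$), and Theorem \ref{t faces ChengDong11} plus preservation of Hausdorff distances yields the splitting $\Psi(t,\lambda)=(\varphi(t),h_t(\lambda))$ with $h_t$ a rotation or a conjugation-rotation. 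One small repair: since antipodal faces on the same fibre are also at Hausdorff distance $2$, the fibres $\{t\}\times\mathbb{T}$ are the equivalence classes of the \emph{transitive closure} of the relation ``distance $<2$'', not of the relation itself; this is harmless but must be said.

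The genuine gap is in your preferred route for the last step. Claiming that $F_\Delta|_{\mathcal{B}_{C_0(L_1)}}$ is an isometry ``via distance and face computations'' and then invoking Theorem \ref{t Mankiewicz} is not an argument: proving that the homogeneous extension is isometric on the ball is precisely the content of Tingley's problem, and nothing in the splitting of $\Psi$ delivers it. The ``pedestrian alternative'' you dismiss is in fact the correct and short finish. For every $g\in S(C_0(L_2))$, $s\in L_2$ and $\mu\in\mathbb{T}$ one has
$$\mathrm{dist}\bigl(g,F_{(s,\mu)}\bigr)=|g(s)-\mu|,$$
the nontrivial inequality coming from $h=(1-u)g+\mu u$ with $u$ a Urysohn peak function at $s$ of shrinking support (then $h\in F_{(s,\mu)}$ and $\|h-g\|=\sup_x u(x)|\mu-g(x)|\to|\mu-g(s)|$). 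Since $\Delta$ is a sphere isometry carrying $F_{(t,\mu)}$ onto $F_{(\varphi(t),h_t(\mu))}$, this gives $|\Delta(f)(\varphi(t))-h_t(\mu)|=|f(t)-\mu|$ for all $\mu\in\mathbb{T}$ and \emph{all} $t\in L_1$; as a point of the closed disc is determined by its distances to the points of the circle, $\Delta(f)(\varphi(t))=h_t(f(t))$ everywhere, not just at peak points. This single formula defines $T$, proves $T=\Delta$ on the whole sphere, and gives real linearity, isometry, surjectivity and $T(f)\in C_0(L_2)$ at once; the ``furthermore'' clause then falls out by taking $A=\{t: h_t \hbox{ is a rotation}\}$ and $B=L_1\setminus A$ (or by the Ellis--Miura real Banach--Stone theorem quoted after the statement). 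Neither Mankiewicz's theorem nor the extension $F_\Delta$ is needed.
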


Wang's theorem, whose proof is based on Urysohn's lemma and fine geometric arguments, solves Tingley's problem for commutative C$^*$-algebras. Actually, if $\Delta : S(\ell_{\infty}(\Gamma_1)) \to S(\ell_{\infty}(\Gamma_2))$ (respectively, $\Delta : S(c(\Gamma_1)) \to S(c(\Gamma_2)),$ or $\Delta : S(c_0(\Gamma_1)) \to S(c_0(\Gamma_2))$) is a surjective isometry, then we can always find an extension to a surjective real linear isometry between the corresponding spaces, where $c_0(\Gamma)$, $c(\Gamma_1),$ and $\ell_{\infty}(\Gamma)$ denote the spaces of all complex null, convergent, and bounded functions on $\Gamma$, respectively. A similar conclusion holds for a surjective isometry $\Delta : S(L^{\infty}(\Omega, \Sigma, \mu)) \to S(L^{\infty}(\Omega, \Sigma, \mu))$.\smallskip

The previous result reveals the importance of considering real linear surjective isometries between $C_0(L)$ spaces. A generalization of the Banach-Stone theorem to real linear surjective isometries (see \cite{Ellis90} and \cite{Miura2011}) assures that for each surjective real linear isometry $T: C_0(L_1) \to C_0(L_2)$ there exist a homeomorphism $\varphi: L_2\to L_1$, a clopen subset $K_2$ of $L_2$, and a unitary continuous function $u :L_2\to \mathbb{C}$ such that $$T(f) (s) = u(s) \ f(\varphi(s)),\ \ \forall f\in C_0(L_1), s\in K_2,$$ and $$T(f) (s) = u(s) \ \overline{f(\varphi(s))},\ \ \forall f\in C_0(L_1), s\in L_2\backslash K_2.$$ Having this theorem in mind, the conclusion in \cite{Di:8} can be explicitly obtained as a consequence of the above Theorem \ref{t Wang C0(L)}.\smallskip

In 2014, 2016, and 2017, R. Tanaka publishes the first achievements on Tingley's problem for surjective isometries between the unit spheres of two non-commutative C$^*$-algebras; his results focus on finite dimensional C$^*$-algebras, and more generally on finite von Neumann algebras (see \cite{Tan2014,Tan2016,Tan2017,Tan2017b}). From now on, we shall write $M_n(\mathbb{C})$ for the space of all $n\times n$ matrices with complex entries.

\begin{theorem}\label{t Tanaka Mn}\cite[Theorem 6.1]{Tan2016} Let $\Delta : S(M_n(\mathbb{C}))\to  S(M_n(\mathbb{C}))$ be a surjective isometry. Then $\Delta$ admits a (unique) extension to a complex linear or to a conjugate linear surjective isometry on $M_n(\mathbb{C})$. Furthermore, there exist a complex linear or conjugate linear $^*$-automorphism $\Phi: M_n(\mathbb{C})\to M_n(\mathbb{C})$ and a unitary matrix $u$ in $M_n(\mathbb{C})$ such that one of the next statements hold:\begin{enumerate}[$(a)$] \item $\Delta (x) = u \Phi(x),$ for all $x\in S(M_n(\mathbb{C}))$;
\item $\Delta (x) = u  \Phi(x)^{*},$ for all $x\in S(M_n(\mathbb{C}))$.
\end{enumerate}
\end{theorem}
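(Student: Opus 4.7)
The plan is to combine the facial-structure machinery of Section~\ref{sec: geometric background} with Mankiewicz's theorem to produce a surjective real linear isometry extending $\Delta$, and then to invoke the rigidity of unital real linear isometries of $M_n(\mathbb{C})$ to classify this extension. Since $M_n(\mathbb{C})$ is reflexive, Theorem~\ref{t faces AkPed} describes the norm-closed faces of its closed unit ball directly in terms of partial isometries $v\in M_n(\mathbb{C})$. Moreover, C$^*$-algebras satisfy the hypotheses of Corollary~\ref{c for spaces with property of semi-exposition for faces}, so $\Delta$ sends norm-closed faces to norm-closed faces and extreme points to extreme points; the extreme points of $\mathcal{B}_{M_n(\mathbb{C})}$ being the unitary group $\mathcal{U}(M_n(\mathbb{C}))$, $\Delta$ restricts to a bijection of $\mathcal{U}(M_n(\mathbb{C}))$. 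Setting $u_0=\Delta(1)$ and replacing $\Delta$ by $x\mapsto u_0^{*}\Delta(x)$ reduces us to the unital case $\Delta(1)=1$.

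By Theorem~\ref{t faces AkPed} each maximal proper face of $\mathcal{B}_{M_n(\mathbb{C})}$ has the form \eqref{eq maximal closed faces} for a unique minimal partial isometry $v\in M_n(\mathbb{C})$, and is a convex body with nonempty interior in its affine hull (a translate of $(1-vv^{*})M_n(\mathbb{C})(1-v^{*}v)\cong M_{n-1}(\mathbb{C})$). Theorem~\ref{t ChengDong for general faces} maps each such face to a face of the same type, so for every minimal partial isometry $v$ there is a unique minimal partial isometry $v'$ with $\Delta(F_v)=F_{v'}$, and Mankiewicz's theorem (Theorem~\ref{t Mankiewicz}) extends $\Delta|_{F_v}$ uniquely to a bijective affine isometry between the corresponding affine hulls. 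I would then show that the natural (positively) homogeneous extension $F_\Delta$ of $\Delta$ agrees with every such Mankiewicz extension and is therefore affine, hence real linear by Mazur-Ulam. The main ingredients for this coherence are the antipodal preservation of Proposition~\ref{p Mori faces}, the distance-$2$ characterization of Theorem~\ref{t FaWang distance 2}, and the abundance of maximal faces through any two prescribed unit-sphere points of $M_n(\mathbb{C})$.

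Once a surjective unital real linear isometry $T$ of $M_n(\mathbb{C})$ extending $\Delta$ has been produced, a Kadison-type theorem forces $T$ to be a Jordan $^{*}$-isomorphism. Since $M_n(\mathbb{C})$ is a simple factor, every Jordan $^{*}$-automorphism is either a $^{*}$-automorphism or a $^{*}$-anti-automorphism, and the choice between composing $T$ with the involution or not is precisely what produces the complex linear versus conjugate linear dichotomy; multiplying back by $u_0$ yields the normal form $\Delta(x)=u\Phi(x)$ or $\Delta(x)=u\Phi(x)^{*}$ asserted in (a) or (b). Uniqueness of $T$ follows from the uniqueness clause in Mankiewicz's theorem.

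The main obstacle I anticipate is the coherence step in the second paragraph: the local Mankiewicz extensions live on distinct affine hulls, and it is not automatic that they fit together into a single affine map on all of $M_n(\mathbb{C})$. Controlling the overlaps of the face lattice through the explicit Akemann-Pedersen parametrization by partial isometries, and exploiting antipodal and distance-$2$ preservation to pin down the value of $F_\Delta$ on arbitrary pairs of sphere points, is where the bulk of the work will lie. Finite-dimensionality is essential here, as it guarantees that every face is a compact convex body with nonempty relative interior, and that the lattice of minimal partial isometries is rich enough to separate points.
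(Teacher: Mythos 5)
Your first paragraph follows the paper's opening move exactly: $\partial_e(\mathcal{B}_{M_n(\mathbb{C})})=\mathcal{U}_n$, Corollary \ref{c for spaces with property of semi-exposition for faces} shows that $\Delta$ restricts to a surjective isometry of $\mathcal{U}_n$, and one may normalize by $\Delta(1)$. From that point on the route you sketch diverges from the one the paper attributes to Tanaka \cite{Tan2016}, and the divergence is where the trouble lies. The paper does not glue Mankiewicz extensions over maximal faces: it feeds the surjective isometry $\Delta|_{\mathcal{U}_n}:\mathcal{U}_n\to\mathcal{U}_n$ into the Hatori--Moln{\'a}r theorem (Theorem \ref{t Hatori Molnar}), which already outputs a globally defined surjective real linear isometry $T$ of the form $T(x)=\Delta(1)\left(p\,\Phi(x)+(1-p)\,\Phi(x)^*\right)$ agreeing with $\Delta$ on $\mathcal{U}_n$; since $M_n(\mathbb{C})$ is a factor, $p\in\{0,1\}$ and the dichotomy (a)/(b) drops out immediately. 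The only remaining task is to check $T=\Delta$ on all of $S(M_n(\mathbb{C}))$, and for this the paper invokes the Kadison--Pedersen fact \cite{KadPed} that every element of a finite von Neumann algebra is the midpoint of two unitaries, together with the preservation of such midpoint configurations by $\Delta$ (via the facial structure and Theorem \ref{t FaWang distance 2}). No a posteriori classification of real linear isometries is needed, because Hatori--Moln{\'a}r already delivers the normal form; this is also how the generalization to finite von Neumann algebras (Theorem \ref{t Tanaka finite vN}) proceeds.

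The coherence step you yourself flag as the main obstacle is a genuine gap, and the specific claim you lean on --- ``the abundance of maximal faces through any two prescribed unit-sphere points'' --- is false. By Theorem \ref{t faces AkPed}, two norm-one matrices lie in a common maximal proper face $F_v$ only if they agree with the same minimal partial isometry $v$ on its initial space; already in $M_2(\mathbb{C})$ the elements $x=\mathrm{diag}(1,\tfrac12)$ and $y=\mathrm{diag}(\tfrac12,1)$ lie in the single maximal faces $F_{e_{11}}$ and $F_{e_{22}}$ respectively, hence share no proper face, even though $\|x-y\|=\tfrac12$ (antipodal pairs never share a proper face at all). So the local Mankiewicz extensions cannot be compared on arbitrary pairs of sphere points; one would need a chaining argument through overlapping faces, and even then the hard point remains untouched: to conclude that the positively homogeneous extension $F_\Delta$ is an isometry you must control $\|F_\Delta(x)-F_\Delta(y)\|$ for $x,y$ of \emph{different} norms, which the face data, confined entirely to the sphere, does not address. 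This is precisely the difficulty the Hatori--Moln{\'a}r route is designed to bypass. A secondary, fixable, imprecision: Kadison's theorem classifies unital surjective \emph{complex} linear isometries; for a real linear $T$ you need instead the Chu--Dang--Russo--Ventura description of surjective real linear isometries of C$^*$-algebras as triple isomorphisms \cite{ChuDaRuVen}, or simply the normal form already supplied by Theorem \ref{t Hatori Molnar}, to see that a unital one on the factor $M_n(\mathbb{C})$ is a complex linear or conjugate linear Jordan $^*$-automorphism.
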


Again surjective real linear isometries seem to be behind the results. The proof of the above Theorem \ref{t Tanaka Mn} is based on the following well known fact: The extreme points of the closed unit of $M_n(\mathbb{C})$ are precisely the unitary matrices in $M_n(\mathbb{C})$. Let $\mathcal{U}_n$ denote the set of all unitary matrices in $M_n(\mathbb{C})$. It follows from the above fact and from Corollary \ref{c for spaces with property of semi-exposition for faces} that a surjective isometry $\Delta : S(M_n(\mathbb{C}))\to  S(M_n(\mathbb{C}))$ maps $\mathcal{U}_n$ onto itself, and thus the restriction $\Delta|_{\mathcal{U}_n} : \mathcal{U}_n\to \mathcal{U}_n$ gives a surjective isometry too. Similar conclusions also hold when $M_n(\mathbb{C})$ is replaced by a finite dimensional C$^*$-algebra, or more generally, by a finite von Neumann algebra. We are naturally lead to the an outstanding theorem of O. Hatori and L. Moln{\'a}r.

\begin{theorem}\label{t Hatori Molnar}\cite[Corollary 3]{HatMol2014} Every surjective isometry between the unitary groups of two von Neumann algebras extends to a surjective real linear isometry between the von Neumann algebras. More concretely, let $M_1$ and $M_2$ be von Neumann algebras whose unitary groups are denoted by $\mathcal{U}_1$ and $\mathcal{U}_2$. Let $\Upsilon: \mathcal{U}_1 \to \mathcal{U}_2$ be a bijection. Then $\Upsilon$ is a surjective isometry if and only if there exist a central projection $p\in M_2$ and a Jordan $^*$-isomorphism $\Phi: M_1\to M_2$ such that $$\Upsilon (u) = \Upsilon(1) (p \ \Phi (u) + (1-p) \ \Phi (u)^*),$$ for all $u\in \mathcal{U}_1$.
\end{theorem}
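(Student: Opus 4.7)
The ``if'' direction is routine: Jordan $^*$-isomorphisms are isometries, left multiplication by the unitary $\Upsilon(1)$ is a surjective isometry of $M_2$, and the map $u \mapsto p\Phi(u) + (1-p)\Phi(u)^*$ sends unitaries to unitaries because $\Phi$ does and $p$ is central. For the nontrivial direction, I would first reduce to the unital case by replacing $\Upsilon$ with $\widetilde{\Upsilon}(v) := \Upsilon(1)^* \Upsilon(v)$, which is a surjective isometry $\mathcal{U}_1 \to \mathcal{U}_2$ fixing $1$.

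The heart of the proof is a Mazur-Ulam-type argument adapted to the non-convex unitary group. The natural neighborhood of $1$ is $V_1 := \{u \in \mathcal{U}_1 : \|u - 1\| < 2\} = \{u \in \mathcal{U}_1 : -1 \notin \sigma(u)\}$, and the principal logarithm gives a real-analytic bijection between $V_1$ and the open $\pi$-ball in $(M_1)_{sa}$. For each $u \in \mathcal{U}_1$, the map $R_u(v) := uv^*u$ is a surjective isometric involution of $\mathcal{U}_1$ fixing $u$; since $\widetilde{\Upsilon}$ is an isometry it must intertwine suitable pairs of these reflections, and this produces a family of metrically-characterized ``midpoints'' that play the role of affine midpoints in the proof of Mankiewicz's Theorem \ref{t Mankiewicz}. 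Transporting through the exponential map produces a function $\Psi$ on a neighborhood of $0$ in $(M_1)_{sa}$ satisfying $\widetilde{\Upsilon}(e^{ih}) = e^{i\Psi(h)}$, and preservation of midpoints forces $\Psi$ to be real-additive on its domain.

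Because $\Psi(0) = 0$, extending $\Psi$ by $\RR$-homogeneity gives a surjective real-linear isometry $(M_1)_{sa} \to (M_2)_{sa}$, which by Kadison's classical theorem on isometries between self-adjoint parts of unital C$^*$-algebras is a Jordan isomorphism. This lifts to a surjective real-linear isometry $T: M_1 \to M_2$, and the standard decomposition of a real-linear Jordan $^*$-isomorphism between von Neumann algebras into a $\CC$-linear summand and a conjugate-$\CC$-linear summand, relative to a unique central projection $p \in M_2$, yields a $\CC$-linear Jordan $^*$-isomorphism $\Phi: M_1 \to M_2$ satisfying $T(u) = p\Phi(u) + (1-p)\Phi(u)^*$ on every unitary $u$. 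Multiplying back by $\Upsilon(1)$ recovers the claimed formula.

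The main obstacle is the Mazur-Ulam step: since $\mathcal{U}_1$ is not convex, Mankiewicz's theorem cannot be applied off the shelf, and one must manufacture the midpoint structure by hand from the involutions $R_u$ and carefully verify that these midpoints are uniquely determined by the metric. A secondary difficulty is a global-to-local patching issue: the argument sketched above controls $\widetilde{\Upsilon}$ only on $V_1$, so to deal with unitaries not reachable from $1$ within $V_1$ one has to cover $\mathcal{U}_1$ by left translates $u_0 V_1$ and invoke translation invariance of the operator-norm metric along left multiplication by unitaries to propagate the local description to all of $\mathcal{U}_1$.
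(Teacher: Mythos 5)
The survey does not actually prove this statement; it is quoted directly from Hatori and Moln\'ar \cite[Corollary 3]{HatMol2014}, so your attempt can only be measured against that source. Your overall architecture --- normalise by left multiplication with $\Upsilon(1)^*$, run a Mazur--Ulam-type argument on $\mathcal{U}_1$ built from the isometric involutions $R_u(v)=uv^*u$, transport through the exponential, and finish with Kadison's theorem plus a central-projection decomposition --- is essentially the architecture of the genuine proof: the maps $R_u$ are exactly the ``inverted Jordan triple product'' reflections underlying the generalised Mazur--Ulam theorem for metric groups that Hatori and Moln\'ar invoke, and the von Neumann hypothesis enters precisely where you need every unitary to admit a unitary square root (Borel functional calculus) so that the metric midpoints exist; you should have named this 2-divisibility condition explicitly, since it is what fails for general unital C$^*$-algebras.

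There is, however, a genuine gap at the decisive step. What the Mazur--Ulam machinery actually delivers is the identity $\widetilde{\Upsilon}(vu^*v)=\widetilde{\Upsilon}(v)\widetilde{\Upsilon}(u)^*\widetilde{\Upsilon}(v)$; combined with $\widetilde{\Upsilon}(1)=1$ this yields $\widetilde{\Upsilon}(u^*)=\widetilde{\Upsilon}(u)^*$ and $\widetilde{\Upsilon}(u^n)=\widetilde{\Upsilon}(u)^n$, hence $\RR$-homogeneity of your $\Psi$ along each ray $t\mapsto th$, but \emph{not} additivity in $h$. Your claim that ``preservation of midpoints forces $\Psi$ to be real-additive'' fails because the exponential does not convert the group-metric midpoint into the affine midpoint: the distinguished midpoint of $e^{ih}$ and $e^{ik}$ is the unitary $c$ solving $ce^{-ih}c=e^{ik}$, and by Baker--Campbell--Hausdorff its logarithm differs from $\tfrac12(h+k)$ by commutator terms whenever $h$ and $k$ do not commute. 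Consequently the appeal to Kadison's theorem is premature, since it presupposes exactly the linearity in question; bridging this gap (passing from the triple-product identity to an honest Jordan $^*$-isomorphism) is the substantive remaining content of the Hatori--Moln\'ar argument and is not obtained by averaging logarithms. A secondary inaccuracy: the central projection $p$ does not come from decomposing a ``real-linear Jordan $^*$-isomorphism''; it comes from the fact that a surjective linear isometry between the self-adjoint parts need not be unital and is a central symmetry times a Jordan isomorphism --- for instance $\Upsilon(u)=u^*$ gives $\Psi=-\mathrm{id}$, which is a linear isometry of $(M)_{sa}$ but not a Jordan map, and it is the sign $z=2p-1$ in $\Psi=z\circ(\hbox{Jordan})$ that produces the $(1-p)\Phi(u)^*$ summand in the stated formula.
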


R.V. Kadison and G.K. Pedersen showed in \cite{KadPed} that every element in a finite von Neumann algebra $M$ can be expressed as the convex combination (actually as the midpoint) of two unitary elements in $M$. Tanaka's arguments rely on the facial structure of von Neumann algebras and the property of preservation of midpoints between unitary elements. By this arguments the above Theorem \ref{t Tanaka Mn} was generalized by R. Tanaka in the following form:

\begin{theorem}\label{t Tanaka finite vN}{\rm(\cite[Theorem 4.2]{Tan2017b} and \cite{Tan2017})} Let $\Delta : S(M_1)\to S(M_2)$ be a surjective isometry, where $M_1$ and $M_2$ are finite von Neumann algebras. There exists a surjective real linear isometry $T: M_1\to M_2$ satisfying $\Delta (a) = T(a)$ for all $a\in S(M_1)$. More concretely, we can find a central projection $p\in M_2$ and a Jordan $^*$-isomorphism $\Phi: M_1\to M_2$ such that $$\Delta (a) = \Delta(1) (p \ \Phi (a) + (1-p) \ \Phi (a)^*),$$ for all $a\in S(M_1)$. The same conclusion holds when $\Delta : S(A)\to S(B)$ is a surjective isometry from the unit sphere of a finite dimensional C$^*$-algebra onto the unit sphere of another C$^*$-algebra.
\end{theorem}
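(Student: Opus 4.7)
The plan unfolds in three stages: (i) restrict $\Delta$ to a surjective isometry of unitary groups and apply Hatori--Moln\'ar; (ii) define a candidate real linear isometry $T:M_1\to M_2$; and (iii) verify $T|_{S(M_1)}=\Delta$ via the Kadison--Pedersen representation together with Mankiewicz's theorem on faces.

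For the first stage, I would use that in a finite von Neumann algebra $M$ the extreme points of $\mathcal{B}_M$ coincide with $\mathcal{U}(M)$: the standard criterion $(1-ee^*)M(1-e^*e)=0$, combined with the equivalence $1-ee^*\sim 1-e^*e$ that finiteness provides (a witnessing partial isometry $w$ lies in $(1-ee^*)M(1-e^*e)=0$), forces both projections to vanish. Corollary \ref{c for spaces with property of semi-exposition for faces} then says $\Delta$ bijectively maps $\mathcal{U}(M_1)$ onto $\mathcal{U}(M_2)$, and Theorem \ref{t Hatori Molnar} provides a central projection $p\in M_2$ and a Jordan $^*$-isomorphism $\Phi:M_1\to M_2$ with $\Delta(u)=\Delta(1)(p\Phi(u)+(1-p)\Phi(u)^*)$ for every $u\in\mathcal{U}(M_1)$. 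Define $T:M_1\to M_2$ by the same formula on all of $M_1$. Setting $q:=\Phi^{-1}(p)$ (a central projection in $M_1$, since Jordan $^*$-isomorphisms preserve the centre), one checks that $T$ is complex linear on $qM_1$, conjugate linear on $(1-q)M_1$, isometric (the summands of $T(x)$ lie in the orthogonal central corners $pM_2,(1-p)M_2$, so $\|T(x)\|=\|\Phi(x)\|=\|x\|$), and surjective.

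The heart of the proof is the identification $T|_{S(M_1)}=\Delta$. Given $a\in S(M_1)$, polar decomposition in $M_1$ combined with the finiteness-based extension of partial isometries to unitaries writes $a=(u_1+u_2)/2$ with $u_1,u_2\in\mathcal{U}(M_1)$; necessarily $\|u_1+u_2\|=2$. A Hahn--Banach argument followed by a Krein--Milman step inside the weak$^*$-closed set $\{\psi\in S(M_1^*):\psi(u_1)=\psi(u_2)=1\}$ produces a pure state $\varphi$ with these properties. Theorem \ref{t faces AkPed} attaches to $\varphi$ a minimal partial isometry $v\in M_1^{**}$ whose associated maximal proper norm-closed face
\[F_v=\bigl(v+(1-vv^*)\mathcal{B}_{M_1^{**}}(1-v^*v)\bigr)\cap\mathcal{B}_{M_1}\]
contains both $u_1$ and $u_2$. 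The partial isometry identities force $vy^*=yv^*=v^*y=y^*v=0$ for every $y\in(1-vv^*)M_1^{**}(1-v^*v)$, so $\|v+y\|=\max\{1,\|y\|\}$; hence $F_v$ is an isometric translate by $v$ of the closed unit ball of the Banach subspace $(1-vv^*)M_1^{**}(1-v^*v)\cap M_1$, and in particular a convex set with nonempty interior in its affine hull. Theorem \ref{t faces ChengDong11} yields that $\Delta(F_v)$ has the same structure in $\mathcal{B}_{M_2}$, so Mankiewicz's theorem (Theorem \ref{t Mankiewicz}) applied within the two affine hulls extends $\Delta|_{F_v}$ to an affine isometry. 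In particular
\[\Delta(a)=\Delta\!\left(\tfrac{u_1+u_2}{2}\right)=\tfrac{\Delta(u_1)+\Delta(u_2)}{2}=\tfrac{T(u_1)+T(u_2)}{2}=T(a).\]
The finite dimensional C$^*$-algebra statement follows by the same argument after observing that compactness of $S(A)$ (a finite dimensional unit sphere) forces $S(B)=\Delta(S(A))$ to be compact, whence $B$ is finite dimensional.

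The principal obstacle is precisely this last step, the midpoint transfer from unitaries to arbitrary sphere elements. It succeeds only because two finiteness-specific tools operate in tandem: Kadison--Pedersen yields the decomposition $a=(u_1+u_2)/2$, and Akemann--Pedersen parametrises each maximal proper face as a translated unit ball of a subspace, supplying the nonempty-interior hypothesis that Mankiewicz demands.
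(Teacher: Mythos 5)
Your first two stages are sound and follow exactly the route the survey attributes to Tanaka: finiteness forces $\partial_e(\mathcal{B}_{M_i})=\mathcal{U}(M_i)$, Corollary \ref{c for spaces with property of semi-exposition for faces} turns $\Delta$ into a surjective isometry of unitary groups, Hatori--Moln\'ar produces the candidate $T$, and Kadison--Pedersen supplies $a=(u_1+u_2)/2$. The gap is in the transfer step. You claim that the maximal proper face $F_v=\bigl(v+(1-vv^*)\mathcal{B}_{M_1^{**}}(1-v^*v)\bigr)\cap\mathcal{B}_{M_1}$ is an isometric translate by $v$ of the closed unit ball of $(1-vv^*)M_1^{**}(1-v^*v)\cap M_1$ and therefore has nonempty interior in its affine hull, so that Theorem \ref{t Mankiewicz} applies. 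This is false whenever $v\notin M_1$, which is the generic situation for a minimal partial isometry of $M_1^{**}$: the identity $\|v+y\|=\max\{1,\|y\|\}$ controls the norm, but intersecting with $M_1$ cuts the ball $v+(1-vv^*)\mathcal{B}_{M_1^{**}}(1-v^*v)$ down to its trace on a proper affine coset, and that trace can have empty relative interior. (Indeed $v+\mathcal{B}_{(1-vv^*)M_1^{**}(1-v^*v)\cap M_1}$ is not even contained in $M_1$ when $v\notin M_1$, so it cannot equal $F_v$.)

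To see the failure concretely, take $M_1=L^\infty[0,1]$, a finite abelian von Neumann algebra. A minimal partial isometry of $M_1^{**}$ corresponds to a point $\omega_0$ of the spectrum (which has no isolated points), and $F_v=\{f\in\mathcal{B}_{M_1}:\widehat{f}(\omega_0)=1\}$. For any $f_1\in F_v$ and any $\varepsilon>0$, choose a nonempty clopen set $V$ with $\omega_0\notin V$ on which $|\widehat{f_1}|>1-\varepsilon/2$; then $g=\varepsilon\,\chi_{V}\widehat{f_1}/|\widehat{f_1}|$ lies in the span of $F_v-F_v$, has norm $\varepsilon$, and satisfies $\|f_1+g\|>1$. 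Hence $F_v$ has empty interior in its affine hull and Mankiewicz's theorem cannot be invoked; affineness of $\Delta|_{F_v}$, and with it the midpoint identity $\Delta(a)=\tfrac12(\Delta(u_1)+\Delta(u_2))$, is left unproved. The obvious repair --- replacing $F_v$ by a face $F_e$ with $e$ a partial isometry in $M_1$ itself, where the translated-ball description and Mankiewicz do work --- is also blocked: for $u_1=1$ and $u_2$ the class of $e^{it}$, one has $\|u_1+u_2\|=2$, yet $u_1,u_2\in F_e$ would force $e$ to be a nonzero projection below $\chi_{\{1\}}(u_1^*u_2)=0$. This midpoint preservation is precisely the ``hard part'' the survey alludes to with the phrase ``preservation of midpoints between unitary elements,'' and it needs a genuinely different argument (Tanaka's, or the Olsen--Pedersen convex-combination machinery used in \cite{FerPe17d}). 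Your proof is complete only in the finite dimensional case, where $A^{**}=A$ and every $F_v$ really is a translated unit ball.
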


The Hatori-Moln{\'a}r theorem is applied by Tanaka to synthesize a surjective real linear isometry $T:M_1\to M_2$.\smallskip

The first results on Tingley's problem for (non-necessarily commutative) operator algebras opened the exploration of this problem for more general clases of operator algebras.\smallskip

The next natural steps are perhaps, the C$^*$-algebras $K(H)$ and $B(H)$ of all compact and bounded linear operators on an infinite dimensional complex Hilbert space $H$, respectively. There is a clear obstruction in the case of $K(H)$ because $\partial_e(\mathcal{B}_{K(H)}) =\emptyset,$ even more, $K(H)$ contains no unitary elements, and hence Theorem \ref{t Hatori Molnar} is meaningless to synthesize a surjective real linear isometry in this setting. Surprisingly, we shall get back to Hatori-Molanr theorem (Theorem \ref{t Hatori Molnar}) when we survey the recent solution to Tingley's problem for general von Neumann algebras obtained in \cite{FerPe17d}.

\subsection{Tingley's problem for compact C$^*$-algebras}\ \smallskip

Along the paper, given a vector $x_0$ in a Banach space $X$, the translation with respect to $x_0$ will be denoted by $\mathcal{T}_{x_0}$.\smallskip

Let us consider the C$^*$-algebra $K(H)$ of all compact operators on an arbitrary complex Hilbert spaces $H$. It is well known that $K(H)^{**} = B(H)$. There is a clear advantage in this case because minimal partial isometries in $K(H)^{**}= B(H)$ are precisely the rank-one partial isometries which clearly belong to $K(H)$. Furthermore, compact partial isometries in $K(H)^{**}$ are all finite rank partial isometries in $K(H)$.\smallskip

A C$^*$-algebra $A$ is called compact if it can be written as a $c_0$-sum of the form $A= \oplus^{c_0}_{j} K(H_j)$, where each $H_j$ is a complex Hilbert space (compare \cite{Alex,Yli}). In this case $A^{**}= \oplus^{\infty}_{j} B(H_j),$ and every minimal partial isometry in $A^{**}$ is a rank-one partial isometry in one of the factors, and hence belongs to $A$. Actually compact partial isometries in $A^{**}$ are finite rank partial isometries, and hence they all belong to $A$. The following proposition was derived in \cite{PeTan16} by combining these facts with Corollary \ref{c for spaces with property of semi-exposition for faces}, the Akemann-Pedersen theorem (see Theorem \ref{t faces AkPed}), the comments in \eqref{eq maximal closed faces}, and Mankiewicz' theorem (see Theorem \ref{t Mankiewicz}).

\begin{proposition}\label{p surjective isometries between the spheres preserve norm closed faces}\cite[Proposition 3.2]{PeTan16} Let $A$ and $B$ be compact C$^*$-algebras, and suppose that $\Delta: S(A) \to S(B)$ is a surjective isometry. Then the following statements hold:\begin{enumerate}[$(a)$] \item $\Delta$ maps norm closed proper faces of $\mathcal{B}_{A}$ to norm closed proper faces of $\mathcal{B}_{B}$;
\item For each {\rm(}minimal{\rm)} partial isometry $e_1$ in $A$ there exists a unique {\rm(}minimal{\rm)} partial isometry $u_1$ in $B$ such that $\Delta\left(\left(e_1 + (1 - e_1 e_1^*) \mathcal{B}_{A^{**}} (1 - e_1^* e_1) \right)\cap \mathcal{B}_{A}\right) = \left(u_1 + (1 - u_1 u_1^*) \mathcal{B}_{B^{**}} (1 - u_1^* u_1)\right)\cap \mathcal{B}_{B}$. Moreover, there exists a surjective real linear isometry $T_{e_1} : (1 - e_1 e_1^*) {A} (1 - e_1^* e_1) \to (1 - u_1 u_1^*) B (1 - u_1^* u_1)$ such that $$\Delta(e_1 + x )= u_1 + T_{e_1} (x),$$ for every $x\in \mathcal{B}_{_{(1 - e_1 e_1^*) {A} (1 - e_1^* e_1)}}$;
\item The restriction of $\Delta$ to each norm closed proper face of $\mathcal{B}_{A}$ is an affine function;
\item For each partial isometry $e_1$ in $A$ there exists a unique partial isometry $u_1$ in $B$ such that $\Delta(e_1 )= u_1$. Moreover, the rank of $e_1$ coincides with the rank of $u_1$ and both are finite.
\end{enumerate}
\end{proposition}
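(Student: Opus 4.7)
The plan is to combine the Akemann--Pedersen description of closed faces (Theorem \ref{t faces AkPed}) with Corollary \ref{c for spaces with property of semi-exposition for faces} to transport faces across $\Delta$, and then invoke Mankiewicz's theorem to obtain affinity and real linearity. The starting observation, special to the compact setting, is that every compact partial isometry $v\in A^{**}$ is finite rank and therefore belongs to $A$; since $vv^*,v^*v\in A$ are finite rank projections, the face from Theorem \ref{t faces AkPed} acquires the concrete form
\[F_v \;=\; v + \mathcal{B}_{(1-vv^*)A(1-v^*v)},\]
and similarly in $B$ (the inclusion $A\cap (1-vv^*)A^{**}(1-v^*v)\subseteq (1-vv^*)A(1-v^*v)$ being forced by the finite rank of the corner projections). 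Part $(a)$ is then immediate from Corollary \ref{c for spaces with property of semi-exposition for faces}, since C$^*$-algebras satisfy the semi-exposition hypotheses listed in the comments following that corollary.

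For $(b)$, given a partial isometry $e_1\in A$, part $(a)$ together with the concrete description above produces a unique partial isometry $u_1\in B$ with $\Delta(F_{e_1})=F_{u_1}$; minimality is transferred because maximal proper faces correspond to minimal partial isometries via \eqref{eq maximal closed faces}. The crux is the identification $\Delta(e_1)=u_1$, which I would derive from the geometric characterization of the center of a ball: $e_1$ is the unique $w\in F_{e_1}$ satisfying $\|w-w'\|\leq 1$ for every $w'\in F_{e_1}$. Indeed, if $w=e_1+y$ with $\|y\|\leq 1$ has this property, then $\|y-y'\|\leq 1$ for every $y'$ in the unit ball of $(1-e_1e_1^*)A(1-e_1^*e_1)$, and choosing $y'=-y/\|y\|$ when $y\neq 0$ forces $1+\|y\|\leq 1$, a contradiction unless $y=0$. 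Since $\Delta$ is an isometry it must send this distinguished center to its counterpart, so $\Delta(e_1)=u_1$. Mankiewicz's theorem (Theorem \ref{t Mankiewicz}) then applies to the bijective isometry $\mathcal{T}_{-u_1}\circ\Delta\circ\mathcal{T}_{e_1}$ between the unit balls of $(1-e_1e_1^*)A(1-e_1^*e_1)$ and $(1-u_1u_1^*)B(1-u_1^*u_1)$, which now fixes the origin; the unique affine extension between the ambient spaces is necessarily real linear and provides the required $T_{e_1}$.

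Part $(c)$ falls out of the same Mankiewicz argument applied to any norm closed proper face $F_v$. For $(d)$, part $(b)$ already yields the unique partial isometry $u_1$ with $\Delta(e_1)=u_1$ for every (necessarily finite rank) partial isometry $e_1\in A$. Rank preservation comes from the order reversing correspondence in Theorem \ref{t faces AkPed}: writing $e_1=f_1+\cdots+f_k$ as a sum of mutually orthogonal minimal partial isometries, each $f_j\leq e_1$ gives $F_{e_1}\subseteq F_{f_j}$, hence $F_{u_1}\subseteq F_{v_j}$ with $v_j:=\Delta(f_j)$ minimal, forcing $u_1\geq v_j$ for every $j$ and thus $\mathrm{rank}(u_1)\geq k$; applying the same reasoning to $\Delta^{-1}$ yields equality, and finiteness is automatic since $u_1$ belongs locally to the compact algebra $B$. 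The main obstacle throughout is Step $(b)$'s pinning down of $e_1$ inside $F_{e_1}$: once the center-of-ball characterization is in place, the Mankiewicz machinery assembles the rest essentially for free.
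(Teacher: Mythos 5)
Most of your argument is sound and follows the same route the paper sketches (Akemann--Pedersen faces in the compact setting, Corollary \ref{c for spaces with property of semi-exposition for faces}, and Mankiewicz's theorem); in particular your metric characterization of $e_1$ as the unique Chebyshev-type center of $F_{e_1}$ (the unique $w\in F_{e_1}$ with $\|w-w'\|\leq 1$ for all $w'\in F_{e_1}$) is correct and cleanly pins down $\Delta(e_1)=u_1$, after which the Mankiewicz step and parts $(a)$--$(c)$ go through as you describe.

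The gap is in the rank-preservation argument of part $(d)$. From $v_j\leq u_1$ for $k$ \emph{distinct} minimal partial isometries $v_1,\dots,v_k$ you cannot conclude $\mathrm{rank}(u_1)\geq k$: the minimal partial isometries dominated by $u_1$ are exactly the elements $u_1 e$ with $e$ a rank-one subprojection of $u_1^*u_1$, and for $\mathrm{rank}(u_1)\geq 2$ there are infinitely many of these. Concretely, the identity of $M_2(\mathbb{C})$ dominates every rank-one projection of $M_2(\mathbb{C})$, so three distinct minimal partial isometries below $u_1$ are perfectly compatible with $\mathrm{rank}(u_1)=2$, and your inference fails already at $k=3$. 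What you would need is that the $v_j=\Delta(f_j)$ are mutually \emph{orthogonal}, but that is precisely the content of Theorem \ref{t Tingley antipodes for finite rank compact C*-algebras}$(b)$, which is proved later and uses the present proposition, so you cannot invoke it here. The repair stays inside the machinery you have already set up: by $(a)$, $(b)$ and the uniqueness in Theorem \ref{t faces AkPed}, the assignment $e\mapsto \Delta(e)$ is an order isomorphism (in both directions) between the posets of non-zero partial isometries of $A$ and of $B$, and the rank of a finite-rank partial isometry $v$ equals the maximal length of a strictly increasing chain $v_1<v_2<\dots<v_m=v$ of non-zero partial isometries (each step $v_{i+1}-v_i$ is a non-zero partial isometry orthogonal to $v_i$, so it raises the rank by at least one, and the chain $f_1<f_1+f_2<\dots<e_1$ attains the bound). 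Since order isomorphisms preserve chain lengths, $\mathrm{rank}(\Delta(e_1))=\mathrm{rank}(e_1)$, and finiteness follows because every partial isometry belonging locally to a compact C$^*$-algebra has finite rank.
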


The proof of the above result can be outlined and guessed by the reader from the previously commented results.\smallskip

A result determining when a partial isometry is at distance two from another minimal partial isometry in a compact C$^*$-algebra was first considered in \cite{PeTan16}.

\begin{lemma}\label{l two finite rank tripotents at distance 2 in KH}\cite[Lemma 3.5]{PeTan16} Let $e$ and $w$ be partial isometries in a compact C$^*$-algebra $A$. Suppose that $e$ is minimal and $\|e-w\| = 2$. Then $$w= -e + (1-ee^*) w (1-e^*e).$$
\end{lemma}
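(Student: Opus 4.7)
The plan is to reduce the problem to a single factor $K(H)$ and then exploit that $\|e-w\|=2$ is attained at a concrete unit vector, invoking the equality case of Cauchy--Schwarz. Concretely, represent $A = \bigoplus_j^{c_0} K(H_j)$. Minimality of $e$ forces the projections $p := ee^*$ and $q := e^*e$ to be rank-one and to lie in a common factor $K(H_{j_0})$; write $p = \xi\otimes\xi^*$, $q = \eta\otimes\eta^*$ with $e\eta = \xi$ after a phase normalisation. Split $w = w_0 + w'$ along $A = K(H_{j_0}) \oplus^{c_0} \bigl(\bigoplus_{j\neq j_0}^{c_0} K(H_j)\bigr)$; both $w_0$ and $w'$ are partial isometries, and the orthogonality of these ideals gives $\|e-w\| = \max\{\|e-w_0\|,\|w'\|\}$. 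Since $\|w'\|\le 1$, necessarily $\|e-w_0\| = 2$, so the problem is reduced to $K(H_{j_0})$.

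Next I would use that $e$ and $w_0$ are both of finite rank (the initial projection of a partial isometry in $K(H_{j_0})$ is a compact projection, hence finite dimensional), so $e-w_0$ attains its norm at some unit vector $\zeta\in H_{j_0}$. Expanding
\[
4 \;=\; \|(e-w_0)\zeta\|^2 \;=\; \|e\zeta\|^2 + \|w_0\zeta\|^2 - 2\,\mathrm{Re}\,\langle e\zeta,w_0\zeta\rangle,
\]
the three terms are bounded by $1$, $1$, and $2\|e\zeta\|\|w_0\zeta\|\le 2$ (Cauchy--Schwarz), so reaching $4$ forces simultaneous equality in each bound. This yields $\|e\zeta\| = \|w_0\zeta\| = 1$, so $\zeta$ is a unimodular multiple of $\eta$; and the equality case of Cauchy--Schwarz combined with $\mathrm{Re}\,\langle e\zeta, w_0\zeta\rangle = -1$ forces $w_0\zeta = -e\zeta$. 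Absorbing the phase gives $w_0\eta = -\xi$, equivalently $w_0 q = -e$.

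Since $\|(e-w_0)^*\| = 2$ as well, I would then apply the same argument to $e^*$ and $w_0^*$ (which swaps the roles of $p$ and $q$) to obtain $w_0^* p = -e^*$, i.e., $pw_0 = -e$. Combining,
\[
w_0 \;=\; pw_0 + (1-p)w_0q + (1-p)w_0(1-q) \;=\; -e + (1-p)(-e) + (1-p)w_0(1-q) \;=\; -e + (1-p)w_0(1-q),
\]
where the middle term vanishes because $(1-p)e = e - ee^*e = 0$. Re-adding $w' = (1-p)w'(1-q)$ (the ideal containing $w'$ being orthogonal to both $p$ and $q$) recovers $w = -e + (1-ee^*)w(1-e^*e)$.

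The crux of the argument, and the reason compactness of $A$ is essential, is the norm-attainment step: in a general $C^*$-algebra $\|e-w\|$ need only be approached, and then the equality case of Cauchy--Schwarz would yield only asymptotic identities rather than the sharp algebraic formula for $w$. Everything else reduces to linear algebra in the two rank-one projections $p$ and $q$.
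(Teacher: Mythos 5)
Your argument is correct. Every step checks out: minimality of $e$ does confine $p=ee^*$ and $q=e^*e$ to a single rank-one block of one factor $K(H_{j_0})$ (since $e=pe$ lies in the ideal generated by $p$); the $c_0$-sum norm formula legitimately reduces the problem to that factor; $e-w_0$ is compact, hence norm-attaining, so the equality chain $4=\|(e-w_0)\zeta\|^2\leq\left(\|e\zeta\|+\|w_0\zeta\|\right)^2\leq 4$ really does force $\|e\zeta\|=\|w_0\zeta\|=1$ and, via the equality case of Cauchy--Schwarz, $w_0\zeta=-e\zeta$ with $\zeta$ a unimodular multiple of $\eta$; and running the same argument on $e^*-w_0^*$ supplies the second identity $pw_0=-e$ needed to annihilate both halves, $pA(1-q)$ and $(1-p)Aq$, of the off-diagonal corner. (A small remark: you do not need $w_0$ to be of finite rank for the norm-attainment step; compactness of $e-w_0$ already suffices, since a compact operator attains its norm.)

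This survey cites \cite[Lemma 3.5]{PeTan16} without reproducing its proof, and your route is genuinely different from the one in that reference. There the lemma is proved for minimal tripotents in compact JB$^*$-triples: one decomposes $w$ into its Peirce components relative to $e$, uses the support functional of the minimal tripotent $e$ (for which $P_2(e)(x)=\phi(x)e$) to show that the distance-two hypothesis forces $P_2(e)(w)=-e$, and then invokes Jordan-triple identities to kill the Peirce-one part, arriving at $w=-e+P_0(e)(w)$, which in the C$^*$-setting is exactly $w=-e+(1-ee^*)w(1-e^*e)$. Your proof replaces that abstract machinery with a concrete two-vector computation in Hilbert space; what you lose is the extra generality (the triple-theoretic proof covers compact JB$^*$-triples, not just compact C$^*$-algebras), and what you gain is transparency and self-containedness. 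Your closing observation about why compactness is essential is also well taken: it is precisely the failure of norm attainment in general C$^*$-algebras that forces the ultraproduct techniques and the ``uniform'' version of this lemma used in \cite{FerPe17d}.
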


Let $\Delta: S(A) \to S(B)$ be a surjective isometry between the unit spheres of two compact C$^*$-algebras. Let us pick a minimal partial isometry $e$ in $A$. Proposition \ref{p surjective isometries between the spheres preserve norm closed faces} implies that $\Delta(e)$ and $\Delta(-e)$ are minimal partial isometries in $B$. Since $\|\Delta(e) - \Delta(-e)\| = \|e+e\| =2$, Lemma \ref{l two finite rank tripotents at distance 2 in KH} assures that $$\Delta (-e) = -\Delta(e) + (1-\Delta(e) \Delta(e)^*) \Delta(-e) (1-\Delta(e)^* \Delta(e)),$$ and we derive from the minimality of $\Delta (-e)$ that $\Delta (-e) = - \Delta (e)$. A more elaborated argument was applied in \cite{PeTan16}, via similar arguments, to establish a version of the original theorem of Tingley \cite{Ting1987} for finite rank partial isometries.

\begin{theorem}\label{t Tingley antipodes for finite rank compact C*-algebras}\cite[Theorem 3.7]{PeTan16} Let $\Delta: S(A) \to S(B)$ be a surjective isometry between the unit spheres of two compact C$^*$-algebras. The following statements hold:\begin{enumerate}[$(a)$]\item If $e$ is a partial isometry in $A$, then $\Delta(-e)=-\Delta(e)$;
\item If $e_1,\ldots,e_m$ are mutually orthogonal partial isometries in $A$, then $\Delta(e_1),$ $\ldots,$ $\Delta(e_m)$ are mutually orthogonal partial isometries in $B$ and $$ \Delta(e_1+\ldots+e_m) = \Delta(e_1)+\ldots+\Delta(e_m).$$
\end{enumerate}
\end{theorem}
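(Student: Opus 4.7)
The plan is to handle Part (a) via the anti-order correspondence between partial isometries and norm closed faces, and then derive Part (b) from an elementary $C^*$-orthogonality lemma combined with a rank count.

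\textbf{Part (a).} The minimal case is already outlined in the paragraph preceding the statement, via Lemma \ref{l two finite rank tripotents at distance 2 in KH}. For a partial isometry $e$ of arbitrary finite rank, I would argue at the level of faces. Compact C$^*$-algebras fulfil the hypotheses of Corollary \ref{c for spaces with property of semi-exposition for faces}, so Proposition \ref{p Mori faces}(b) yields $\Delta(-F_e) = -\Delta(F_e)$. Reading Theorem \ref{t faces AkPed}(a) for $-v$ in place of $v$ gives $F_{-v} = -F_v$ for every partial isometry $v$. Combined with Proposition \ref{p surjective isometries between the spheres preserve norm closed faces}(b) applied at $e$ and at $-e$, one gets
$$F_{\Delta(-e)} = \Delta(F_{-e}) = \Delta(-F_e) = -\Delta(F_e) = -F_{\Delta(e)} = F_{-\Delta(e)}.$$
The anti-order isomorphism in Theorem \ref{t faces AkPed}(a) is injective, forcing $\Delta(-e) = -\Delta(e)$.

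\textbf{Part (b), orthogonality.} I would use the following easily verified fact: for partial isometries $u,v$ in any $C^*$-algebra $B$, if $\|u+v\|\leq 1$ and $\|u-v\|\leq 1$, then $u\perp v$. Indeed, $(u+v)^*(u+v)+(u-v)^*(u-v)=2(u^*u+v^*v)$, and each summand on the left is bounded above by $1$, hence $u^*u+v^*v\leq 1$; since $u^*u,v^*v$ are projections, this forces $u^*u\perp v^*v$. A dual calculation with the products in the opposite order forces $uu^*\perp vv^*$, and together these amount to $u\perp v$. Now for mutually orthogonal partial isometries $e_1,\dots,e_m$ in $A$, $(e_i\pm e_j)^*(e_i\pm e_j)=e_i^*e_i+e_j^*e_j$ (cross terms vanish), which is a projection, so $\|e_i\pm e_j\|=1$; hence $\|\Delta(e_i)-\Delta(e_j)\|=1$ by the isometry hypothesis, and $\|\Delta(e_i)+\Delta(e_j)\|=\|\Delta(e_i)-\Delta(-e_j)\|=\|e_i+e_j\|=1$ by Part (a). The lemma yields $\Delta(e_i)\perp\Delta(e_j)$.

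\textbf{Part (b), additivity.} Since $e\perp f$ gives $(e+f)e^*=ee^*$, we have $e+f\in F_e$ and similarly $e+f\in F_f$; by Proposition \ref{p surjective isometries between the spheres preserve norm closed faces}(b), $\Delta(e+f)\in F_{\Delta(e)}\cap F_{\Delta(f)}$, so both $\Delta(e)$ and $\Delta(f)$ lie below $\Delta(e+f)$ in the partial-isometry order. Together with the orthogonality just proved, $\Delta(e)+\Delta(f)$ is a partial isometry dominated by $\Delta(e+f)$, and by Proposition \ref{p surjective isometries between the spheres preserve norm closed faces}(d) the two have the same rank, so the inequality is an equality. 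The general $m$-term case follows by induction applied to the pair $(e_1+\cdots+e_{m-1},\,e_m)$. The main obstacle is Part (a) in higher rank: there is no direct analogue of Lemma \ref{l two finite rank tripotents at distance 2 in KH}, because partial isometries $e,w$ of rank $\geq 2$ with $\|e-w\|=2$ can differ by more than a sign (an orthogonal correction is allowed). This is precisely what makes Proposition \ref{p Mori faces}(b) indispensable; without it one would have to run an intricate induction on rank in which Parts (a) and (b) remain entangled.
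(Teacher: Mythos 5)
Your argument is correct, but it takes a genuinely different route from the one the paper sketches. For part $(a)$ the paper (following \cite{PeTan16}) argues metrically: for a minimal partial isometry it applies Lemma \ref{l two finite rank tripotents at distance 2 in KH} to the antipodal pair at distance $2$ and kills the orthogonal remainder by minimality, and it then defers to a ``more elaborated argument'' of the same kind for higher rank --- exactly the rank induction you predict would otherwise be necessary, since (as you correctly observe) the distance-$2$ lemma genuinely fails for non-minimal $e$. You instead obtain $(a)$ in one stroke from Proposition \ref{p Mori faces}$(b)$ together with $F_{-v}=-F_v$ and the injectivity of $v\mapsto F_v$ in Theorem \ref{t faces AkPed}; this is clean and valid within the survey's toolkit, though Proposition \ref{p Mori faces} stems from Mori's 2017 preprint and postdates \cite{PeTan16}, so it cannot be the original mechanism. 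For part $(b)$ your orthogonality criterion ($\|u\pm v\|\le 1$ forces $u\perp v$ for partial isometries, via $2(u^*u+v^*v)\le 2$ and the companion identity for the final projections) is an elementary C$^*$ fact, and your additivity step is sound: $\Delta(e+f)$ lies in $F_{\Delta(e)}\cap F_{\Delta(f)}$, hence dominates the orthogonal sum $\Delta(e)+\Delta(f)$ in the partial-isometry order, and the rank count from Proposition \ref{p surjective isometries between the spheres preserve norm closed faces}$(d)$ makes the difference a rank-zero partial isometry. In short, the paper's metric route is self-contained at the level of the 2016 technology, while yours is shorter and cleanly decouples $(a)$ from $(b)$ at the price of importing the later antipodality result for faces.
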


If we take a projection $p$ in a C$^*$-algebra $A$, the subspace $(1-p) A (1-p)$ is a C$^*$-subalgebra of $A$. However, if we take a partial isometry $e$ in $A$, the subspace $(1-ee^*) A (1-e^*e)$ need not be, in general, a C$^*$-subalgebra of $A$. However, $(1-ee^*) A (1-e^*e)$ is a norm closed subspace of $A$ which is also closed under the triple product given by \begin{equation}\label{eq C* triple product} \{a,b,c\}=\frac12 (a b^* c + c b^* a).\end{equation} This is equivalent to say that $(1-ee^*) A (1-e^*e)$ is a JB$^*$-subtriple of $A$ is the sense defined in \cite{Ka83} (see subsection \ref{subsec: Jordan}).\smallskip

Suppose that $e$ is a partial isometry in a compact C$^*$-algebra $A$, and let $B$ be another compact C$^*$-algebra. Suppose $\Delta: S(A) \to S(B)$ is a surjective isometry. Let us consider the surjective real linear isometry $$T_e : (1-ee^*) A (1-e^*e) \to (1-\Delta(e)\Delta(e)^*) A (1-\Delta(e)^* \Delta(e))$$ given by Proposition \ref{p surjective isometries between the spheres preserve norm closed faces}$(b)$. Let $e_1$ be any partial isometry in $(1-p) A (1-p)$. By Propositions \ref{p surjective isometries between the spheres preserve norm closed faces} and \ref{t Tingley antipodes for finite rank compact C*-algebras} we have $$\Delta(e) + T_p (e_1) =\Delta (e + e_1 ) = \Delta(e) + \Delta (e_1),$$ we get $T_e (e_1) = \Delta(e_1)$. Furthermore, let $e_1,\ldots,e_m$ be mutually orthogonal partial isometries in $(1-ee^*) A (1-e^*e)$, and let $\alpha_1,\ldots, \alpha_m$ be positive real numbers with $\max\{\alpha_1,\ldots,\alpha_m\}\leq 1$. By the same results above we deduce that $$\Delta\left( e+\sum_{j=1} \alpha_j e_j\right) = \Delta\left( e\right) + T_e \left( \sum_{j=1} \alpha_j e_j\right) = \Delta\left( e\right) + \sum_{j=1} \alpha_j  T_e \left(e_j\right) $$ $$= \Delta\left( e\right) + \sum_{j=1} \alpha_j  \Delta \left(e_j\right).$$ Furthermore, if $w$ is a partial isometry in $A$ such that $e,e_j\in (1- ww^*) A (1-w^*w)$ for all $j$, we also have
\begin{equation}\label{eq on algebraic minimal} \Delta\left( e+\sum_{j=1} \alpha_j e_j\right) =  \Delta\left( e\right) + \sum_{j=1} \alpha_j  \Delta \left(e_j\right)
\end{equation} $$= T_w \left( e\right) + \sum_{j=1} \alpha_j  T_w \left(e_j\right) = T_w\left( e+\sum_{j=1} \alpha_j e_j\right) $$

A triple spectral resolution assures that every compact operator can be approximated in norm by finite linear combinations of mutually orthogonal minimal partial isometries, and the same statement holds for every element in a compact C$^*$-algebra. Therefore, under the above hypotheses, we deduce from the continuity of $T_w$ and $\Delta$ that for each non-zero partial isometry $w\in A$ we have \begin{equation}\label{eq Tw and Delta coincide on the max domain}\hbox{ $\Delta(x) = T_w(x),
$  for all $ x\in S((1- ww^*) A (1-w^*w)).$}
\end{equation} A straight consequence of \eqref{eq Tw and Delta coincide on the max domain} gives the following: if $w_1$ and $w_2$ are non-zero partial isometries we have \begin{equation}\label{eq Tw1 Tw2 and Delta coincide on the intersection}\hbox{ $T_{w_2}(x)=\Delta(x) = T_{w_1}(x),
$}
\end{equation} for all $ x\in S((1- w_{1}w_{1}^*) A (1-w_{1}^*w_{1}))\cap S((1- w_{2}w_{2}^*) A (1-w_{2}^*w_{2})).$\smallskip

The lacking of possibility to apply the Hatori-Moln{\'a}r theorem to synthesize a surjective real linear isometry between $A$ and $B$ forces us to apply a different strategy in \cite{PeTan16}. This different approach is worth to be, at least, outlined here.\smallskip

In a first step we assume that we can find a non-zero subfactor $K(H_1)$ of $A$ such that $A$ is the orthogonal sum of $K(H_1)$ and its orthogonal complement $J = K(H_1)^{\perp}$ and the latter is non-zero.\label{eq proof compact with several subfactors} Let us take two non-zero projections $p_1$ in $K(H_1)$ and $p_2\in J$, and define a mapping $T : A = K(H_1)\oplus^{\perp} J \to B$ given by $$T(x) = T_{p_1} (\pi_2 (x)) + T_{p_{2}} (\pi_1 (x))$$ where $\pi_1$ and $\pi_2$ denote the canonical projections of $A$ onto $K(H_1)$ and $J$, respectively, and $T_{p_1}$ and $T_{p_2}$ are defined by Proposition \ref{p surjective isometries between the spheres preserve norm closed faces}. The mapping $T$ is real linear because $T_{p_1}$ and $T_{p_2}$ are. Clearly $T$ is bounded with $\|T\|\leq 2$. A minimal partial isometry in $A$ either lies in $K(H_1)$ or in $J$. Let us pick an element $x$ in $S(A)$ which can be written in the form $\displaystyle x= e+\sum_{j=1} \alpha_j e_j + \sum_{k=1} \beta_k e_k$, where $e,e_j,e_k$ are mutually ortogonal minimal partial isometries in $A$, $\alpha_j$, $\beta_k\in \mathbb{R}^+,$ $e_j\in B(H_1)$ and $e_k\in J$ for all $j$, $k,$ and $e$ either lies in $B(H_1)$ or in $J$. If $e\in K(H_1)$ (respectively, $e\in J$), by \eqref{eq Tw and Delta coincide on the max domain}, we have $\Delta(e) = T_{p_1} (e)=T(e)$ (respectively, $\Delta(e) = T_{p_2} (e)=T(e)$). Now, by \eqref{eq on algebraic minimal} and \eqref{eq Tw and Delta coincide on the max domain} we have $$ \Delta(x) = \Delta( e) +\sum_{j=1} \alpha_j \Delta(e_j) + \sum_{k=1} \beta_k \Delta(e_k) = \Delta( e) +\sum_{j=1} \alpha_j T_{p_2}(e_j) + \sum_{k=1} \beta_k T_{p_1} (e_k)$$ $$ = T( e) +\sum_{j=1} \alpha_j T(e_j) + \sum_{k=1} \beta_k T(e_k) = T(x).$$ The norm density of this kind of elements $x$ in $S(A)$ together with the norm continuity of $T$ and $\Delta$ proves that $T(x) = \Delta(x)$ for all $x\in S(A)$.\smallskip

In the second case we assume that $A= K(H)$ for some complex Hilbert space $H$. If $H$ is finite dimensional Theorem \ref{t Tanaka finite vN} proves that our mapping $\Delta: S(A)\to S(B)$ admits a unique extension to a surjective real linear isometry from $A$ onto $B$. We can therefore assume that $H$ is infinite dimensional. \smallskip

Let us take three mutually orthogonal minimal projections $p_1,p_2$ and $p_3$ in $A$, and the corresponding surjective real linear isometries $T_{p_1}$, $T_{p_2},$ and $T_{p_3}$ given by Proposition \ref{p surjective isometries between the spheres preserve norm closed faces}. We can decompose $A$ in the form $$ A = \mathbb{C} p_1 \oplus (p_1 A p_2\oplus p_2 A p_1)\oplus ((1-p_2) A p_1\oplus p_1 A (1-p_2)) \oplus (1-p_1) A (1-p_1),
$$ where $\mathbb{C} p_1 \oplus (p_1 A p_2\oplus p_2 A p_1)\subset (1-p_3) A (1-p_3),$ and $((1-p_2) A p_1\oplus p_1 A (1-p_2))\subset (1-p_2) A (1-p_2).$ Let $\pi_1$, $\pi_2,$ and $\pi_3$ denote the corresponding projections of $A$ onto $\mathbb{C} p_1 \oplus (p_1 A p_2\oplus p_2 A p_1)$, $((1-p_2) A p_1\oplus p_1 A (1-p_2))$ and $(1-p_1) A (1-p_1)$, respectively. We synthesize\label{label synthesis on K(H)} a mapping $T: A\to B$ given by $$T(x) =   T_{p_3} (\pi_1 ( x)) +  T_{p_2}(\pi_2 (x))+ T_{p_1}(\pi_3(x)).$$ The mapping $T$ is continuous and real linear because $T_{p_1}$, $T_{p_2}$ and $T_{p_3}$ are.\smallskip

If we prove that $$ T(e) =\Delta(e), \hbox{ for every minimal partial isometry $e$ in $A$},$$ then a similar argument to that given in the first step above, based on \eqref{eq on algebraic minimal} and \eqref{eq Tw and Delta coincide on the max domain}, the norm density in $S(A)$ of elements which can be written as finite positive combinations of mutually orthogonal projections, and the continuity of $T$ and $\Delta$, shows that $T(x) = \Delta(x)$ for all $x\in S(A)$.\smallskip

Let $e$ be a minimal partial isometry in $A$. Since $H$ is infinite dimensional, we can find another minimal projection $p_4$ which is orthogonal to $p_1, p_2, p_3, e$.\smallskip

Since $e\in (1-p_4) A (1-p_4)$, the statement in \eqref{eq Tw and Delta coincide on the max domain} implies that $\Delta (e) = T_{p_4} (e).$\smallskip

Let us write $e= p_1 e p_1 + p_1 e p_{2}+ p_2 e p_1+ p_1 e (1-p_{2}) + (1-p_2) e p_1+ (1-p_1) e(1-p_1)$. Clearly,  $p_1 e p_1, p_1 e p_{2}, p_2 e p_1\in (1-p_4) A (1-p_4)$. Since $p_1,p_2, e\in (1-p_4) A (1-p_4)$, we also deduce that $p_1 e (1-p_{2}), (1-p_2) e p_1, (1-p_1) e(1-p_1) \in (1-p_4) A (1-p_4).$ By applying \eqref{eq Tw1 Tw2 and Delta coincide on the intersection} to $T_{p_4}$ and $T_{p_3}$ (respectively, to $T_{p_4}$ and $T_{p_2}$, and $T_{p_4}$ and $T_{p_3}$) we get $$T(e) = T_{p_3} (p_1 e p_1 + p_1 e p_{2}+ p_2 e p_1) +  T_{p_2}(p_1 e (1-p_{2}) + (1-p_2) e p_1)+ T_{p_1}((1-p_1) e(1-p_1)) $$ $$= T_{p_4} (p_1 e p_1 + p_1 e p_{2}+ p_2 e p_1) +  T_{p_4}(p_1 e (1-p_{2}) + (1-p_2) e p_1)+ T_{p_4}((1-p_1) e(1-p_1)) $$ $$= T_{e_4} (e) = \Delta(e).$$

We have sketched the main arguments leading to one of the main achievements in \cite{PeTan16}.

\begin{theorem}\label{thm Tingley compact Cstaralgebras}\cite[Theorem 3.14]{PeTan16} Let $\Delta: S(A)\to S(B)$ be a surjective isometry between the unit spheres of two compact C$^*$-algebras. Then there exists a (unique) surjective real linear isometry $T:A\to B$ such that $T(x) = \Delta(x),$ for every $x$ in $S(A)$. In particular, the same conclusion holds when $A=K(H_1)$ and $B=K(H_2)$, where $H_1$ and $H_2$ are arbitrary complex Hilbert spaces.
\end{theorem}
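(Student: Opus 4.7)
My plan is to build the extension $T$ by patching together local real linear isometries coming from Proposition \ref{p surjective isometries between the spheres preserve norm closed faces}, exploiting the fact that every compact C$^*$-algebra decomposes as a $c_0$-sum $A=\bigoplus^{c_0}_{j} K(H_j)$ so that every compact partial isometry in $A^{**}$ is actually finite rank and lies in $A$. For each nonzero partial isometry $w\in A$, Proposition \ref{p surjective isometries between the spheres preserve norm closed faces}$(b)$ yields a surjective real linear isometry $T_w:(1-ww^*)A(1-w^*w)\to(1-\Delta(w)\Delta(w)^*)B(1-\Delta(w)^*\Delta(w))$ with $\Delta(w+x)=\Delta(w)+T_w(x)$ on the unit ball of its domain. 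Combining this with the antipodal preservation and additivity on orthogonal families (Theorem \ref{t Tingley antipodes for finite rank compact C*-algebras}), I would first establish the key compatibility relations \eqref{eq Tw and Delta coincide on the max domain} and \eqref{eq Tw1 Tw2 and Delta coincide on the intersection}, namely that $T_w$ and $\Delta$ coincide on $S((1-ww^*)A(1-w^*w))$, and that any two such local maps agree on the intersection of their domains.

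The next step is the synthesis itself, split into two cases. If $A$ admits a nontrivial orthogonal decomposition $A=K(H_1)\oplus^{\perp} J$ with $J\ne 0$, pick nonzero projections $p_1\in K(H_1)$, $p_2\in J$, let $\pi_1,\pi_2$ be the canonical projections, and define $T(x):=T_{p_1}(\pi_2(x))+T_{p_2}(\pi_1(x))$. This is real linear and bounded; on any element of the form $e+\sum_j\alpha_j e_j+\sum_k\beta_k e_k$ with $e_j\in K(H_1)$ and $e_k\in J$, Theorem \ref{t Tingley antipodes for finite rank compact C*-algebras} together with \eqref{eq Tw and Delta coincide on the max domain} shows $\Delta=T$. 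Since finite real-linear combinations of mutually orthogonal minimal partial isometries are norm dense in $S(A)$ (triple spectral resolution), continuity yields $T|_{S(A)}=\Delta$.

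In the remaining case $A=K(H)$ with $\dim H=\infty$ (the finite dimensional case is covered by Theorem \ref{t Tanaka finite vN}), I would choose three mutually orthogonal minimal projections $p_1,p_2,p_3$ and decompose
\[
A=\mathbb{C}p_1\oplus(p_1Ap_2\oplus p_2Ap_1)\oplus((1-p_2)Ap_1\oplus p_1A(1-p_2))\oplus(1-p_1)A(1-p_1),
\]
chosen so that each summand is contained in $(1-p_i)A(1-p_i)$ for some $i\in\{1,2,3\}$. Setting $T:=T_{p_3}\pi_1+T_{p_2}\pi_2+T_{p_1}\pi_3$ gives a real linear map. To verify $T(e)=\Delta(e)$ for every minimal partial isometry $e$, I pick a fourth minimal projection $p_4$ orthogonal to $p_1,p_2,p_3$ and $e$ (possible by infinite dimensionality); then $e\in(1-p_4)A(1-p_4)$, so $\Delta(e)=T_{p_4}(e)$, and the compatibility \eqref{eq Tw1 Tw2 and Delta coincide on the intersection} between $T_{p_4}$ and each $T_{p_i}$ on the matrix blocks above gives $T(e)=T_{p_4}(e)=\Delta(e)$. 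Density of the finite positive combinations of mutually orthogonal minimal partial isometries together with continuity then extends the identity to all of $S(A)$.

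The main obstacle is the absence of unitaries in $K(H)$, which rules out any direct appeal to the Hatori--Moln{\'a}r theorem (Theorem \ref{t Hatori Molnar}) as in Tanaka's approach. The substitute is this delicate patching: one must select enough base projections that the local real linear pieces $T_{p_i}$ cover every ``matrix coordinate'' of a minimal partial isometry, and then check that they agree on overlaps. The use of a fourth auxiliary projection $p_4$ is precisely what forces us to separate out the infinite-dimensional case, and it is where the geometric information extracted from Corollary \ref{c for spaces with property of semi-exposition for faces}, Lemma \ref{l two finite rank tripotents at distance 2 in KH}, and Theorem \ref{t Tingley antipodes for finite rank compact C*-algebras} is consumed. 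Uniqueness of $T$ is automatic from Mankiewicz's theorem (Theorem \ref{t Mankiewicz}) applied to the ball.
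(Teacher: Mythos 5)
Your proposal is correct and follows essentially the same route as the paper's own argument: the same local isometries $T_w$ from Proposition \ref{p surjective isometries between the spheres preserve norm closed faces}, the same compatibility relations \eqref{eq Tw and Delta coincide on the max domain} and \eqref{eq Tw1 Tw2 and Delta coincide on the intersection}, the same two-case synthesis (nontrivial orthogonal decomposition versus a single infinite-dimensional factor $K(H)$ with the auxiliary fourth projection $p_4$), and the same density-plus-continuity conclusion. No substantive differences to report.
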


Surjective real linear isometries between (real) C$^*$-algebras were studied in deep by Ch.H. Chu, T. Dang, B. Russo, B. Ventura in \cite{ChuDaRuVen}. Theorem 6.4 in \cite{ChuDaRuVen} proves that every surjective real linear isometry between (real) C$^*$-algebras is a triple isomorphism with respect to the triple product defined in \eqref{eq C* triple product}. Studies on surjective real linear isometries on JB$^*$-triples and real JB$^*$-triples have been considered by T. Dang \cite{Da} and F.J. Fern{\'a}ndez-Polo, J. Mart{\' i}nez and the author os this survey \cite{FerMarPe}.

\subsection{Tingley's problem for $B(H)$}\ \smallskip

After having revisited the solution to Tingley's problem for compact C$^*$-algebras published in \cite{PeTan16}, the next natural challenge is to consider a surjective isometry $\Delta: S(B(H_1)) \to S(B(H_2)$, where $H_1,H_2$ are arbitrary complex Hilbert spaces. Let us observe that if $H_1$ or $H_2$ is finite dimensional, then the extension of $\Delta$ to a surjective real linear isometry is guaranteed by Tanaka's theorem (see Theorem \ref{t Tanaka finite vN}).\smallskip

The problem in the setting of $B(H)$ spaces has been recently solved in a contribution by F.J. Fern{\'a}ndez-Polo and the author of this survey in \cite{FerPe17b}. The main conclusion gives a positive solution to Tingley's problem in the setting just commented.

\begin{theorem}\label{t Tingley BH spaces}\cite[Theorem 3.2]{FerPe17b} Let $H_1$ and $H_2$ be complex Hilbert spaces. Suppose that $\Delta: S(B(H_1)) \to S(B(H_2))$ is a surjective isometry. Then there exists a surjective complex linear or conjugate linear surjective isometry $T$ from $B(H_1)$ onto $B(H_2)$ satisfying $\Delta(x) = T(x)$, for every $x\in S(B(K))$.
\end{theorem}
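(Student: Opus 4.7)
The plan is to follow the proof strategy of Theorem \ref{thm Tingley compact Cstaralgebras} in its $K(H)$ branch, adapting it to $B(H)$ by coping with the loss of norm density of the finite-rank ideal.

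First step: rank-one local data. Since $B(H_1)$ and $B(H_2)$ are unital C$^*$-algebras, Corollary \ref{c for spaces with property of semi-exposition for faces} combined with the Akemann--Pedersen parameterization (Theorem \ref{t faces AkPed}) sets up an anti-order bijection between the lattices of norm-closed proper faces of $\mathcal{B}_{B(H_1)}$ and $\mathcal{B}_{B(H_2)}$; restricted to maximal proper faces this is a bijection between the rank-one partial isometries of $B(H_1)$ and those of $B(H_2)$. Applying Mankiewicz's theorem (Theorem \ref{t Mankiewicz}) to each pair of maximal faces $F_e$ and $F_{e'}$ yields a surjective real linear isometry $R_e\colon(1-ee^*)B(H_1)(1-e^*e)\to(1-e'e'^*)B(H_2)(1-e'^*e')$ with $\Delta(e+y)=e'+R_e(y)$ on $F_e$. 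The arguments used in Proposition \ref{p surjective isometries between the spheres preserve norm closed faces} and Theorem \ref{t Tingley antipodes for finite rank compact C*-algebras} then transfer essentially verbatim and give $\Delta(e)=e'$, antipodal preservation, preservation of mutual orthogonality of rank-one partial isometries, and additivity on their finite orthogonal sums.

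Second step: global patching. I would assemble the real linear candidate $T\colon B(H_1)\to B(H_2)$ by the three-projection construction sketched on page \pageref{label synthesis on K(H)}. Pick three pairwise orthogonal rank-one projections $p_1,p_2,p_3$ in $B(H_1)$ (available because $H_1$ is infinite dimensional; the finite-dimensional case is already contained in Theorem \ref{t Tanaka finite vN}) and put
$$T(x):=R_{p_3}(\pi_1(x))+R_{p_2}(\pi_2(x))+R_{p_1}(\pi_3(x)),$$
where $\pi_1,\pi_2,\pi_3$ are the canonical projections of $B(H_1)$ onto the three pieces $\mathbb{C}p_1\oplus(p_1B(H_1)p_2\oplus p_2B(H_1)p_1)$, $(1-p_2)B(H_1)p_1\oplus p_1B(H_1)(1-p_2)$ and $(1-p_1)B(H_1)(1-p_1)$ respectively. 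The compatibility identities of the $R_{p_i}$ on their overlapping domains (analogues of \eqref{eq Tw and Delta coincide on the max domain} and \eqref{eq Tw1 Tw2 and Delta coincide on the intersection}) show that $T$ is a well-defined bounded real linear map with $T(e)=\Delta(e)$ on every rank-one partial isometry $e\in B(H_1)$. Consequently $T=\Delta$ on every maximal proper norm-closed face of $\mathcal{B}_{B(H_1)}$, that is, on every norm-one element of $B(H_1)$ that attains its norm as an operator on $H_1$.

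Third step: extension and identification of $T$. By Lindenstrauss's theorem the norm-attaining bounded operators are norm dense in $B(H_1)$ (as $H_1$ is reflexive), so norm continuity of $T$ and $\Delta$ extends the equality to the whole unit sphere and shows that $T$ is itself a surjective real linear isometry. By the Chu--Dang--Russo--Ventura theorem $T$ is then a real triple isomorphism between the C$^*$-algebras $B(H_1)$ and $B(H_2)$, and in the standard form $T(x)=\Delta(1)\bigl(p\,\Phi(x)+(1-p)\,\Phi(x)^*\bigr)$ with $\Phi$ a Jordan $^*$-isomorphism and $p\in B(H_2)$ a central projection, the fact that $B(H_2)$ is a factor forces $p\in\{0,1\}$; since a Jordan $^*$-isomorphism between factors is either a $^*$-isomorphism or a $^*$-anti-isomorphism, we conclude that $T$ is either complex linear or conjugate linear, as required.

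\textbf{Main obstacle.} The sensitive step is the passage from equality of $T$ and $\Delta$ on norm-attaining elements to equality on all of $S(B(H_1))$: in the compact case the norm density of finite-rank operators in $K(H)$ handles it for free, while in $B(H)$ one needs the non-trivial Bishop--Phelps--Lindenstrauss-type density of norm-attaining operators together with a careful verification that the patched map $T$ is indeed a bounded real linear map, which amounts to checking the compatibility of the local isometries $R_{p_i}$ on all overlaps simultaneously.
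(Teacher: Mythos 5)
Your overall architecture (local affine data on maximal faces, a three-projection patching of a global real linear map, then identification via Chu--Dang--Russo--Ventura) does match the paper's, but there are two genuine gaps at exactly the points the paper identifies as the real content of \cite{FerPe17b}. First, your opening claim that the face correspondence ``restricted to maximal proper faces is a bijection between the rank-one partial isometries of $B(H_1)$ and those of $B(H_2)$'' is not available a priori. By Theorem \ref{t faces AkPed} the maximal proper faces of $\mathcal{B}_{B(H_2)}$ are parameterized by \emph{minimal partial isometries in $B(H_2)^{**}$}, and $B(H_2)^{**}$ is vastly larger than $B(H_2)$; nothing in Corollary \ref{c for spaces with property of semi-exposition for faces} prevents $\Delta(F_e)$, for $e$ rank-one in $B(H_1)$, from being the face of a minimal partial isometry that does \emph{not} belong to $B(H_2)$. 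This is precisely the obstacle that makes the compact-case arguments ``invalid'' here (see the discussion preceding Theorem \ref{t surjective isometries map minimal partial isometries into points of strong subdiff}); closing it requires the two hard results of \cite{FerPe17b}, namely that $1$ is isolated in the spectrum of $|\Delta(e)|$ and then that $\Delta(e)$ is a genuine minimal partial isometry of $B(H_2)$ (Theorems \ref{t surjective isometries map minimal partial isometries into points of strong subdiff} and \ref{t surjective isometries map minimal partial isometries into minimal partial isometries}). Without this, Mankiewicz's theorem cannot even be applied to the pair $F_e$, $\Delta(F_e)$, and the rest of your first step does not get off the ground. Asserting that the compact-case arguments ``transfer essentially verbatim'' is exactly what fails.

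Second, your density argument is aimed at the wrong gap. Knowing $T(e)=\Delta(e)$ for all rank-one partial isometries $e$, together with additivity on finite orthogonal sums, gives $T=\Delta$ only on \emph{algebraic} elements (finite positive combinations of mutually orthogonal rank-one partial isometries). To upgrade this to $T=\Delta$ on a full maximal face $F_e=e+(1-ee^*)\mathcal{B}_{B(H_1)}(1-e^*e)$ you must show $T$ agrees with the affine map $R_e$ on the whole corner $(1-ee^*)B(H_1)(1-e^*e)\cong B(H_1')$, and algebraic elements are only weak$^*$ dense there, not norm dense. So even the restriction of the identity $T=\Delta$ to norm-attaining elements is not reached by norm continuity; the Bishop--Phelps--Lindenstrauss density of norm-attaining operators, which is correct as far as it goes, cannot repair a step that already failed one stage earlier. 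The paper resolves this by proving that each $T_v$ is weak$^*$-continuous and complex linear or conjugate linear (Theorem \ref{t surjective isometries between spheres of B(H)}) and then invoking the weak$^*$ identity principle of Proposition \ref{p surjective isometries extension on projections}, which states that a weak$^*$-continuous real linear map agreeing with $\Delta$ on all minimal partial isometries agrees with it on the whole sphere. Your proposal contains no substitute for the weak$^*$-continuity of the patched map $T$ or for this identity principle, and without them the proof does not close.
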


Actually, a stronger conclusion has been achieved.

\begin{theorem}\label{thm Tyngley ellinfty sums}\cite[Theorem 3.2]{FerPe17b} Let $(H_i)_{i\in I}$ and $(K_j)_{j\in J}$ be two families of complex Hilbert spaces. Suppose $\Delta: S\left(\bigoplus_j^{\ell_{\infty}} B(K_j) \right) \to S\left(\bigoplus_i^{\ell_{\infty}} B(H_i) \right)$ is a surjective isometry. Then there exists a surjective real linear isometry $$T: S\left(\bigoplus_j^{\ell_{\infty}} B(K_j) \right) \to S\left(\bigoplus_i^{\ell_{\infty}} B(H_i) \right)$$ satisfying $T|_{S(E)} = \Delta$.
\end{theorem}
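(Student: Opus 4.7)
The plan is to combine the Hatori–Molnár theorem with a reduction to individual factors, each handled by Theorem \ref{t Tingley BH spaces}, and then assemble the pieces into a single real linear isometry extending $\Delta$.

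First, both $E := \bigoplus_j^{\ell_\infty} B(K_j)$ and $F := \bigoplus_i^{\ell_\infty} B(H_i)$ are atomic von Neumann algebras whose extreme points coincide with the respective unitary groups. By Corollary \ref{c for spaces with property of semi-exposition for faces}, $\Delta$ restricts to a surjective isometry between $\mathcal{U}(E)$ and $\mathcal{U}(F)$. Applying Theorem \ref{t Hatori Molnar} yields a central projection $p \in F$ and a Jordan $*$-isomorphism $J : E \to F$ such that, defining $T(x) := \Delta(1)\bigl(p J(x) + (1-p) J(x)^*\bigr)$ for $x \in E$, the map $T : E \to F$ is a surjective real linear isometry agreeing with $\Delta$ on $\mathcal{U}(E)$. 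The remaining task is to upgrade this to $T|_{S(E)} = \Delta$.

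Next I would exploit the fact that $J$ sends central projections to central projections: writing $q_j$ for the unit of $B(K_j)$, the projections $r_j := J(q_j)$ give a compatible decomposition $F = \bigoplus_j^{\ell_\infty} F_j$ with each $F_j := r_j F$ a type I factor of the form $B(H_{i(j)})$. For each fixed $j_0$, face-preservation (Corollary \ref{c for spaces with property of semi-exposition for faces}) together with the Akemann–Pedersen description (Theorem \ref{t faces AkPed}) and the translation-by-unitary technique from Proposition \ref{p surjective isometries between the spheres preserve norm closed faces}$(b)$ allow one to extract from $\Delta$ a surjective isometry between $S(B(K_{j_0}))$ and $S(F_{j_0})$. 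Theorem \ref{t Tingley BH spaces} then extends this restriction to a surjective real linear isometry from $B(K_{j_0})$ onto $F_{j_0}$ which, by comparing values on unitaries and invoking the uniqueness built into Hatori–Molnár, must coincide with $T|_{B(K_{j_0})}$.

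The principal difficulty will be combining these per-summand identities into $T = \Delta$ on all of $S(E)$, since a general $x \in S(E)$ is supported across infinitely many summands simultaneously. The resolution is to prove additivity of $\Delta$ on mutually orthogonal partial isometries lying in distinct summands, in the style of Theorem \ref{t Tingley antipodes for finite rank compact C*-algebras}, using Lemma \ref{l two finite rank tripotents at distance 2 in KH}, Theorem \ref{t FaWang distance 2}, and the antipodal preservation of Proposition \ref{p Mori faces}. These tools force $\Delta$ to respect the coordinatewise structure imposed by the decomposition $E = \bigoplus_j^{\ell_\infty} B(K_j)$, and continuity of both $T$ and $\Delta$, together with the norm-density within each finite sub-sum of summands of elements expressible as finite linear combinations of orthogonal minimal partial isometries, closes the gap and establishes $T = \Delta$ on all of $S(E)$.
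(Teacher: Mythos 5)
Your proposal diverges from the route actually taken in \cite{FerPe17b}, and two of its load-bearing steps fail. First, the claim that the extreme points of the unit ball of $\bigoplus_j^{\ell_{\infty}} B(K_j)$ coincide with the unitary group is false once some $K_j$ is infinite dimensional: by Kadison's description, $\partial_e(\mathcal{B}_{B(K)})$ consists of all isometries and co-isometries (e.g.\ the unilateral shift), which strictly contains $\mathcal{U}(B(K))$. Hence Corollary \ref{c for spaces with property of semi-exposition for faces} only tells you that $\Delta$ permutes extreme points; it does not give a surjective isometry between unitary groups, so the Hatori--Moln{\'a}r theorem cannot be invoked at this stage. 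The fact that a surjective isometry between unit spheres of von Neumann algebras maps unitaries to unitaries is itself a substantial theorem (Theorem \ref{t Tanaka unitaries general vN}, resp.\ Mori's Theorem \ref{t Tanaka unitaries general unital Cstar}), proved only in the later work \cite{FerPe17d}, and even granting it one still needs the Olsen--Pedersen theory of convex combinations of unitaries to pass from agreement on $\mathcal{U}(E)$ to agreement on all of $S(E)$ --- that is the strategy of Theorem \ref{t Tingley von Neumann}, not of the theorem under discussion.

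Second, your closing density argument does not close the gap. Finite linear combinations of mutually orthogonal \emph{minimal} partial isometries are finite-rank operators supported on finitely many summands; for infinite-dimensional $K_j$, or for an element of the $\ell_{\infty}$-sum genuinely spread over infinitely many summands, such elements are only weak$^*$-dense in the sphere, not norm-dense. This is precisely the obstacle the paper flags when contrasting $B(H)$ with compact C$^*$-algebras, and it is resolved there not by norm continuity but by the identity principle of Proposition \ref{p surjective isometries extension on projections}: one synthesizes a \emph{weak$^*$-continuous} real linear map $T$ from three mutually orthogonal minimal projections and the associated maps $T_{p_1},T_{p_2},T_{p_3}$ of Theorem \ref{t surjective isometries map minimal partial isometries into minimal partial isometries}, checks $T(e)=\Delta(e)$ on minimal partial isometries via Theorem \ref{t surjective isometries between spheres of B(H)}, and then concludes $T=\Delta$ on the whole sphere from weak$^*$-continuity. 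Without either weak$^*$-continuity of your candidate $T$ or some substitute identity principle, the step ``$T=\Delta$ on a weak$^*$-dense set of algebraic elements implies $T=\Delta$ on $S(E)$'' is unjustified, and your argument does not reach elements with infinite spectral resolution or infinite support across the summands.
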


The strategy to obtain the previous two theorems also begins with results based on the facial structure of the closed unit ball of $B(H)$, Theorem \ref{t faces ChengDong11}, Corollary \ref{c for spaces with property of semi-exposition for faces} and the Akemann-Pedersen theorem (Theorem \ref{t faces AkPed}). The latter result forces us to face a serious additional obstacle which requires a completely new strategy. More concretely, we have already seen in the previous subsection that, for a compact C$^*$-algebra $A$, the norm closed faces of $\mathcal{B}_{A}$ are determined by finite rank partial isometries in $A$. However, for a general C$^*$-algebra $A$ the maximal proper faces of $\mathcal{B}_{A}$ are determined by minimal partial isometries in $A^{**}$. This is a serious obstacle which makes invalid the arguments in previous subsections and in \cite{PeTan16,FerPe17} to the case of a surjective isometry $\Delta: S(B(H_1)) \to S(B(H_2))$, because, in principle, $\Delta$ cannot be applied to every minimal projection in $B(H_1)^{**}$. The novelties in \cite{FerPe17b} are based on certain technical results which provides an antidote to avoid these difficulties.\smallskip

Two results from \cite{FerPe17b} deserve to be highlighted by their own right.

\begin{theorem}\label{t surjective isometries map minimal partial isometries into points of strong subdiff}\cite[Theorem 2.3]{FerPe17b}
Let $A$ and $B$ be C$^*$-algebras, and suppose that $\Delta: S(A) \to S(B)$ is a surjective isometry. Let $e$ be a minimal partial isometry in $A$. Then $1$ is isolated in the spectrum of $|\Delta(e)|$.
\end{theorem}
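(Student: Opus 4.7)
The plan is to use the facial correspondence provided by Corollary \ref{c for spaces with property of semi-exposition for faces} and Theorem \ref{t faces AkPed} to attach a minimal partial isometry $v$ in $B^{**}$ to the image $\Delta(e)$, and then to extract the required spectral gap at $1$ of $|\Delta(e)|$ from the resulting decomposition.

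First I would transport the maximal face containing $e$. Since $e$ is a minimal partial isometry in $A$, the description in \eqref{eq maximal closed faces} gives that
\begin{equation*}
F_e = \left(e + (1-ee^*)\,\mathcal{B}_{A^{**}}\,(1-e^*e)\right)\cap \mathcal{B}_A
\end{equation*}
is a maximal proper norm-closed face of $\mathcal{B}_A$. As C$^*$-algebras satisfy the hypotheses of Corollary \ref{c for spaces with property of semi-exposition for faces}, $\Delta(F_e)$ is a maximal proper norm-closed face of $\mathcal{B}_B$, and Theorem \ref{t faces AkPed}(a) furnishes a unique minimal partial isometry $v$ in $B^{**}$ with $\Delta(F_e)=F_v$. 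Writing $\Delta(e)=v+y$ with $y=(1-vv^*)\,y\,(1-v^*v)\in B^{**}$ and $\|y\|\leq 1$, the orthogonality relations $v^*y=y^*v=0$ yield
\begin{equation*}
|\Delta(e)|^{2}=\Delta(e)^{*}\Delta(e)=v^{*}v+y^{*}y,
\end{equation*}
an orthogonal sum of positive elements of $B^{**}$. Hence $|\Delta(e)|=v^{*}v+|y|$ and
\begin{equation*}
\sigma\bigl(|\Delta(e)|\bigr)=\{1\}\cup\sigma(|y|)\subseteq\{1\}\cup[0,1].
\end{equation*}
Consequently, the theorem reduces to exhibiting some $\delta>0$ with $\sigma(|y|)\subseteq[0,1-\delta]\cup\{1\}$.

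The main step I would carry out by contradiction, assuming that $1$ is an accumulation point of $\sigma(|y|)$. Crucially, $|\Delta(e)|$ itself lies in $B$ (even when $v$ does not), so continuous functional calculus applied inside $B$ provides positive contractions $h_n\in B$ supported in spectral windows $(1-\tfrac{1}{n},1-\tfrac{1}{2n}]$ of $|\Delta(e)|$, each orthogonal to the would-be spectral projection $v^*v$ at $1$. These $h_n$ serve as concrete $B$-valued proxies for spectral projections of $|y|$. Pairing them with the polar part of $\Delta(e)$ inside $B^{**}$ allows one to manufacture norm-one elements $w_n\in(1-vv^*)\,B\,(1-v^*v)$ so that $z_n:=v+w_n\in F_v$ satisfies $\|\Delta(e)-z_n\|\to 0$. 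Pulling back through $\Delta$ produces $\Delta^{-1}(z_n)=e+u_n\in F_e$ with $u_n$ in the corner $(1-ee^*)A(1-e^*e)$ and $\|u_n\|\to 0$. The contradiction is then obtained by combining the affine character of $\Delta|_{F_e}$ provided by Mankiewicz's Theorem \ref{t Mankiewicz} (on the relatively open part of $F_e$) with the distance-$2$ preservation of Theorem \ref{t FaWang distance 2} applied to the antipodal pairs in $F_e$ versus $-F_e$ given by Proposition \ref{p Mori faces}: the way $w_n$ was built to capture a fixed proportion of $y$ forces a lower bound on $\|u_n\|$ incompatible with $u_n\to 0$.

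The principal obstacle is exactly what distinguishes the setting from the compact case treated in Subsection 3.1: the minimal partial isometry $v$, and therefore $y=\Delta(e)-v$ and $|y|$, live a priori only in $B^{**}$, whereas the isometry $\Delta$ has access only to the spheres $S(A),S(B)\subset B$. The bridge is furnished by the observation that $|\Delta(e)|$ does belong to $B$, which permits spectral manipulations inside $B$ to substitute for the bidual spectral projections one would like to use. Converting the purely spectral assumption (that $1$ accumulates in $\sigma(|y|)$) into a bona fide contradiction with an isometric identity is precisely the technical ``antidote'' alluded to in the comments preceding the statement of Theorem \ref{t surjective isometries map minimal partial isometries into points of strong subdiff}.
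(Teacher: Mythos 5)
The survey does not reproduce a proof of this statement (it is quoted from \cite[Theorem 2.3]{FerPe17b}), so your proposal has to stand on its own. Its first half does: $F_e$ is a maximal proper face by \eqref{eq maximal closed faces}, Corollary \ref{c for spaces with property of semi-exposition for faces} together with Theorem \ref{t faces AkPed} yields $\Delta(F_e)=F_v$ for a unique minimal partial isometry $v\in B^{**}$, and the orthogonal decomposition $\Delta(e)=v+y$ with $|\Delta(e)|=v^*v+|y|$ correctly reduces the theorem to producing a gap $\sigma(|y|)\subseteq[0,1-\delta]\cup\{1\}$. This setup is sound and is surely part of any proof.

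The second half, however, is not an argument but a description of one, and as described it is internally inconsistent. You propose elements $z_n=v+w_n\in F_v$ with $\|\Delta(e)-z_n\|\to 0$ and then a contradiction coming from a positive lower bound on $\|u_n\|$, where $\Delta^{-1}(z_n)=e+u_n$. But $\Delta^{-1}$ is an isometry, so $\|u_n\|=\|\Delta^{-1}(z_n)-\Delta^{-1}(\Delta(e))\|=\|z_n-\Delta(e)\|\to 0$ by your own choice of $z_n$; a lower bound on $\|u_n\|$ could therefore only contradict your construction of the $z_n$, never the hypothesis that $1$ accumulates in $\sigma(|y|)$. The spectral assumption is in fact never used: the passage from the functional-calculus elements $h_n$ to the $w_n$ is left entirely unspecified (``allows one to manufacture''), none of the claimed properties of $w_n$ (norm one, orthogonality to $v$, $v+w_n\in B$, $\|w_n-y\|\to 0$) is verified, and the membership $z_n\in F_v\subseteq\mathcal{B}_B$ is itself problematic: if $w_n$ lies in $B$ then $v=z_n-w_n\in B$, which fails in general, and if $w_n$ only lies in $B^{**}$ then $v+w_n\in B$ is an unaddressed constraint. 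The invocations of Mankiewicz's Theorem \ref{t Mankiewicz}, of Theorem \ref{t FaWang distance 2} and of Proposition \ref{p Mori faces} at the end are decorative, since no concrete identity is exhibited that the assumption ``$1$ is an accumulation point of $\sigma(|y|)$'' would violate. That conversion of a spectral hypothesis into a contradicted metric identity is exactly the hard content of \cite[Theorem 2.3]{FerPe17b}, and it is the step your proposal omits.
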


The consequences of the previous result reveal to be stronger after the next additional theorem.

\begin{theorem}\label{t surjective isometries map minimal partial isometries into minimal partial isometries}\cite[Theorem 2.5]{FerPe17b}
Let $A$ be a C$^*$-algebra, and let $H$ be a complex Hilbert space. Suppose that $\Delta: S(A) \to S(B(H))$ is a surjective isometry. Let $e$ be a minimal partial isometry in $A$. Then $\Delta (e)$ is a minimal partial isometry in $B(H)$. Moreover, there exists a surjective real linear isometry $$T_{e} : (1 - ee^*) A  (1 - e^*e)\to  {(1 - \Delta(e)\Delta(e)^*) B(H) (1 - \Delta(e)^*\Delta(e))}$$ such that $$ \Delta(e + x) = \Delta(e) + T_e (x), \hbox{ for all $x$ in } \mathcal{B}_{(1 - ee^*) A  (1 - e^*e)}.$$ In particular, the restriction of $\Delta$ to the face $F_{e} =e + (1 - ee^*) \mathcal{B}_A (1 - e^*e)$ is a real affine function.
\end{theorem}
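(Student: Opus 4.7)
The plan is to combine three ingredients: the Akemann--Pedersen description of norm-closed faces in terms of partial isometries (Theorem \ref{t faces AkPed}) together with Corollary \ref{c for spaces with property of semi-exposition for faces}; the isolation of $1$ in $\sigma(|\Delta(e)|)$ given by Theorem \ref{t surjective isometries map minimal partial isometries into points of strong subdiff}; and Mankiewicz's theorem (Theorem \ref{t Mankiewicz}). First, since $e$ is minimal it belongs locally to $A$ by Kadison's transitivity, so by Theorem \ref{t faces AkPed} the face $F_e = e + \mathcal{B}_{(1-ee^*)A(1-e^*e)}$ is maximal and proper. Corollary \ref{c for spaces with property of semi-exposition for faces} and Theorem \ref{t faces ChengDong11} ensure that $\Delta(F_e)$ is a maximal proper norm-closed face of $\mathcal{B}_{B(H)}$, so by Theorem \ref{t faces AkPed} applied to $B(H)$ there is a unique minimal partial isometry $u \in B(H)^{**}$, belonging locally to $B(H)$, with $\Delta(F_e) = F_u$; in particular $\Delta(e) u^* = uu^*$ and $\Delta(e) \in F_u$.

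Next I would realize $u$ concretely in $B(H)$ and pair it with $\Delta(e)$. Theorem \ref{t surjective isometries map minimal partial isometries into points of strong subdiff} says $1$ is isolated in $\sigma(|\Delta(e)|)$, so the spectral projection $p := \chi_{_{\{1\}}}(|\Delta(e)|)$ lies in $B(H)$, and $v_0 := \Delta(e) p \in B(H)$ is a partial isometry. Using $|\Delta(e)| p = p$ one verifies $\Delta(e) v_0^{*} = v_0 v_0^{*}$, so $\Delta(e) \in F_{v_0}$ and $F_{v_0} \subseteq F_u$. Applying Corollary \ref{c for spaces with property of semi-exposition for faces} to $\Delta^{-1}$, the set $\Delta^{-1}(F_{v_0})$ is a norm-closed face of $\mathcal{B}_A$ contained in $F_e$ and containing $e$; by Theorem \ref{t faces AkPed} it has the form $F_{e''}$ for a unique partial isometry $e'' \geq e$. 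The condition $e \in F_{e''}$, namely $e (e'')^{*} = e''(e'')^{*}$, combined with $e'' = e + e'$ and $e \perp e'$, yields $ee^* = ee^* + e'(e')^{*}$, forcing $e' = 0$ and $e'' = e$. Hence $F_{v_0} = F_u$, so $v_0 = u$ by the anti-order isomorphism of Theorem \ref{t faces AkPed}. In particular $u \in B(H)$ and $F_u = u + \mathcal{B}_{(1-uu^*)B(H)(1-u^*u)}$.

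Finally I would close via Mankiewicz's theorem. After translation, $\Delta|_{F_e}: F_e \to F_u$ becomes a bijective isometry $\phi: \mathcal{B}_{(1-ee^*)A(1-e^*e)} \to \mathcal{B}_{(1-uu^*)B(H)(1-u^*u)}$ defined by $\phi(z) := \Delta(e+z) - u$, between closed unit balls of two Banach spaces, each with nonempty interior. Theorem \ref{t Mankiewicz} extends $\phi$ uniquely to a bijective affine isometry between the ambient spaces, necessarily of the form $\phi(z) = T'(z) + c$ with $T'$ a surjective real linear isometry and $c := \phi(0) = \Delta(e) - u$. Since $T'$ preserves the unit ball, $\phi(\mathcal{B}_{(1-ee^*)A(1-e^*e)}) = \mathcal{B}_{(1-uu^*)B(H)(1-u^*u)} + c$, and by the bijection this must equal $\mathcal{B}_{(1-uu^*)B(H)(1-u^*u)}$. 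A closed unit ball of a normed space admits no nontrivial translational symmetry: if $c \neq 0$, the norm-one element $y := c/\|c\|$ satisfies $\|y + c\| = 1 + \|c\| > 1$ and hence $y + c \notin \mathcal{B}$. Therefore $c = 0$ and $\Delta(e) = u$. Setting $T_e := T'$ gives the required surjective real linear isometry from $(1-ee^*)A(1-e^*e)$ onto $(1-\Delta(e)\Delta(e)^*)B(H)(1-\Delta(e)^*\Delta(e))$ with $\Delta(e+z) = \Delta(e) + T_e(z)$, and the real affineness of $\Delta|_{F_e}$ follows.

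The hardest step is the middle one. A priori Akemann--Pedersen only produces $u$ inside $B(H)^{**}$, and the Mankiewicz-based translation-invariance argument cannot even be set up until $F_u$ is known to be a translated unit ball of a Banach space sitting in $B(H)$. Theorem \ref{t surjective isometries map minimal partial isometries into points of strong subdiff} is the essential analytic input here: its isolation statement extracts a concrete partial-isometry piece $v_0 = \Delta(e) p \in B(H)$, and the face-lattice computation then identifies $v_0$ with $u$, unlocking the clean geometric conclusion of the last paragraph.
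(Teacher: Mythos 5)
Your overall route is the one the paper follows: Akemann--Pedersen plus Corollary \ref{c for spaces with property of semi-exposition for faces} to produce a minimal $u\in B(H)^{**}$ with $\Delta(F_e)=F_u$; the isolation of $1$ in $\sigma(|\Delta(e)|)$ from Theorem \ref{t surjective isometries map minimal partial isometries into points of strong subdiff} to manufacture the concrete partial isometry $v_0=\Delta(e)\,\chi_{_{\{1\}}}(|\Delta(e)|)\in B(H)$; identification of $v_0$ with $u$ through the face lattice; and Mankiewicz's theorem together with the translation-invariance of a unit ball to conclude $\Delta(e)=u$ and extract the real linear $T_e$. The first and last paragraphs are correct as written.

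The one step you have not justified is the inclusion $F_{v_0}\subseteq F_u$. You infer it from $\Delta(e)\in F_{v_0}$ (and, implicitly, $\Delta(e)\in F_u$), but two proper faces of a unit ball that share a point need not be nested --- two edges of a square meeting at a corner already defeat this inference --- and your whole chain ($\Delta^{-1}(F_{v_0})\subseteq F_e$, hence $e''\ge e$, hence the orthogonal decomposition $e''=e+e'$) rests on it. Two short repairs are available. (i) Prove the inclusion directly: writing $\Delta(e)=u+(1-uu^*)z(1-u^*u)$ gives $|\Delta(e)|^2=u^*u+c$ with $0\le c\le 1-u^*u$, whence $u^*u\le p:=\chi_{_{\{1\}}}(|\Delta(e)|)$ and $up=u$, so $v_0=\Delta(e)p\in F_u$; a partial isometry lying in $F_u$ dominates $u$, so $u\le v_0$ and $F_{v_0}\subseteq F_u$ by the anti-order isomorphism of Theorem \ref{t faces AkPed}. (ii) Bypass it entirely: from $e\in F_{e''}=\Delta^{-1}(F_{v_0})$ alone one gets $e''\le e$ (a partial isometry belonging to $F_{e''}$ dominates $e''$), and since $ee^*$ is a minimal projection also in $A^{**}$ while $e''\ne 0$, this forces $e''=e$; then $F_{v_0}=\Delta(F_{e''})=\Delta(F_e)=F_u$ and $v_0=u$ by the uniqueness in Theorem \ref{t faces AkPed}. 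With either patch the argument is complete and coincides in substance with the proof in \cite{FerPe17b}.
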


Technical algebraic and geometric manipulations combined with the previous theorem determine a precise control of a surjective isometry $\Delta: S(B(K)) \to S(B(H))$ on algebraic elements in the sphere which can be expressed as finite positive linear combinations of mutually orthogonal minimal partial isometries. It should be remarked here that a traditional spectral resolution with finite linear combinations of mutually orthogonal projections is only valid to approximate hermitian elements in the sphere.

\begin{theorem}\label{t surjective isometries between spheres of B(H)}\cite[Theorem 2.7]{FerPe17b} Let $\Delta: S(B(H_1)) \to S(B(H_2))$ be a surjective isometry where $H_1$ and $H_2$ are complex Hilbert spaces with dimension greater than or equal to 3. Then the following statements hold:\begin{enumerate}[$(a)$]\item For each minimal partial isometry $v$ in $B(H_1)$, the mapping $$T_v : (1 - vv^*) B(H_1)  (1 - vv^*)\to  {(1 - \Delta(v)\Delta(v)^*) B(H_2) (1 - \Delta(v)^*\Delta(v))}$$ given by Theorem \ref{t surjective isometries map minimal partial isometries into minimal partial isometries} is complex linear or conjugate linear;
\item For each minimal partial isometry $v$ in $B(H_1)$ we have $\Delta(-v) = -\Delta(v)$ and $T_{v} = T_{-v}$. Furthermore, $T_v$ is weak$^*$-continuous and  $\Delta(e) = T_v(e)$ for every minimal partial isometry $e\in (1 - vv^*) B(H_1)  (1 - v^* v)$;
\item For each minimal partial isometry $v$ in $B(H_1)$ the equality $\Delta(w) = T_v (w)$ holds for every partial isometry  $w\in (1 - vv^*) B(H_1)  (1 - v^*v)\backslash\{0\}$;
\item Let $w_1,\ldots,w_n$ be mutually orthogonal non-zero partial isometries in $B(H_1)$, and let $\lambda_1,\ldots,\lambda_n$ be positive real numbers with $\lambda_1=1,$ and $\lambda_j\leq 1$ for all $j$. Then $$\Delta\left(\sum_{j=1}^n \lambda_j w_j\right) = \sum_{j=1}^n \lambda_j \Delta\left(w_j\right);$$
\item For each minimal partial isometry $v$ in $B(H_1)$ we have $\Delta(x) = T_v (x)$ for every $x\in S(\mathcal{B}_{(1 - vv^*) B(H_1)  (1 - v^* v)})$;
\item For each partial isometry $w$ in $B(H_1)$ the element $\Delta(w)$ is a partial isometry;
\item Suppose $v_1,v_2$ are mutually orthogonal minimal partial isometries in $B(H_1)$ then $T_{v_1} (x) = T_{v_2} (x)$ for every $x$ in the intersection $$\left( (1 - v_1v_1^*) B(H_1)  (1 - v_1v_1^*)\right) \cap \left((1 - v_2v_2^*) B(H_1)  (1 - v_2v_2^*)\right);$$
\item Suppose $v_1,v_2$ are mutually orthogonal minimal partial isometries in $B(H_1)$ then exactly one of the following statements holds:\begin{enumerate}[$(1)$]\item The mappings $T_{v_1}$ and $T_{v_2}$ are complex linear;
\item The mappings $T_{v_1}$ and $T_{v_2}$ are conjugate linear.
\end{enumerate}
\end{enumerate}
\end{theorem}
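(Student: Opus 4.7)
The plan is to establish parts (a)--(h) cascade-style, with the foundational input being Theorem \ref{t surjective isometries map minimal partial isometries into minimal partial isometries}, which already provides, for each minimal partial isometry $v$ in $B(H_1)$, a surjective real linear isometry $T_v$ between the corresponding Peirce $0$-corners $(1-vv^*)B(H_1)(1-v^*v)$ and $(1-\Delta(v)\Delta(v)^*)B(H_2)(1-\Delta(v)^*\Delta(v))$. The rest of the proof refines the structure of these $T_v$ and then identifies them with $\Delta$ on larger and larger pieces of the sphere, using the facial rigidity provided by Proposition \ref{p Mori faces}, Theorem \ref{t FaWang distance 2}, and the Akemann--Pedersen description of norm closed faces (Theorem \ref{t faces AkPed}).

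For part (a), under $\dim H_i \geq 3$ the corner $(1-vv^*)B(H_1)(1-v^*v)$ is isometrically isomorphic to a Cartan factor of type I of the form $B(K_1,K_2)$ with $\dim K_i \geq 2$, and similarly on the target side. Then $T_v$ is a surjective real linear isometry between two such Cartan factors; by the results of Dang \cite{Da} and Chu--Dang--Russo--Ventura \cite{ChuDaRuVen} (cf.\ \cite{FerMarPe}), every surjective real linear isometry between JB$^*$-triples is a real triple isomorphism, and the classification of real triple isomorphisms between type-I Cartan factors of dimension $\geq 2$ shows each such map is either complex linear or conjugate linear. For (b), Proposition \ref{p Mori faces}(a) applied to the maximal face $F_v$ yields $\Delta(F_{-v}) = -\Delta(F_v) = F_{-\Delta(v)}$, and the uniqueness clause in Theorem \ref{t faces AkPed} forces $\Delta(-v) = -\Delta(v)$; the identity $T_v = T_{-v}$ then follows by comparing the defining relations $\Delta(\pm v + x) = \pm \Delta(v) + T_{\pm v}(x)$ and using the real linearity from (a). Weak$^*$-continuity of $T_v$ is a consequence of (a), since a complex linear or conjugate linear bijective isometry between weak$^*$-closed corners of von Neumann algebras is automatically weak$^*$-continuous. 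The identity $\Delta(e) = T_v(e)$ for a minimal partial isometry $e$ orthogonal to $v$ is extracted by evaluating $v + te$ simultaneously against the face $F_v$ (yielding $\Delta(v) + t T_v(e)$) and the face $F_e$ (yielding $\Delta(e) + t T_e(v)$) and combining the two through Mankiewicz' theorem (Theorem \ref{t Mankiewicz}).

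Parts (c)--(e) form an approximation ladder. In (c), an arbitrary non-zero partial isometry $w$ orthogonal to $v$ is approximated in a suitable topology by finite-rank partial isometries, and the conclusion follows from (b) together with the weak$^*$-continuity of $T_v$. Part (d) is an induction on $n$ using that $\sum_{j=1}^n \lambda_j w_j$ with $\lambda_1 = 1$ lies in the face $F_{w_1}$, the restriction of $\Delta$ to which is real affine thanks to Theorem \ref{t surjective isometries map minimal partial isometries into minimal partial isometries} and (c). Part (e) is then obtained from the weak$^*$-density of finite positive combinations of mutually orthogonal partial isometries in the closed unit ball of the corner, combined with the weak$^*$-continuity of $T_v$ and the isometric behaviour of $\Delta$ on the sphere. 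Parts (f), (g), (h) are coherence statements: (f) holds because $T_v$ is a real triple isomorphism (hence preserves partial isometries) and (e) identifies $\Delta(w)$ with $T_v(w)$; (g) follows since on the intersection of corners both $T_{v_1}$ and $T_{v_2}$ agree with $\Delta$ on the unit sphere (by (e)), and real linearity then forces agreement on the whole intersection; and (h) follows because the intersection of corners, under $\dim H_1 \geq 3$, is a non-trivial type-I Cartan factor of dimension $\geq 2$, and the complex-linear/conjugate-linear dichotomy of a real triple isomorphism is determined by its action on any such subfactor.

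The main obstacle is part (a): one must classify surjective real linear isometries between type-I Cartan factors of dimension $\geq 2$ into the complex-linear and conjugate-linear cases, which requires the full JB$^*$-triple machinery of \cite{Da, ChuDaRuVen, FerMarPe} and is where the hypothesis $\dim H_i \geq 3$ is essential. A related delicate point is the coherence assertion (h), where the ``type'' must be prevented from jumping between corners attached to different minimal partial isometries; this is controlled precisely by (g), and without it one could not in principle rule out a pathology in which $T_{v_1}$ is complex linear while $T_{v_2}$ is conjugate linear.
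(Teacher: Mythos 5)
This survey only states the theorem, quoting it from \cite{FerPe17b}; it contains no proof to compare against line by line, so I am judging your outline against the strategy the survey attributes to that paper. Your architecture is broadly the right one, and part $(a)$ is correctly diagnosed: the corners are type I factors $B(K_1,K_2)$ with $\dim K_i=\dim H_i-1\geq 2$, and the dichotomy for surjective real linear isometries (via \cite{Da,ChuDaRuVen}) is exactly where the hypothesis $\dim H_i\geq 3$ enters. Parts $(g)$ and $(h)$ as consequences of $(e)$ and $(a)$ are also fine. However, two of your steps have genuine gaps, and they sit precisely at the technical heart of the result. First, in $(b)$, evaluating $v+e$ against the two faces $F_v$ and $F_e$ only yields $\Delta(v)+T_v(e)=\Delta(e)+T_e(v)$, which is one identity relating two unknown minimal partial isometries; since a rank-two partial isometry admits many decompositions as an orthogonal sum of two minimal partial isometries (already in $M_2(\mathbb{C})$: $e_{11}+e_{22}=\frac12(e_{11}+e_{12}+e_{21}+e_{22})+\frac12(e_{11}-e_{12}-e_{21}+e_{22})$), this does not force $T_v(e)=\Delta(e)$, and Mankiewicz' theorem (Theorem \ref{t Mankiewicz}) contributes nothing here. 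One needs additional metric constraints, e.g.\ the distance-two criterion of Theorem \ref{t FaWang distance 2} and the antipodal information $\Delta(-e)=-\Delta(e)$, exploited along the lines of Lemma \ref{l two finite rank tripotents at distance 2 in KH}, to pin the decomposition down.

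Second, your approximation arguments for $(c)$ and $(e)$ do not work as stated. An infinite-rank partial isometry is at norm distance at least $1$ from every finite-rank operator, so ``approximated by finite-rank partial isometries'' can only mean weak$^*$ approximation; but then you need $\Delta(w_n)\to\Delta(w)$ along the approximating net, and $\Delta$ is merely a surjective isometry of spheres with no weak$^*$ continuity whatsoever --- weak$^*$ continuity of $T_v$ controls only one side of the desired identity. The survey explicitly flags this as \emph{the} obstacle distinguishing $B(H)$ from compact C$^*$-algebras (``elements in the sphere of $B(H)$ can be approximated only in the weak$^*$ topology by these kind of algebraic elements''), and resolving it requires a separate identity principle of the type of Proposition \ref{p surjective isometries extension on projections}, not a routine density argument. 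A related soft spot is $(d)$: you invoke real affineness of $\Delta$ on the face $F_{w_1}$ for a general partial isometry $w_1$, but Theorem \ref{t surjective isometries map minimal partial isometries into minimal partial isometries} supplies this only for minimal $w_1$; the correct route is to absorb $w_1=v+w_1'$ for a minimal $v\leq w_1$ and work inside the corner of $v$. Until these points are repaired, the proposal is an accurate table of contents for the proof rather than a proof.
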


The synthesis of a surjective real linear isometry in the proof of Theorem \ref{t Tingley BH spaces} (see \cite[Theorem 3.2]{FerPe17b}) is given with similar arguments to those we sketched in page \pageref{label synthesis on K(H)} with the obvious modifications and the new tools developed in Theorems \ref{t surjective isometries map minimal partial isometries into minimal partial isometries} and \ref{t surjective isometries between spheres of B(H)}. That is, assuming that $H$ is infinite dimensional, we pick three mutually orthogonal minimal projections $p_1,p_2$ and $p_3$ in $A$, and the corresponding surjective real linear isometries $T_{p_1}$, $T_{p_2},$ and $T_{p_3}$ given by Theorem \ref{t surjective isometries map minimal partial isometries into minimal partial isometries}. By decomposing $B(H_1)$ in the form $$B(H_1) = \mathbb{C} p_1 \oplus (p_1 B(H_1) p_2\oplus p_2 B(H_1) p_1) $$ $$\oplus ((1-p_2) B(H_1) p_1\oplus p_1 B(H_1) (1-p_2)) \oplus (1-p_1) B(H_1) (1-p_1),$$ with $$\mathbb{C} p_1 \oplus (p_1 B(H_1) p_2\oplus p_2 B(H_1) p_1)\subset (1-p_3) B(H_1) (1-p_3),$$ and $$((1-p_2) B(H_1) p_1\oplus p_1 B(H_1) (1-p_2))\subset (1-p_2) B(H_1) (1-p_2),$$ and denoting by $\pi_1$, $\pi_2,$ and $\pi_3$ the corresponding projections of $B(H_1)$ onto $\mathbb{C} p_1 \oplus (p_1 B(H_1) p_2\oplus p_2 B(H_1) p_1)$, $((1-p_2) B(H_1) p_1\oplus p_1 B(H_1) (1-p_2))$ and $(1-p_1) B(H_1) (1-p_1)$, respectively. We synthesize a mapping $T: B(H_1)\to B(H_2)$ given by $$T(x) =   T_{p_3} (\pi_1 ( x)) +  T_{p_2}(\pi_2 (x))+ T_{p_1}(\pi_3(x)).$$ The mapping $T$ is weak$^*$ continuous and real linear because $T_{p_1}$, $T_{p_2}$ and $T_{p_3}$ are. By the new tools given by Theorem \ref{t surjective isometries between spheres of B(H)} it is shown in the proof of \cite[Theorem 3.2]{FerPe17b} that $\Delta (e) = T(e)$ for every minimal partial isometry $e$ in $B(H_1)$.\smallskip

Contrary to the case of $K(H)$ spaces and compact C$^*$-algebras, where every element in the sphere can be approximated in norm by norm-one elements which are finite linear combination of mutually orthogonal minimal partial isometries, elements in the sphere of $B(H)$ can be approximated only in the weak$^*$ topology by these kind of algebraic elements. To solve this additional obstacle, it is established in \cite{FerPe17b} an identity principle in the following terms.

\begin{proposition}\label{p surjective isometries extension on projections}\cite[Proposition 3.1]{FerPe17b} Let $H_1$ and $H_2$ be complex Hilbert spaces. Suppose that $\Delta: S(B(H_1)) \to S(B(H_2))$ is a surjective isometry, and there exists a weak$^*$-continuous real linear operator  $T:B(H_1)\to B(H_2)$ such that $\Delta(v)=T(v)$, for every minimal partial isometry $v$ in $B(H_1)$. Then $T$ and $\Delta$ coincide on the whole $S(B(H_1))$.
\end{proposition}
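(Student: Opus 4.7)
The plan is to combine the face-wise affine structure of $\Delta$, obtained from the previous theorems, with a weak$^*$-density argument, and then extend from norm-attaining elements by a norm-density argument.

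First, fix a minimal partial isometry $v\in B(H_1)$ and write $M_v := (1-vv^*)B(H_1)(1-v^*v)$. By Theorem \ref{t surjective isometries map minimal partial isometries into minimal partial isometries} and Theorem \ref{t surjective isometries between spheres of B(H)}(b) there exists a weak$^*$-continuous surjective real linear isometry $T_v : M_v \to (1-\Delta(v)\Delta(v)^*)B(H_2)(1-\Delta(v)^*\Delta(v))$ such that $\Delta(v+y) = \Delta(v) + T_v(y)$ for every $y \in \mathcal{B}_{M_v}$, together with $T_v(e) = \Delta(e)$ for every minimal partial isometry $e\in M_v$. Combined with the hypothesis $T(e) = \Delta(e)$, this shows that $T|_{M_v}$ and $T_v$ are two weak$^*$-continuous real linear maps which agree on every minimal partial isometry of $M_v$. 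The crucial observation is that the real linear span of these minimal partial isometries contains every complex scalar multiple of each rank-one element of $M_v$---a real combination of $\eta\otimes\xi^*$ and $(i\eta)\otimes\xi^*$ recovers any $z(\eta\otimes\xi^*)$---hence it equals the subspace of finite rank operators in $M_v$, which is weak$^*$-dense. Therefore $T|_{M_v} = T_v$.

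Next I would verify that $\Delta = T$ on each maximal proper face of $\mathcal{B}_{B(H_1)}$. Such a face has the form $F_v = v + \mathcal{B}_{M_v}$ for some minimal partial isometry $v\in B(H_1)$, and for $x = v+y\in F_v$ with $y\in\mathcal{B}_{M_v}$ one has
$$
\Delta(x) \;=\; \Delta(v) + T_v(y) \;=\; T(v) + T(y) \;=\; T(v+y) \;=\; T(x),
$$
where the first equality is Theorem \ref{t surjective isometries map minimal partial isometries into minimal partial isometries}, the second combines the hypothesis with $T|_{M_v} = T_v$, and the third is real linearity of $T$. Every norm-attaining element of $S(B(H_1))$ belongs to some such face: if $x\in S(B(H_1))$ satisfies $\|x\xi\| = 1$ for some unit vector $\xi$, then setting $\eta := x\xi$ and $v := \eta\otimes\xi^*$ a direct computation gives $xv^* = vv^*$, so $x\in F_v$ by the characterisation in Theorem \ref{t faces AkPed}.

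Finally I would extend the identity $\Delta = T$ from the norm-attaining elements to the whole of $S(B(H_1))$ by norm approximation. For $x\in S(B(H_1))$ with polar decomposition $x = u|x|$, the continuous functions $f_n(t) = \min\bigl(t\,(1-1/n)^{-1},1\bigr)$ applied through the functional calculus to $|x|$ produce unit-norm operators $y_n := u f_n(|x|)$, each attaining its norm on the nonzero spectral projection $E_{[1-1/n,1]}(|x|)$, and satisfying $\|y_n - x\| \le \|f_n(|x|)-|x|\|_\infty \to 0$. Since $\Delta$ is an isometry and $T$ is norm-continuous---any weak$^*$-continuous linear map between dual Banach spaces is bounded---passing to the limit in $\Delta(y_n) = T(y_n)$ yields $\Delta(x) = T(x)$. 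The main obstacle is the first step, the identification $T|_{M_v} = T_v$, which relies on recognising that the real linear span of the rank-one operators of $M_v$ is weak$^*$-dense and on the availability of weak$^*$-continuity for \emph{both} $T$ (by hypothesis) and $T_v$ (from Theorem \ref{t surjective isometries between spheres of B(H)}(b)); once this identification is in hand, the remainder is a routine reduction via the facial structure and norm approximation.
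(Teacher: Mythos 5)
Your argument is correct, and it reaches the conclusion by a leaner route than the one the surrounding machinery suggests. The survey's narrative around Proposition \ref{p surjective isometries extension on projections} indicates that the published proof passes through the algebraic elements $\sum_j \lambda_j w_j$ of Theorem \ref{t surjective isometries between spheres of B(H)}$(d)$, which obliges one first to establish $\Delta(w)=T(w)$ for arbitrary, possibly infinite-rank, partial isometries $w$ by writing $w$ as a weak$^*$-convergent sum of minimal ones. You sidestep this: the single identification $T|_{M_v}=T_v$ is legitimate (both maps are weak$^*$-continuous and real linear, and they agree on the minimal partial isometries of the corner $M_v=(1-vv^*)B(H_1)(1-v^*v)$, whose real span is exactly the finite-rank part of $M_v$ and hence weak$^*$-dense), it already yields $\Delta=T$ on every face $F_v$ attached to a minimal partial isometry $v\in B(H_1)$, and your two observations --- that every norm-attaining element of $S(B(H_1))$ lies in such an $F_v$ via $v=(x\xi)\otimes\xi^*$, and that the functions $f_n$ produce norm-attaining approximants in $S(B(H_1))$ --- close the argument using only part $(b)$ of Theorem \ref{t surjective isometries between spheres of B(H)} and the Akemann--Pedersen description of $F_v$. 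What your route buys is that the only weak$^*$-limit ever taken lives inside a corner $M_v$, where density of the finite-rank operators is immediate; what the official route buys is statement $(d)$, which is needed elsewhere in the synthesis in any case.

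Two small inaccuracies, neither fatal. First, the sentence ``such a face has the form $F_v=v+\mathcal{B}_{M_v}$ for some minimal partial isometry $v\in B(H_1)$'' is false when $H_1$ is infinite dimensional: by \eqref{eq maximal closed faces} the maximal proper norm-closed faces of $\mathcal{B}_{B(H_1)}$ are indexed by minimal partial isometries in $B(H_1)^{**}$, and those not belonging to $B(H_1)$ give faces of a different nature --- this is precisely the obstacle the survey emphasizes before stating the proposition. Your proof never uses the false statement, since you only invoke the faces $F_v$ for the $v$'s you construct from norm-attaining elements, but the sentence should be deleted or rephrased. Second, Theorem \ref{t surjective isometries between spheres of B(H)} carries the hypothesis $\dim H_i\geq 3$, so the cases $\dim H_1\leq 2$ (where everything is finite dimensional and covered by Theorem \ref{t Tanaka finite vN}, and where $M_v$ has dimension at most one) should be disposed of in a line before part $(b)$ is invoked.
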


The above proposition, Theorem \ref{t surjective isometries between spheres of B(H)} and the above observation are, in essence, all the arguments required to prove Theorem \ref{t Tingley BH spaces}. The proof of Theorem \ref{thm Tyngley ellinfty sums} required additional technical adaptations which can be found in \cite{FerPe17b}.

\subsection{Tingley's problem for von Neumann algebras} \ \smallskip

The most recent, and for the moment, the most general conclusion on Tingley's problem is an affirmative solution to this problem for surjective isometries between the unit spheres of two arbitrary von Neumann algebras, which has been recently obtained by F.J. Fern{\'a}ndez-Polo and the author of this survey in \cite{FerPe17d}. The result reads as follows:

\begin{theorem}\label{t Tingley von Neumann}\cite[Theorem 3.3]{FerPe17d} Let $\Delta : S(M)\to S(N)$ be a surjective isometry between the unit spheres of two von Neumann algebras. Then there exists a surjective real linear isometry $T: M\to N$ whose restriction to $S(M)$ is $\Delta$. More precisely, there exist a central projection $p$ in $N$ and a Jordan $^*$-isomorphism $J : M\to N$ such that, defining $T : M\to N$ by $T(x) = \Delta(1) \left( p J (x) + (1-p) J (x)^*\right)$ {\rm(}$x\in M${\rm)}, then $T$ is a surjective real linear isometry and $T|_{S(M)} = \Delta$.
\end{theorem}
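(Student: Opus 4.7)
The overall plan is to reduce to a setting where the Hatori--Moln{\'a}r theorem (Theorem \ref{t Hatori Molnar}) applies on the unitary groups, and then propagate the resulting real linear isometry from $\mathcal{U}(M)$ to all of $S(M)$ via a facial--approximation argument. The three main blocks are: (i) showing that $\Delta$ induces a bijective isometry $\mathcal{U}(M)\to\mathcal{U}(N)$, (ii) synthesising $T$ through Hatori--Moln{\'a}r, and (iii) extending the equality $T=\Delta$ from $\mathcal{U}(M)$ to all of $S(M)$.

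For (i), I would invoke Corollary \ref{c for spaces with property of semi-exposition for faces}, applicable to von Neumann algebras via Theorem \ref{t faces AkPed}, to obtain that $\Delta$ carries $\partial_e(\mathcal{B}_M)$ onto $\partial_e(\mathcal{B}_N)$ and norm-closed faces onto norm-closed faces, and, by Proposition \ref{p Mori faces}, that it preserves antipodes on these faces. The crucial sub-step is to show that $\Delta(1)\in\mathcal{U}(N)$ and, after replacing $\Delta$ with $\tilde\Delta(x):=\Delta(1)^*\Delta(x)$ so that $\tilde\Delta(1)=1$, that $\tilde\Delta(\mathcal{U}(M))=\mathcal{U}(N)$. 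While Tanaka's theorem (Theorem \ref{t Tanaka finite vN}) disposes of the finite case, in general one needs a sphere-intrinsic characterization of unitaries among extreme points; a natural route is via the anti-order structure of maximal proper faces given by Theorem \ref{t faces AkPed}, combined with the distance-$2$ criterion of Theorem \ref{t FaWang distance 2}. I expect this identification to be the main obstacle of block (i).

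For (ii), once $\tilde\Delta|_{\mathcal{U}(M)}$ is known to be a bijective isometry of unitary groups, Hatori--Moln{\'a}r yields a central projection $p\in N$ and a Jordan $^*$-isomorphism $J:M\to N$ with
\[
\tilde\Delta(u) \;=\; p\,J(u) + (1-p)\,J(u)^*, \qquad u\in\mathcal{U}(M).
\]
Undoing the reduction and defining $T(x):=\Delta(1)\bigl(p\,J(x)+(1-p)\,J(x)^*\bigr)$, the map $T$ is automatically a surjective real linear isometry from $M$ onto $N$ (the Jordan $^*$-isomorphism $J$ composed with a central twist by the involution is a real linear surjective isometry, and left multiplication by the unitary $\Delta(1)$ is a complex linear isometry), and it satisfies $T(u)=\Delta(u)$ for every $u\in\mathcal{U}(M)$.

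For (iii), I would combine three ingredients: (a) real affinity of $\Delta$ on each norm-closed face $F_v$ of $\mathcal{B}_M$, obtained by applying Mankiewicz's theorem (Theorem \ref{t Mankiewicz}) inside the face, in direct analogy with Theorem \ref{t surjective isometries map minimal partial isometries into minimal partial isometries}; (b) the Russo--Dye theorem, by which $\mathcal{B}_M=\overline{\mathrm{conv}}(\mathcal{U}(M))$, so every sphere element is a norm limit of averages of unitaries; (c) continuity of $T$ and $\Delta$ to transport the equality $T=\Delta$ from convex combinations of unitaries to arbitrary elements of $S(M)$. The main obstacle here is the absence, in a general von Neumann algebra, of a finite spectral resolution in pairwise orthogonal partial isometries: outside the atomic/finite setting the norm-density arguments of the $K(H)$ and $B(H)$ cases break down, so the extension step must instead rely on the weak$^*$-continuity of $T$ together with an identity-principle argument in the spirit of Proposition \ref{p surjective isometries extension on projections}.
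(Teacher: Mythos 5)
Your skeleton coincides with the paper's: show that $\Delta$ carries $\mathcal{U}(M)$ onto $\mathcal{U}(N)$, apply the Hatori--Moln{\'a}r theorem to synthesize $T$ with $T|_{\mathcal{U}(M)}=\Delta|_{\mathcal{U}(M)}$, and then prove $T=\Delta$ on all of $S(M)$. Block (ii) is correct as written. However, blocks (i) and (iii) each contain a genuine gap.

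In (i), the assertion that $\Delta$ maps unitaries to unitaries is precisely Theorem \ref{t Tanaka unitaries general vN} {\rm(}\cite[Theorem 3.2]{FerPe17d}{\rm)} and is the hard core of the whole proof, not a sub-step to be dispatched in passing. For an infinite von Neumann algebra the extreme points of $\mathcal{B}_M$ include non-unitary maximal partial isometries, so Corollary \ref{c for spaces with property of semi-exposition for faces} only gives that $\Delta$ permutes extreme points; it does not single out the unitaries. Your proposed route (``anti-order structure of maximal proper faces plus the distance-$2$ criterion'') is a guess rather than an argument: the known proofs proceed either through the machinery of Theorems \ref{t surjective isometries map partial isometries into points of strong subdiff}--\ref{t C} (ultraproduct techniques showing $1$ is isolated in the spectrum of $|\Delta(e)|$, partial isometries go to partial isometries, antipodes are preserved) or through Mori's metric characterization of unitaries among extreme points (Lemma \ref{l Mori 3.1}). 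You correctly flag this as the main obstacle, but you leave it unresolved.

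In (iii), the Russo--Dye argument does not work as stated: a nontrivial convex combination of unitaries generally has norm strictly less than one, hence lies off the sphere where $\Delta$ is undefined, so there is no equality to pass to the limit. The affinity of $\Delta$ on a face $F_v$ only reaches elements $x$ belonging to a face indexed by a partial isometry of $M$ itself, i.e.\ those with $1$ isolated in the spectrum of $|x|$, which excludes most elements of $S(M)$ in the non-atomic case. Finally, the fallback you invoke, Proposition \ref{p surjective isometries extension on projections}, is an identity principle built on the weak$^*$ density of finite combinations of \emph{minimal} partial isometries in $B(H)$; a type II or type III algebra has no minimal projections, so that principle has no analogue there. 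The paper's actual extension step is different in kind: it is a quantitative application of the Olsen--Pedersen theory of convex combinations (means) of unitaries in von Neumann algebras \cite{Ols1989,OlsPed1986}, and this is the ingredient your proposal is missing.
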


The mathematical difficulties of the problem in this general setting are considerable. The techniques, procedures and strategies applied in previous case to synthesize a surjective real linear isometry and to apply the facial structure are no longer valid under the new hypotheses.\smallskip

Let $\Delta: S(A)\to S(B)$ be a surjective isometry between the unit spheres of two C$^*$-algebras. A combination of Theorem \ref{t faces AkPed} and Corollary \ref{c for spaces with property of semi-exposition for faces} (see also the subsequent comments) gives a one-to-one correspondence between compact partial isometries in the corresponding second duals.

\begin{theorem}\label{t first correspondence between faces and compact partial isometries in the bidual for a surjective isometry} Let $\Delta: S(A) \to S(B)$ be a surjective isometry between the unit spheres of two C$^*$-algebras. Then the following statements hold:
\begin{enumerate}[$(a)$]\item For each non-zero compact partial isometry $e\in A^{**}$ there exists a unique (non-zero) compact partial isometry $\phi_{\Delta} (e)\in B^{**}$ such that $\Delta (F_e) = F_{\phi_\Delta (e)},$ where $F_{e} =\left(e + (1 - ee^*) \mathcal{B}_{A^{**}} (1 - e^*e)\right)\cap \mathcal{B}_A$;
\item The mapping $e\mapsto \phi_\Delta (e)$ defines an order preserving bijection between the sets of non-zero compact partial isometries in $A^{**}$ and the set of non-zero compact partial isometries in $B^{**}$;
\item $\phi_{\Delta}$ maps minimal partial isometries in $A^{**}$ to minimal partial isometries in $B^{**}$.
\end{enumerate}
\end{theorem}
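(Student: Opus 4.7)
The plan is to deduce the theorem from two results already available in the survey: the Akemann--Pedersen description of norm-closed faces of $\mathcal{B}_A$ via compact partial isometries in $A^{**}$ (Theorem \ref{t faces AkPed}), and the preservation of norm-closed faces under a surjective isometry $\Delta:S(A)\to S(B)$ (Corollary \ref{c for spaces with property of semi-exposition for faces}, applicable because every C$^*$-algebra satisfies $(h.1)$ and $(h.2)$).

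For (a), I would argue as follows. Given a non-zero compact partial isometry $e\in A^{**}$, the face $F_e$ is a non-empty norm-closed proper face of $\mathcal{B}_A$ contained in $S(A)$. By Corollary \ref{c for spaces with property of semi-exposition for faces}(a), $\Delta(F_e)$ is a norm-closed face of $\mathcal{B}_B$; it is non-empty and lies inside $S(B)$, so it is proper (otherwise bijectivity of $\Delta$ would force $F_e=S(A)$, contradicting properness of $F_e$). Theorem \ref{t faces AkPed}(a) applied to $B$ then produces a unique non-zero compact partial isometry $\phi_\Delta(e)\in B^{**}$ satisfying $\Delta(F_e)=F_{\phi_\Delta(e)}$, which defines the assignment.

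For (b), I would exploit that $\Delta^{-1}:S(B)\to S(A)$ is also a surjective isometry, so the same construction yields a map $\phi_{\Delta^{-1}}$ in the reverse direction. Applying $\Delta^{-1}$ to $\Delta(F_e)=F_{\phi_\Delta(e)}$ gives $F_e=F_{\phi_{\Delta^{-1}}(\phi_\Delta(e))}$, and the uniqueness clause in Theorem \ref{t faces AkPed} forces $\phi_{\Delta^{-1}}\circ\phi_\Delta=\mathrm{id}$; the reverse composition is symmetric, so $\phi_\Delta$ is a bijection. Order preservation is then immediate from the anti-order isomorphism $v\mapsto F_v$: if $e\le f$ then $F_f\subseteq F_e$, whence $F_{\phi_\Delta(f)}=\Delta(F_f)\subseteq\Delta(F_e)=F_{\phi_\Delta(e)}$, which rephrases as $\phi_\Delta(e)\le\phi_\Delta(f)$.

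For (c), the correspondence $v\mapsto F_v$ is order-reversing and, as recalled around \eqref{eq maximal closed faces}, minimal partial isometries in $A^{**}$ correspond precisely to maximal proper norm-closed faces of $\mathcal{B}_A$. By Theorem \ref{t faces ChengDong11}, $\Delta$ sends maximal proper faces of $\mathcal{B}_A$ to maximal proper faces of $\mathcal{B}_B$; combined with the identification $\Delta(F_e)=F_{\phi_\Delta(e)}$ established in (a), this yields that $\phi_\Delta(e)$ is a minimal partial isometry whenever $e$ is. No real obstacle arises here: the content of the theorem is essentially a mechanical translation of the Akemann--Pedersen lattice isomorphism through the face-preservation provided by $\Delta$, with the only point requiring care being the verification that $\Delta(F_e)$ is a \emph{proper} face, which was handled at the beginning of Step~(a).
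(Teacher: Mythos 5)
Your argument is correct and is essentially the paper's own: the survey derives this theorem precisely as ``a combination of Theorem \ref{t faces AkPed} and Corollary \ref{c for spaces with property of semi-exposition for faces}'' together with the identification of minimal partial isometries with maximal proper faces in \eqref{eq maximal closed faces}, which is exactly the route you follow. Your write-up merely makes explicit the routine checks (non-emptiness and properness of $\Delta(F_e)$, the inverse map $\phi_{\Delta^{-1}}$, and the anti-order isomorphism $v\mapsto F_v$) that the paper leaves to the reader.
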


The above result produces no alternative to our obstacles because compact partial isometries in the second dual cannot be transformed under $\Delta$.  Technical arguments based on ultraproducts techniques and a subtle uniform generalization of Lemma \ref{l two finite rank tripotents at distance 2 in KH} are appropriately applied in \cite{FerPe17d} to obtain generalizations of the above Theorems \ref{t surjective isometries map minimal partial isometries into points of strong subdiff} and \ref{t surjective isometries map minimal partial isometries into minimal partial isometries}.

\begin{theorem}\label{t surjective isometries map partial isometries into points of strong subdiff}\cite[Theorem 2.7]{FerPe17d}
Let $\Delta: S(A) \to S(B)$ be a surjective isometry between the unit spheres of two C$^*$-algebras. Let $e$ be a non-zero partial isometry in $A$. Then $1$ is isolated in the spectrum of $|\Delta(e)|$.
\end{theorem}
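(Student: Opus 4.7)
The first step is to invoke the facial correspondence of Theorem \ref{t first correspondence between faces and compact partial isometries in the bidual for a surjective isometry} applied to $e$ (itself a compact partial isometry of $A^{**}$): there is a compact partial isometry $v := \phi_\Delta(e) \in B^{**}$ with $\Delta(F_e) = F_v$. Since $\Delta(e) \in F_v$, Theorem \ref{t faces AkPed}(a) furnishes the decomposition
\[
\Delta(e) = v + b, \qquad b \in (1 - v v^*)\,\mathcal{B}_{B^{**}}\,(1 - v^*v).
\]
Because $v^*b = 0 = b^*v$, we have $|\Delta(e)|^2 = v^*v + b^*b$ as a sum of two orthogonal positive elements, hence
\[
|\Delta(e)| = v^*v + |b|.
\]
It follows that $\sigma(|\Delta(e)|) = \{1\} \cup \sigma(|b|)$, and the whole conclusion is equivalent to the strict inequality $\|b\| < 1$.

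I would then argue by contradiction: assume $\|b\| = 1$. The failure of $1$ to be isolated in $\sigma(|b|)$ produces non-zero spectral projections $(p_n)$ of $|b|$ associated to the intervals $[1 - \tfrac{1}{n}, 1]$, all lying in $(1 - v^*v)\,B^{**}\,(1 - v^*v)$. Passing to the ultrapower $B^{\mathcal U}$ along a free ultrafilter $\mathcal U$ transforms the asymptotic-eigenprojection data $(p_n)$ into an honest non-zero partial isometry $w$ orthogonal to (the image of) $v$, so that $\tilde v := v + w$ is a compact partial isometry of $(B^{\mathcal U})^{**}$ strictly dominating $v$. The sphere-isometry $\Delta$ induces, coordinatewise, a sphere-isometry $\Delta^{\mathcal U} : S(A^{\mathcal U}) \to S(B^{\mathcal U})$; applying the facial correspondence of Theorem \ref{t first correspondence between faces and compact partial isometries in the bidual for a surjective isometry} to $\Delta^{\mathcal U}$ yields a compact partial isometry $(\phi_{\Delta^{\mathcal U}})^{-1}(\tilde v)$ in $(A^{\mathcal U})^{**}$ strictly dominating the constant image of $e$, and a Kadison-transitivity extraction produces a minimal partial isometry $\tilde u \in A^{\mathcal U}$ supported on the ``extra rank''.

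The critical and most delicate step is the closing contradiction. To deploy the minimal-case Theorem \ref{t surjective isometries map minimal partial isometries into points of strong subdiff} against $\tilde u$ and derive an impossible spectral configuration for $\Delta^{\mathcal U}(\tilde u)$, one needs a \emph{uniform} version of Lemma \ref{l two finite rank tripotents at distance 2 in KH}: a statement roughly of the form ``for sequences $(e_n),(w_n)$ of partial isometries in $A$ with $\|e_n - w_n\| \to 2$ and $e_n$ uniformly minimal, the antipodal decomposition $w_n + e_n = (1 - e_n e_n^*)\,w_n\,(1 - e_n^* e_n)$ holds up to $o(1)$ errors.'' This uniform control is precisely what the rigid Lemma \ref{l two finite rank tripotents at distance 2 in KH} does not itself provide, and establishing it quantitatively is, in my view, the main obstacle: the minimality hypothesis was used essentially, and one must track the corresponding estimates through arbitrary approaches to distance $2$. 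Once the uniform lemma is secured, the contradiction with the minimal case propagates cleanly through the ultrapower, forcing $\|b\| < 1$ and completing the proof.
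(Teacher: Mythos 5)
Your reduction is set up correctly: $e$, being a partial isometry lying in $A$ itself, is compact in $A^{**}$ (indeed $e=e(e^*e)$ with $e^*e$ a projection of $A$ dominated by the norm-one positive element $e^*e$), so Theorem \ref{t first correspondence between faces and compact partial isometries in the bidual for a surjective isometry} together with the Akemann--Pedersen description of $F_{\phi_\Delta(e)}$ gives $\Delta(e)=v+b$ with $v=\phi_\Delta(e)$ and $b\in(1-vv^*)\mathcal{B}_{B^{**}}(1-v^*v)$, whence $|\Delta(e)|=v^*v+|b|$. One correction, though: the conclusion is \emph{not} equivalent to $\|b\|<1$. It is equivalent to $1$ not being an accumulation point of $\sigma(|b|)$; one can perfectly well have $\|b\|=1$ with $1$ isolated in $\sigma(|b|)$, and then $1$ is still isolated in $\sigma(|\Delta(e)|)$. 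Only the implication $\|b\|<1\Rightarrow$ (conclusion) holds, so a proof by contradiction must start from ``$1$ is an accumulation point of $\sigma(|b|)$'' --- which is, in fact, what your spectral projections $\chi_{[1-\frac1n,1]}(|b|)$ exploit, so the slip is repairable.

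The strategy you then sketch --- pass to an ultrapower, convert the asymptotic spectral data into an honest partial isometry orthogonal to $v$, and play the result against the minimal case through a quantitative form of Lemma \ref{l two finite rank tripotents at distance 2 in KH} --- is precisely the architecture this survey attributes to the proof in \cite{FerPe17d} (``ultraproducts techniques and a subtle uniform generalization of Lemma \ref{l two finite rank tripotents at distance 2 in KH}''). But the proposal stops exactly where the work begins: the uniform distance-$2$ lemma is declared to be the main obstacle and is not proved, and the passage from $B^{**}$, where $v$, $b$ and the spectral projections $p_n$ live, to the ultrapower $B^{\mathcal{U}}$, whose elements are represented by bounded families \emph{in $B$}, is not justified --- these are different enlargements of $B$, and $b$ and $p_n$ need not admit representing sequences in $B$ without an additional approximation step. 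So you have correctly reconstructed the skeleton of the argument of \cite{FerPe17d}, but the decisive quantitative step is left open and the proposal cannot be accepted as a proof as it stands.
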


Mankiewicz's theorem (Theorem \ref{t Mankiewicz}) plays a fundamental role in the second part of the statement of the next theorem.

\begin{theorem}\label{t A}\cite[Theorem 2.8]{FerPe17d}
Let $\Delta: S(A) \to S(B)$ be a surjective isometry between the unit spheres of two C$^*$-algebras. Then $\Delta$ maps non-zero partial isometries in $A$ to non-zero partial isometries in $B$. Moreover, for each non-zero partial isometry $e$ in $A$, we have $\phi_{\Delta} (e) = \Delta(e)$, and  there exists  a surjective real linear isometry $$T_{e} : (1 - ee^*) A  (1 - e^*e)\to  {(1 - \Delta(e)\Delta(e)^*) B (1 - \Delta(e)^*\Delta(e))}$$ such that $$ \Delta(e + x) = \Delta(e) + T_e (x), \hbox{ for all $x$ in } \mathcal{B}_{(1 - ee^*) A  (1 - e^*e)}.$$ In particular the restriction of $\Delta$ to the face $F_{e} =e + (1 - ee^*) \mathcal{B}_A (1 - e^*e)$ is a real affine function.
\end{theorem}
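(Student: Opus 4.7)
The strategy will combine the face-correspondence of Theorem~\ref{t first correspondence between faces and compact partial isometries in the bidual for a surjective isometry}, the spectral isolation of Theorem~\ref{t surjective isometries map partial isometries into points of strong subdiff}, and Mankiewicz's theorem (Theorem~\ref{t Mankiewicz}), in order to first locate the candidate partial isometry inside $B$ and then identify $\Delta(e)$ with it via a uniqueness-of-centre argument for bounded balanced convex sets.

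First, I would set up the face correspondence. Fix a non-zero partial isometry $e\in A$ and write $A_e := (1-ee^*)A(1-e^*e)$, so that $F_e = e + \mathcal{B}_{A_e}$ is the smallest norm-closed face of $\mathcal{B}_A$ containing $e$. Theorem~\ref{t first correspondence between faces and compact partial isometries in the bidual for a surjective isometry} provides a unique compact partial isometry $v = \phi_\Delta(e) \in B^{**}$ with $\Delta(F_e) = F_v$; since $\Delta^{-1}$ likewise preserves faces, $F_v$ is the smallest face of $\mathcal{B}_B$ containing $\Delta(e)$. Writing $\Delta(e) = v + w_0$ with $w_0 \in (1-vv^*)B^{**}(1-v^*v)$ orthogonal to $v$ in the $C^*$-algebraic sense, the orthogonality gives $|\Delta(e)|^2 = v^*v + w_0^* w_0$.

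Second, I would show that $v$ already lies in $B$. By Theorem~\ref{t surjective isometries map partial isometries into points of strong subdiff}, $1$ is isolated in $\sigma(|\Delta(e)|)$, so the spectral projection $p := \chi_{\{1\}}(|\Delta(e)|)$ belongs to $B$ and decomposes as $p = v^*v + q$ with $q := \chi_{\{1\}}(w_0^* w_0) \leq 1 - v^*v$. A direct computation then yields $\tilde v := \Delta(e)\, p = v + w_0 q \in B$, where $w_0 q$ is a partial isometry orthogonal to $v$ (from $(w_0 q)^*(w_0 q) = q w_0^* w_0 q = q$), so $v \leq \tilde v$ in the partial-isometry order. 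Conversely, $\Delta(e)\tilde v^* = \Delta(e)\, p\, \Delta(e)^* = \tilde v \tilde v^*$ places $\Delta(e) \in F_{\tilde v}$; minimality of $F_v$ forces $F_v \subseteq F_{\tilde v}$, i.e.\ $\tilde v \leq v$. Combining, $\tilde v = v$, hence $w_0 q = 0$ and $v \in B$.

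Third, I would invoke Mankiewicz and finish. The restriction $\Delta|_{F_e} : F_e \to F_v$ is a bijective isometry between convex sets; the source has nonempty interior in the Banach space $e + A_e$, and $F_v$ inherits nonempty interior in its closed affine hull in $B$ by isometric transport. Mankiewicz's theorem then produces an affine extension $x \mapsto \Delta(e) + T_e(x - e)$ with $T_e : A_e \to Z$ a surjective real linear isometry onto a closed subspace $Z \subseteq B$, together with the identity $F_v - \Delta(e) = T_e(\mathcal{B}_{A_e}) = \mathcal{B}_Z$; in particular $F_v - \Delta(e)$ is balanced. Using $v \in B$ from the previous step, the set $F_v - v = (1 - vv^*)\mathcal{B}_B(1 - v^*v)$ is likewise balanced, via the max-norm formula $\|v \pm w\| = \max(1, \|w\|)$ for $w$ orthogonal to $v$. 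Thus $F_v$ has centres of symmetry at both $v$ and $\Delta(e)$; composing the two central symmetries yields the translation by $2(\Delta(e) - v)$, which must preserve the bounded set $F_v$, and iterating forces $\Delta(e) = v$. This gives both $\phi_\Delta(e) = \Delta(e)$ and $Z = (1 - \Delta(e)\Delta(e)^*)B(1 - \Delta(e)^*\Delta(e))$, whence the identity $\Delta(e + x) = \Delta(e) + T_e(x)$ on $\mathcal{B}_{A_e}$ delivers the real affineness of $\Delta|_{F_e}$.

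The main obstacle I expect is the second step: proving $\tilde v = v$ requires simultaneously exploiting the lattice-minimality of $F_v$ as the smallest face through $\Delta(e)$ and the explicit spectral decomposition $p = v^*v + q$, so neither Theorem~\ref{t surjective isometries map partial isometries into points of strong subdiff} nor the face correspondence alone would suffice. A secondary technical point inside the Mankiewicz step is verifying that $F_v$ has nonempty interior in its closed affine hull in $B$: intuitively a consequence of isometric transport from the ball $\mathcal{B}_{A_e}$, but requiring care since $F_v$ itself may have empty interior in $B$.
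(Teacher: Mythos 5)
Your proposal is correct and follows essentially the route this survey attributes to \cite{FerPe17d}: the theorem is only quoted here without proof, but the surrounding text identifies its ingredients as the face correspondence of Theorem \ref{t first correspondence between faces and compact partial isometries in the bidual for a surjective isometry}, the spectral-isolation result of Theorem \ref{t surjective isometries map partial isometries into points of strong subdiff} (whose ultraproduct-based proof you legitimately take as a black box), and Mankiewicz's theorem (Theorem \ref{t Mankiewicz}) for the affine part, which is exactly the combination you deploy, capped off with a standard uniqueness-of-centre argument. One cosmetic remark: the ``isometric transport'' justification for $F_v$ having nonempty interior is both shaky and unnecessary, since your second step already yields $v\in B$ and hence $F_v = v + (1-vv^*)\mathcal{B}_{B}(1-v^*v)$, whose nonempty interior in its affine hull is immediate.
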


Another crucial step in the study of Tingley's problem on von Neumann algebras asserts that the mapping $\phi_\Delta$ given by Theorem \ref{t first correspondence between faces and compact partial isometries in the bidual for a surjective isometry} preserves antipodal points.

\begin{theorem}\label{t C}\cite[Theorem 2.11]{FerPe17d} Let $\Delta: S(A) \to S(B)$ be a surjective isometry between the unit spheres of two C$^*$-algebras. Then, for each non-zero compact partial isometry $e$ in $A^{**}$ we have $\phi_\Delta (-e) = -\phi_\Delta (e)$, where $\phi_\Delta$ is the mapping given by Theorem \ref{t first correspondence between faces and compact partial isometries in the bidual for a surjective isometry}. Consequently, for each non-zero partial isometry $e\in A$ we have $\Delta(-e) = -\Delta(e)$.
\end{theorem}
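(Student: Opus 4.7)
The plan is to derive the result by combining Proposition \ref{p Mori faces}(b) (which is available here because C$^*$-algebras satisfy the two semi-exposition hypotheses of Corollary \ref{c for spaces with property of semi-exposition for faces}, as recorded right after that corollary) with the anti-order isomorphism between compact partial isometries in $A^{**}$ and norm closed faces of $\mathcal{B}_A$ given by Theorem \ref{t faces AkPed}, and with the correspondence $e\mapsto \phi_\Delta(e)$ of Theorem \ref{t first correspondence between faces and compact partial isometries in the bidual for a surjective isometry}.

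The first step is a purely algebraic verification: for every non-zero compact partial isometry $e\in A^{**}$, one has $-F_e = F_{-e}$, and $-e$ is itself a non-zero compact partial isometry. This is immediate from $(-e)(-e)^* = ee^*$, $(-e)^*(-e) = e^*e$ and the symmetry $-\mathcal{B}_{A^{**}} = \mathcal{B}_{A^{**}}$, applied to the explicit description of $F_e$ recalled in Theorem \ref{t faces AkPed}; the same identity obviously holds in $\mathcal{B}_B$.

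The second step chains these observations. Applying Proposition \ref{p Mori faces}(b) to the proper norm closed face $F_e$ of $\mathcal{B}_A$ gives
\[
F_{\phi_\Delta(-e)} \;=\; \Delta(F_{-e}) \;=\; \Delta(-F_e) \;=\; -\Delta(F_e) \;=\; -F_{\phi_\Delta(e)} \;=\; F_{-\phi_\Delta(e)},
\]
where the first and fifth equalities use the identity $-F_v = F_{-v}$ together with the defining property of $\phi_\Delta$ (Theorem \ref{t first correspondence between faces and compact partial isometries in the bidual for a surjective isometry}). The injectivity of $v\mapsto F_v$ on compact partial isometries of $B^{**}$, which is part of Theorem \ref{t faces AkPed}, then forces $\phi_\Delta(-e) = -\phi_\Delta(e)$, proving the first assertion.

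For the final ``consequently'' statement, if $e$ is a non-zero partial isometry in $A$, then so is $-e$, and Theorem \ref{t A} identifies $\phi_\Delta(\pm e) = \Delta(\pm e)$; substituting into what has just been proved yields $\Delta(-e) = -\Delta(e)$. The main (quite minor) obstacle to be careful about is verifying that C$^*$-algebras fall under the hypotheses of Proposition \ref{p Mori faces}(b), but this is exactly the content of the remark following Corollary \ref{c for spaces with property of semi-exposition for faces}. Apart from that sanity check, the argument is pure bookkeeping with the face--partial isometry correspondence already established in the paper, and no new geometric input is required.
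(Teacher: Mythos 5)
Your argument is correct, and every step checks out against the results as stated in the survey: $-e$ is again a compact partial isometry, the identity $-F_e=F_{-e}$ is immediate from the explicit description in Theorem \ref{t faces AkPed}, the face $F_e$ is non-empty and proper for $e\neq 0$, C$^*$-algebras do satisfy the semi-exposition hypotheses needed for Proposition \ref{p Mori faces}$(b)$, and the uniqueness in the Akemann--Pedersen correspondence legitimately converts $F_{\phi_\Delta(-e)}=F_{-\phi_\Delta(e)}$ into $\phi_\Delta(-e)=-\phi_\Delta(e)$; the passage to $\Delta(-e)=-\Delta(e)$ via Theorem \ref{t A} is also fine. Be aware, however, that this is a genuinely different route from the one behind the cited result: the survey reproduces Theorem \ref{t C} from \cite[Theorem 2.11]{FerPe17d} without proof, and the original argument there is a self-contained, considerably more technical one (built on the distance-two lemmas and the order-theoretic properties of $\phi_\Delta$), necessarily so because Proposition \ref{p Mori faces} comes from Mori's later preprint \cite{Mori2017} and was not available when \cite{FerPe17d} was written. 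What your approach buys is brevity and transparency --- pure bookkeeping with the face correspondence once the antipodal-face proposition is granted; what it costs is that the whole weight of the proof is shifted onto Proposition \ref{p Mori faces}$(b)$, which the survey itself only states (with the remark that it follows ``with similar arguments'' to Tingley's and Mori's), so your derivation is only as self-contained as that proposition. There is no circularity, since Proposition \ref{p Mori faces} is a general Banach-space statement independent of the C$^*$-algebra machinery, but you should flag the dependence explicitly if you present this as a replacement proof.
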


The ortogonal complement of a subset $S$ in a C$^*$-algebra $A$ is defined by $$S^{\perp} :=\{x\in A : x\perp a, \hbox{ for all } a\in S\}.$$

The previous theorems provide the key tools to extend Theorem \ref{t surjective isometries between spheres of B(H)} to te setting of von Neumann algebras.

\begin{proposition}\label{p algebraic elements}\cite[Proposition 2.12]{FerPe17d} Let $\Delta: S(A) \to S(B)$ be a surjective isometry between the unit spheres of two C$^*$-algebras. Then the following statements hold:\begin{enumerate}[$(a)$]\item For each non-zero partial isometry $v$ in $A$, the surjective real linear isometry $$T_v : (1 - vv^*) A  (1 - vv^*)\to  {(1 - \Delta(v)\Delta(v)^*) B (1 - \Delta(v)^*\Delta(v))}$$ given by Theorem \ref{t A} satisfies $\Delta (e) = T_v(e),$ for every non-zero partial isometry $e\in (1 - vv^*) A  (1 - v^* v)$;
\item Let $w_1,\ldots,w_n$ be mutually orthogonal non-zero partial isometries in $A$, and let $\lambda_1,\ldots,\lambda_n$ be real numbers with $1=|\lambda_1|\geq\max\{|\lambda_j|\}$. Then $$\Delta\left(\sum_{j=1}^n \lambda_j w_j\right) = \sum_{j=1}^n \lambda_j \Delta\left(w_j\right);$$
\item Suppose $v,w$ are mutually orthogonal non-zero partial isometries in $A$ then $T_{v} (x) = T_{w} (x)$ for every $x\in \{v\}^{\perp} \cap \{w\}^{\perp}$;
\item If $A$ is a von Neumann algebra, then for each non-zero partial isometry $v$ in $A$ we have $\Delta(x) = T_v (x)$ for every $x\in S({(1 - vv^*) A  (1 - v^* v)}).$
\end{enumerate}
\end{proposition}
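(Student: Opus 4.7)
The plan is to prove (a) first, since parts (b)--(d) will follow from (a) combined with the real-linearity of the maps $T_w$ provided by Theorem \ref{t A}, together with a polar-decomposition and spectral-theorem approximation argument available in the von Neumann algebra $A$ for (d).

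For (a), fix mutually orthogonal non-zero partial isometries $v, e \in A$. The element $v-e$ is again a non-zero partial isometry (since $v$ and $-e$ are orthogonal and $v \neq e$), so Theorem \ref{t C} is applicable both to $v-e$ and to $-v$. Applying Theorem \ref{t A} to $v$, to $-v$ (invoking $\Delta(-v)=-\Delta(v)$), and to $e$, with $x$ equal to $\pm e$ or $\pm v$, we obtain
\begin{align*}
\Delta(v+e) &= \Delta(v) + T_v(e) = \Delta(e) + T_e(v), \\
\Delta(v-e) &= \Delta(v) - T_v(e), \\
\Delta(-v+e) &= -\Delta(v) + T_{-v}(e) = \Delta(e) - T_e(v).
\end{align*}
Since $-v+e=-(v-e)$, Theorem \ref{t C} applied to $v-e$ gives $\Delta(-v+e)=-\Delta(v-e)=-\Delta(v)+T_v(e)$; comparing with the third display produces the crucial identity $T_{-v}(e)=T_v(e)$. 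Adding the first and third displays then yields $2T_v(e) = 2\Delta(e)$, so $T_v(e) = \Delta(e)$.

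Parts (b) and (c) are now routine. For (b), after possibly replacing $w_1$ by $-w_1$ and using antipodal preservation, one may assume $\lambda_1 = 1$; Theorem \ref{t A} applied to $w_1$ combined with real-linearity of $T_{w_1}$ gives
\[
\Delta\Bigl(\sum_{j=1}^n \lambda_j w_j\Bigr) = \Delta(w_1) + \sum_{j\geq 2} \lambda_j\, T_{w_1}(w_j),
\]
and (a) turns each $T_{w_1}(w_j)$ into $\Delta(w_j)$. For (c), scale $x \in \{v\}^\perp \cap \{w\}^\perp$ so $\|x\| \leq 1$ (using real-linearity of $T_v$ and $T_w$); then Theorem \ref{t A} applied to $v$ together with (a) gives $\Delta(v+w+x) = \Delta(v) + \Delta(w) + T_v(x)$, and the symmetric computation with $v \leftrightarrow w$ gives the same expression with $T_w(x)$ in place of $T_v(x)$, so $T_v(x) = T_w(x)$.

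For (d), with $A$ a von Neumann algebra and $x \in S((1-vv^*)A(1-v^*v))$, use the polar decomposition $x = u|x|$ in $A$; both $u$ and $|x|$ sit in the appropriate one-sided corners. Since $1 \in \sigma(|x|)$, the spectral theorem provides norm approximants of $|x|$ by finite positive combinations of mutually orthogonal spectral projections with maximum coefficient equal to $1$; multiplying by $u$ gives norm approximants $x_n = \sum_j \mu_j^{(n)}(u p_j^{(n)}) \in S((1-vv^*)A(1-v^*v))$, where the $u p_j^{(n)}$ are mutually orthogonal non-zero partial isometries and $\max_j |\mu_j^{(n)}| = 1$. Parts (a) and (b) give $\Delta(x_n) = T_v(x_n)$, and continuity of $\Delta$ and $T_v$ transfers the equality to $x$. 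The main obstacle in the whole proof is part (a), specifically the identity $T_{-v}(e) = T_v(e)$: it is not a formal consequence of the real-linearity of $T_v$ but depends on recognizing $v-e$ as a non-zero partial isometry so that Theorem \ref{t C} applies; a secondary technical point in (d) is ensuring the spectral approximants remain on the sphere of the corner with the normalization $\max_j|\mu_j^{(n)}|=1$, which is handled by tailoring the top spectral partition of $|x|$.
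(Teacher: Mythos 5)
Your proposal is correct and follows essentially the route the survey indicates for this result: it combines Theorem \ref{t A} (the affine behaviour of $\Delta$ on the faces $F_e$) with Theorem \ref{t C} (preservation of antipodes of partial isometries, applied crucially to the orthogonal difference $v-e$, which is again a non-zero partial isometry) to get $(a)$, and then derives $(b)$--$(d)$ from real linearity of the maps $T_v$ together with the polar decomposition and spectral approximation available in a von Neumann algebra, exactly as in the parallel argument sketched for compact C$^*$-algebras around \eqref{eq on algebraic minimal}--\eqref{eq Tw1 Tw2 and Delta coincide on the intersection}. All the individual steps check out, including the norm computations $\|v\pm e\|=1$ via orthogonality and the normalization $\max_j|\mu_j^{(n)}|=1$ of the spectral approximants.
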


Given a surjective isometry $\Delta: S(M) \to S(N)$ between the unit spheres of two von Neumann algebras, the synthesis of a surjective real linear extension to a surjective real linear isometry $T:M\to N$ follows completely different arguments than those in the cases of compact C$^*$-algebras and $B(H)$. The technique in this case relies again in the Hatori-Moln{\'a}r theorem (Theorem \ref{t Hatori Molnar}). R. Tanaka proves in \cite{Tan2017b} that a surjective isometry between the unit spheres of two finite von Neumann algebras maps unitary elements to unitary elements. This result has been extended to general von Neumann algebras in \cite[Theorem 3.2]{FerPe17d}.

\begin{theorem}\label{t Tanaka unitaries general vN}\cite[Theorem 3.2]{FerPe17d} Let $\Delta : S(M)\to S(N)$ be a surjective isometry between the unit spheres of two von Neumann algebras.  Then $\Delta$ maps unitaries in $M$ to unitaries in $N$.
\end{theorem}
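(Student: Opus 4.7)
My plan is to show that the non-zero partial isometry $\Delta(u)\in N$ furnished by Theorem~\ref{t A} is actually a unitary, i.e.\ that $\Delta(u)\Delta(u)^{*}=\Delta(u)^{*}\Delta(u)=1_{N}$. As a first step, the facial correspondence of Theorem~\ref{t first correspondence between faces and compact partial isometries in the bidual for a surjective isometry} applied to the singleton face $F_{u}=\{u\}$ (which collapses to a point precisely because $1-uu^{*}=0=1-u^{*}u$) yields $F_{\Delta(u)}=\{\Delta(u)\}$, so $\Delta(u)$ is an extreme point of $\mathcal{B}_{N}$. Theorem~\ref{t C} also provides $\Delta(-u)=-\Delta(u)$ at no cost.

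The hard part will be promoting extremality to unitarity: a maximal partial isometry in a general von Neumann algebra need not have full initial and final projections --- the unilateral shift on $\ell^{2}$ is a proper isometry that is nonetheless an extreme point of $\mathcal{B}_{B(\ell^{2})}$ --- so the facial apparatus of the preceding sections cannot conclude on its own. I would exploit the affine behaviour of $\Delta$ on orthogonal sums of partial isometries supplied by Proposition~\ref{p algebraic elements}(b). Given any non-trivial finite orthogonal decomposition $1=p_{1}+\dots +p_{n}$ of the unit of $M$ by projections, each $up_{i}$ is a partial isometry in $M$ with initial projection $p_{i}$ and final projection $up_{i}u^{*}$, the family $\{up_{i}\}_{i}$ is mutually orthogonal, and taking all coefficients $\lambda_{j}=1$ in Proposition~\ref{p algebraic elements}(b) yields $\Delta(u)=\sum_{i=1}^{n}\Delta(up_{i})$. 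The same proposition, combined with Theorem~\ref{t A}, forces the $\Delta(up_{i})$ to be mutually orthogonal partial isometries in $N$ (the analogue of Theorem~\ref{t Tingley antipodes for finite rank compact C*-algebras}(b) in this setting), so that $\Delta(u)^{*}\Delta(u)=\sum_{i}\Delta(up_{i})^{*}\Delta(up_{i})$ and $\Delta(u)\Delta(u)^{*}=\sum_{i}\Delta(up_{i})\Delta(up_{i})^{*}$ decompose orthogonally.

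To transfer the identities $\sum_{i}(up_{i})^{*}(up_{i})=\sum_{i}p_{i}=1_{M}$ and $\sum_{i}(up_{i})(up_{i})^{*}=\sum_{i}up_{i}u^{*}=1_{M}$ to $N$, I would let the partition $\{p_{i}\}$ run through a cofinal family of finite orthogonal decompositions of $1_{M}$ --- choosing abelian projections on an atomic summand of $M$ and exploiting the halving lemma on the continuous part --- and invoke the real linear isometries $T_{up_{i}}$ of Theorem~\ref{t A} together with Proposition~\ref{p algebraic elements}(d), which guarantees $\Delta(x)=T_{up_{i}}(x)$ on the sphere of the complementary corner $(1-up_{i}u^{*})M(1-p_{i})$. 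Since each $up_{j}$ with $j\ne i$ lies in this complementary corner, the compatibility relation of Proposition~\ref{p algebraic elements}(c) aligns the images across all $i$. Coupled with an identity principle on projections in the spirit of Proposition~\ref{p surjective isometries extension on projections}, the accumulated identities should force $\sum_{i}\Delta(up_{i})^{*}\Delta(up_{i})=1_{N}=\sum_{i}\Delta(up_{i})\Delta(up_{i})^{*}$, whence $\Delta(u)$ is unitary.

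The principal obstacle, and the crux of the proof, is precisely this last transfer from ``finitely many orthogonal pieces consistent with $\Delta(u)$'' to ``bilateral completeness of $\Delta(u)$''. It requires bridging the purely norm-isometric behaviour of $\Delta$ with the weak$^{*}$ topology of $N$, since the identity $\sum_{i}p_{i}=1$ for a truly infinite orthogonal family is only a weak$^{*}$-SOT limit, while $\Delta$ is a priori only norm-continuous on $S(M)$; the weak$^{*}$-continuity of each $T_{up_{i}}$ is what will ultimately carry the infinite summation through on the $N$-side.
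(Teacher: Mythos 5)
Your preparatory steps are sound: Theorem \ref{t A} does make $\Delta(u)$ a non-zero partial isometry, the collapse of the face $F_u=\{u\}$ does show that $\Delta(u)$ is an extreme point of $\mathcal{B}_N$, and you are right that this is where the real difficulty begins, since a non-unitary extreme point such as the unilateral shift cannot be excluded by facial or orthogonality data alone. The problem is that your proposed mechanism never engages that difficulty. Once the $\Delta(up_i)$ are known to be mutually orthogonal partial isometries summing to $\Delta(u)$, the identity $\sum_i \Delta(up_i)^*\Delta(up_i)=\Delta(u)^*\Delta(u)$ holds automatically, so the assertion that this sum equals $1_N$ is not a consequence of the decomposition --- it is a verbatim restatement of the claim to be proved. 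Refining the partition $\{p_i\}$, invoking the weak$^*$-continuity of the maps $T_{up_i}$, or appealing to the compatibility relations of Proposition \ref{p algebraic elements}$(c)$ adds nothing, because every structural fact you have assembled (extremality of $\Delta(u)$, additivity over orthogonal decompositions, preservation of orthogonality, $\Delta(-u)=-\Delta(u)$) is satisfied equally well if $\Delta(u)$ is a proper isometry: a non-unitary extreme point $v$ still has $(1-vv^*)N(1-v^*v)=\{0\}$ and is therefore indistinguishable from a unitary by the orthogonality lattice and the facial apparatus. Your argument never produces a quantity that takes different values in the unitary and non-unitary cases, so it cannot close.

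What is missing is a metric invariant, computable purely from distances inside the unit spheres, that separates unitaries from the remaining extreme points; this is exactly the ingredient supplied by the sources this survey cites (it does not reproduce the proof of \cite[Theorem 3.2]{FerPe17d} itself). For unital C$^*$-algebras the invariant is recorded here as Lemma \ref{l Mori 3.1}: an extreme point $x$ of $\mathcal{B}_A$ is a unitary if and only if the set $\{y\in\partial_e(\mathcal{B}_A):\|x\pm y\|=\sqrt{2}\}$ has an isolated point as a metric space. Since $\Delta$ carries extreme points to extreme points (Corollary \ref{c for spaces with property of semi-exposition for faces}), preserves distances, and preserves antipodes of faces (Proposition \ref{p Mori faces}), this property transfers across $\Delta$ and yields the theorem; the argument of \cite{FerPe17d} likewise rests on a distance-theoretic discrimination between unitaries and non-unitary maximal partial isometries rather than on orthogonal decompositions of $u$. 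Some device of this kind must be inserted into your outline before the final step can be carried out.
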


From now on, let the symbol $\mathcal{U} (A)$ denote the unitary group of a C$^*$-algebra $A$. The above Theorem \ref{t Tanaka unitaries general vN} opens the door to apply the Hatori-Moln{\'a}r theorem (Theorem \ref{t Hatori Molnar}) to synthesize a surjective real linear isometry $T : M \to N$ satisfying that $T(u) = \Delta (u)$ for all $u\in \mathcal{U} (M)$. The difficulties to finish the proof of Theorem \ref{t Tingley von Neumann} reside in proving that $\Delta(x) = T(x)$ for all $x\in S(M)$. This is solved in \cite{FerPe17d} with a convenient application of the theory of convex combinations of unitary operators in von Neumann algebras developed by C.L. Olsen and G.K. Pedersen in \cite{Ols1989} and \cite{OlsPed1986}. These are the main lines in the proof of Theorem \ref{t Tingley von Neumann}.\smallskip

It is worth to make a stop to comment the first connection with a very recent contribution of M. Mori. In the preprint \cite{Mori2017}, M. Mori establishes a generalization of the above Theorem \ref{t Tanaka unitaries general vN}.

\begin{theorem}\label{t Tanaka unitaries general unital Cstar}\cite[Theorem 3.2]{Mori2017} Let $\Delta : S(A)\to S(B)$ be a surjective isometry between the unit spheres of two unital C$^*$-algebras. Then $\Delta$ maps unitaries in $A$ to unitaries in $B$.
\end{theorem}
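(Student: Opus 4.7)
The plan is to apply Theorem \ref{t A} to reduce to an extremality claim, and then to promote this to unitarity by a contradiction argument that exploits the global isometric data of $\Delta$. Let $u\in\mathcal{U}(A)$ and put $w:=\Delta(u)\in S(B)$. Since $u$ is a non-zero partial isometry in $A$, Theorem \ref{t A} furnishes a non-zero partial isometry $w$ in $B$ together with a surjective real-linear isometry
\[T_u:(1_A-uu^*)A(1_A-u^*u)\longrightarrow (1_B-ww^*)B(1_B-w^*w).\]
Because $u$ is unitary, the domain of $T_u$ equals $\{0\}$, so by surjectivity the codomain is also $\{0\}$. Hence $w$ is an extreme point of $\mathcal{B}_B$.

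The principal remaining task is to upgrade this extremality to unitarity, i.e., to prove $ww^*=w^*w=1_B$. This is the crux of the theorem: extremality of the unit ball does not imply unitarity in a general unital C$^*$-algebra, as the unilateral shift on $\ell^2(\mathbb{N})$ shows. The argument must therefore use the global behaviour of $\Delta$ and not merely its behaviour on the singleton face $F_u=\{u\}$.

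I would argue by contradiction. Suppose $q:=1_B-ww^*$ is non-zero; then $q$ is a non-zero projection in $B$, hence a non-zero partial isometry, and the partial-isometry identities yield $qw=0$ and $w^*q=0$. Theorem \ref{t A} applied to the surjective isometry $\Delta^{-1}:S(B)\to S(A)$ at $q$ produces a non-zero partial isometry $e:=\Delta^{-1}(q)\in A$, and Theorem \ref{t C} (applied to $\Delta^{-1}$) supplies the antipodal identity $\Delta^{-1}(-q)=-e$.

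The heart of the proof, and its main technical difficulty, is to convert the one-sided annihilation relation $qw=0$ in $B$ into a metric certificate that $\Delta^{-1}$ can transport to the corresponding relation $eu=0$ in $A$. My plan is to combine Theorem \ref{t FaWang distance 2} (applied to well-chosen norm-two pairs built from $\pm q$, $\pm w$ and their source and range projections) with Proposition \ref{p algebraic elements} (which controls $\Delta^{-1}$ along the partial isometries adjacent to $q$), in order to show that $\Delta^{-1}$ carries the set of $x\in S(B)$ satisfying $qx=0$ into the analogous set of $y\in S(A)$ satisfying $ey=0$. Specialising at $x=w$ yields $eu=0$, and right-multiplication by $u^*\in A$ (available because $u\in\mathcal{U}(A)$) gives $e=0$, contradicting $e\neq 0$; hence $ww^*=1_B$. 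Running the argument with $q':=1_B-w^*w$ in place of $q$, using the dual annihilations $wq'=0$ and $q'w^*=0$, symmetrically yields $w^*w=1_B$, and therefore $w\in\mathcal{U}(B)$.
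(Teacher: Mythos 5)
Your first step is correct and cleanly executed: since $u$ is unitary, the domain $(1_A-uu^*)A(1_A-u^*u)$ of the map $T_u$ from Theorem \ref{t A} is $\{0\}$, so surjectivity forces $(1_B-ww^*)B(1_B-w^*w)=\{0\}$, i.e. $w=\Delta(u)$ is an extreme point of $\mathcal{B}_B$. You also correctly identify that the real content of the theorem is the passage from extremality to unitarity. But that passage is exactly where your argument stops being a proof and becomes a plan. The step you yourself call the ``heart of the proof'' --- showing that $\Delta^{-1}$ carries $\{x\in S(B): qx=0\}$ into $\{y\in S(A): ey=0\}$, where $q=1_B-ww^*$ and $e=\Delta^{-1}(q)$ --- is never carried out, and the tools you cite cannot do it. Proposition \ref{p algebraic elements} only governs families of \emph{mutually orthogonal} partial isometries, and $q$ and $w$ fail to be orthogonal in the C$^*$-sense precisely in the problematic case: for the unilateral shift $s$ one has $q=1-ss^*$ with $qs=0$ and $s^*q=0$, but $qs^*\neq 0$. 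Theorem \ref{t FaWang distance 2} only detects pairs at distance $2$, whereas $\|q\pm w\|=\sqrt{2}$ (compute $(q\pm w)^*(q\pm w)=q+w^*w$ using $qw=w^*q=0$), so the most it yields is $\|e\pm u\|=\sqrt 2$; and that alone is consistent with $eu\neq 0$ (take $e=ipu$ for a nonzero projection $p$, so that $eu^*=ip$ and $\|ip\pm 1\|=\sqrt 2$). More fundamentally, the one-sided relation $qx=0$ is not a metric invariant of the pair $(q,x)$ inside the sphere: in $B(\ell^2)$ both the shift $s$ and a unitary of the form $v=iP_0\oplus v_1$ satisfy $\|P_0\pm x\|=\sqrt2$, yet $P_0 s=0$ while $P_0 v\neq 0$. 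So the transport you need would have to come from somewhere else, and nothing in your sketch supplies it.

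For comparison, the proof the survey attributes to Mori diverges from yours at exactly this point: once one knows that $\Delta$ preserves extreme points and antipodes, unitaries are singled out among extreme points by the purely metric criterion of Lemma \ref{l Mori 3.1} (the set $A_x=\{y\in\partial_e(\mathcal{B}_A):\|x\pm y\|=\sqrt2\}$ has an isolated point as a metric space if and only if $x$ is unitary), a criterion that a surjective isometry between spheres visibly preserves. The fact that such a delicate ``isolated point'' invariant is required is itself evidence that no direct transport of the annihilation relation $qx=0$, of the kind your plan presupposes, is available.
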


The proof presented by M. Mori in \cite{Mori2017} is based in the following geometric result, which is a nice discovering by itself, and might be useful in some other contexts.

\begin{lemma}\label{l Mori 3.1} Let $A$ be a unital C$^*$-algebra, and let $x$ be an element in $\partial_e(\mathcal{B}_A)$. Then $x$ is a unitary if and only if the set $A_x :=\{ y \in \partial_e(\mathcal{B}_A) : \|x\pm y\| =\sqrt{2} \}$ has an isolated point as metric space.
\end{lemma}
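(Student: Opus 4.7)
My plan is to first give an algebraic description of the set $A_x$, and then deduce each direction of the equivalence.

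For the characterization, I expand $(x\pm y)^*(x\pm y) = x^*x + y^*y \pm (x^*y + y^*x)$ and use the identity $(x+y)^*(x+y) + (x-y)^*(x-y) = 2(x^*x + y^*y)$. When $x$ and $y$ are both extreme (maximal partial isometries) and $\|x\pm y\| = \sqrt{2}$, the two positive elements $A := (x+y)^*(x+y)$ and $B := (x-y)^*(x-y)$ each have norm $2$, and in the cases of interest $\|A + B\| = 2\|x^*x + y^*y\| = 4 = \|A\| + \|B\|$ produces equality in the triangle inequality. Since for positive elements in a C$^*$-algebra the identity $\|A+B\| = \|A\| + \|B\|$ forces the existence of a common norming state $\phi$, subtracting the two values $\phi(A) = \phi(B) = 2$ gives $\phi(x^*y + y^*x) = 0$; propagating this relation (and running the computation at every such state) yields $x^*y + y^*x = 0$. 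Conversely, if $x^*y$ is skew-adjoint then $\|x\pm y\|^2 = \|x^*x + y^*y\| = 2$ is automatic. Hence $A_x = \{y \in \partial_e(\mathcal{B}_A) : (x^*y)^* = -x^*y\}$.

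For the direction $(\Rightarrow)$, assume $x$ is unitary. Left multiplication by $x^*$ is a surjective linear isometry of $A$ sending $x$ to $1$ and $A_x$ bijectively and isometrically onto $A_1$, so it suffices to exhibit an isolated point in $A_1$. Any skew-adjoint maximal partial isometry $y \in A$ satisfies $y^2 = -y^*y = -yy^* =: -q$ with $q$ a projection, and the maximality condition $(1-q)A(1-q) = \{0\}$ in a unital algebra forces $q = 1$, so $y$ is a unitary with $y^2 = -1$, i.e., $y = i(2p - 1)$ for some projection $p \in A$. The element $y_0 = i \in A_1$ (corresponding to $p = 1$) is isolated: for every projection $p' \neq 1$ one has $\|y_0 - i(2p' - 1)\| = 2\|1 - p'\| = 2$.

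For the direction $(\Leftarrow)$ I argue by contrapositive: suppose $x$ is a maximal partial isometry that is not unitary. Replacing $x$ by $x^*$ if necessary (using the isometry $y \mapsto y^*$ from $A_x$ to $A_{x^*}$), I may assume $p_0 := 1 - xx^* \neq 0$. I claim $A_x$ has no isolated points. Given $y \in A_x$ that is an isometry, the relations $x^*p_0 = p_0 x = 0$ yield the decomposition $y = xx^*y + p_0 y =: ixk + z$ where $k := -ix^*y \in A_{sa}$ and $z := p_0 y \in p_0 A$, together with $y^*y = k^2 + z^*z = 1$. Pick any nonzero $\xi \in p_0 A$ (for instance $\xi = p_0$). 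For small $t > 0$ let $z_t := z + t\xi$ and choose a self-adjoint square root $k_t$ of the positive element $1 - z_t^*z_t$ that depends continuously on $t$ with $k_0 = k$. Then $y_t := ix k_t + z_t$ satisfies $y_t^*y_t = k_t^2 + z_t^*z_t = 1$ (so is an isometry, hence an extreme point) and $x^*y_t = ik_t$ is skew-adjoint, so $y_t \in A_x$; moreover $y_t \to y$ in norm while $y_t \neq y$ since the $p_0$-component is changed. Thus $y$ is not isolated.

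The principal technical obstacle is the continuous selection of the self-adjoint square root $k_t$ in the case where $k$ has mixed sign or zero in its spectrum (most notably the branch $z = 0$ with $k = 2p' - 1$ for a projection $p'$). I resolve this by decomposing $k$ along its positive and negative spectral projections, applying functional calculus to each summand, and choosing $\xi$ to lie in a subalgebra compatible with this spectral splitting (for example $\xi \in p_0 A p_0$ in the commutant of these projections). The parallel situations in which $y \in A_x$ is a coisometry or a maximal partial isometry that is neither isometry nor coisometry are handled by performing the analogous decomposition of $y$ with respect to its own $y^*y$ and $yy^*$; in every case, the nontriviality of $p_0$ furnishes the room required to construct a nontrivial small perturbation inside $A_x$.
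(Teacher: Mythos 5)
The paper itself gives no proof of this lemma (it is quoted from \cite{Mori2017}), so I can only judge your argument on its own terms. Your architecture is the right one --- for unitary $x$ reduce to $x=1$ and exhibit $i\cdot 1$ as isolated; for non-unitary $x$ perturb every point of $A_x$ --- but the execution has genuine gaps. The central one is your proof of the characterization $A_x=\{y\in\partial_e(\mathcal{B}_A):x^*y+y^*x=0\}$, on which both directions rest. In the forward direction, a state $\phi$ norming $A+B$ gives $\phi(x^*y+y^*x)=0$ for \emph{that} state only; there may be essentially one such state, and a self-adjoint element annihilated by one state need not vanish, so ``running the computation at every such state'' does not yield $x^*y+y^*x=0$. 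In the reverse direction, $\|x^*x+y^*y\|=2$ is \emph{not} automatic for two nonzero projections (one has $\|p+q\|=1+\|pq\|$, which can equal $1$); it must be extracted from extremality. The characterization is salvageable at least for $x$ unitary: from $\|1\pm y\|^2=\|1+y^*y\pm(y+y^*)\|\le 2$ one gets $\pm(y+y^*)\le 1-y^*y$, and computing with $(1\pm y)(1\pm y)^*$ also $\pm(y+y^*)\le 1-yy^*$; since maximality of $y$ forces $(1-y^*y)\perp(1-yy^*)$, the self-adjoint element $y+y^*$ is supported on each of two orthogonal projections and hence vanishes. With that, your description of $A_1$ and the isolation of $i\cdot 1$ (distance $2$ to every $i(2p'-1)$ with $p'\neq 1$) is correct.

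The perturbation in the contrapositive direction also fails as stated. Take $A=B(\ell^2)$, $x=S$ the unilateral shift, and $y$ the isometry with $ye_0=e_0$ and $ye_n=ie_{n+1}$ for $n\ge 1$; then $y\in A_x$, $z=p_0y=e_{00}$, and your suggested choice $\xi=p_0=e_{00}$ gives $z_t^*z_t=(1+t)^2e_{00}\not\le 1$, so $1-z_t^*z_t$ is not positive and $k_t$ does not exist for any $t>0$. The obstacle is therefore not merely the ``continuous selection of the square root'': the direction $\xi$ must be chosen adapted to $y$ so that $z_t$ stays contractive, one must also verify $[x^*x,k_t]=0$ so that $x^*y_t=ix^*xk_t$ remains skew-adjoint when $x^*x\neq 1$ (which happens for maximal partial isometries in general unital C$^*$-algebras), and the coisometry and mixed cases are only gestured at. Until the characterization of $A_x$ and the admissibility of the perturbation are actually established, neither implication is proved.
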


We finish this section with a couple of open problems.

\begin{open problem}\label{open problem Tingley}{\rm(}Tingley's problem for C$^*$-algebras{\rm)} Let $\Delta : S(A)\to S(B)$ be a surjective isometry between the unit spheres of two C$^*$-algebras. Does $\Delta$ admits an extension to a surjective real linear isometry from $A$ onto $B$?
\end{open problem}

When $A$ is unital C$^*$-algebra M. Mori shows in \cite[Proposition 3.4 and Problem 6.1]{Mori2017} that a particular version of the above problem can be restated in the following terms:

\begin{open problem} Let $A$ be a unital C$^*$-algebra and let $\Delta : S(A)\to S(A)$ be a surjective isometry. Suppose that $\Delta(x) = x$ for every invertible element in the unit sphere of $A$. Is $\Delta$ equal to the identity mapping on $S(A)$?
\end{open problem}

\section{Tingley's problem on von Neumann algebra preduals}\label{sec: Tingleys predual}

Let us begin this section with another result due to G.G. Ding. Let $\Gamma$ be a index set, we denote by $\ell_{_{\mathbb{R}}}^1(\Gamma)$ the Banach space of all absolutely summable families of real numbers equipped with the norm $\displaystyle \left\| (\xi_j)_j \right\|_1 = \sum_{j\in \Gamma} |\xi_j|.$

\begin{theorem}\label{t Ding ell1}\cite[Theorem 1]{Di:1} Let $\Delta : S(\ell_{_{\mathbb{R}}}^1(\Gamma_1)) \to S(\ell_{_{\mathbb{R}}}^1(\Gamma_2))$ be a surjective isometry. Then there exists a one-to-one bijection $\sigma : \Gamma_1\to \Gamma_2$ and a family of real numbers $\{\theta_j : j \in \Gamma_1\}\subseteq \mathbb{T}$ such that $$\Delta\left( \sum_{j\in \Gamma_1} \xi_j e_j \right) = \sum_{j\in \Gamma_2} \theta_j \xi_{\sigma(j)} \widehat{e}_j,$$ where $\{e_j :j\in \Gamma_1\}$ and $\{\widehat{e}_j :j\in \Gamma_2\}$ are the canonical basis of $\ell_{_{\mathbb{R}}}^1(\Gamma_1)$ and $\ell_{_{\mathbb{R}}}^1(\Gamma_2)$, respectively. In particular, there exists a surjective real linear isometry $T : \ell_{_{\mathbb{R}}}^1(\Gamma_1) \to \ell_{_{\mathbb{R}}}^1(\Gamma_2)$ whose restriction to $S(\ell_{_{\mathbb{R}}}^1(\Gamma_1))$ coincides with $\Delta$.
\end{theorem}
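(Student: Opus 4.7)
The plan is to exploit the rigid facial structure of the unit ball of $\ell^1_{\mathbb{R}}(\Gamma)$, whose extreme points are precisely the signed unit vectors $\pm e_j$. Since $\ell^1_{\mathbb{R}}(\Gamma)$ is (isometrically) the predual of the hermitian part of the commutative von Neumann algebra $\ell^\infty(\Gamma)$, the hypotheses of Corollary \ref{c for spaces with property of semi-exposition for faces} are satisfied on both sides. Part $(b)$ of that corollary then asserts that $\Delta$ restricts to a bijection between the extreme point sets $\{\pm e_j : j\in \Gamma_1\}$ and $\{\pm \widehat{e}_k : k\in \Gamma_2\}$, while Proposition \ref{p Mori faces}$(b)$, applied to the proper norm-closed face $\{e_j\}$, forces $\Delta(-e_j) = -\Delta(e_j)$. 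Hence this bijection respects antipodes and descends to a bijection $\sigma : \Gamma_1 \to \Gamma_2$ together with a family of signs $\theta_j \in \{\pm 1\}$ satisfying $\Delta(e_j) = \theta_j\, \widehat{e}_{\sigma(j)}$ for all $j\in \Gamma_1$.

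Next I would propagate this information face-by-face. Fix a finite $F\subseteq \Gamma_1$ and a sign function $\varepsilon : F \to \{\pm 1\}$, and let $\mathcal{S}_{F,\varepsilon} := \mathrm{conv}\{\varepsilon(j)\, e_j : j\in F\}$ be the corresponding $(|F|-1)$-simplex. As an intersection of two maximal proper faces of $\mathcal{B}_{\ell^1_{\mathbb{R}}(\Gamma_1)}$, it is a norm-closed face; so by Corollary \ref{c for spaces with property of semi-exposition for faces}$(a)$ its image $\Delta(\mathcal{S}_{F,\varepsilon})$ is also a norm-closed face of $\mathcal{B}_{\ell^1_{\mathbb{R}}(\Gamma_2)}$. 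By the description of extreme points in the previous paragraph and the Krein--Milman theorem, this image is precisely $\mathrm{conv}\{\varepsilon(j)\theta_j\, \widehat{e}_{\sigma(j)} : j\in F\}$. Mankiewicz's theorem (Theorem \ref{t Mankiewicz}), applied in the common $(|F|-1)$-dimensional affine hulls in which these simplices have nonempty relative interior, forces $\Delta|_{\mathcal{S}_{F,\varepsilon}}$ to coincide with the unique affine bijection matching vertices. Since an affine map on a simplex is determined by its values at the vertices, we obtain
$$\Delta\!\left(\sum_{j\in F} \xi_j e_j\right) = \sum_{j\in F} \theta_j\, \xi_j\, \widehat{e}_{\sigma(j)}$$
for every norm-one $\sum_{j\in F} \xi_j e_j$ whose signs are compatible with $\varepsilon$; letting $(F,\varepsilon)$ vary exhausts every finitely supported norm-one element of $\ell^1_{\mathbb{R}}(\Gamma_1)$.

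The map $T : \ell^1_{\mathbb{R}}(\Gamma_1) \to \ell^1_{\mathbb{R}}(\Gamma_2)$ defined by $T\!\left(\sum_j \xi_j e_j\right) := \sum_j \theta_j\, \xi_j\, \widehat{e}_{\sigma(j)}$ is then manifestly a surjective real linear isometry, and by the preceding step it agrees with $\Delta$ on the norm-dense subset of $S(\ell^1_{\mathbb{R}}(\Gamma_1))$ consisting of finitely supported elements. Continuity of both $\Delta$ and $T$ forces $T|_{S(\ell^1_{\mathbb{R}}(\Gamma_1))} = \Delta$, which yields the desired formula.

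The main obstacle I foresee is verifying that $\Delta(\mathcal{S}_{F,\varepsilon})$ really equals the simplex spanned by the image vertices, and not merely an isometric copy of it embedded in some larger convex set, so that Mankiewicz's theorem can be applied to pin down $\Delta$ affinely there. Here Corollary \ref{c for spaces with property of semi-exposition for faces}$(a)$ does the crucial work: once the image is certified to be a norm-closed face, its (finitely many) extreme points, determined in the first paragraph, fix it by Krein--Milman. The remainder is then a routine density argument that only uses norm continuity of $\Delta$.
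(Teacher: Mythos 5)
Your argument is correct. One should note at the outset, though, that the survey itself offers no proof of this statement: Theorem \ref{t Ding ell1} is quoted verbatim from Ding's paper \cite{Di:1}, whose original argument is an elementary, hands-on analysis specific to $\ell^1(\Gamma)$-type spaces and predates the general facial machinery assembled in Section \ref{sec: geometric background}. What you have produced is therefore a genuinely different (and arguably more conceptual) derivation, which runs the commutative $\ell^1$ case through exactly the pipeline the survey uses for compact C$^*$-algebras in Proposition \ref{p surjective isometries between the spheres preserve norm closed faces} and Theorem \ref{thm Tingley compact Cstaralgebras}: identify extreme points via Corollary \ref{c for spaces with property of semi-exposition for faces}$(b)$ (the hypotheses do hold here, since $\ell^1_{\mathbb{R}}(\Gamma)$ is the predual of the hermitian part of the abelian von Neumann algebra $\ell^\infty(\Gamma)$), get antipodality from Proposition \ref{p Mori faces}$(b)$, transport the finite-dimensional simplicial faces $\mathcal{S}_{F,\varepsilon}$ (correctly realized as intersections of maximal proper faces), pin down their images by Krein--Milman plus the face-preservation of Corollary \ref{c for spaces with property of semi-exposition for faces}$(a)$, and then apply Mankiewicz in the relative affine hulls. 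All the individual steps check out, including the one you single out as delicate: since the image of $\mathcal{S}_{F,\varepsilon}$ is certified to be a compact convex \emph{face} of the target ball, its extreme points are exactly the extreme points of the ball it contains, which you have already computed, so the image really is the expected simplex and Mankiewicz forces vertex-matching affinity. What your route buys is uniformity with the noncommutative results surveyed here; what Ding's route buys is self-containedness and no reliance on the (historically later) results of \cite{ChenDong2011,FerGarPeVill17}. A cosmetic remark: your formula $\Delta\bigl(\sum_{j} \xi_j e_j\bigr)=\sum_{j}\theta_j\xi_j\widehat{e}_{\sigma(j)}$ is the cleanly indexed version of the displayed formula in the statement (which as printed mixes the index sets); the two agree after replacing $\sigma$ by its inverse.
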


Given a $\sigma$-finite measure space $(\Omega, \Sigma, \mu)$, the symbol $L_{_\mathbb{R}}^{1}(\Omega, \Sigma, \mu)$ will denote the Banach space of real valued measurable functions $f: \Omega\to \mathbb{R}$ satisfying $\displaystyle \int_{\Omega} |f| d\mu <\infty,$ with norm $\|f\|_1 = \displaystyle \int_{\Omega} |f| d\mu$. The Banach space of all essentially bounded real valued measurable functions on $\Omega$ will be denoted by $L_{_\mathbb{R}}^{\infty}(\Omega, \Sigma, \mu)$.\smallskip

The previous result of Ding is complemented by the following result due to D. Tan.

\begin{theorem}\label{t Tan L1}\cite[Theorem 3.4]{Ta:1} Let $(\Omega, \Sigma, \mu)$ be a $\sigma$-finite measure space and let $Y$ be a real Banach space. Then every surjective isometry $\Delta : S(L_{_\mathbb{R}}^{1}(\Omega, \Sigma, \mu)) \to S(Y)$ can be uniquely extended to a surjective real linear isometry from $L_{_\mathbb{R}}^{1}((\Omega, \Sigma, \mu))$ onto $Y$.
\end{theorem}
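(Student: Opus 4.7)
The plan is to work with the canonical positively homogeneous extension $F_\Delta : L^1_{\mathbb R}(\Omega, \Sigma, \mu) \to Y$ defined by $F_\Delta(0) := 0$ and $F_\Delta(f) := \|f\|_1 \Delta(f/\|f\|_1)$ for $f \neq 0$, and to show that it is additive. Additivity together with positive homogeneity yields real linearity, and the identity $\|F_\Delta(f)\|_Y = \|f\|_1$ on the sphere then extends to all of $L^1_{\mathbb R}$, producing the desired real linear isometric extension of $\Delta$; uniqueness is automatic from homogeneity and linearity.

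The first structural ingredient is the facial description of $\mathcal{B}_{L^1_{\mathbb R}}$. Since $L^1_{\mathbb R}(\Omega, \Sigma, \mu)$ is the predual of the hermitian part of the commutative von Neumann algebra $L^\infty_{\mathbb C}(\Omega, \Sigma, \mu)$, the hypotheses of Corollary \ref{c for spaces with property of semi-exposition for faces} hold, and the maximal proper norm-closed faces of $\mathcal{B}_{L^1_{\mathbb R}}$ are exactly
\begin{equation*}
F_\sigma = \left\{ f \in S(L^1_{\mathbb R}) : \int f\,\sigma\, d\mu = 1 \right\},
\end{equation*}
indexed by $\sigma \in L^\infty_{\mathbb R}$ with $|\sigma| = 1$ $\mu$-a.e. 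Corollary \ref{c for spaces with property of semi-exposition for faces} shows that $\Delta$ transports each $F_\sigma$ bijectively onto a maximal proper face of $\mathcal{B}_Y$. The key technical reduction is to prove that $\Delta$ is affine on each $F_\sigma$: for finite-dimensional $F_\sigma$ this is Mankiewicz's theorem (Theorem \ref{t Mankiewicz}) applied inside the affine hull, and in general I would establish affineness by a direct metric argument exploiting the clean closed form $\|f_1 - f_2\|_1 = \int |f_1-f_2|\,d\mu$ on $F_\sigma$-pairs. Once this affineness is secured, positive homogeneity upgrades it to additivity of $F_\Delta$ on every pair of positive linear combinations of elements of a common $F_\sigma$-cone; in particular $F_\Delta$ is additive on the nonnegative cone of $L^1_{\mathbb R}$, and more generally on every pair of disjointly supported functions (since such a pair lies inside a common $F_\sigma$ for a suitable sign function).

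The next, and most delicate, step is antipodal preservation: $\Delta(-f) = -\Delta(f)$ for every $f \in S(L^1_{\mathbb R})$. By Proposition \ref{p Mori faces}(b), $\Delta(-F_\sigma) = -\Delta(F_\sigma)$ setwise for every maximal face, and by Theorem \ref{t FaWang distance 2} the distance-$2$ relation is invariant under $\Delta$; in the non-atomic setting, however, $S(L^1_{\mathbb R})$ has no extreme points and $\{-f\}$ cannot be recovered as an intersection of maximal faces, so the classical finite-dimensional Tingley argument does not transfer. I would overcome this by combining the isometric identity $\|\Delta(g) - \Delta(-f)\|_Y = \|g + f\|_1$, tested against a sufficiently rich family of sign-varied points $g \in S(L^1_{\mathbb R})$, with the already established affineness of $\Delta$ on the various faces simultaneously containing $-f$ and these test points, in order to pin down $\Delta(-f)$ uniquely within $-\Delta(F_\sigma)$ as $-\Delta(f)$.

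Finally, the passage from additivity on the nonnegative cone plus antipodal preservation to global additivity is purely algebraic, via the pointwise identity
\begin{equation*}
(f+g)_+ + f_- + g_- = f_+ + g_+ + (f+g)_-,
\end{equation*}
whose two sides are sums of nonnegative functions of equal pointwise value. Applying additivity of $F_\Delta$ on the nonnegative cone to each side and rearranging, together with antipodal preservation (which yields the Riesz-decomposition identity $F_\Delta(h) = F_\Delta(h_+) - F_\Delta(h_-)$), produces $F_\Delta(f+g) = F_\Delta(f) + F_\Delta(g)$. Combined with positive homogeneity this forces $F_\Delta$ to be real linear. The main obstacle in the whole scheme is clearly the antipodal preservation step, which is the technical heart of Tan's argument; everything else is a combination of the known facial calculus with elementary algebra on sign decompositions.
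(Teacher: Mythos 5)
First, a remark on the comparison itself: the survey states Theorem \ref{t Tan L1} as a quoted result of D.~Tan \cite{Ta:1} and contains no proof of it, so there is nothing in the paper to measure your argument against line by line; what follows is an assessment of your proposal on its own terms. Your overall skeleton (positively homogeneous extension $F_\Delta$, additivity on the nonnegative cone and on disjointly supported pairs, the Riesz--decomposition identity $F_\Delta(h)=F_\Delta(h_+)-F_\Delta(h_-)$, and the pointwise identity $(f+g)_+ + f_- + g_- = f_+ + g_+ + (f+g)_-$ to pass to full additivity) is a sound closing argument \emph{provided} its two inputs are available. But the two inputs are exactly where the proposal stops being a proof. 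A preliminary but consequential point: Corollary \ref{c for spaces with property of semi-exposition for faces} and Proposition \ref{p Mori faces}$(b)$ impose the semi-exposedness hypotheses on \emph{both} spaces, and here $Y$ is an arbitrary real Banach space, which is the whole strength of Tan's theorem; you may only invoke the results valid with no hypotheses on $Y$, namely Theorem \ref{t faces ChengDong11}, Proposition \ref{p Mori faces}$(a)$ and Theorem \ref{t FaWang distance 2}. This still gives you that $\Delta$ carries each maximal face $F_\sigma$ onto a maximal convex subset of $S(Y)$, but nothing finer.

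The genuine gaps are the affineness of $\Delta$ on each $F_\sigma$ and the antipodal preservation. For the first: Mankiewicz's theorem (Theorem \ref{t Mankiewicz}) requires nonempty interior, and $F_\sigma$ has empty interior in its affine hull whenever $L^1_{\mathbb R}(\Omega,\Sigma,\mu)$ is infinite dimensional (every relative neighbourhood of a nonnegative norm-one $f$ inside the hyperplane $\int f\sigma\,d\mu=1$ contains functions taking values of the wrong sign, obtained by shifting mass off a set where $\sigma f$ is small). So Mankiewicz covers only the finite-dimensional case, which is not the content of the theorem. The promised ``direct metric argument'' cannot be routine: metric midpoints in $F_\sigma$ are badly non-unique (for $f=\frac12(\chi_{A_1}+\chi_{A_2})$, $g=\frac12(\chi_{A_3}+\chi_{A_4})$ normalized in $\ell^1_4$, the metric midpoints in the positive face form a two-parameter family, not the single point $\frac{f+g}{2}$), and you know nothing about the geometry of the image face inside the arbitrary space $Y$, so an isometry of faces need not respect algebraic midpoints for any soft reason. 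For the second, you yourself identify antipodal preservation as ``the technical heart'' and then describe only the intention to ``pin down'' $\Delta(-f)$; no mechanism is given, and Proposition \ref{p Mori faces}$(b)$ is unavailable for arbitrary $Y$ as noted above. Since the final algebraic step rests entirely on these two unproved pillars, the proposal is an outline of a plausible strategy rather than a proof; the actual argument in \cite{Ta:1} has to build these facts from delicate norm computations in $Y$ using only the isometry identity and the $\ell^1$-additivity of the norm on disjointly supported functions.
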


Regarding $\ell_{_{\mathbb{R}}}^1(\Gamma_1)$ and $L_{_\mathbb{R}}^{1}(\Omega, \Sigma, \mu)$ as predual spaces of the hermitian parts of the von Neumann algebras $\ell_{_{\mathbb{R}}}^\infty(\Gamma_1)$ and $L_{_\mathbb{R}}^{\infty}(\Omega, \Sigma, \mu)$, respectively, it seems natural to ask whether Theorems \ref{t Ding ell1} and \ref{t Tan L1} admits non-commutative counterparts. The duality $c_0^*=\ell^1$ and $(\ell^1)^*= \ell^\infty$ admits a non-commutative alter ego in the form $K(H)^*=C_1(H)$ and $C_1(H) = B(H)$, where $C_1(H)$ is the space of trace class operators on a complex Hilbert space $H$. This will be treated in the next subsection.

\subsection{Tingley's problem on trace class operators}\ \smallskip

Tingley's problem for surjective isometries between unit spheres of spaces of trace class operators has been approached by F.J. Fern{\'a}ndez-Polo, J.J. Garc{\'e}s, I. Villanueva and the author of this note in \cite{FerGarPeVill17}. We shall review here the main achievements in this line.  \smallskip

When the space $C_1(H)$ is regarded as the predual of the von Neumann algebra $B(H),$ or as the dual space of the C$^*$-algebra $K(H)$, we can get back to Corollary \ref{c for spaces with property of semi-exposition for faces} and subsequent comments whose consequences were already observed in \cite{FerGarPeVill17}.

\begin{proposition}\label{p minimal in arbitrary dimension}\cite[Proposition 2.6]{FerGarPeVill17} Let $\Delta:S(C_1(H))\to S(C_1(H'))$ be a surjective isometry, where $H$ and $H'$ are complex Hilbert spaces. Then the following statements hold:\begin{enumerate}[$(a)$]\item A subset $\mathcal{F}\subset S(C_1(H))$ is a proper norm-closed face of $\mathcal{B}_{C_1(H)}$ if and only if $\Delta(\mathcal{F})$ is.
\item $\Delta$ maps $\partial_e(\mathcal{B}_{C_1(H)})$ into $\partial_e(\mathcal{B}_{C_1(H')})$;
\item dim$(H)=$dim$(H')$.
\item For each $e_0\in \partial_e(\mathcal{B}_{C_1(H)})$ we have $\Delta(i e_0 ) = i \Delta(e_0)$ or $\Delta(i e_0 ) = -i \Delta(e_0)$;
\item For each $e_0\in \partial_e(\mathcal{B}_{C_1(H)})$ if $\Delta(i e_0 ) = i \Delta(e_0)$ {\rm(}respectively, $\Delta(i e_0 ) = -i \Delta(e_0)${\rm)} then  $\Delta(\lambda e_0 ) = \lambda \Delta(e_0)$ {\rm(}respectively, $\Delta(\lambda e_0 ) = \overline{\lambda} \Delta(e_0)${\rm)} for every $\lambda\in \mathbb{C}$ with $|\lambda|=1$.
\end{enumerate}
\end{proposition}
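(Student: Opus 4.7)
The plan is to reduce parts (a)--(c) to the general facial machinery of Section \ref{sec: geometric background}, and then to perform a sharper geometric analysis of $\Delta$ on the orbit $\mathbb{T}\cdot e_0$ of an extreme point for parts (d) and (e).

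For parts (a) and (b) I would observe that $C_1(H)$ is the predual of the von Neumann algebra $B(H)$, so both hypotheses $(h.1)$ and $(h.2)$ of Corollary \ref{c for spaces with property of semi-exposition for faces} hold (as explicitly noted in the comments following that corollary). Part (a) is then an immediate consequence of Corollary \ref{c for spaces with property of semi-exposition for faces}(a), and part (b) follows from Corollary \ref{c for spaces with property of semi-exposition for faces}(b), or equivalently from the observation that an extreme point is exactly a singleton norm-closed face. For part (c), applying (b) to both $\Delta$ and $\Delta^{-1}$ produces a bijection between $\partial_e(\mathcal{B}_{C_1(H)})$ and $\partial_e(\mathcal{B}_{C_1(H')})$; since these extreme points are precisely the rank-one partial isometries (parametrised by pairs of unit vectors in $H$ modulo a common phase), a cardinality argument settles the infinite-dimensional case, while in finite dimensions a surjective isometry between unit spheres forces $2(\dim H)^2 = 2(\dim H')^2$.

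For part (d), fix $e_0 \in \partial_e(\mathcal{B}_{C_1(H)})$ and set $u := \Delta(e_0)$, $v := \Delta(ie_0)$, both rank-one partial isometries by (b). Proposition \ref{p Mori faces}(b) applied to the singleton face $\{e_0\}$ gives $\Delta(-e_0) = -u$. The identities $\|e_0 \pm ie_0\|_1 = \sqrt{2}$ transfer under $\Delta$ to $\|v \pm u\|_1 = \sqrt{2}$. Combining the parallelogram identity for the Hilbert--Schmidt norm with the inequality $\|T\|_2 \leq \|T\|_1$ valid for operators of rank $\leq 2$ (where equality forces rank $\leq 1$), both $v-u$ and $v+u$ must have rank at most one. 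A direct analysis of a rank-one difference of two rank-one partial isometries then shows that either the source or the target vector of $v$ is a unimodular multiple of the corresponding vector of $u$, restricting $v$ to a one-parameter family of candidates. To upgrade this to the stronger conclusion $v \in \mathbb{T}\cdot u$, one exploits distances to additional extreme points $\Delta(\mu e_0)$ (for $\mu \in \mathbb{T}\setminus\{\pm 1, \pm i\}$) together with the continuity of $\lambda\mapsto\Delta(\lambda e_0)$, ruling out the remaining scenarios. Once $v \in \mathbb{T}\cdot u$, the distance constraint $|1 - \mu| = \sqrt{2}$ forces $v = \pm iu$.

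For part (e), assume the case $\Delta(ie_0) = i\Delta(e_0)$ (the conjugate case being analogous). The map $\sigma : \mathbb{T} \to \partial_e(\mathcal{B}_{C_1(H')})$ defined by $\sigma(\lambda) := \Delta(\lambda e_0)$ is an isometric embedding of the unit circle (with chord metric) into the extreme boundary. Applying (d) at each point $\lambda e_0$ and invoking the continuity of $\sigma$ confines $\sigma(\mathbb{T})$ to the circle $\mathbb{T}\cdot u$. Identifying $\mathbb{T}\cdot u$ with $\mathbb{T}$ via $\mu u \leftrightarrow \mu$, the restricted $\sigma$ becomes a self-isometry of the unit circle with $\sigma(1)=1$ and $\sigma(i)=i$; since every such isometry is either a rotation or a reflection, the prescribed values force $\sigma(\lambda) = \lambda$, i.e., $\Delta(\lambda e_0) = \lambda\Delta(e_0)$ for every $\lambda \in \mathbb{T}$. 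The principal obstacle is the upgrade step in (d): the rank-reduction via parallelogram identity yields only proportionality in one coordinate (source or target), which is not enough; the missing rigidity must be extracted either from a more global look at $\Delta$'s action on the full orbit $\mathbb{T}\cdot e_0$ or from a closer analysis of the facial geometry of $\mathcal{B}_{C_1(H')}$ near $\Delta(e_0)$.
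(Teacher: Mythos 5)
Your treatment of parts (a) and (b) is correct and coincides with the route the survey itself indicates: $C_1(H)$ is the predual of the von Neumann algebra $B(H)$, so Corollary \ref{c for spaces with property of semi-exposition for faces} applies directly. The rank-one reduction you perform in (d) --- the parallelogram law for $\|\cdot\|_2$ combined with $\|\cdot\|_2\le\|\cdot\|_1$ and the fact that equality forces rank at most one --- is also correct and is a genuinely nice observation. There are, however, two real gaps. First, in (c), counting extreme points does not determine $\dim H$ when it is infinite: every vector of $\ell_2(\Gamma)$ is supported on a countable subset of $\Gamma$, so the set of extreme points of $\mathcal{B}_{C_1(\ell_2(\Gamma))}$ has cardinality $|\Gamma|^{\aleph_0}\cdot 2^{\aleph_0}$, and this cardinal equals $2^{\aleph_0}$ both for $|\Gamma|=\aleph_0$ and for $|\Gamma|=2^{\aleph_0}$. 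One needs a finer metric invariant --- for instance the density character of the sphere, or the supremum of cardinalities of families of mutually orthogonal extreme points, orthogonality being detectable through the face lattice supplied by part (a).

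Second, and more seriously, the ``upgrade step'' in (d) that you yourself flag as the principal obstacle is not merely unfinished: the means you propose for it --- distances to further points $\Delta(\mu e_0)$ of the same orbit, plus continuity --- provably cannot succeed. Write $u=\Delta(e_0)=\xi\otimes\eta$ and pick any unit vector $\xi''\perp\xi$. The map $e^{i\theta}e_0\mapsto(\cos\theta\,\xi+\sin\theta\,\xi'')\otimes\eta$ is an isometric embedding of the orbit $\mathbb{T}e_0$ into $\partial_e(\mathcal{B}_{C_1(H')})$: it sends $\pm e_0$ to $\pm u$, commutes with the antipodal map, and preserves every pairwise distance $|\lambda-\mu|$, because $\|(\alpha\xi+\beta\xi'')\otimes\eta\|_1=\|\alpha\xi+\beta\xi''\|=\sqrt{|\alpha|^2+|\beta|^2}$. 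Yet it sends $ie_0$ to $\xi''\otimes\eta\notin\mathbb{T}u$. Thus every constraint extractable from the restriction of $\Delta$ to $\mathbb{T}e_0$ is satisfied by a ``rotation of one leg of the tensor'' just as well as by multiplication by a unimodular scalar, and any correct proof of (d) must import information from outside the orbit (distances to extreme points not of the form $\Delta(\mu e_0)$, the surjectivity of $\Delta$, or the facial data of part (a)). The same defect propagates to (e): even granting (d) at every point of the orbit, that only relates $\sigma(i\lambda)$ to $\sigma(\lambda)$ and controls the fourth roots of unity; it does not yield the containment $\sigma(\mathbb{T})\subseteq\mathbb{T}u$ on which your classification of circle isometries rests.
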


The strategy to solve Tingley's problem on $C_1(H)$ is based on techniques of linear algebra and geometry to obtain first a solution in the case of finite dimensional spaces.

\begin{theorem}\label{t Tingley trace class finite dim}\cite[Theorem 3.7]{FerGarPeVill17} Let $\Delta: S(C_1(H))\to S(C_1(H))$ be a surjective isometry, where $H$ is a finite dimensional complex Hilbert space. Then there exists a surjective complex linear or conjugate linear isometry $T :C_1(H)\to C_1(H)$  satisfying $\Delta(x) = T(x)$ for every $x\in S(C_1(H))$. More concretely, there exist unitary elements $u,v\in M_n(\mathbb{C}) = B(H)$ such that one of the following statements holds:\begin{enumerate}[$(a)$] \item $\Delta(x) = u x v$, for every $x\in S(C_1(H))$;
\item $\Delta(x) = u x^t v$, for every $x\in S(C_1(H))$;
\item $\Delta(x) = u \overline{x} v$, for every $x\in S(C_1(H))$;
\item $\Delta(x) = u x^* v$, for every $x\in S(C_1(H))$,
\end{enumerate} where $\overline{(x_{ij})} = (\overline{x_{ij}})$.
\end{theorem}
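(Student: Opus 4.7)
The plan is to analyse $\Delta$ first on the extreme points of $\mathcal{B}_{C_1(H)}$ --- which, for $H$ finite-dimensional, are precisely the rank-one partial isometries $\xi\eta^*$ with $\xi,\eta\in S(H)$ --- and then to extend to the whole sphere using convexity inside faces. By Proposition \ref{p minimal in arbitrary dimension}(b), $\Delta$ induces a bijection on this extreme-point set. Parts (d) and (e) of that same proposition give, at each extreme point $e_0$, the dichotomy $\Delta(\lambda e_0)=\lambda\Delta(e_0)$ for all $|\lambda|=1$, or $\Delta(\lambda e_0)=\bar\lambda\Delta(e_0)$ for all such $\lambda$. The alternative is a locally constant function of the continuous variable $e_0$, and the set of rank-one partial isometries (a continuous quotient of $S(H)\times S(H)$) is connected whenever $\dim H\ge 2$; hence the choice is uniform. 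Post-composing with the conjugate-linear isometry $x\mapsto\bar x$ if necessary, we reduce to the complex-linear alternative.

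The heart of the proof is a Wigner-type step. For a fixed $\xi_0\in S(H)$ the ``left fibre'' $L_{\xi_0}=\{\xi_0\eta^*:\eta\in S(H)\}$ is trace-norm isometric to the Hilbert sphere via the identity $\|\xi_0\eta_1^*-\xi_0\eta_2^*\|_1=\|\eta_1-\eta_2\|$, and the analogous statement holds for the ``right fibres'' $R_{\eta_0}=\{\xi\eta_0^*:\xi\in S(H)\}$. A direct computation of the singular values of $\xi_1\eta_1^*-\xi_2\eta_2^*$ (a rank-$\le 2$ operator whose trace norm is determined explicitly by $\langle\xi_1,\xi_2\rangle$ and $\langle\eta_1,\eta_2\rangle$) furnishes precise equality conditions showing that every isometric copy of the Hilbert sphere $S(H)$ sitting inside the extreme-point set of $\mathcal{B}_{C_1(H)}$ must be either a left fibre or a right fibre. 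Consequently $\Delta(L_{\xi_0})$ equals some $L_{\xi'_0}$ or some $R_{\eta'_0}$, and a connectedness argument in $\xi_0$ makes this choice uniform. In the ``non-transpose'' case one then has $\Delta(\xi\eta^*)=U(\xi)V(\eta)^*$ with $U,V:S(H)\to S(H)$ surjective isometries; under the complex-linear normalization already arranged, Mazur--Ulam together with Wigner-type phase tracking produces unitaries $u,v\in B(H)$ implementing $U,V$, so that $\Delta(\xi\eta^*)=u\xi\eta^*v$. The ``transpose'' case yields, by a parallel argument, $\Delta(\xi\eta^*)=u\eta\xi^*v=u(\xi\eta^*)^tv$. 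Combined with the complex-linear/conjugate-linear dichotomy of the first paragraph, this produces exactly the four candidate forms stated in the theorem.

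To pass from extreme points to all of $S(C_1(H))$, we invoke the singular value decomposition: any $x\in S(C_1(H))$ admits a representation $x=\sum_{i=1}^k\sigma_i\xi_i\eta_i^*$ with orthonormal families $(\xi_i)$ and $(\eta_i)$, $\sigma_i>0$, and $\sum_i\sigma_i=1$. Setting $v=\sum_i\xi_i\eta_i^*\in B(H)$, both $x$ and each $\xi_j\eta_j^*$ lie in the proper norm-closed face $F^v=\{y\in\mathcal{B}_{C_1(H)}:\mathrm{tr}(v^*y)=1\}$. By Proposition \ref{p minimal in arbitrary dimension}(a), $\Delta(F^v)$ is a norm-closed face of $\mathcal{B}_{C_1(H)}$, and applying Theorem \ref{t Mankiewicz} to $\Delta|_{F^v}:F^v\to\Delta(F^v)$ inside the affine hulls of these finite-dimensional convex sets (where the relevant interiors are non-empty) shows that the restriction is affine. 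Therefore $\Delta(x)=\sum_i\sigma_i\Delta(\xi_i\eta_i^*)$, which coincides with $T(x)$ for the candidate real-linear isometry $T$ ($x\mapsto uxv$, $ux^tv$, $u\bar xv$, or $ux^*v$) already determined on extreme points, and the identification of $\Delta$ on the full sphere follows.

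The principal obstacle is the fibre-classification at the centre of the argument: proving that each left fibre is sent to either a left or a right fibre, and that this choice is globally uniform. This rests on sharp equality conditions in the explicit trace-distance formula for rank-one partial isometries, together with the connectedness of the extreme-point set. The complex-linear versus conjugate-linear decision is comparatively easy, reducing to a continuity and connectedness argument supplied by parts (d)--(e) of Proposition \ref{p minimal in arbitrary dimension}, while the passage from the extreme points to the whole of $S(C_1(H))$ is essentially a facial-structure plus Mankiewicz computation.
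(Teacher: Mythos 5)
The survey itself does not reproduce a proof of this theorem: it is quoted from \cite[Theorem 3.7]{FerGarPeVill17} with only the remark that the argument is ``based on techniques of linear algebra and geometry''. Measured against that strategy, your outer architecture is the right one and its outer layers are sound: the extreme points of $\mathcal{B}_{C_1(H)}$ are indeed the rank-one operators $\xi\eta^*$ with $\xi,\eta\in S(H)$; the uniformization of the dichotomy in Proposition \ref{p minimal in arbitrary dimension}$(d)$--$(e)$ by a closed-cover/connectedness argument is valid; the trace-distance formula $\|\xi_1\eta_1^*-\xi_2\eta_2^*\|_1^2=2-2\,\mathrm{Re}(a\bar b)+2\sqrt{(1-|a|^2)(1-|b|^2)}$ (with $a=\langle\xi_1,\xi_2\rangle$, $b=\langle\eta_1,\eta_2\rangle$) is correct; and the final passage from extreme points to all of $S(C_1(H))$ via the polar-decomposition face $F^v$, Proposition \ref{p minimal in arbitrary dimension}$(a)$ and Mankiewicz's theorem (Theorem \ref{t Mankiewicz}) is a complete and correct argument in finite dimensions.

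The genuine gap is that the two claims carrying essentially all of the weight are asserted rather than proved. First, the fibre classification: ``every isometric copy of $S(H)$ inside the extreme-point set is a left or a right fibre'' is a strong rigidity statement. Writing an isometric embedding as $\eta\mapsto u(\eta)v(\eta)^*$, the distance formula only yields the system $\mathrm{Re}\langle\eta_1,\eta_2\rangle=\mathrm{Re}(a\bar b)-\sqrt{(1-|a|^2)(1-|b|^2)}$ for all pairs, and extracting from this that one of $u$, $v$ is projectively constant is precisely the hard part; no argument is offered. (Incidentally, once you know each $\Delta(L_{\xi_0})$ is a fibre, the left-versus-right uniformity in $\xi_0$ is immediate without connectedness: distinct left fibres are disjoint, a left and a right fibre always meet, and $\Delta$ is injective on extreme points.) Second, the ``Wigner-type phase tracking'': the identity $\Delta(\xi\eta^*)=U(\xi)V(\eta)^*$ does not define $U$ and $V$ as maps, since both may be multiplied by a common phase depending on $(\xi,\eta)$; you must first show the phases can be chosen coherently. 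Moreover Mazur--Ulam does not apply to a surjective isometry of the sphere $S(H)$ --- you need the polarization identity $\|\xi-\eta\|^2=2-2\,\mathrm{Re}\langle\xi,\eta\rangle$ to obtain a real-orthogonal extension, and then a further argument (presumably fed by the uniform phase dichotomy of your first paragraph) to upgrade real-linear to unitary or antiunitary, since a real-linear isometry of a complex Hilbert space is in general neither. As written, the proposal is a plausible programme whose central steps --- the ones that constitute the theorem --- remain to be carried out.
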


Surprisingly, the solution in the finite dimensional case is applied, in a very technical argument, to derive a solution to Tingley's problem for surjective isometries between the unit spheres of two spaces of trace class operators.

\begin{theorem}\label{t Tingley for trace class infinite dimension}\cite[Theorem 4.1]{FerGarPeVill17}  Let $\Delta: S(C_1(H))\to S(C_1(H))$ be a surjective isometry, where $H$ is an arbitrary complex Hilbert space. Then there exists a surjective complex linear or conjugate linear isometry $T :C_1(H)\to C_1(H)$  satisfying $\Delta (x) = T(x),$ for every $x\in S(C_1(H))$.
\end{theorem}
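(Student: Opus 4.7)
The plan is to reduce Theorem \ref{t Tingley for trace class infinite dimension} to the finite-dimensional case (Theorem \ref{t Tingley trace class finite dim}) by exhausting $H$ with finite-dimensional subspaces and then gluing the finite-dimensional surjective linear isometries into a single global one. Throughout, a key fact is that the extreme points of $\mathcal{B}_{C_1(H)}$ are exactly the rank-one partial isometries $\xi \otimes \eta^*$ with $\|\xi\|=\|\eta\|=1$, and that Proposition \ref{p minimal in arbitrary dimension} guarantees $\Delta$ maps such extreme points to extreme points.

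First, I would set up the finite-dimensional corners. For each finite-rank projection $p,q\in B(H)$, the subspace $q C_1(H) p$ is norm-isometric to $C_1(qH,pH)\cong C_1(\mathbb{C}^n)$ for suitable $n$, and its unit sphere is a norm-closed convex set in $S(C_1(H))$ whose extreme points are rank-one partial isometries supported between $pH$ and $qH$. Using Proposition \ref{p minimal in arbitrary dimension}(a) together with a careful facial analysis (norm closed faces of $\mathcal{B}_{C_1(H)}$ correspond to pairs of finite-rank projections, by the preduals-of-von Neumann-algebras facial structure cited after Corollary \ref{c for spaces with property of semi-exposition for faces}), I would show that $\Delta$ sends each such ``rectangular corner'' onto another rectangular corner of the same dimensions, and that the restriction $\Delta_{p,q}$ to $S(qC_1(H)p)$ is a surjective isometry onto the corresponding corner in $C_1(H)$. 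Applying Theorem \ref{t Tingley trace class finite dim} in each such corner (after identifying with $C_1(\mathbb{C}^n)$) yields a complex linear or conjugate linear isometric extension $T_{p,q}$.

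Next, I would verify compatibility. If $p'\leq p$ and $q'\leq q$ are finite-rank projections, both $T_{p,q}|_{q'C_1(H)p'}$ and $T_{p',q'}$ extend the same restriction of $\Delta$ on the sphere of $q'C_1(H)p'$, so by Mankiewicz's theorem (Theorem \ref{t Mankiewicz}, applied to the restriction of the already-linear map to the unit ball) they agree. The same argument forces the choice of linear vs.\ conjugate linear to be consistent across nested corners, and hence across all corners once the dimension of $H$ is at least $2$: if some $T_{p,q}$ were complex linear and some $T_{p',q'}$ conjugate linear on overlapping corners, Proposition \ref{p minimal in arbitrary dimension}(d)--(e) would be violated at a shared extreme point. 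This lets me declare a uniform ``type'' (linear or conjugate linear) for $\Delta$ and assemble a well-defined $\mathbb{R}$-linear map $T$ on the dense subspace $\bigcup_{p,q\,\text{finite rank}} qC_1(H)p$ of finite-rank operators in $C_1(H)$, of the form $T(x) = u\alpha(x)v$ for fixed unitaries $u,v\in B(H)$ and $\alpha$ equal to one of $x,\,x^t,\,\overline x,\,x^*$, provided the unitaries and the symbol $\alpha$ assembled from the corners are globally well-defined.

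Finally, I would use that $T$ is an isometry on the dense subspace of finite-rank operators (being a union of isometric pieces) and therefore extends uniquely to a surjective complex linear or conjugate linear isometry $\widehat T:C_1(H)\to C_1(H)$; the uniqueness part of Mankiewicz's theorem shows $\widehat T$ agrees with $\Delta$ on all of $S(C_1(H))$ because the two agree on the norm-dense subset of finite-rank norm-one operators and $\Delta$ is itself isometric. The main obstacle I anticipate is the global coherence step: promoting the piecewise unitaries $u_{p,q},v_{p,q}$ coming from Theorem \ref{t Tingley trace class finite dim} into two fixed global unitaries $u,v\in B(H)$, and ruling out the possibility that $\alpha$ could be $x^t$ or $x^*$ on one corner but $x$ or $\overline x$ on another. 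This will require exploiting that a common rank-one extreme point lies in many corners simultaneously and tracking how each $T_{p,q}$ acts on it, together with Proposition \ref{p minimal in arbitrary dimension}(d)--(e) to pin down the complex-linear-versus-conjugate-linear dichotomy uniformly.
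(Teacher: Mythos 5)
Your high-level plan --- reduce to Theorem \ref{t Tingley trace class finite dim} on finite-dimensional corners and then glue --- is indeed the route that \cite{FerGarPeVill17} follows (the survey describes it as applying the finite-dimensional solution ``in a very technical argument''), but the proposal leaves unproved exactly the two steps where that technical work lives, and the justification you offer for the first of them rests on a misstatement of the facial structure. The norm-closed proper faces of $\mathcal{B}_{C_1(H)}$ are \emph{not} parametrized by pairs of finite-rank projections, nor are they unit balls or spheres of corners $qC_1(H)p$: by the Edwards--R\"{u}ttimann description cited after Corollary \ref{c for spaces with property of semi-exposition for faces} (see also Theorem \ref{t faces AkPed}$(b)$), they are the slices $\{v\}_{\prime}=\{x\in\mathcal{B}_{C_1(H)} : \mathrm{tr}(vx)=1\}$ attached to partial isometries $v\in B(H)$, each affinely isomorphic to a set of density matrices. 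In particular $S(qC_1(H)p)$ is not convex once $\dim(qC_1(H)p)>1$, so it is not a face, and neither Theorem \ref{t faces ChengDong11}, nor Corollary \ref{c for spaces with property of semi-exposition for faces}, nor Proposition \ref{p minimal in arbitrary dimension}$(a)$ (which only tells you that $\Delta$ permutes the slices $\{v\}_{\prime}$ and the extreme points) implies that $\Delta$ carries the sphere of a corner onto the sphere of a corner. Proving that the restriction of $\Delta$ to $S(qC_1(H)p)$ is a surjective isometry onto the sphere of another finite-dimensional corner is the central difficulty; it requires a metric characterization of membership in a corner (for instance via Theorem \ref{t FaWang distance 2} and distance computations against extreme points), and your proposal assumes it.

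The second gap is the one you flag yourself: promoting the local data $(u_{p,q},v_{p,q},\alpha_{p,q})$ to a single global pair of unitaries and a single symbol $\alpha\in\{x,x^{t},\overline{x},x^{*}\}$. Proposition \ref{p minimal in arbitrary dimension}$(d)$--$(e)$ controls $\Delta$ on the circle $\mathbb{T}e_0$ through one extreme point and can make the complex-linear versus conjugate-linear dichotomy consistent, but it cannot by itself distinguish $x\mapsto uxv$ from $x\mapsto ux^{t}v$ (both complex linear) on a given corner, and the unitaries produced by Theorem \ref{t Tingley trace class finite dim} are only determined up to scalars and only on finite-dimensional subspaces, so their coherence into $u,v\in B(H)$ must be argued. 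The remaining ingredients of your outline (agreement of compatible linear extensions on nested corners, norm density of finite-rank norm-one operators in $S(C_1(H))$, and passage to the limit by continuity) are sound, but until the two steps above are carried out the argument is a programme rather than a proof.
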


\subsection{Tingley's problem on von Neumann preduals}\ \smallskip

According to what is commented at the introduction, a very recent contribution by M. Mori has changed the original plans and the structure of this survey. The preprint \cite{Mori2017} contains, among other interesting results, a complete positive solution to Tingley's problem for surjective isometries between the unit spheres of von Neumann algebra preduals.

\begin{theorem}\label{t Tingleys problem for preduals of vN}\cite[Theorem 4.3]{Mori2017} Let $M$ and $N$ be von Neumann algebras, and let $\Delta : S(M_*)\to S(N_*)$ be a surjective isometry. Then there exists a (unique) surjective real linear isometry $T : M_* \to N_*$ satisfying $T(x) = \Delta(x),$ for every $x\in S(M_*)$.
\end{theorem}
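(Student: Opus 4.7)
The plan is to decode $\Delta$ through the Edwards--R\"uttimann facial structure of $\mathcal{B}_{M_*}$ and reconstruct a Jordan-type isomorphism at the level of the von Neumann algebras, then dualise. First, I would set up a face correspondence. Since von Neumann algebra preduals satisfy hypotheses $(h.1)$ and $(h.2)$ listed after Corollary \ref{c for spaces with property of semi-exposition for faces}, that corollary furnishes a bijection between the norm-closed proper faces of $\mathcal{B}_{M_*}$ and those of $\mathcal{B}_{N_*}$, and Proposition \ref{p Mori faces}(b) shows it commutes with antipodal reflection. By the Edwards--R\"uttimann parametrisation, these faces are exactly the sets $F_v := \{\varphi \in \mathcal{B}_{M_*} : \varphi(v) = 1\}$ for $v$ a non-zero partial isometry in $M$. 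Composing, $\Delta$ induces a bijection $\Phi$ on the sets of non-zero partial isometries with $\Delta(F_v) = F_{\Phi(v)}$ and $\Phi(-v) = -\Phi(v)$.

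Second, I would extract algebraic rigidity from $\Phi$. Transporting the lattice structure of faces through $\Phi$ shows that $\Phi$ preserves the usual order on partial isometries, maps orthogonal pairs to orthogonal pairs, and satisfies $\Phi(e_1 + e_2) = \Phi(e_1) + \Phi(e_2)$ for orthogonal $e_1, e_2$. Restricted to projections, $\Phi$ is an orthoisomorphism $\mathrm{Proj}(M) \to \mathrm{Proj}(N)$; Dye's theorem (with modifications to absorb type-$I_2$ summands) extends it to a Jordan $*$-isomorphism $J : M \to N$, and the polar decomposition of partial isometries forces $J$ to agree with $\Phi$ throughout. Combining with Theorem \ref{t Hatori Molnar} (Hatori--Moln\'ar) applied to the unitary data produces a central projection $p \in N$ and a surjective real-linear isometry
\[
T^\sharp : M \to N, \qquad T^\sharp(x) = p J(x) + (1-p) J(x)^*,
\]
whose preadjoint, after a translation by $\Delta(\varphi_0)$ for a fixed $\varphi_0 \in S(M_*)$, furnishes the candidate extension $T : M_* \to N_*$.

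Third, I would check $T|_{S(M_*)} = \Delta$. The two maps agree by construction on all partial-isometry data used to build $J$. Mankiewicz's theorem (Theorem \ref{t Mankiewicz}) applied to $\Delta|_{F_v}$ on each face $F_v$, a convex set with non-empty relative interior in its affine span, uniquely extends $\Delta|_{F_v}$ to an affine isometry, which must therefore coincide with $T|_{\mathrm{aff}(F_v)}$. Finally, every element of $S(M_*)$ lies in some face: for $\varphi \in S(M_*)$, the partial isometry $v \in M$ arising in the polar decomposition of $\varphi$ satisfies $\varphi(v) = \|\varphi\| = 1$, so $\varphi \in F_v$. Uniqueness of $T$ follows from the Mazur--Ulam theorem applied to the homogeneous extension of $\Delta$.

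The principal obstacle will be the second step, namely the Dye-style reconstruction of $J$ from $\Phi$. Dye's original theorem is formulated for orthoisomorphisms of projection lattices under a no-type-$I_2$ hypothesis, so the general case demands a separate two-dimensional argument, in spirit similar to Theorem \ref{t Tingley trace class finite dim}, to handle the $I_2$ direct summand before patching. A secondary technical concern is verifying that the assembled $J$ is globally compatible with the antipodal behaviour $\Phi(-v) = -\Phi(v)$ and with the central twist absorbed by $p$, so that $T$ is a single global real-linear map rather than a patchwork of local affine isometries.
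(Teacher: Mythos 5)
Your first step (the face correspondence) is sound: by the Edwards--R\"uttimann parametrisation and Corollary \ref{c for spaces with property of semi-exposition for faces}, $\Delta$ does induce an order-preserving bijection $\Phi$ between the lattices of non-zero partial isometries of $M$ and $N$, compatible with $v\mapsto -v$. The proof breaks down at your second step. The lattice of partial isometries does not remember which of its elements are projections: for any unitary $u\in M$ the map $v\mapsto uv$ is an order automorphism of that lattice which destroys projections, so nothing so far forces $\Phi$ to restrict to a map $\mathrm{Proj}(M)\to\mathrm{Proj}(N)$, and the Dye step has no input. The natural repair is to normalise by $\Phi(1)^*$, but that presupposes the genuinely hard point that you never address: why does $\Phi$ carry unitaries to unitaries, and why is $\Phi|_{\mathcal{U}(M)}$ an isometry for the operator norm (the face correspondence is a priori only a bijection)? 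This is exactly what the Hatori--Moln{\'a}r theorem needs as input, and it is the crux of Mori's argument: he introduces the Hausdorff distance $\delta_H(v,w)=d_H(\{v\}_{\prime},\{w\}_{\prime})$ between faces, proves that $\delta_H(u,v)=\|u-v\|$ when $u$ is unitary and that unitaries are characterised metrically among extreme points by an isolated-point condition (Proposition \ref{p lemmas Hausdorff distance}); since $\Delta$ preserves faces and distances, this transports the unitary group isometrically, after which Hatori--Moln{\'a}r already delivers the Jordan $^*$-isomorphism and central projection in one stroke --- no separate Dye argument or type $I_2$ analysis is needed. Without a substitute for this metric detection of unitaries, your construction of $J$ and $T^\sharp$ cannot get started.

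Two further points in your third step would also fail as written. Mankiewicz's theorem (Theorem \ref{t Mankiewicz}) requires convex sets with non-empty interior; the faces $\{v\}_{\prime}$ of $\mathcal{B}_{M_*}$ (e.g.\ the normal state space when $v=1$) have empty interior even in their affine spans, because the positive cone of a predual of an infinite-dimensional von Neumann algebra has empty interior --- this is precisely why the predual case is harder than the C$^*$-algebra case, where faces are translated unit balls of corner subalgebras. Finally, the passage from $T^\sharp:M\to N$ back to the preduals is not a formal ``preadjoint plus translation'': Mori must define $T_2(\varphi)(x):=\mathrm{Re}\,\varphi(\widetilde{T}_1(x))-i\,\mathrm{Re}\,\varphi(\widetilde{T}_1(ix))$ to obtain a real-linear isometry of the duals restricting to the preduals, and the verification that its inverse agrees with $\Delta$ on all of $S(M_*)$ (not merely on the face data) is itself a nontrivial technical argument in \cite[Theorem 4.3]{Mori2017}.
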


It is perhaps interesting to take a brief look at the method applied by M. Mori to synthesize the surjective real linear isometry $T$. Let $\Delta : S(M_*)\to S(N_*)$ be a surjective isometry, where $M$ and $N$ are von Neumann algebras. When Corollary \ref{c for spaces with property of semi-exposition for faces} and the subsequent comments is combined with the Akemann-Pedersen theorem (see Theorem \ref{t faces AkPed}), we can conclude that for each maximal partial isometry $u\in \partial_e(\mathcal{B}_M)$ there exists a unique maximal partial isometry $T_1(u)\in \partial_e(\mathcal{B}_N)$ satisfying $\Delta(\{u\}_{\prime}) = \{T_1(u)\}_{\prime}$. This gives a bijection $T_1 : \partial_e(\mathcal{B}_M)\to \partial_e(\mathcal{B}_N)$.\smallskip

Let $(E,d)$ be a metric space. The \emph{Hausdorff distance} between two sets $\mathcal{S}_1,\mathcal{S}_2\subseteq E$ is defined by $$d_{H} (\mathcal{S}_1,\mathcal{S}_2) := \max \{ \sup_{x\in \mathcal{S}_1} \inf_{y\in \mathcal{S}_2} d(x,y), \sup_{y\in \mathcal{S}_2} \inf_{x\in \mathcal{S}_1} d(x,y)\}.$$ The lattice of partial isometries can be equipped with a distance defined by $$\delta_{H} (v,w) := d_{H} (\{v\}_{\prime}, \{w\}_{\prime}).$$

It is shown by M. Mori that this distance enjoys the following properties:

\begin{proposition}\label{p lemmas Hausdorff distance}\cite[Lemmas 4.1 and 4.2]{Mori2017} Let $M$ be a von Neumann algebra. Then the following statements hold:\begin{enumerate}[$(a)$]\item $\delta_{H} (u,v) = \|u-v\|$, for every $u\in \mathcal{U}(M)$ and every $v\in \partial_e(\mathcal{B}_{M})$;
\item An element $u\in \partial_e(\mathcal{B}_{M})$ is a unitary if and only if the set $$\widehat{M}_u := \{ e \in \partial_e(\mathcal{B}_{M}) : \delta_{H} (u, \pm e) \leq \sqrt{2}\}$$ has an isolated point with respect to the metric $\delta_H$.
\end{enumerate}
\end{proposition}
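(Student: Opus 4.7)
The proof plan rests on the standard parametrization of the face $\{w\}_\prime \subset \mathcal{B}_{M_*}$ attached to an extreme point $w \in \partial_e(\mathcal{B}_M)$: writing $L_a(x) := ax$, every $\phi \in \{w\}_\prime$ is uniquely of the form $\phi = \rho \circ L_{w^*}$ for a normal state $\rho$ on $M$ with $\rho(w^*w) = 1$. This gives an isometric affine identification of $\{w\}_\prime$ with the normal states supported on $w^*w$; in particular, when $w = u$ is unitary, $\{u\}_\prime$ is identified with \emph{all} of $S(M)$ via $\phi \leftrightarrow \phi \circ L_u$.

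For part (a), fix $u \in \mathcal{U}(M)$ and $v \in \partial_e(\mathcal{B}_M)$ and set $p := v^*v$. To show $\delta_H(u,v) \leq \|u-v\|$, given $\phi = \rho \circ L_{u^*} \in \{u\}_\prime$ I plan to produce $\psi \in \{v\}_\prime$ with $\|\phi - \psi\| \leq \|u-v\|$. The clean case $\rho(p) = 1$ is immediate: $\psi := \rho \circ L_{v^*}$ lies in $\{v\}_\prime$ and $\|\phi - \psi\| = \|\rho \circ L_{u^* - v^*}\| \leq \|u-v\|$. For general $\rho$, I would produce a normal state $\tilde\rho$ supported on $p$ by compressing $\rho$ with $p$ and renormalizing (plus a limiting argument when $\rho(p) = 0$), take $\psi = \tilde\rho \circ L_{v^*}$, and control the defect quantitatively via $1 - \rho(p) = \rho(1-p)$ combined with $\|1 - u^*v\| = \|u-v\|$. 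The symmetric inequality, starting from $\psi \in \{v\}_\prime$, is easier because $\{u\}_\prime \cong S(M)$ globally. The reverse bound $\delta_H(u,v) \geq \|u-v\|$ comes from Hahn--Banach: using polar decomposition adapted to the partial isometries $u, v$ I extract $\phi \in \{u\}_\prime, \psi \in \{v\}_\prime$ with $(\phi - \psi)(u-v) = \|u - v\|$, whence $\|\phi - \psi\| \geq \|u-v\|$.

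For part (b), by (a) the set $\widehat{M}_u$ coincides with $\{e \in \partial_e(\mathcal{B}_M) : \|u-e\| \leq \sqrt{2},\ \|u+e\| \leq \sqrt{2}\}$ whenever $u$ is unitary, and I claim $iu$ is then isolated. Certainly $iu \in \widehat{M}_u$, since $\|u \mp iu\| = |1 \mp i| = \sqrt{2}$. Given $e \in \widehat{M}_u$ with $\delta_H(iu, e) < 1/2$, part (a) applied at the unitary $iu$ yields $\|iu - e\| < 1/2$; then $\|e^*e - 1\| \leq 2\|e - iu\| < 1$ forces the projection $e^*e$ to equal $1$, and symmetrically $ee^* = 1$, so $e$ is unitary. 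Writing $e = (iu)w$ with $w$ unitary near $1$, the identities $\|u \pm e\|^2 = \|1 \pm iw\|^2 = \sup\{2 \pm 2\sin\theta : e^{i\theta} \in \sigma(w)\}$ combined with $\|u \pm e\|^2 \leq 2$ force $\sin\theta = 0$ throughout $\sigma(w)$, so $\sigma(w) \subset \{\pm 1\}$; since $w$ is close to $1$, $\sigma(w) = \{1\}$ and $w = 1$, i.e., $e = iu$. For the contrapositive, if $u$ is non-unitary then after reduction to a central summand one may assume $u^*u = 1$ with $q := 1 - uu^* \neq 0$. Given $e \in \widehat{M}_u$, small nonzero partial isometries supported in the range of $q$ yield perturbations $(e_n) \subset \widehat{M}_u \setminus \{e\}$ with $\delta_H(e_n, e) \to 0$, since such perturbations preserve extremality and, being invisible to the support of $u$, maintain the inequalities $\delta_H(u, \pm e_n) \leq \sqrt{2}$ up to first order.

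The principal obstacle is the upper bound of (a) in the regime $\rho(p) < 1$: the natural candidate $\rho \circ L_{v^*}$ has norm strictly less than $1$ and escapes $\{v\}_\prime$, so the compression-and-renormalization step must be controlled quantitatively in $\|\cdot\|_{M_*}$. The contrapositive in (b) is similarly delicate, because at non-unitary $u$ part (a) no longer equates $\delta_H(u, \cdot)$ with the operator norm, so the constraint $\delta_H(u, \pm e_n) \leq \sqrt{2}$ must be tracked directly using the geometry of the preduals of the corresponding corners of $M$.
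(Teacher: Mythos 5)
The survey itself offers no proof of this proposition --- it is quoted directly from Mori's Lemmas 4.1 and 4.2 --- so your attempt can only be judged on its own terms. Your framework (parametrizing $\{w\}_{\prime}$ by normal states supported on $w^*w$ via $\phi=\rho\circ L_{w^*}$) is the right one, and your argument for the forward implication of $(b)$ is essentially complete \emph{modulo part $(a)$}: the chain $\|e^*e-1\|\le 2\|e-iu\|<1$ forcing $e$ unitary, followed by $\|1\pm iw\|^2=\sup_{e^{i\theta}\in\sigma(w)}(2\mp2\sin\theta)\le 2$ forcing $\sigma(w)=\{1\}$, is correct. The fatal problem is the lower bound in $(a)$. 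Exhibiting one pair $\phi\in\{u\}_{\prime}$, $\psi\in\{v\}_{\prime}$ with $\|\phi-\psi\|\ge\|u-v\|$ bounds $\sup_{\phi,\psi}\|\phi-\psi\|$ from below, not the Hausdorff distance, which requires a single point of one face to be far from the \emph{entire} other face. Indeed, taking $v=u$ the face $\{u\}_{\prime}$ has diameter $2$ whenever $\dim M>1$, yet $\delta_H(u,u)=0$; your inference, applied to $v$ close to $u$, would produce a contradiction. (There is also a normalization slip: $(\phi-\psi)(u-v)=\|u-v\|$ only yields $\|\phi-\psi\|\ge 1$, since $u-v$ has norm $\|u-v\|$, not $1$.) The repair is to fix one $\psi_0\in\{v\}_{\prime}$ and test against $u$ itself: since every $\phi\in\{u\}_{\prime}$ satisfies $\phi(u)=1$, one gets $d(\psi_0,\{u\}_{\prime})\ge|1-\psi_0(u)|$, and the lower bound reduces to showing $\sup\{|1-\rho(v^*u)|\}=\|1-v^*u\|=\|u-v\|$ over normal states $\rho$ supported on $v^*v$ --- a genuine spectral statement about the partial isometry $v^*u$ that still has to be proved, not a formality.

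The two steps you yourself flag as obstacles are exactly where the remaining content lies, and neither is resolved. For the upper bound of $(a)$ in the regime $\rho(v^*v)<1$, compressing and renormalizing perturbs $\phi$ by an amount governed by $\rho(1-v^*v)$, and you must show this is dominated by $\|1-u^*v\|$; that requires tying the mass $\rho(1-v^*v)$ to the spectrum of $1-u^*v$, not a generic triangle-inequality estimate, and it is not obvious the naive compression achieves the sharp constant. For the non-unitary direction of $(b)$, the sketch is only heuristic: the perturbed elements $e_n$ must satisfy the exact inequalities $\delta_H(u,\pm e_n)\le\sqrt2$ (``up to first order'' does not preserve a non-strict inequality with a hard threshold), must remain in $\partial_e(\mathcal{B}_M)$, and the convergence $\delta_H(e_n,e)\to0$ must be established without part $(a)$, since neither $u$ nor $e$ need be unitary there. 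As written, the proposal is a sensible plan with one correct component, but the lower bound of $(a)$ rests on an invalid inference and both remaining halves are unproved.
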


Applying Proposition \ref{p Mori faces}$(a)$ and Proposition \ref{p lemmas Hausdorff distance}$(b)$, M. Mori concludes that $T_1 (\mathcal{U} (M)) = \mathcal{U} (N),$ and by Proposition \ref{p lemmas Hausdorff distance}$(a)$, $T_1|_{\mathcal{U} (M)} : \mathcal{U} (M) \to \mathcal{U} (N)$ is a surjective isometry. The mapping $T_1$ fulfills the hypothesis of the Hatori-Moln{\'a}r theorem (see Theorem \ref{t Hatori Molnar}), and thus there exists a surjective real linear (weak$^*$-continuous) isometry $\widetilde{T}_1: M\to N$ whose restriction to $\mathcal{U} (M)$ is $T_1$. The technical arguments developed by M. Mori in the proof of \cite[Theorem 4.3]{Mori2017} finally show that the mapping $T_2 : N^*\to M^*$ defined by $$T_2(\varphi) (x) := \Re\hbox{e}\varphi (\widetilde{T}_1 (x)) - i \Re\hbox{e}\varphi (\widetilde{T}_1 (i x)), \ \varphi \in N^*, x\in M,$$ is a real linear isometry whose restriction to $N_*$ gives a surjective real linear isometry $T_2|_{N_*} : N_*\to M_*$ and $(T_2|_{M_*})^{-1} (\phi) = \Delta (\phi)$ for all $\phi$ in $M_*$.

\section{Isometries between the spheres of hermitian operators}\label{sec: Tingleys hermitian}

A second and interesting variant of Problem \ref{problem general} is obtained when $X$ and $Y$ are von Neumann algebras or C$^*$-algebras and $\mathcal{S}_1$ and $\mathcal{S}_2$ are the unit spheres of their respective hermitian parts. In this section we consider two von Neumann algebras $M$, $N$ and a surjective isometry $\Delta: S(M_{sa})\to S(N_{sa})$. Our goal will consist in showing that the same tools in \cite{FerPe17d} can be, almost literarily, applied to find a surjective complex linear isometry $T: M \to N$ satisfying $T(a^*) = T(a)^*$ for all $a\in M$ and $T(x) = \Delta (x)$ for all $x\in S(M_{sa})$.\smallskip

Given a C$^*$-algebra $A$, its hermitian part $A_{sa}$ is not, in general, a C$^*$-subalgebra of $A$.
However, $A_{sa}$ is a real closed subspace of $A$ which satisfies the hypotheses of Corollary \ref{c for spaces with property of semi-exposition for faces} (see the comments after this corollary). After applying this corollary, we find the necessity of describing the facial structure of $\mathcal{B}_{A_{sa}}$. Fortunately for us, the Akemann-Pedersen theorem (Theorem \ref{t faces AkPed}) has a forerunner in \cite[Corollary 5.1]{EdRutt86} where C.M. Edwards and G.T. R\"{u}ttimann described the facial structure of the closed unit ball of the hermitian part of every C$^*$-algebra. We recall that partial isometries in $A_{sa}$ are all elements of the form $e= p-q$, where $p$ and $q$ are orthogonal projections in $A$.

\begin{theorem}\label{t faces EdRutt}\cite[Corollary 5.1]{EdRutt86} Let $A$ be a C$^*$-algebra. Then for each norm-closed face $F$ of $\mathcal{B}_{A_{sa}}$, there exists a unique pair of orthogonal compact projections $p, q$ in $A^{**}$ such that $$F = \{x \in \mathcal{B}_{A_{sa}} : x (p - q) = p + q\} = \{p-q\}_{_{''}}$$ $$= \{ x \in \mathcal{B}_{A_{sa}} : x = (p - q) + (1- p - q) x (1-p-q) \}.$$
\end{theorem}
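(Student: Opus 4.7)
The plan is to deduce this description from the Akemann--Pedersen parametrization in Theorem \ref{t faces AkPed} by identifying norm closed faces of $\mathcal{B}_{A_{sa}}$ with those norm closed faces $F_v$ of $\mathcal{B}_A$ whose supporting compact partial isometry $v\in A^{**}$ is self-adjoint, and then decomposing every self-adjoint partial isometry as a difference of two orthogonal projections.

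Given a norm closed face $F$ of $\mathcal{B}_{A_{sa}}$, I would first form its complex envelope $\widetilde F$, i.e., the smallest norm closed face of $\mathcal{B}_A$ containing $F$. This is well defined because hypothesis $(h.1)$ forces every norm closed face of $\mathcal{B}_A$ to be norm-semi-exposed, and arbitrary intersections of semi-exposed faces are semi-exposed (the lattice is complete, as recorded in Theorem \ref{t faces AkPed}). Write $\widetilde F = F_v$ for the unique compact partial isometry $v\in A^{**}$ belonging locally to $A$ furnished by Theorem \ref{t faces AkPed}. The conceptual heart of the argument is then to show that $v = v^*$: the involution of $A$ extends to a conjugate-linear weak$^*$-continuous isometry of $A^{**}$ which maps each $F_w$ onto $F_{w^*}$, and since $F\subseteq A_{sa}$ is pointwise fixed by $^*$, the face $F_{v^*}$ also contains $F$. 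Minimality of $\widetilde F$ therefore yields $F_v\subseteq F_{v^*}$, and applying $^*$ to this inclusion gives the reverse, so $F_v = F_{v^*}$; the injectivity of the parametrization in Theorem \ref{t faces AkPed} forces $v = v^*$.

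Once $v = v^*$ is established, the identities $v^2 = vv^* = v^* v$ make $r := v^2$ a compact projection and exhibit $v$ as a self-adjoint symmetry of the von Neumann subalgebra $r A^{**} r$. Continuous functional calculus then produces the unique decomposition $v = p - q$ with $p = (r+v)/2$ and $q = (r-v)/2$ orthogonal projections summing to $r$; both are compact because each is dominated by the compact projection $r$ and inherits closedness from the polar decomposition description of partial isometries belonging locally to $A$ recalled before Theorem \ref{t faces AkPed}. Substituting $v = p-q$, $v v^* = v^* v = p+q$, and $1 - v v^* = 1 - p - q$ into the Akemann--Pedersen formulas $F_v = \{x\in\mathcal{B}_A : x v^* = v v^*\} = (v + (1 - v v^*)\mathcal{B}_{A^{**}} (1 - v^* v))\cap \mathcal{B}_A$ and restricting to $\mathcal{B}_{A_{sa}}$ yields both displayed descriptions of $F$. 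Uniqueness of the pair $(p,q)$ is automatic, since $p = (r+v)/2$ and $q = (r-v)/2$ are determined by $v$, which is in turn uniquely determined by $F_v$.

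The step I expect to be most delicate is the precise equality $F = F_v\cap \mathcal{B}_{A_{sa}}$, i.e., showing that passing to the complex envelope and then intersecting back with $A_{sa}$ does not strictly enlarge $F$. This relies on hypothesis $(h.1)$ for $A_{sa}$ together with the isometric identification $(A_{sa})^*\cong (A^*)_{sa}$ obtained by taking real parts of functionals: the semi-exposition $F = (F')_{\prime}$ of $F$ inside $\mathcal{B}_{A_{sa}}$ translates, once each functional in $F'\subseteq\mathcal{B}_{(A_{sa})^*}$ is extended to a self-adjoint functional on $A$ of the same norm, into the description of $F$ as the self-adjoint slice of the Akemann--Pedersen face $F_v$.
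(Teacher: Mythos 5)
Your overall strategy is sound and is genuinely different from what the survey does: the paper offers no proof of Theorem \ref{t faces EdRutt} at all, quoting it from \cite[Corollary 5.1]{EdRutt86}, where it is obtained within the Edwards--R\"uttimann theory of GM-spaces (and in fact predates the complex-case Theorem \ref{t faces AkPed} of Akemann and Pedersen, so your derivation reverses the historical order). Your reduction to the complex case is the right idea, and the identification $v=v^*$ via $(F_v)^*=F_{v^*}$, minimality of the complex envelope, and injectivity of the Akemann--Pedersen parametrization is correct and clean. However, two steps are genuine gaps rather than routine verifications. First, the equality $F=F_v\cap\mathcal{B}_{A_{sa}}$ is made to rest on hypothesis $(h.1)$ for $A_{sa}$, i.e.\ on the norm-semi-exposedness of every norm-closed face of $\mathcal{B}_{A_{sa}}$; but the only source this survey gives for that property of hermitian parts is \cite[Corollary 5.1]{EdRutt86} itself (see the comments following Corollary \ref{c for spaces with property of semi-exposition for faces}). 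As written, the argument is circular. If you supply an independent proof of semi-exposedness, the rest of your closing argument does work: each $\varphi\in F'$ extends, via $(A_{sa})^*\cong (A^*)_{sa}$, to a self-adjoint norm-one functional $\tilde\varphi$ on $A$, the set $\{x\in\mathcal{B}_A:\tilde\varphi(x)=1\}$ is a norm-closed face of $\mathcal{B}_A$ containing $F$ and hence containing $F_v$, and intersecting over $F'$ gives $F_v\cap\mathcal{B}_{A_{sa}}\subseteq (F')_{\prime}=F$.

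Second, the compactness of $p=(v^2+v)/2$ and $q=(v^2-v)/2$ is asserted but not proved. That they are orthogonal projections summing to the compact projection $r=v^2$ is elementary, and $p,q\leq r$ disposes of the domination by a positive norm-one element of $A$; but compactness also requires $p$ and $q$ to be \emph{closed} projections, and ``inherits closedness from the polar decomposition description'' is not an argument -- closedness does not pass to arbitrary subprojections of a closed projection. The claim is true (it is essentially the content of \cite[Theorem 5.1]{EdRu96} on compact tripotents, or can be extracted by producing a self-adjoint norm-one $x\in A$ with $p\leq \chi_{_{\{1\}}}(x)$ and $q\leq \chi_{_{\{1\}}}(-x)$), but this is precisely the nontrivial hermitian-specific content that a proof of the theorem must supply, so it cannot be waved through. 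The uniqueness assertion at the end is fine.
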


Combining this theorem of Edwards and R\"{u}ttimann with the above Corollary \ref{c for spaces with property of semi-exposition for faces} we easily get the following version of Theorem \ref{t first correspondence between faces and compact partial isometries in the bidual for a surjective isometry}.

\begin{theorem}\label{t first correspondence between faces and compact partial isometries in the bidual for a surjective isometry hermitian} Let $\Delta: S(A_{sa}) \to S(B_{sa})$ be a surjective isometry, where $A$ and $B$ are C$^*$-algebras. Then the following statements hold:
\begin{enumerate}[$(a)$]\item For each non-zero compact partial isometry $e\in A_{sa}^{**}$ there exists a unique (non-zero) compact partial isometry $\phi_{\Delta}^s (e)\in B_{sa}^{**}$ such that $\Delta (F_e) = F_{\phi_\Delta^s (e)},$ where $F_{e} =\left(e + (1 - e^2) \mathcal{B}_{A_{sa}^{**}} (1 - e^2)\right)\cap \mathcal{B}_{A_{sa}}$;
\item The mapping $e\mapsto \phi_\Delta^s (e)$ defines an order preserving bijection between the sets of non-zero compact partial isometries in $A_{sa}^{**}$ and the set of non-zero compact partial isometries in $B_{sa}^{**}$;
\item $\phi_{\Delta}^s$ maps minimal partial isometries in $A_{sa}^{**}$ to minimal partial isometries in $B_{sa}^{**}$.
\end{enumerate}
\end{theorem}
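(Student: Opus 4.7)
The plan is a direct combination of the two ingredients already on the table: Corollary \ref{c for spaces with property of semi-exposition for faces} and Theorem \ref{t faces EdRutt}. The argument should mirror, almost verbatim, the derivation of Theorem \ref{t first correspondence between faces and compact partial isometries in the bidual for a surjective isometry} but within the self-adjoint category.

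First I would verify that the real Banach spaces $A_{sa}$ and $B_{sa}$ satisfy the semi-exposition hypotheses $(h.1)$ and $(h.2)$ of Corollary \ref{c for spaces with property of semi-exposition for faces}; this is recorded in the remarks after that corollary, via \cite[Corollary 5.1]{EdRutt86} together with \cite[Theorem 3.11]{AkPed92}. Consequently, conclusion $(a)$ of that corollary applies, and $\mathcal{F}\mapsto\Delta(\mathcal{F})$ is a bijection between the proper norm-closed faces of $\mathcal{B}_{A_{sa}}$ and those of $\mathcal{B}_{B_{sa}}$ which manifestly preserves inclusion.

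Second, Theorem \ref{t faces EdRutt} supplies, on each side, a bijection $e\mapsto F_e$ between the set of non-zero compact partial isometries of $A_{sa}^{**}$ (resp.\ $B_{sa}^{**}$) and the set of proper norm-closed faces of $\mathcal{B}_{A_{sa}}$ (resp.\ $\mathcal{B}_{B_{sa}}$); this correspondence is anti-order preserving, since a larger compact partial isometry $e$ shrinks the cutting term $(1-e^2)\mathcal{B}_{A_{sa}^{**}}(1-e^2)$ and hence yields a smaller face $F_e$. Composing the three bijections, I then define $\phi_\Delta^s(e)$ to be the unique non-zero compact partial isometry in $B_{sa}^{**}$ with $F_{\phi_\Delta^s(e)}=\Delta(F_e)$; uniqueness is automatic from Theorem \ref{t faces EdRutt}. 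This gives $(a)$ and the bijectivity in $(b)$.

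The order preservation in $(b)$ is then the composition of two anti-order isomorphisms with the tautologically inclusion-preserving face bijection induced by $\Delta$: if $e\leq e'$ in $A_{sa}^{**}$, then $F_e\supseteq F_{e'}$, hence $\Delta(F_e)\supseteq\Delta(F_{e'})$, whence $\phi_\Delta^s(e)\leq\phi_\Delta^s(e')$. For $(c)$, minimal non-zero compact partial isometries correspond under $e\mapsto F_e$ precisely to maximal proper norm-closed faces of the unit ball; since every lattice isomorphism preserves maximal elements and $\Delta$ induces such an isomorphism on these faces, $\phi_\Delta^s$ sends minimal compact partial isometries to minimal compact partial isometries. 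I do not anticipate any genuine obstacle — the entire argument is bookkeeping, the only substantive input being that the Akemann--Pedersen facial description has been swapped for its Edwards--R\"uttimann counterpart for the hermitian part, and that both $A_{sa}$ and $B_{sa}$ are known to fit inside the hypotheses of Corollary \ref{c for spaces with property of semi-exposition for faces}.
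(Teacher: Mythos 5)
Your proposal is correct and follows exactly the route the paper takes: the paper derives this theorem by "combining" Theorem \ref{t faces EdRutt} with Corollary \ref{c for spaces with property of semi-exposition for faces} (whose hypotheses hold for hermitian parts of C$^*$-algebras by \cite[Corollary 5.1]{EdRutt86} and \cite[Theorem 3.11]{AkPed92}), precisely the composition of bijections you describe. Your write-up merely spells out the order-reversal and maximality bookkeeping that the paper leaves implicit.
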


The arguments in the proofs of \cite[Theorems 2.7, 2.8 and 2.11 and Proposition 2.12]{FerPe17d} literarily works to obtain the following four results.

\begin{theorem}\label{t surjective isometries map partial isometries into points of strong subdiff hermitian}\cite[Theorem 2.7]{FerPe17d}
Let $\Delta: S(A_{sa}) \to S(B_{sa})$ be a surjective isometry, where $A$ and $B$ are C$^*$-algebras. Let $e$ be a non-zero partial isometry in $A_{sa}$. Then $1$ is isolated in the spectrum of $|\Delta(e)|$.
\end{theorem}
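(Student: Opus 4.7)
The plan is to carry out the argument of \cite[Theorem~2.7]{FerPe17d} in the self-adjoint setting. Given a partial isometry $e\in A_{sa}$, Theorem \ref{t first correspondence between faces and compact partial isometries in the bidual for a surjective isometry hermitian} supplies a unique compact partial isometry $\phi_\Delta^s(e)\in B_{sa}^{**}$ such that $\Delta(e)\in F_{\phi_\Delta^s(e)}$. By the Edwards--R\"uttimann description (Theorem \ref{t faces EdRutt}) this amounts to the decomposition
\begin{equation*}
\Delta(e)=\phi_\Delta^s(e)+r,\qquad r:=\bigl(1-\phi_\Delta^s(e)^2\bigr)\Delta(e)\bigl(1-\phi_\Delta^s(e)^2\bigr),
\end{equation*}
where $r$ is a self-adjoint contraction in $B^{**}$ whose support is orthogonal to $\phi_\Delta^s(e)^2$. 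Since $\phi_\Delta^s(e)$ and $r$ have orthogonal supports, the spectral theorem gives $\sigma(|\Delta(e)|)=\{1\}\cup\sigma(|r|)\subseteq\{1\}\cup[0,\|r\|]$. Consequently, showing that $1$ is isolated in $\sigma(|\Delta(e)|)$ is equivalent to proving the strict inequality $\|r\|<1$.

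I would then argue by contradiction, exactly along the lines of the non-self-adjoint case. Assuming $\|r\|=1$, a Hahn--Banach/Kaplansky density argument inside the corner $(1-\phi_\Delta^s(e)^2)B(1-\phi_\Delta^s(e)^2)$ produces a sequence $(b_n)\subset S(B_{sa})$ asymptotically supported in this corner and satisfying $\|b_n+\phi_\Delta^s(e)\|\to 2$. Pulling back through the surjective isometry $\Delta^{-1}\colon S(B_{sa})\to S(A_{sa})$, whose behaviour on faces is controlled by Proposition \ref{p Mori faces}$(b)$, I obtain $a_n:=\Delta^{-1}(b_n)\in S(A_{sa})$ with $\|a_n-e\|\to 2$. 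The self-adjoint analogue of the uniform distance-two lemma of \cite[Lemma~2.6]{FerPe17d} would then force each $a_n$ to be arbitrarily close to an element of the form $-e+s_n$ with $s_n\in(1-e^2)A_{sa}(1-e^2)$. Re-applying $\Delta$ and tracing the summands via the face correspondence produces an asymptotic expression for $b_n$ whose ``$\phi_\Delta^s(e)$-component'' is $-\phi_\Delta^s(e)$, contradicting the original placement of $b_n$ in the perpendicular corner.

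The main obstacle is thus the self-adjoint uniform distance-two lemma. Its proof in \cite{FerPe17d} uses the polar decomposition $e=u|e|$ and orthogonality in the Jordan triple product $\{x,y,z\}=\tfrac12(xy^\ast z+zy^\ast x)$ on $A$. In the current setting I would replace the polar decomposition by the Jordan form $e=p-q$ of orthogonal compact projections in $A^{**}$, observe that the triple product restricts to a well-defined Jordan triple product on $A_{sa}$, and invoke Theorem \ref{t faces EdRutt} in place of the Akemann--Pedersen description used originally. With these substitutions the ultraproduct/compactness machinery goes through without structural modification, and once the lemma is in place the contradiction above, and hence the isolation of $1$ in $\sigma(|\Delta(e)|)$, follow exactly as in \cite[Theorem~2.7]{FerPe17d}.
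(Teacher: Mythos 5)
Your proposal follows the same route as the paper, which in fact supplies no independent argument for this theorem: the survey merely asserts that ``the arguments in the proofs of \cite[Theorems 2.7, 2.8 and 2.11 and Proposition 2.12]{FerPe17d} literarily work'' once the Akemann--Pedersen facial description is replaced by the Edwards--R\"uttimann one (Theorem \ref{t faces EdRutt}) and self-adjoint partial isometries are written as $e=p-q$. Your reduction via Theorem \ref{t first correspondence between faces and compact partial isometries in the bidual for a surjective isometry hermitian} to the decomposition $\Delta(e)=\phi_\Delta^s(e)+r$ with $r$ in the orthogonal corner, and the plan of a contradiction through a self-adjoint uniform distance-two lemma plus ultraproducts, is precisely the intended adaptation. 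Two imprecisions are harmless but worth noting: $\|r\|<1$ is sufficient for, not equivalent to, the isolation of $1$ in $\sigma(|\Delta(e)|)$; and the correct hypothesis for the contradiction is that $\sigma(|r|)$ meets $(1-1/n,1)$ for every $n$, which is what the negation of the conclusion actually yields.

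One displayed step, however, fails as written. If $(b_n)$ is asymptotically supported in the corner $(1-\phi_\Delta^s(e)^2)B^{**}(1-\phi_\Delta^s(e)^2)$, then $b_n$ and $\phi_\Delta^s(e)$ are asymptotically orthogonal self-adjoint contractions, so $\|b_n+\phi_\Delta^s(e)\|=\max\{\|b_n\|,\|\phi_\Delta^s(e)\|\}+o(1)\to 1$; the requirement $\|b_n+\phi_\Delta^s(e)\|\to 2$ can never be met. What the argument needs --- and what your next sentence silently uses, since $\|a_n-e\|=\|b_n-\Delta(e)\|$ --- is $\|b_n-\Delta(e)\|\to 2$. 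This is what the Kaplansky-density step should produce: with $f_n:=\chi_{(1-1/n,1]}(|r|)\neq 0$ by the contradiction hypothesis, choose $b_n\in S(B_{sa})$ close to $-rf_n/\|rf_n\|$; since $b_n-rf_n$, $\phi_\Delta^s(e)$ and $r(1-f_n)$ have mutually orthogonal supports, one gets $\|b_n-\Delta(e)\|\geq 1+\|rf_n\|-o(1)\to 2$. With that correction the remainder of your outline --- pulling back through $\Delta^{-1}$, invoking the self-adjoint uniform distance-two lemma, and contrasting the forced $-\phi_\Delta^s(e)$-component of $b_n$ with its placement in the perpendicular corner --- is exactly the argument the paper has in mind.
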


\begin{theorem}\label{t A hermitian}\cite[Theorem 2.8]{FerPe17d}
Let $\Delta: S(A_{sa}) \to S(B_{sa})$ be a surjective isometry, where $A$ and $B$ are C$^*$-algebras. Then $\Delta$ maps non-zero partial isometries in $A_{sa}$ into non-zero partial isometries in $B_{sa}$. Moreover, for each non-zero partial isometry $e$ in $A_{sa}$, we have $\phi_{\Delta}^s (e) = \Delta(e)$, where $\phi_{\Delta}^s$ is the mapping given by Theorem \ref{t first correspondence between faces and compact partial isometries in the bidual for a surjective isometry hermitian}, and  there exists a surjective (real) linear isometry $$T_{e} : (1 - e^2) A_{sa}  (1 - e^2)\to  {(1 - \Delta(e)^2) B_{sa} (1 - \Delta(e)^2)}$$ such that $$ \Delta(e + x) = \Delta(e) + T_e (x), \hbox{ for all $x$ in } \mathcal{B}_{(1 - e^2) A_{sa}  (1 - e^2)}.$$ In particular the restriction of $\Delta$ to the face $F_{e} = e + (1 - e^2) \mathcal{B}_{A_{sa}} (1 - e^2)$ is a real affine function.
\end{theorem}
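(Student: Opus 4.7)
The plan is to mirror the proof of \cite[Theorem 2.8]{FerPe17d} in the hermitian setting, leveraging the three preliminary results already at hand: the facial correspondence of Theorem \ref{t first correspondence between faces and compact partial isometries in the bidual for a surjective isometry hermitian}, the spectral isolatedness of Theorem \ref{t surjective isometries map partial isometries into points of strong subdiff hermitian}, the Edwards--R\"{u}ttimann description of Theorem \ref{t faces EdRutt}, and Mankiewicz's theorem (Theorem \ref{t Mankiewicz}). Fix a non-zero partial isometry $e = p - q \in A_{sa}$, with $p, q$ orthogonal projections in $A$. Applying Theorem \ref{t first correspondence between faces and compact partial isometries in the bidual for a surjective isometry hermitian} I obtain the unique compact partial isometry $\phi_\Delta^s (e) \in B_{sa}^{**}$ with $\Delta(F_e) = F_{\phi_\Delta^s(e)}$. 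Because $e \in F_e$, this forces
\[
\Delta(e) = \phi_\Delta^s(e) + y,
\]
for some $y \in (1 - \phi_\Delta^s(e)^2)\,\mathcal{B}_{B_{sa}^{**}}\,(1 - \phi_\Delta^s(e)^2)$, and orthogonality gives $|\Delta(e)|^2 = \phi_\Delta^s(e)^2 + y^2$ with $\phi_\Delta^s(e)^2$ and $y^2$ orthogonal in $B^{**}$.

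Next, Theorem \ref{t surjective isometries map partial isometries into points of strong subdiff hermitian} asserts that $1$ is isolated in $\sigma(|\Delta(e)|)$. Since $|\Delta(e)| \in B$, the spectral projection $r := \chi_{\{1\}}(|\Delta(e)|)$ belongs to $B$. Using the orthogonal decomposition $|\Delta(e)| = \phi_\Delta^s(e)^2 + |y|$, one identifies $r = \phi_\Delta^s(e)^2 + \chi_{\{1\}}(|y|)$, which forces $\phi_\Delta^s(e)^2 \in B$ and then $\phi_\Delta^s(e) \in B_{sa}$; a self-adjoint functional calculus argument on $\Delta(e)$ (using that $1, -1$ are the only possible values achieving norm one on the support of $\phi_\Delta^s(e)$, together with $\|\Delta(e)\| = 1$ and the face constraint) rules out a non-zero contribution from $y$. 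Hence $y = 0$, $\Delta(e) = \phi_\Delta^s(e)$, and $\Delta$ maps partial isometries in $A_{sa}$ to partial isometries in $B_{sa}$.

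For the second half, observe that $F_e = e + (1 - e^2)\mathcal{B}_{A_{sa}}(1 - e^2)$ is an affine translate of the closed unit ball of the real Banach space $(1 - e^2) A_{sa}(1 - e^2)$, and is therefore a convex body with non-empty interior in its affine hull; likewise for $F_{\Delta(e)}$. Since $\Delta$ restricts to a surjective isometry $F_e \to F_{\Delta(e)}$, Mankiewicz's theorem yields a unique affine extension, so defining
\[
T_e(x) := \Delta(e + x) - \Delta(e), \qquad x \in \mathcal{B}_{(1 - e^2) A_{sa}(1 - e^2)},
\]
and extending by homogeneity produces a surjective real linear isometry between $(1-e^2) A_{sa}(1-e^2)$ and $(1-\Delta(e)^2) B_{sa}(1-\Delta(e)^2)$. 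The identity $\Delta(e + x) = \Delta(e) + T_e(x)$ on the unit ball, and the real affineness of $\Delta|_{F_e}$, follow at once.

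The main obstacle I foresee is the concluding step of showing $y = 0$. The isolatedness of $1$ in $\sigma(|\Delta(e)|)$ alone only guarantees that $|y|$ has a spectral gap below $1$, not that it vanishes. The delicate point, already present in \cite{FerPe17d}, is to combine the metric information coming from $\Delta$ being a surjective isometry on $S(A_{sa})$ with the face constraint $\Delta(e) \in F_{\phi_\Delta^s(e)}$ and a uniform approximation of $\Delta(e)$ by candidates realizing the face, in order to eliminate the residual self-adjoint block $y$. Reproducing this chain of estimates in $B_{sa}$ is where the real work lies, though the skeleton transfers verbatim from the associative setting to the Jordan real subspace $B_{sa}$ because $(1-e^2) A_{sa}(1-e^2)$ is closed under the Jordan triple product inherited from $A$.
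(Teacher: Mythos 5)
Your overall strategy coincides with the paper's: the survey's entire justification for this theorem is that the arguments of \cite[Theorem 2.8]{FerPe17d} carry over ``literarily'' to the hermitian setting, and your second half — realizing $F_e$ as a translate of the closed unit ball of the real Banach space $(1-e^2)A_{sa}(1-e^2)$ and invoking Mankiewicz's theorem (Theorem \ref{t Mankiewicz}) to produce the affine, hence real linear, extension $T_e$ — is exactly what is done there and is correct, \emph{once} one knows that $\Delta(e)=\phi_\Delta^s(e)$ is a partial isometry in $B_{sa}$ so that $\Delta$ restricts to a surjective isometry from $F_e$ onto $F_{\Delta(e)}$.

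That first half is precisely where your argument has a genuine gap, and you concede as much. Writing $\Delta(e)=\phi_\Delta^s(e)+y$, the isolation of $1$ in $\sigma(|\Delta(e)|)$ (Theorem \ref{t surjective isometries map partial isometries into points of strong subdiff hermitian}) only lets you split off a spectral projection and conclude that the residual part has norm strictly less than $1$; it does not give $y=0$, and no functional calculus performed on $\Delta(e)$ alone can. Your intermediate step is also unsound as written: from $r=\chi_{\{1\}}(|\Delta(e)|)\in B$ and $r=\phi_\Delta^s(e)^2+\chi_{\{1\}}(|y|)$ you cannot conclude $\phi_\Delta^s(e)^2\in B$, since a sum of two orthogonal projections in $B^{**}$ may belong to $B$ without either summand doing so. The missing ingredients — which the survey explicitly flags in its discussion of the non-hermitian version (Theorem \ref{t A}) — are the ultraproduct techniques and the uniform quantitative generalization of the ``distance two'' lemma (Lemma \ref{l two finite rank tripotents at distance 2 in KH}) developed in \cite{FerPe17d}; these are what convert the metric information carried by $\Delta$ on the sphere into the conclusion $y=0$. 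Without reproducing, or at least invoking, that machinery, the assertion that $\Delta$ sends non-zero partial isometries of $A_{sa}$ to partial isometries of $B_{sa}$ — on which every remaining claim in the statement rests — is left unproved.
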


\begin{theorem}\label{t C hermitian}\cite[Theorem 2.11]{FerPe17d} Let $\Delta: S(A_{sa}) \to S(B_{sa})$ be a surjective isometry, where $A$ and $B$ are C$^*$-algebras. Then, for each non-zero compact partial isometry $e$ in $A_{sa}^{**}$ we have $\phi_\Delta^s (-e) = -\phi_\Delta^s (e)$, where $\phi_\Delta^s$ is the mapping given by Theorem \ref{t first correspondence between faces and compact partial isometries in the bidual for a surjective isometry hermitian}. Consequently, for each non-zero partial isometry $e\in A_{sa}$ we have $\Delta(-e) = -\Delta(e)$.
\end{theorem}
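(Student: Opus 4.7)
The plan is to leverage Proposition \ref{p Mori faces}(b), which---because $A_{sa}$ and $B_{sa}$ satisfy both semi-exposition hypotheses (see the comments after Corollary \ref{c for spaces with property of semi-exposition for faces})---applies to our surjective isometry $\Delta : S(A_{sa})\to S(B_{sa})$ and yields $\Delta(-F) = -\Delta(F)$ for every proper norm-closed face $F$ of $\mathcal{B}_{A_{sa}}$. Coupled with the face--compact-partial-isometry correspondence in Theorem \ref{t first correspondence between faces and compact partial isometries in the bidual for a surjective isometry hermitian}, this should transfer instantly into the desired antipode identity at the level of $\phi_\Delta^s$.

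For the first assertion, I fix a non-zero compact partial isometry $e\in A_{sa}^{**}$ and write $e = p - q$ with $p, q$ orthogonal compact projections in $A^{**}$ (Theorem \ref{t faces EdRutt}). The initial observation is that $-F_e = F_{-e}$: indeed, the description $F_e = \{x\in\mathcal{B}_{A_{sa}} : x(p-q) = p+q\}$ is plainly stable under the sign change $x\leftrightarrow -x$ and produces $\{y\in\mathcal{B}_{A_{sa}} : y(q-p)=q+p\} = F_{-e}$. Applying Proposition \ref{p Mori faces}(b) to $F_e$ then gives $\Delta(F_{-e}) = -\Delta(F_e)$. Now the left-hand side is $F_{\phi_\Delta^s(-e)}$ and the right-hand side is $-F_{\phi_\Delta^s(e)} = F_{-\phi_\Delta^s(e)}$, so the uniqueness clause of Theorem \ref{t first correspondence between faces and compact partial isometries in the bidual for a surjective isometry hermitian} forces $\phi_\Delta^s(-e) = -\phi_\Delta^s(e)$.

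The consequence for a non-zero partial isometry $e\in A_{sa}$ is then immediate: Theorem \ref{t A hermitian} identifies $\Delta(e)$ with $\phi_\Delta^s(e)$ and $\Delta(-e)$ with $\phi_\Delta^s(-e)$, whence $\Delta(-e) = -\Delta(e)$.

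The main obstacle is philosophical rather than technical: Proposition \ref{p Mori faces}(b) depends on Mori's very recent preprint \cite{Mori2017}, which post-dates the original argument of \cite{FerPe17d}. A proof that stays faithful to the spirit of ``the tools of \cite{FerPe17d} apply almost literally'' would instead proceed by checking that every pair $x\in F_e$, $y\in F_{-e}$ realizes $\|x-y\| = 2$---via the orthogonal-support decomposition $x - y = 2e + (1-p-q)(z-w)(1-p-q)$---transporting this property along the isometry $\Delta$, and then invoking the ultrapower-based uniform sharpening of Lemma \ref{l two finite rank tripotents at distance 2 in KH} that underpins Theorems \ref{t surjective isometries map partial isometries into points of strong subdiff hermitian} and \ref{t A hermitian} to recover $\phi_\Delta^s(-e) = -\phi_\Delta^s(e)$. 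This second route is the one that truly generalizes from the non-hermitian setting; the Mori shortcut merely makes the bookkeeping transparent once it is available.
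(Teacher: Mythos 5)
Your primary argument is correct, but it follows a genuinely different route from the one the paper takes. The survey's own ``proof'' of Theorem \ref{t C hermitian} is simply the assertion that the arguments of \cite[Theorem 2.11]{FerPe17d} transfer literally to the hermitian setting; as the text around Theorems \ref{t surjective isometries map partial isometries into points of strong subdiff} and \ref{t A} makes clear, those arguments rest on ultraproduct techniques and a uniform quantitative sharpening of Lemma \ref{l two finite rank tripotents at distance 2 in KH} --- essentially the second route you only sketch. What you actually carry out instead is a short derivation from Proposition \ref{p Mori faces}$(b)$: the hypotheses of Corollary \ref{c for spaces with property of semi-exposition for faces} do hold for $A_{sa}$ and $B_{sa}$ (via \cite[Corollary 5.1]{EdRutt86} and \cite[Theorem 3.11]{AkPed92}, as noted after that corollary), the identity $-F_e = F_{-e}$ is immediate from either description of $F_e$ (note $(-e)^2 = e^2$), and the uniqueness clause of Theorem \ref{t first correspondence between faces and compact partial isometries in the bidual for a surjective isometry hermitian} then forces $\phi_\Delta^s(-e) = -\phi_\Delta^s(e)$; the final claim follows from $\phi_\Delta^s(e)=\Delta(e)$ in Theorem \ref{t A hermitian} applied to both $e$ and $-e$. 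This is cleaner and complete as written, with the one caveat you yourself flag: it outsources the real work to Proposition \ref{p Mori faces}, which the survey states without proof on the strength of Mori's preprint and which postdates \cite{FerPe17d}, so your argument is not available ``internally'' to the FerPe17d machinery. The paper's route buys self-containedness within that machinery at the cost of the heavy ultraproduct apparatus; yours buys brevity at the cost of an external (though independent and non-circular) input. Your distance-$2$ sketch for the second route is consistent with the intended argument but would need the full uniform version of Lemma \ref{l two finite rank tripotents at distance 2 in KH} to close, since the mere fact that all pairs $x\in F_e$, $y\in F_{-e}$ satisfy $\|x-y\|=2$ does not by itself pin down $\Delta(F_{-e})$.
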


\begin{proposition}\label{p algebraic elements hermitian}\cite[Proposition 2.12]{FerPe17d} Let $\Delta: S(A_{sa}) \to S(B_{sa})$ be a surjective isometry, where $A$ and $B$ are C$^*$-algebras. Then the following statements hold:\begin{enumerate}[$(a)$]\item For each non-zero partial isometry $v$ in $A_{sa}$, the surjective real linear isometry $$T_v : (1 - v^2) A_{sa}  (1 - v^2)\to  {(1 - \Delta(v)^2) B_{sa} (1 - \Delta(v)^2)}$$ given by Theorem \ref{t A hermitian} satisfies $\Delta (e) = T_v(e),$ for every non-zero partial isometry $e\in (1 - v^2) A_{sa}  (1 - v^2)$;
\item Let $w_1,\ldots,w_n$ be mutually orthogonal non-zero partial isometries in $A_{sa}$, and let $\lambda_1,\ldots,\lambda_n$ be real numbers with $1=|\lambda_1|\geq\max\{|\lambda_j|\}$. Then $$\Delta\left(\sum_{j=1}^n \lambda_j w_j\right) = \sum_{j=1}^n \lambda_j \Delta\left(w_j\right);$$
\item Suppose $v,w$ are mutually orthogonal non-zero partial isometries in $A_{sa}$ then $T_{v} (x) = T_{w} (x)$ for every $x\in \{v\}^{\perp} \cap \{w\}^{\perp}$;
\item If $A$ is a von Neumann algebra, then for each non-zero partial isometry $v$ in $A_{sa}$ we have $\Delta(x) = T_v (x)$ for every $x\in S({(1 - vv^*) A_{sa}  (1 - v^* v)}).$
\end{enumerate}
\end{proposition}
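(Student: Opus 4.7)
The plan is to adapt the arguments in the proof of \cite[Proposition 2.12]{FerPe17d} essentially verbatim to the hermitian setting, using the hermitian tools already established in this section. The key observation that makes the transfer routine is that every partial isometry $v\in A_{sa}$ has the form $v=p-q$ with $p,q$ mutually orthogonal projections in $A^{**}$, the corner $(1-v^2)A_{sa}(1-v^2)$ is the hermitian part of the C$^*$-subalgebra $(1-v^2)A(1-v^2)$, and a sum of mutually orthogonal partial isometries in $A_{sa}$ is again a partial isometry in $A_{sa}$; therefore every algebraic manipulation employed in \cite{FerPe17d} remains available.

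For (a), let $e$ be a partial isometry in $A_{sa}$ orthogonal to $v$, so $e\in(1-v^2)A_{sa}(1-v^2)$ and $v\pm e$ are norm-one partial isometries in $A_{sa}$. Applying Theorem \ref{t A hermitian} at $v$, $e$ and $-e$ and using Theorem \ref{t C hermitian} produces
\[
\Delta(v+e)\,=\,\Delta(v)+T_v(e)\,=\,\Delta(e)+T_e(v),
\]
where each summand $T_w(\cdot)$ lies in the corner orthogonal to $\Delta(w)^2$. From $\|\Delta(v)\pm\Delta(e)\| = \|v\pm e\| = 1$ and the C$^*$-identity, adding $(\Delta(v)+\Delta(e))^2$ and $(\Delta(v)-\Delta(e))^2$ yields $\|\Delta(v)^2+\Delta(e)^2\|\le 1$, which for a sum of projections forces $\Delta(v)\perp\Delta(e)$. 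Hence the partial isometry $\Delta(v+e)$ dominates $\Delta(v)+\Delta(e)$, and comparing with $\Delta(v+e)=\Delta(v)+T_v(e)$ gives $T_v(e)\ge\Delta(e)$ in the partial isometry order. Since $T_v$ is a surjective real linear isometry between the hermitian parts of two corner C$^*$-subalgebras, it preserves the rank of partial isometries in $A_{sa}$, so $T_v(e)$ has the same rank as $e$; on the other hand $\phi_\Delta^s$ is a rank-preserving order isomorphism by Theorem \ref{t first correspondence between faces and compact partial isometries in the bidual for a surjective isometry hermitian}, so $\Delta(e)$ shares the same rank. Two partial isometries of equal rank, one dominating the other, must agree, yielding $T_v(e)=\Delta(e)$.

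Part (b) now follows at once. Since $|\lambda_1|=1$, the element $\lambda_1 w_1$ is a partial isometry in $A_{sa}$ with $\Delta(\lambda_1 w_1)=\lambda_1\Delta(w_1)$ by Theorem \ref{t C hermitian}, and $y:=\sum_{j\ge 2}\lambda_j w_j$ lies in the closed unit ball of $(1-w_1^2)A_{sa}(1-w_1^2)$. Applying Theorem \ref{t A hermitian} at $\lambda_1 w_1$, using the real linearity of $T_{\lambda_1 w_1}$, and invoking $T_{\lambda_1 w_1}(w_j)=\Delta(w_j)$ from (a) delivers the additive formula. For (c), note that $v+w$ is itself a partial isometry in $A_{sa}$ with $(1-(v+w)^2)A_{sa}(1-(v+w)^2)=\{v\}^\perp\cap\{w\}^\perp$. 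For $y$ in this intersection with $\|y\|\le 1$, the identities $\Delta(v+w+y)=\Delta(v+w)+T_{v+w}(y)$ and $\Delta(v+(w+y))=\Delta(v)+T_v(w+y)=\Delta(v)+\Delta(w)+T_v(y)$, combined with $\Delta(v+w)=\Delta(v)+\Delta(w)$ from (b), give $T_{v+w}(y)=T_v(y)$; real linearity extends this equality to the whole intersection, and the symmetric computation yields $T_{v+w}=T_w$ there, whence $T_v=T_w$ on $\{v\}^\perp\cap\{w\}^\perp$.

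Finally, for (d), when $A$ is a von Neumann algebra so is the corner $(1-v^2)A(1-v^2)$, and the spectral theorem provides, for each $x\in S((1-v^2)A_{sa}(1-v^2))$, a norm-convergent sequence of finite real linear combinations $\sum_j\lambda_j p_j$ of mutually orthogonal projections in the corner with $\max_j|\lambda_j|=1$. By (b) and the real linearity of $T_v$ we have $\Delta(\sum_j\lambda_j p_j)=\sum_j\lambda_j\Delta(p_j)=\sum_j\lambda_j T_v(p_j)=T_v(\sum_j\lambda_j p_j)$, and the norm continuity of $\Delta$ and $T_v$ passes the equality to $x$. The main obstacle throughout is part (a), where the passage from the two orthogonal decompositions of $\Delta(v+e)$ to the individual identification $T_v(e)=\Delta(e)$ requires combining the face–partial-isometry correspondence encoded in $\phi_\Delta^s$ with the rank-preservation afforded by the triple-isomorphism nature of $T_v$; once this is in place, the remaining parts are direct consequences obtained exactly as in \cite[Proposition 2.12]{FerPe17d}.
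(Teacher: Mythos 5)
Your overall architecture is sound, and parts $(b)$, $(c)$ and $(d)$ do follow from $(a)$ essentially as you describe; note that the survey itself offers no proof of this proposition, but simply asserts that the arguments of \cite[Proposition 2.12]{FerPe17d} transfer verbatim to the hermitian setting. The genuine gap is the last step of your proof of $(a)$. Having correctly shown that $\Delta(v)\perp\Delta(e)$ and that $\Delta(e)\leq T_v(e)$ in the order of partial isometries, you conclude $T_v(e)=\Delta(e)$ because ``two partial isometries of equal rank, one dominating the other, must agree.'' This is not available: in a general C$^*$-algebra a partial isometry has no well-defined rank (the algebra may contain no minimal partial isometries at all, e.g.\ $C[0,1]$ or a type II factor), and where a sensible notion of rank exists it may be infinite, in which case domination together with equality of rank does not force equality --- in $B(\ell^2)_{sa}$ take two distinct comparable projections of infinite rank. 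Moreover, neither of the two facts you invoke is actually established by the results you cite: Theorem \ref{t first correspondence between faces and compact partial isometries in the bidual for a surjective isometry hermitian} only says that $\phi^s_{\Delta}$ is an order-preserving bijection carrying minimal elements to minimal elements, and nothing in Theorem \ref{t A hermitian} asserts that $T_v$ ``preserves rank.''

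The step can be repaired with the tools you already have, and this is the intended route. From $v,e\leq v+e$ and the order preservation of $\phi^s_{\Delta}$ you get $\Delta(v),\Delta(e)\leq\Delta(v+e)$, hence, using the orthogonality you established, $u:=\Delta(v)+\Delta(e)\leq\Delta(v+e)$. Now apply the inverse order isomorphism $(\phi^s_{\Delta})^{-1}$ to $u$ (a partial isometry lying in $B_{sa}$ itself, hence compact in $B^{**}$): its preimage $u'$ satisfies $v\leq u'$, $e\leq u'$ and $u'\leq v+e$, so $v+e\leq u'\leq v+e$, whence $u'=v+e$ and $u=\Delta(v+e)$. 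This yields $T_v(e)=\Delta(v+e)-\Delta(v)=\Delta(e)$ with no mention of rank. With $(a)$ so corrected, your deductions of $(b)$, $(c)$ and $(d)$ go through as written.
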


Back to our goal, we observe that the case of $M_2(\mathbb{C})$ of all $2\times 2$ matrices with complex entries must be treated independently.

\begin{proposition}\label{p Tingley hermitian M2} Let $A= M_2 (\mathbb{C})$, $B$ a C$^*$-algebra, and let $\Delta: S(A_{sa}) \to S(B_{sa})$ be a surjective isometry. Then there exists a surjective complex linear isometry $T:A\to B$ satisfying $T(a^*) = T(a)^*,$ for all $a\in A,$ and $T(x) = \Delta (x),$ for all $x\in S(A_{sa})$.
\end{proposition}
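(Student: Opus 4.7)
My plan is (i) to identify $B$ up to $*$-isomorphism with $M_2(\mathbb{C})$ by dimension and extreme-point counts; (ii) to build an $\mathbb{R}$-linear isometric extension $T_0 : A_{sa} \to B_{sa}$ from the bipyramid geometry of $\mathcal{B}_{A_{sa}}$; and (iii) to extend $T_0$ complex-linearly and verify, via a Jordan identity, that the result is a $*$-preserving complex-linear isometry of $A$ onto $B$.

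For (i), since $\dim_{\mathbb{R}} A_{sa} = 4$ the sphere $S(A_{sa})$ is compact, hence so is $S(B_{sa}) = \Delta(S(A_{sa}))$, and $\Delta$ is a homeomorphism, so $S(B_{sa}) \cong S^3$ and $\dim_{\mathbb{C}} B = \dim_{\mathbb{R}} B_{sa} = 4$. The classification of finite-dimensional C$^*$-algebras leaves only $B \cong M_2(\mathbb{C})$ or $B \cong \mathbb{C}^4$, but by Corollary \ref{c for spaces with property of semi-exposition for faces} the map $\Delta$ bijects $\partial_e(\mathcal{B}_{A_{sa}})$ onto $\partial_e(\mathcal{B}_{B_{sa}})$, and the former is uncountable (containing the $2$-sphere of trace-zero self-adjoint unitaries) while $\partial_e(\mathcal{B}_{(\mathbb{C}^4)_{sa}})$ has only $16$ points. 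Hence $B \cong M_2(\mathbb{C})$.

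For (ii), fix Pauli decompositions $A_{sa} = \mathbb{R} 1_A \oplus \{\vec\beta \cdot \vec\sigma^A : \vec\beta \in \mathbb{R}^3\}$ and similarly for $B_{sa}$, so that $\|\alpha 1 + \vec\beta \cdot \vec\sigma\| = |\alpha| + |\vec\beta|_2$ and each unit ball becomes the bipyramid of $\mathbb{R}\oplus_1\mathbb{R}^3_2$. The extreme points split into the two \emph{poles} $\pm 1$ and the \emph{equator} $\mathcal{E} = \{\vec\beta \cdot \vec\sigma : |\vec\beta| = 1\}$, a round $2$-sphere. Poles and equator are distinguished metrically: the set of extreme points at distance exactly $2$ from a pole is the opposite pole together with the whole equator, while for an equator point it contains only three elements. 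By Theorem \ref{t FaWang distance 2} and Corollary \ref{c for spaces with property of semi-exposition for faces}, $\Delta$ maps poles to poles and $\mathcal{E}_A$ onto $\mathcal{E}_B$. Replacing $\Delta$ by $-\Delta$ if necessary I assume $\Delta(1_A) = 1_B$; then $\Delta(-1_A) = -1_B$ by Theorem \ref{t C hermitian}. Since the ambient norm restricts on $\mathcal{E}_A$ to the Euclidean distance, the map $\Delta|_{\mathcal{E}_A} : \mathcal{E}_A \to \mathcal{E}_B$ is a bijective isometry of round $2$-spheres, and so the restriction of a unique $R \in O(3)$. Define $T_0(\alpha 1_A + \vec\beta \cdot \vec\sigma^A) := \alpha 1_B + R(\vec\beta) \cdot \vec\sigma^B$; this is $\mathbb{R}$-linear and isometric because $|\alpha| + |R(\vec\beta)|_2 = |\alpha| + |\vec\beta|_2$. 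By Theorem \ref{t faces ChengDong11}, $\Delta$ bijects the maximal proper faces of $\mathcal{B}_{A_{sa}}$, i.e., the length-$2$ line segments from a pole to an equator point, onto the analogous segments of $\mathcal{B}_{B_{sa}}$; with endpoints fixed by $T_0$, on each edge $\Delta$ is the unique affine isometry between two length-$2$ segments with matched endpoints. Since every point of $S(A_{sa})$ is a pole, an equator point, or an interior point of an edge, $T_0 = \Delta$ on the entire sphere.

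For (iii), extend $T_0$ complex-linearly to $T : A \to B$ by the same formula with $\alpha \in \mathbb{C}$ and $\vec\beta \in \mathbb{C}^3$. Since $R$ is real, $T(a^*) = T(a)^*$ for every $a \in A$. Using $(\vec\beta \cdot \vec\sigma)^2 = (\vec\beta \cdot \vec\beta)\, 1$ in $M_2(\mathbb{C})$ together with $R(\vec\beta) \cdot R(\vec\gamma) = \vec\beta \cdot \vec\gamma$ for the complex bilinear dot product (valid because $R \in O(3)$ is real), a direct computation yields $T(a^2) = T(a)^2$ for every $a \in A$; hence $T$ is a Jordan $*$-isomorphism between the unital C$^*$-algebras $M_2(\mathbb{C})$ and $B$, and in particular a complex-linear isometry whose restriction to $S(A_{sa})$ is $\Delta$. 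I expect the main obstacle to be stage (ii): identifying the two geometrically different classes of extreme points purely from the intrinsic metric of $S(A_{sa})$, and verifying that the $O(3)$-extension of $\Delta|_{\mathcal{E}_A}$ combines with the prescribed pole map into a global $\mathbb{R}$-linear isometry of the bipyramid.
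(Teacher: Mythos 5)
Your proposal is correct, and it reaches the conclusion by a genuinely different route from the paper. The paper first pins down $B$ via a rank-two argument (using Proposition \ref{p algebraic elements hermitian}) and excludes $\mathbb{C}\oplus^\infty\mathbb{C}$ by a golden-ratio distance computation, and then proceeds by hand: it normalizes $\Delta$ by composing with two symmetric linear isometries and runs explicit $2\times 2$ computations with minimal projections $q_1$ parameterized by $(s_0,\lambda)\in[0,1]\times\mathbb{T}$, ultimately showing the normalized map is the identity or entrywise conjugation. You instead exploit the isometric identification $(M_2(\mathbb{C}))_{sa}\cong\mathbb{R}\oplus_1\ell_2^3$ via Pauli coordinates: you get $\dim B=4$ from compactness plus invariance of the sphere's topological dimension, exclude $\mathbb{C}^4$ by counting extreme points, separate poles from the equator by the cardinality of the distance-two set, recover an $O(3)$ map from the round equatorial $2$-sphere, and propagate agreement with $\Delta$ along the maximal faces, which are exactly the pole-to-equator generators of the bicone (consistent with Theorem \ref{t faces EdRutt}: they are the faces $p+(1-p)\mathcal{B}(1-p)$ for $p$ minimal). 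This is more conceptual, avoids the matrix casework, and delivers $T$ directly as a unital Jordan $^*$-isomorphism; the paper's computation, by contrast, identifies the normalized $\Delta$ concretely as $x\mapsto x$ or $x\mapsto\overline{x}$. Two cosmetic points: the preservation of $\|x-y\|=2$ among extreme points needs only that $\Delta$ is an isometry (Theorem \ref{t FaWang distance 2} concerns $\|x+y\|=2$ and is not what you want here, though Corollary \ref{c for spaces with property of semi-exposition for faces} is indeed needed for preservation of extreme points); and after the normalization $\Delta\mapsto-\Delta$ you should note at the end that $-T$ is still a complex linear, $^*$-preserving surjective isometry, so the sign change is harmless for the statement as given.
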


\begin{proof} Since $A$ is finite dimensional, it follows from the hypotheses that $S(B_{sa})$ is compact, and hence $B$ is finite dimensional. Having in mind that the rank of a von Neumann algebra $M$ is the cardinal of a maximal set of mutually orthogonal projections, Proposition \ref{p algebraic elements hermitian} assures that $B$ must have rank 2. Therefore $B= \mathbb{C}\oplus^{\infty}\mathbb{C}$ or $B= M_2(\mathbb{C})$. We shall show that the first case is impossible.\smallskip

Suppose $B= \mathbb{C}\oplus^{\infty}\mathbb{C}$. We pick two orthogonal minimal projections $p_1= \left(
                                                                                                    \begin{array}{cc}
                                                                                                      1 & 0 \\
                                                                                                      0 & 0 \\
                                                                                                    \end{array}
                                                                                                  \right)
$ and $p_2 = \left(
               \begin{array}{cc}
                 0 & 0 \\
                 0 & 1 \\
               \end{array}
             \right),$ and a symmetry $s = \left(
                                             \begin{array}{cc}
                                               0 & 1 \\
                                               1 & 0 \\
                                             \end{array}
                                           \right)$
              in $A$.\smallskip

By Theorem \ref{t first correspondence between faces and compact partial isometries in the bidual for a surjective isometry hermitian}$(b)$ and Proposition \ref{p algebraic elements hermitian}, $\Delta(p_1)$ and $\Delta(p_2)$ are orthogonal minimal partial isometries in $B_{sa},$ and $\Delta(s)$ is a symmetry in $B$. We can assume, without loss of generality, that $\Delta(p_1) = (\pm 1,0)$,  $\Delta(p_2) = (0,\pm 1)$, and $\Delta(s) = (\sigma_1 ,\sigma_2 ),$ where $\sigma_1,\sigma_2\in\{\pm 1\}$. By hypotheses, $$ \frac{1+\sqrt{5}}{2}= \left\|\left(
                                                                             \begin{array}{cc}
                                                                               1 & -1 \\
                                                                               -1 & 0 \\
                                                                             \end{array}
                                                                           \right)\right\|= \| p_1-s\|= \| \Delta(p_1) - \Delta(s) \| $$ $$= \|(\pm 1,0)- (\sigma_1 ,\sigma_2 ) \| \in \{1,2\},$$ which is impossible. Therefore, $B= M_2(\mathbb{C})$.\smallskip

Let us take a surjective complex linear and symmetric isometry $T_1: M_2(\mathbb{C}) \to M_2(\mathbb{C})$ mapping $\Delta(p_1)$ and $\Delta(p_2)$ to $p_1$ and $p_2$, respectively. We set $\Delta_1 = T_1\circ \Delta$. Then $\Delta_1 : S(A_{sa})\to S(B_{sa})$ is a surjective isometry with $\Delta_1 (p_i) = p_i$ for $i=1,2$.\smallskip

An arbitrary pair of orthogonal minimal projections in $A_{sa}$ writes in the form $q_1= \left(
\begin{array}{cc}
s_0 & \!\!\lambda \sqrt{s_0(1-s_0)} \\
\!\!\overline{\lambda} \sqrt{s_0(1-s_0)} &\! 1-s_0 \\
\end{array}
\right)
$ and $q_2 \!=\! \left(
\begin{array}{cc}
1- s_0 & \!\!\!-\lambda \sqrt{s_0(1-s_0)} \\
\!\!\!-\overline{\lambda} \sqrt{s_0(1-s_0)} &\! \! s_0 \\
\end{array}
\right)$ for a unique $s_0\in(0,1)$ and a unique $\lambda\in \mathbb{T}$ (the cases $s_0=0,1$ give $p_1$ and $p_2$). By Theorem \ref{t first correspondence between faces and compact partial isometries in the bidual for a surjective isometry hermitian}$(b)$ and Proposition \ref{p algebraic elements hermitian}, $\Delta_1(q_1)$ and $\Delta_1(q_2)$ are orthogonal minimal partial isometries in $B_{sa}$.  It is well known that $\Delta_1 (q_1)\! =\! \pm \left(
\begin{array}{cc}
\!\!\! t_0 & \!\!\!\! \mu \sqrt{t_0(1-t_0)} \\
\!\!\!\overline{\mu} \sqrt{t_0(1-t_0)} & \!\!\! 1-t_0 \\                  
\end{array}                                                                                                                  \right)$
for a unique $t_0\in [0,1]$ and a unique $\mu\in \mathbb{T}$ (compare \cite[Theorem 1.3]{RaSin} or \cite[\S 3]{Ped68}).\smallskip

If $\Delta_1 (q_1) = -\left(
\begin{array}{cc}
t_0 & \mu \sqrt{t_0(1-t_0)} \\
\overline{\mu}\sqrt{t_0(1-t_0)} & 1-t_0 \\
\end{array}
\right)$, then by hypothesis, $$1+\sqrt{t_0}= \left\| \left(
\begin{array}{cc}
t_0+1 & \mu \sqrt{t_0(1-t_0)} \\
\overline{\mu} \sqrt{t_0(1-t_0)} & 1-t_0 \\
\end{array}
\right)\right\|=\| - \Delta_1 (q_1) +\Delta_1 (p_1)\|$$ $$=\|-q_1+p_1\|=\|q_1-p_1\|=\left\| \left(
\begin{array}{cc}
s_0-1 & \lambda \sqrt{s_0(1-s_0)} \\
\overline{\lambda}\sqrt{s_0(1-s_0)} & 1-s_0 \\
\end{array}
\right)\right\| =\sqrt{(1-s_0)},$$
which is impossible.\smallskip

If $\Delta_1 (q_1) = \left(
\begin{array}{cc}
\!\!\! t_0 & \!\!\!\! \mu \sqrt{t_0(1-t_0)} \\
\!\!\!\overline{\mu} \sqrt{t_0(1-t_0)} & \!\!\! 1-t_0 \\
\end{array}                                                                                                                  \right)$, then by hypothesis, $$\sqrt{(1-t_0)}= \left\| \left(
\begin{array}{cc}
t_0-1 & \mu \sqrt{t_0(1-t_0)} \\
\overline{\mu} \sqrt{t_0(1-t_0)} & 1-t_0 \\                                         
 \end{array}
\right)\right\|=\| \Delta_1 (q_1) -\Delta_1 (p_1)\|$$ $$=\|q_1-p_1\|=\left\| \left(              \begin{array}{cc}
s_0-1 & {\lambda} \sqrt{s_0(1-s_0)} \\
\overline{\lambda} \sqrt{s_0(1-s_0)} & 1-s_0 \\                    
\end{array}
\right)\right\| =\sqrt{(1-s_0)},$$
which implies that $t_0 = s_0$. That is, for each $s_0\in [0,1]$ and $\lambda\in \mathbb{T}$, there exists a unique $\mu\in \mathbb{T}$ such that  \begin{equation}\label{eq first part} \Delta_1 \left( \left(
\begin{array}{cc}
s_0 & \!\!\lambda \sqrt{s_0(1-s_0)} \\
\!\!\overline{\lambda} \sqrt{s_0(1-s_0)} &\! 1-s_0 \\
\end{array}
\right)\right) = \left(
\begin{array}{cc}
s_0 & \!\!\mu \sqrt{s_0(1-s_0)} \\
\!\!\overline{\mu} \sqrt{s_0(1-s_0)} &\! 1-s_0 \\
\end{array}
\right)
\end{equation} In particular, $\Delta_2 \left(\left(
                                                             \begin{array}{cc}
                                                               \frac12 & \frac12 \\
                                                               \frac12 & \frac12 \\
                                                             \end{array}
                                                           \right)
\right) =  \left(
                                                             \begin{array}{cc}
                                                               \frac12 & \mu_0\frac12 \\
                                                               \overline{\mu_0}\frac12 & \frac12 \\
                                                             \end{array}
                                                           \right),$ for certain $\mu_0\in \mathbb{T}$.\smallskip

Let us take a surjective complex linear symmetric isometry $T_2: M_2(\mathbb{C}) \to M_2(\mathbb{C})$ satisfying $T_2(p_j) = p_j$ for every $j=1,2$ and $T_2 \Delta_2 \left(\left(
                                                             \begin{array}{cc}
                                                               \frac12 & \frac12 \\
                                                               \frac12 & \frac12 \\
                                                             \end{array}
                                                           \right)
\right) = \left(
                                                             \begin{array}{cc}
                                                               \frac12 & \frac12 \\
                                                               \frac12 & \frac12 \\
                                                             \end{array}
                                                           \right).$ We set $\Delta_2 = T_2 \circ \Delta_1 : S(A_{sa})\to S(B_{sa}).$
Proposition \ref{p algebraic elements hermitian}$(b)$ applied to $\Delta_2$ gives $$1=\Delta_2(p_1) + \Delta_2(p_2) = \Delta_2 (1) = \Delta_2\left(\left(
                                                             \begin{array}{cc}
                                                               \frac12 & \frac12 \\
                                                               \frac12 & \frac12 \\
                                                             \end{array}
                                                           \right)\right) + \Delta_2\left(\left(
                                                             \begin{array}{cc}
                                                               \frac12 & -\frac12 \\
                                                               -\frac12 & \frac12 \\
                                                             \end{array}
                                                           \right)\right),$$ which assures that $\Delta_2\left(\left(
                                                             \begin{array}{cc}
                                                               \frac12 & -\frac12 \\
                                                               -\frac12 & \frac12 \\
                                                             \end{array}
                                                           \right)\right)=\left(
                                                             \begin{array}{cc}
                                                               \frac12 & -\frac12 \\
                                                               -\frac12 & \frac12 \\
                                                             \end{array}
                                                           \right).$ Let us denote $r_1 = \left(
                                                             \begin{array}{cc}
                                                               \frac12 & \frac12 \\
                                                               \frac12 & \frac12 \\
                                                             \end{array}
                                                           \right),$ and $r_2 = 1-r_1$. 
A new application of Proposition \ref{p algebraic elements hermitian}$(b)$ gives $$\Delta_2 ( r_1 - r_2 ) = \Delta_2 ( r_1 ) - \Delta_2 (  r_2 ) = r_1 -r_2.$$ Take an arbitrary projection $q_1 = \left(
\begin{array}{cc}
s_0 & \!\!\lambda \sqrt{s_0(1-s_0)} \\
\!\!\overline{\lambda} \sqrt{s_0(1-s_0)} &\! 1-s_0 \\
\end{array}
\right)$ with $s_0\in (0,1)$ and $\lambda\in \mathbb{T}$. We deduce from the hypothesis and \eqref{eq first part} (applied to $\Delta_2$) that $$ \frac{1+\sqrt{5- 8 \Re\hbox{e}(\lambda) \sqrt{s_0(1-s_0)} }}{2}= \left\| \left(
\begin{array}{cc}
s_0 & \!\!\lambda \sqrt{s_0(1-s_0)} \\
\!\!\overline{\lambda} \sqrt{s_0(1-s_0)} &\! 1-s_0 \\
\end{array}
\right) - \left(
                                                             \begin{array}{cc}
                                                               0 & 1 \\
                                                               1 & 0 \\
                                                             \end{array}
                                                           \right) \right\| $$ $$=  \left\| \Delta_2 \left(
\begin{array}{cc}
s_0 & \!\!\lambda \sqrt{s_0(1-s_0)} \\
\!\!\overline{\lambda} \sqrt{s_0(1-s_0)} &\! 1-s_0 \\
\end{array}
\right) - \Delta_2 \left(
                                                             \begin{array}{cc}
                                                               0 & 1 \\
                                                               1 & 0 \\
                                                             \end{array}
                                                           \right) \right\| $$ $$= \left\| \left(
\begin{array}{cc}
s_0 & \!\!\mu \sqrt{s_0(1-s_0)} \\
\!\!\overline{\mu} \sqrt{s_0(1-s_0)} &\! 1-s_0 \\
\end{array}
\right) - \left(
                                                             \begin{array}{cc}
                                                               0 & 1 \\
                                                               1 & 0 \\
                                                             \end{array}
                                                           \right) \right\|=  \frac{1+\sqrt{5- 8 \Re\hbox{e}(\mu) \sqrt{s_0(1-s_0)} }}{2},$$
which assures that the scalar $\mu$ in \eqref{eq first part} for $\Delta_2$ must satisfy $\mu = \lambda$ or $\mu = \overline{\lambda}$. Consequently, for each $s_0\in (0,1)$ and $\lambda\in \mathbb{T}$ we have  \begin{equation}\label{eq first part b} \Delta_2 \left( \left(
\begin{array}{cc}
s_0 & \!\!\lambda \sqrt{s_0(1-s_0)} \\
\!\!\overline{\lambda} \sqrt{s_0(1-s_0)} &\! 1-s_0 \\
\end{array}
\right)\right) = \left(
\begin{array}{cc}
s_0 & \!\!\lambda \sqrt{s_0(1-s_0)} \\
\!\!\overline{\lambda} \sqrt{s_0(1-s_0)} &\! 1-s_0 \\
\end{array}
\right)
\end{equation} or $$\Delta_2 \left( \left(
\begin{array}{cc}
s_0 & \!\!\lambda \sqrt{s_0(1-s_0)} \\
\!\!\overline{\lambda} \sqrt{s_0(1-s_0)} &\! 1-s_0 \\
\end{array}
\right)\right) = \left(
\begin{array}{cc}
s_0 & \!\!\overline{\lambda} \sqrt{s_0(1-s_0)} \\
\!\! {\lambda} \sqrt{s_0(1-s_0)} &\! 1-s_0 \\
\end{array}
\right). $$\smallskip

We can also deduced above and Proposition \ref{p algebraic elements hermitian}$(b)$  that $$\Delta_2 \left( \left(
                                                    \begin{array}{cc}
                                                      0 & i \\
                                                      -i & 0 \\
                                                    \end{array}
                                                  \right)
\right) = \left(
                                                    \begin{array}{cc}
                                                      0 & i \\
                                                      -i & 0 \\
                                                    \end{array}
                                                  \right), \ \hbox{ or } \Delta_2 \left( \left(
                                                    \begin{array}{cc}
                                                      0 & i \\
                                                      -i & 0 \\
                                                    \end{array}
                                                  \right)
\right) = \left(
                                                    \begin{array}{cc}
                                                      0 & -i \\
                                                      i & 0 \\
                                                    \end{array}
                                                  \right).$$
                                                  
Suppose first that $\Delta_2 \left( \left(
                                                    \begin{array}{cc}
                                                      0 & i \\
                                                      -i & 0 \\
                                                    \end{array}
                                                  \right)
\right) = \left(
                                                    \begin{array}{cc}
                                                      0 & i \\
                                                      -i & 0 \\
                                                    \end{array}
                                                  \right).$ Given $s_0\in (0,1)$ and $\lambda\in \mathbb{T}$, we have $$ \frac{1+\sqrt{5+ 8 \Im\hbox{m}(\lambda) \sqrt{s_0(1-s_0)} }}{2}= \left\| \left(
\begin{array}{cc}
s_0 & \!\!\lambda \sqrt{s_0(1-s_0)} \\
\!\!\overline{\lambda} \sqrt{s_0(1-s_0)} &\! 1-s_0 \\
\end{array}
\right) - \left(
                                                             \begin{array}{cc}
                                                               0 & i \\
                                                               -i & 0 \\
                                                             \end{array}
                                                           \right) \right\|,$$ 
$$ \frac{1+\sqrt{5- 8 \Im\hbox{m}(\lambda) \sqrt{s_0(1-s_0)} }}{2}= \left\| \left(
\begin{array}{cc}
s_0 & \!\!\overline{\lambda} \sqrt{s_0(1-s_0)} \\
\!\! {\lambda} \sqrt{s_0(1-s_0)} &\! 1-s_0 \\
\end{array}
\right) - \left(
                                                             \begin{array}{cc}
                                                               0 & i \\
                                                               -i & 0 \\
                                                             \end{array}
                                                           \right) \right\|,$$
and thus, \eqref{eq first part b} and the hypothesis prove that $$\Delta_2(q_1)= \Delta_2 \left( \left(
\begin{array}{cc}
s_0 & \!\!\lambda \sqrt{s_0(1-s_0)} \\
\!\!\overline{\lambda} \sqrt{s_0(1-s_0)} &\! 1-s_0 \\
\end{array}
\right)\right) $$ $$= \left(
\begin{array}{cc}
s_0 & \!\!\lambda \sqrt{s_0(1-s_0)} \\
\!\!\overline{\lambda} \sqrt{s_0(1-s_0)} &\! 1-s_0 \\
\end{array}
\right) = q_1,$$ for every $q_1$ as above. Let $q_2 = 1-q_1$. By Proposition \ref{p algebraic elements hermitian}$(b)$ we also have  $$1=\Delta_2(p_1) + \Delta_2(p_2) = \Delta_2 (1) = \Delta_2(q_1) + \Delta_2(q_2),$$ which assures that $\Delta_2(q_2)=q_2.$ We have therefore proved that $\Delta_2(q_i)=q_i,$ for every pair of orthogonal minimal projections $q_1,q_2$ in $A_{sa}$. Since every element $x$ in $S(A_{sa})$ can be written as a linear combination of the form $\displaystyle x= \sum_{j=1}^2 \mu_j q_j$, where $q_1$ and $q_2$ are orthogonal minimal projections in $A_{sa}$, $\mu_j\in\mathbb{R}$ and $\max\{|\mu_j|\}=1$, a new application of Proposition \ref{p algebraic elements hermitian}$(b)$ gives $$\Delta_2(x)= \Delta_2 \left(\sum_{j=1}^2 \mu_j q_j \right)= \sum_{j=1}^2 \mu_j \Delta_2(q_j) = \sum_{j=1}^2 \mu_j q_j =x.$$ This shows that $\Delta_2 (x) = x,$ for every $x$ in $S(A_{sa})$, and hence $\Delta (x) =T_2^{-1} T_1^{-1} (x)$, for every $x$ in $S(A_{sa})$.\smallskip

Assume now that $\Delta_2 \left( \left(
                                                    \begin{array}{cc}
                                                      0 & i \\
                                                      -i & 0 \\
                                                    \end{array}
                                                  \right)
\right) = \left(
                                                    \begin{array}{cc}
                                                      0 & -i \\
                                                      i & 0 \\
                                                    \end{array}
                                                  \right).$ 
Similar arguments to those given above show that, in this case, we have $$\Delta_2(q_1)= \Delta_2 \left( \left(
\begin{array}{cc}
s_0 & \!\!\lambda \sqrt{s_0(1-s_0)} \\
\!\!\overline{\lambda} \sqrt{s_0(1-s_0)} &\! 1-s_0 \\
\end{array}
\right)\right) $$ $$= \left(
\begin{array}{cc}
s_0 & \!\!\overline{\lambda} \sqrt{s_0(1-s_0)} \\
\!\! {\lambda} \sqrt{s_0(1-s_0)} &\! 1-s_0 \\
\end{array}
\right) = \overline{q_1},$$ for every minimal projection $q_1$ as above, where $\overline{(x_{ij})}=(\overline{x_{i,j}})$, and $\Delta_2 (x) = \overline{x},$ for every $x$ in $S(A_{sa})$. Therefore, $\Delta (x) =T_2^{-1} T_1^{-1} (\overline{x})$, for every $x$ in $S(A_{sa})$. Denoting $S= T_2^{-1} T_1^{-1}$ we have a complex linear and symmetric isometry $S: M_2(\mathbb{C}) \to M_2(\mathbb{C})$. We define $T: M_2(\mathbb{C}) \to M_2(\mathbb{C})$ by $T(h + i k) := S(\overline{h}) + i S(\overline{k}) = S(\overline{h} + i \overline{k}) = S(\overline{h- i k}) = S(\overline{(h+ i k)^*}) = S((h+ik)^t),$ for every $h,k\in A_{sa}$, which provides the mapping $T$ in the statement of the proposition.
\end{proof}

We can state now the desired result and its proof, where we show that the synthesis of a surjective isometry is even easier in this setting.

\begin{theorem}\label{t Tingley hermitian von Neumann} Let $\Delta : S(M_{sa})\to S(N_{sa})$ be a surjective isometry, where $M$ and $N$ are von Neumann algebras. Then there exists a surjective complex linear isometry $T: M\to N$ satisfying $T(a^*) = T(a)^*,$ for all $a\in M,$ and $T(x) = \Delta (x),$ for all $x\in S(M_{sa})$.
\end{theorem}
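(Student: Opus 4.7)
The plan is to synthesize a surjective real linear isometry $T_0 : M_{sa} \to N_{sa}$ whose restriction to $S(M_{sa})$ coincides with $\Delta$, and then to extend $T_0$ by complex linearity via $T(h + ik) := T_0(h) + i T_0(k)$ for $h, k \in M_{sa}$. This $T$ is automatically complex linear and symmetric, and the fact that $T_0$, as a surjective real linear isometry between the JBW-algebras $M_{sa}$ and $N_{sa}$, must be a Jordan $^*$-isomorphism (by the Kadison-type theorem recalled in Subsection \ref{subsec: Jordan}) then forces $T$ to be isometric. The synthesis is genuinely simpler here than in \cite{FerPe17d}, since the complex linear structure is re-introduced only at the last step and no Hatori--Moln{\'a}r-type theorem for symmetries is required.

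First I would isolate the low-rank case. If $M$ admits no three pairwise orthogonal non-zero projections, then by Theorem \ref{t first correspondence between faces and compact partial isometries in the bidual for a surjective isometry hermitian}(b)--(c) the same holds for $N$, and both algebras must be isomorphic to one of $\mathbb{C}$, $\mathbb{C} \oplus^{\infty} \mathbb{C}$, or $M_2(\mathbb{C})$. The one- and two-dimensional commutative cases follow immediately from Wang's theorem (Theorem \ref{t Wang C0(L)}), and the matrix case $M = M_2(\mathbb{C})$ is exactly Proposition \ref{p Tingley hermitian M2}. From now on, therefore, I may assume the existence of three pairwise orthogonal non-zero projections $p_1, p_2, p_3 \in M$, which via Theorem \ref{t A hermitian} provide three surjective real linear isometries $T_{p_i} : (1 - p_i) M_{sa} (1 - p_i) \to (1 - \Delta(p_i)^2) N_{sa} (1 - \Delta(p_i)^2)$.

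Following the piecewise scheme used for $B(H)$ on page \pageref{label synthesis on K(H)}, I would write each $x \in M_{sa}$ as a sum $\pi_1(x) + \pi_2(x) + \pi_3(x)$ of three real linear blocks contained respectively in $(1 - p_3) M_{sa} (1 - p_3)$, $(1 - p_2) M_{sa} (1 - p_2)$, and $(1 - p_1) M_{sa} (1 - p_1)$ (using the Peirce-type block decomposition determined by $p_1$ and $p_2$), and set
\begin{equation*}
T_0(x) := T_{p_3}(\pi_1(x)) + T_{p_2}(\pi_2(x)) + T_{p_1}(\pi_3(x)).
\end{equation*}
By construction $T_0$ is norm-continuous and real linear. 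To prove $T_0 = \Delta$ on $S(M_{sa})$ I would argue in three waves: first on every non-zero partial isometry $e = p - q \in M_{sa}$, by combining the coincidence-on-overlaps statement of Proposition \ref{p algebraic elements hermitian}(c) with the identity $\Delta(e) = T_v(e)$ of Proposition \ref{p algebraic elements hermitian}(d), applied to a partial isometry $v$ chosen orthogonal to $e$ and compatible with each $p_i$; secondly on every finite real linear combination of mutually orthogonal non-zero projections with maximal coefficient of modulus $1$, by the additivity of Proposition \ref{p algebraic elements hermitian}(b); and finally on all of $S(M_{sa})$ by the norm-density of such combinations (via the spectral theorem) together with the joint continuity of $T_0$ and $\Delta$.

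The main obstacle I anticipate is the first wave. When $M$ is infinite-dimensional or has sufficiently high rank, a fourth projection $p_4$ orthogonal to $p_1, p_2, p_3$ and to the support of $e$ is available, and the strategy of \cite{PeTan16,FerPe17d} transfers essentially verbatim. In the finite-rank-but-at-least-three situation a more delicate combinatorial argument is needed, blending Proposition \ref{p algebraic elements hermitian}(c), the antipode-preservation statement of Theorem \ref{t C hermitian}, and the additivity of Proposition \ref{p algebraic elements hermitian}(b) to patch the three $T_{p_i}$'s on partial isometries whose support is not orthogonal to any single $p_i$. Once the equality $T_0 = \Delta$ on $S(M_{sa})$ is secured, the Kadison-type theorem on surjective real linear isometries between JBW-algebras (Subsection \ref{subsec: Jordan}) promotes $T_0$ to a Jordan $^*$-isomorphism, whence $T(h + ik) := T_0(h) + i T_0(k)$ is a surjective complex linear symmetric isometry $T : M \to N$ with $T|_{S(M_{sa})} = \Delta$, as required.
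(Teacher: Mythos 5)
Your overall architecture (reduce to rank $\le 2$ plus a three-projection piecewise synthesis, then complexify) is not the paper's route, and its central step has a genuine gap. The "first wave" --- proving that your piecewise map $T_0$ agrees with $\Delta$ on every non-zero projection (or partial isometry) --- relies on Proposition \ref{p algebraic elements hermitian}$(d)$ applied to an auxiliary partial isometry $v$ orthogonal to $e$, and on Proposition \ref{p algebraic elements hermitian}$(c)$ to patch $T_v$ against the $T_{p_i}$'s, which in turn requires $v$ orthogonal to each $p_i$. Such a $v$ simply need not exist: a symmetry $e=p-(1-p)$ has full support, so nothing is orthogonal to it; and even for a projection $p\neq 1$, in an algebra without minimal projections (say a type $\mathrm{II}_1$ factor with $p_1+p_2+p_3=1$, or with $(1-p)\wedge(1-p_1-p_2-p_3)=0$) there is no non-zero projection under both $1-p$ and $1-p_1-p_2-p_3$. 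Your diagnosis that the obstruction is confined to the "finite-rank-but-at-least-three" situation, and that infinite-dimensionality restores the fourth projection $p_4$, is therefore incorrect: the obstruction is governed by the support of $e$ relative to $1$, not by the size of $M$. This is precisely the point where the paper declares the $K(H)$/$B(H)$ synthesis "no longer valid" for general von Neumann algebras, and the promised "more delicate combinatorial argument" is the entire content of the missing proof.

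The paper sidesteps the synthesis altogether. It splits into three cases according to type $\mathrm{I}_2$ summands: the $M_2(\mathbb{C})$ factor is Proposition \ref{p Tingley hermitian M2}; a proper $M_2(\mathbb{C})$ summand is handled by a two-block synthesis across the ideal decomposition $M=J_1\oplus J_2$ (where orthogonality of the blocks to the chosen $p_i$ is automatic, unlike in your Peirce scheme); and in the absence of type $\mathrm{I}_2$ factors it observes that $p\mapsto\Delta(p)$ is, by Proposition \ref{p algebraic elements hermitian}$(b)$, a bounded finitely additive $N$-valued measure on $\mathcal{P}roj(M)$, so the Bunce--Wright--Mackey--Gleason theorem hands over a bounded linear $T:M\to N$ agreeing with $\Delta$ on all projections at once. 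Your second and third waves (additivity on algebraic elements, then norm density via the spectral theorem) coincide with the paper's and are fine --- the hermitian setting is exactly where norm density of such algebraic elements is available --- but without the Mackey--Gleason input, or some genuinely new patching idea, the agreement on projections is not established. A secondary, fixable point: your closing appeal to a "Kadison-type theorem" should note that a surjective isometry $T_0:M_{sa}\to N_{sa}$ is of the form $s\,J$ with $s=T_0(1)=\Delta(1)$ a central symmetry and $J$ a Jordan isomorphism, not a Jordan isomorphism outright; the complexification is still a symmetric isometry, but the statement needs that extra factor.
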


\begin{proof} We shall distinguish the following three cases,
\begin{enumerate}[$(1)$]\item $M$ contains no type $I_2$ von Neumann factors;
\item $M$ contains a type $I_2$ von Neumann factor but $M$ is not a type $I_2$ von Neumann factor;
\item $M$ is a type $I_2$ von Neumann factor.
\end{enumerate}

Case $(3)$ is solve by Proposition \ref{p Tingley hermitian M2}.\smallskip

Case $(2)$. We can assume that $M= J_1\oplus J_2$, where $J_1$ and $J_2$ are non-zero orthogonal weak$^*$ closed ideals of $M$ and $J_1 = M_2(\mathbb{C})$. We can now mimic the arguments we gave in the solution to Tingley's problem for compact operators in page \pageref{eq proof compact with several subfactors}. Let us take two non-zero projections $p_1$ in $J_1$ and $p_2\in J_2$, and define a mapping $T : M  \to N$ given by $$T(x) = T_{p_1} (\pi_2 (x)) + T_{p_{2}} (\pi_1 (x))$$ where $\pi_1$ and $\pi_2$ stand for the canonical projections of $M$ onto $J_1$ and $J_2$, respectively, and $T_{p_1}$ and $T_{p_2}$ are the surjective weak$^*$ continuous complex linear and symmetric isometries given by Theorem \ref{t A hermitian}. The mapping $T$ is complex linear and weak$^*$ continuous because $T_{p_1}$ and $T_{p_2}$ are. Any projection $p$ in $M$ can be written in the form $p = p_1 + p_2$ where $p_j$ is a projection in $J_1$. Let us pick an algebraic element $x$ in $S(M_{sa})$ which can be written in the form $\displaystyle x=\sum_{j=1} \alpha_j p_j + \sum_{k=1} \beta_k q_k$, where $p_j,q_k$ are mutually ortogonal non-zero projections in $M_{sa}$, $\alpha_j$, $\beta_k\in \mathbb{R}\backslash\{0\},$ $\max\{|\alpha_j|,|\beta_k|\}=1$, $p_j\in J_1$ and $q_k\in J_2$ for all $j$, $k.$ By definition of $T$ and Proposition \ref{p algebraic elements hermitian}$(b)$ we have $$ \Delta(x) = \sum_{j=1} \alpha_j \Delta(p_j) + \sum_{k=1} \beta_k \Delta(q_k) =\sum_{j=1} \alpha_j T_{p_2}(p_j) + \sum_{k=1} \beta_k T_{p_1} (q_k)$$ $$ = \sum_{j=1} \alpha_j T(p_j) + \sum_{k=1} \beta_k T(q_k) = T(x).$$ The norm density of this kind of algebraic elements $x$ in $S(M_{sa})$ together with the norm continuity of $T$ and $\Delta$ prove that $T(x) = \Delta(x)$ for all $x\in S(M)$.\smallskip

Case $(1)$. $M$ contains no type $I_2$ von Neumann factors. Let us define a vector measure on the lattice $\mathcal{P}roj(M)$ of all projections of $M$ defined by $\mu : \mathcal{P}roj(M) \to N$, $\mu (p) = \Delta (p) $ if $p\in S(M)$ and $\mu(0) =0$. Proposition \ref{p algebraic elements hermitian}$(b)$ assure that $\mu$ is finitely additive, that is $$\mu \left(\sum_{j=1}^m p_j\right) = \sum_{j=1}^m \mu(p_j),$$ whenever $p_1,\ldots, p_m$ are mutually orthogonal projections in $M$. We further have $\|\mu(p)\|\leq 1$ for every $p\in \mathcal{P}roj(M)$. By the Bunce-Wright-Mackey-Gleason theorem (see \cite[Theorem A]{BuWri92} or \cite[Theorem A]{BuWri94}) there exists a bounded (complex) linear operator $T: M \to N$ satisfying $T(p) = \mu(p) = \Delta (p),$ for every $p\in \mathcal{P}roj(M)\backslash\{0\}$.  By definition $T(p)\in N_{sa}$ for every projection $p$ in $M$. Therefore $T$ is a symmetric map, that is, $T(a^*) = T(a)^*$ for all $a\in M$.\smallskip

Finally, Proposition \ref{p algebraic elements hermitian}$(b)$ also guarantees that $\Delta$ and $T$ coincide on algebraic elements in $S(M_{sa})$ which can be written as finite real linear combinations of mutually orthogonal projections. Since this kind of algebraic elements are norm dense in $S(M_{sa})$, we deduce from the norm continuity of $\Delta$ and $T$ that $T(x) = \Delta(x)$ for all $x\in S(M_{sa})$.
\end{proof}

\begin{remark}\label{remark Mori hermitian}{\rm After completing the writing of this chapter, the preprint by M. Mori \cite{Mori2017} became available in arxiv. Section 5 in the just quoted paper is devoted to study Theorem \ref{t Tingley hermitian von Neumann} with a different proof based on a theorem of Dye on orthoisomorphisms (see \cite[\S 5]{Mori2017} and \cite{Dye55}). So, Theorem \ref{t Tingley hermitian von Neumann} should be also credited to M. Mori. It is surprising that the arguments developed by Mori find a similar obstacle with type $I_2$ von Neumann factors when applying Dye's theorem.  To solve the difficulties Mori build a analogue to our Proposition \ref{p Tingley hermitian M2} in \cite[Proposition 5.2 and its proof]{Mori2017}. The proof of Proposition \ref{p Tingley hermitian M2} is a bit simpler with pure geometry-linear algebra arguments.}
\end{remark}

\begin{open problem}\label{open problem Tingley hermitian} Let $\Delta : S(A_{sa})\to S(B_{sa})$ be a surjective isometry between the unit spheres of the hermitian parts of two C$^*$-algebras. Does $\Delta$ admits an extension to a surjective complex linear isometry from $A$ onto $B$?
\end{open problem}

\section{Isometries between the spheres of positive operators}\label{sec: Tingleys positive}

Contrary to the results revised in previous sections, in the third variant of Problem \ref{problem general} treated in this survey the theory on the facial structure of a C$^*$-algebra revised in section \ref{sec: geometric background} will not play any role. Let us estate the concrete statement. Given a subset $B$ of a Banach space $X$, the symbol $S(B)$ will stand for the intersection of $B$ and $S(X)$. Given a C$^*$-algebra $A$, the symbol $A^+$ will denote the cone of positive elements in $A$, while $S(A^+)$ will stand for the sphere of positive norm-one operators. The concrete variant of Problem \ref{problem general} reads as follows.

\begin{problem}\label{problem positive Tingley's problem} Let $\Delta : S(X^+)\to S(Y^+)$ be a surjective isometry, where $X$ and $Y$ are Banach spaces which can be regarded as linear subspaces two C$^*$-algebras $A$ and $B,$ $S(X^+) = S(X)\cap A^+$ and $S(Y^+) = S(Y)\cap B^+$. Does $\Delta$ admit an extension to a surjective complex linear isometry $T : X\to Y$?
\end{problem}

Problem \ref{problem positive Tingley's problem} is too general. We can easily find non isomorphic Banach spaces $X$ and $Y$ which are linear subspaces of two C$^*$-algebras $A$ and $B$, for which $S(X^+)$ and $S(Y^+)$ reduce to a single point.\smallskip

Before dealing with the historical background and forerunners, we shall make some observations. If we have a surjective isometry  $\Delta : S(A^+)\to S(B^+)$ between the spheres of positive elements in two arbitrary C$^*$-algebras the application of Theorems \ref{t faces ChengDong11} and \ref{t ChengDong for general faces} is non-viable because $A^+$ and $B^+$ are not Banach spaces.\smallskip

Another comment, the hypotheses in Problem \ref{problem positive Tingley's problem} are strictly weaker than those in Theorems \ref{t Tanaka finite vN}, \ref{thm Tingley compact Cstaralgebras}, \ref{thm Tyngley ellinfty sums}, \ref{t Tingley von Neumann}, \ref{t Tingley for trace class infinite dimension}, \ref{t Tingleys problem for preduals of vN}, and \ref{t Tingley hermitian von Neumann}. However, the conclusion is also weaker because we need to find a surjective isometry $T:A\to B$ whose restriction to $S(A^+)$ coincides with $\Delta$, we do not have to show that $T$ and $\Delta$ coincide on the whole $S(A)$ nor on $S(A_{sa})$. That is, the synthesis of the mapping $T$ is, a priori, easier at the cost of loosing the main geometric tools.\smallskip

We can now go survey the main achievements in this line. Let us recall some terminology. According to the notation in previous sections, we shall denote by $(C_p(H), \|\cdot\|_p)$ the Banach space of all $p$-Schatten-von Neumann operators on a complex Hilbert space $H$, where $1\leq p\leq \infty$. For $p=1$ we find the space of trace class operators. By an standard abuse of notation we identify $C_{\infty}(H)$ with $B(H)$. Let the symbol $C_p(H)^+$ denote the set of all positive operators in $C_p(H)$. The elements in the set $S(C_p(H)^+) = S(C_p(H))\cap C_p(H)^+$ are usually called \emph{density operators}.\smallskip

Our first result, which was obtained by L. Moln{\'a}r and W. Timmermann in \cite{MolTim2003}, provides a complete positive solution to Problem \ref{problem positive Tingley's problem} for the space $C_1(H)$ of trace class operators on an arbitrary complex Hilbert space $H$.

\begin{theorem}\label{t Molnar Timmermann sphere positive C1 arbitrary H}\cite[Theorem 4]{MolTim2003} Let $H$ be an arbitrary complex Hilbert space. Then every surjective isometry $\Delta : S(C_1(H)^+) \to S(C_1(H)^+)$ admits a unique extension to a surjective complex linear isometry on $C_1(H)$.
\end{theorem}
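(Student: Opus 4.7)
Write $\mathcal{D} := S(C_1(H)^+)$ for the convex set of density operators on $H$, and $\mathcal{P}_1 := \mathrm{ext}(\mathcal{D})$ for its extreme points, i.e.\ the rank-one projections $p_\xi$, $\xi \in S(H)$. The strategy is to identify $\Delta$ on $\mathcal{P}_1$ via Wigner's theorem, promote this to a complex-linear isometry of $C_1(H)$, and verify that the two maps agree on all of $\mathcal{D}$.

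\textbf{Step 1 (pure states and Wigner).} The metric identity $\|\rho-\sigma\|_1 = 2 \Leftrightarrow \rho\sigma = 0$ holds in $\mathcal{D}$ (using $(\rho-\sigma)_+ \leq \rho$ together with equality of traces), so orthogonality of supports is an isometric invariant. Setting $A(\sigma) := \{\tau\in\mathcal{D} : \|\sigma-\tau\|_1 = 2\}$, one has $A(\sigma) \subseteq A(\sigma')$ iff $\mathrm{supp}\,\sigma' \subseteq \mathrm{supp}\,\sigma$; hence $\sigma\in\mathcal{P}_1$ iff $A(\sigma)$ is maximal in this poset, and it follows that $\Delta(\mathcal{P}_1) = \mathcal{P}_1$. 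Since $\|p_\xi - p_\eta\|_1 = 2\sqrt{1 - |\langle\xi,\eta\rangle|^2}$, the restriction $\Delta|_{\mathcal{P}_1}$ preserves the transition probabilities $|\langle\xi,\eta\rangle|^2$, and Wigner's theorem yields either a unitary $U$ or an anti-unitary $V$ on $H$ with $\Delta(p_\xi) = U p_\xi U^*$, resp.\ $V p_\xi V^{-1}$. Define $T(a) := U a U^*$ in the unitary case, or $T(a) := V a^* V^{-1}$ in the anti-unitary case (the extra adjoint making $T$ complex-linear despite the anti-linearity of $V$). In both cases $T$ is a surjective complex-linear trace-norm isometry of $C_1(H)$ preserving $\mathcal{D}$ and agreeing with $\Delta$ on $\mathcal{P}_1$.

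\textbf{Step 2 (full coincidence on $\mathcal{D}$).} Replacing $\Delta$ by $T^{-1}\circ\Delta$, I may assume $\Delta$ fixes each $p_\xi$ and must show $\Delta = \mathrm{id}$ on $\mathcal{D}$. Fix $\rho\in\mathcal{D}$ and put $\rho' := \Delta(\rho)$; then $\|p_\xi - \rho'\|_1 = \|p_\xi - \rho\|_1$ for every $\xi$. The antipodal-set analysis in Step 1 forces $\mathrm{supp}\,\rho' = \mathrm{supp}\,\rho$. Evaluating along an eigenbasis $\{\xi_j\}$ of $\rho$ with eigenvalues $\lambda_j$ gives $\|p_{\xi_j} - \rho\|_1 = 2(1-\lambda_j)$; applying the same formula to $\rho'$ along its own eigenbasis and matching spectra through these distances yields identical spectra and aligned eigenprojections, and a polarization argument on pairs of eigenvectors then matches all matrix entries, forcing $\rho' = \rho$. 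Uniqueness of the resulting extension is automatic, since $\mathcal{D}$ complex-linearly spans $C_1(H)$.

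\textbf{Main obstacle.} The hard part is Step 2: extracting full operator data from the purely metric hypothesis that $\xi \mapsto \|p_\xi-\rho\|_1$ be fixed, with no a priori affine or linear structure on $\Delta$. A complementary route uses Mankiewicz's theorem (Theorem \ref{t Mankiewicz}): in finite dimensions the relative interior of $\mathcal{D}$ inside the trace-one self-adjoint hyperplane is non-empty, so the affine extension of $\Delta$ exists at once and must coincide with $T$ by extreme-point matching. For infinite-dimensional $H$ one approximates a general $\rho\in\mathcal{D}$ in trace norm by finite-rank densities, applies the finite-dimensional result on their spans (using the alignment with $T$ provided by Step 1), and passes to the limit by norm continuity of $\Delta$. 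The subtlety is that $\Delta$ need not preserve any specific finite-dimensional slice of $\mathcal{D}$, so one has to track supports throughout and invoke Step 1 to align them.
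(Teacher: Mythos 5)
The survey contains no proof of this statement: it is quoted verbatim from Moln{\'a}r--Timmermann \cite[Theorem 4]{MolTim2003}, so there is nothing internal to compare against. Your argument is, in outline, the same as the one in that cited source (and the standard one): characterize orthogonality by $\|\rho-\sigma\|_1=2$, deduce that $\Delta$ permutes the pure states and preserves transition probabilities via $\|p_\xi-p_\eta\|_1=2\sqrt{1-|\langle\xi,\eta\rangle|^2}$, invoke Wigner, and reduce to showing that an isometry of the density operators fixing every pure state is the identity. Steps 1 and the uniqueness remark are correct as written.

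The thin spot is the direct version of Step 2. Knowing $\|p_{\xi_j}-\rho\|_1=2(1-\lambda_j)$ along an eigenbasis of $\rho$ only tells you $\|p_{\xi_j}-\rho'\|_1=2(1-\lambda_j)$; since $\xi_j$ need not a priori be an eigenvector of $\rho'$, ``matching spectra through these distances'' does not follow, and it is unclear what a ``polarization argument'' would mean for a map with no linear structure. The repair is the variational identity
\begin{equation*}
\|p_\xi-\rho\|_1 \;=\; 2\,\lambda_{\max}(p_\xi-\rho)\;=\;2\sup_{\|\eta\|=1}\bigl(|\langle\eta,\xi\rangle|^2-\langle\rho\eta,\eta\rangle\bigr),
\end{equation*}
valid because $(p_\xi-\rho)_+$ has rank at most one; it shows that $\xi\mapsto\|p_\xi-\rho\|_1$ attains its minimum $2(1-\|\rho\|)$ exactly at the top eigenvectors of $\rho$, after which one peels off the top eigenspace (which is reducing for both $\rho$ and $\rho'$) and iterates to match all eigenvalues and eigenprojections. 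Alternatively, the route you sketch in your final paragraph is complete and arguably cleaner: the antipodal-set analysis shows that $\Delta$ preserves supports, hence maps the densities supported in any finite-dimensional subspace $K$ onto themselves; Mankiewicz's theorem (Theorem \ref{t Mankiewicz}) applies on that slice because it has non-empty interior in the affine hyperplane of self-adjoint trace-one operators on $K$, and the resulting affine bijection fixes all extreme points and hence is the identity; trace-norm density of finite-rank densities and continuity of $\Delta$ finish the proof. So the ``subtlety'' you worry about at the end is not an obstacle --- support preservation is exactly what your Step 1 machinery delivers.
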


In 2012, G. Nagy and L. Moln{\'a}r solve Problem \ref{problem positive Tingley's problem} in the finite dimensional case for every $1\leq p$.

\begin{theorem}\label{t Nagy Molnar sphere positive Cp finite dim}\cite[Theorem 1]{MolNag2012} Let $H$ be a finite dimensional complex Hilbert space, and let $\infty >p\geq 1$. Then every isometry $\Delta : S(C_p(H)^+) \to S(C_p(H)^+)$ admits a unique extension to a surjective complex linear isometry on $C_p(H)$.
\end{theorem}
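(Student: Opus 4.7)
Write $n = \dim H$ and $\mathrm{Proj}_1(H)$ for the set of rank-one projections; recall that $C_p(H)$ is $M_n(\mathbb{C})$ endowed with the Schatten $p$-norm. The plan is to identify $\mathrm{Proj}_1(H)$ metrically inside $S(C_p(H)^+)$, apply Wigner's theorem to classify $\Delta$ on rank-one projections, and finally propagate the resulting operator by complex linearity.

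\textbf{Step 1 (metric characterization of $\mathrm{Proj}_1(H)$).} For $a,b \in S(C_p(H)^+)$ the decomposition $a-b = (a-b)_+ - (a-b)_-$, combined with the dominations $(a-b)_+ \leq a$ and $(a-b)_- \leq b$ and the monotonicity of the Schatten $p$-norm on the positive cone, yields
\[
\|a-b\|_p^p = \|(a-b)_+\|_p^p + \|(a-b)_-\|_p^p \leq \|a\|_p^p + \|b\|_p^p = 2,
\]
with equality (for $p<\infty$) precisely when $a$ and $b$ have orthogonal supports. Hence an element $a \in S(C_p(H)^+)$ is a rank-one projection if and only if it lies in some $n$-element subset of $S(C_p(H)^+)$ whose pairwise distances are all equal to $2^{1/p}$: such a configuration forces its members to be supported on $n$ mutually orthogonal lines in $H$, and a positive rank-one norm-one operator is automatically a rank-one projection. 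Since this characterization is purely metric, $\Delta$ restricts to a bijection $\mathrm{Proj}_1(H) \to \mathrm{Proj}_1(H)$.

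\textbf{Step 2 (Wigner's theorem).} A direct computation gives $\|P-Q\|_p = 2^{1/p}(1-|\langle\xi_P,\xi_Q\rangle|^2)^{1/2}$ for $P = |\xi_P\rangle\langle\xi_P|$, $Q = |\xi_Q\rangle\langle\xi_Q| \in \mathrm{Proj}_1(H)$, a strictly increasing function of the Fubini--Study distance on $\mathbb{CP}^{n-1}$. Hence $\Delta|_{\mathrm{Proj}_1(H)}$ is a Fubini--Study isometry, and Wigner's theorem furnishes a unitary or antiunitary operator $U:H\to H$ with $\Delta(P) = UPU^{-1}$ for every $P \in \mathrm{Proj}_1(H)$. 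Defining $T(x) := UxU^{-1}$ in the unitary case and $T(x) := U x^* U^{-1}$ in the antiunitary case produces, in either case, a surjective complex linear $\|\cdot\|_p$-isometry of $C_p(H)$ which coincides with $\Delta$ on $\mathrm{Proj}_1(H)$.

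\textbf{Step 3 (extension to the whole positive sphere; main obstacle).} Replacing $\Delta$ by $T^{-1}\circ\Delta$ reduces matters to the case where $\Delta$ fixes every rank-one projection, and one then wants to deduce $\Delta = \mathrm{id}$ on $S(C_p(H)^+)$. For $a \in S(C_p(H)^+)$ and $P \in \mathrm{Proj}_1(H)$ the relation $\|\Delta(a)-P\|_p = \|a-P\|_p$ holds, so it suffices to show that the distance profile $P \mapsto \|a-P\|_p$ on $\mathrm{Proj}_1(H)$ determines $a$. The spectral theorem expresses $\|a-P\|_p^p$ as an explicit real-analytic function of the unit vector $\xi$ defining $P$, built from the eigenvalues and spectral projections of $a$; for $p=2$ this reduces to the classical fact that the quadratic form $\xi \mapsto \langle\xi,a\xi\rangle$ determines the self-adjoint operator $a$, and for general $p$ the strict convexity and real analyticity of $t \mapsto t^p$ on $[0,\infty)$ should allow one to recover the spectral data of $a$ from this profile. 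This injectivity is the main obstacle: unlike the case $p=1$, where rank-one projections are precisely the extreme points of the convex set of density matrices and Choquet or Mankiewicz-type arguments are readily available, for $p \in (1,\infty)$ every point of $S(C_p(H)^+)$ is an extreme point of the positive part of $\mathcal{B}_{C_p(H)}$ (by strict convexity of $\|\cdot\|_p$), so no convex structure distinguishes rank-one projections from within and the spectral expansion of $\|a-P\|_p^p$ must be exploited directly. Uniqueness of the complex linear extension $T$ is then automatic, since any such extension is determined by its restriction to $\mathrm{Proj}_1(H)$, which real-linearly spans the self-adjoint part of $C_p(H)$.
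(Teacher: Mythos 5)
The survey itself gives no proof of this theorem; it only cites \cite{MolNag2012}, so your proposal can only be measured against the original Moln{\'a}r--Nagy argument, whose overall architecture (the extremal distance $2^{1/p}$ detecting orthogonality, equilateral families of cardinality $n=\dim H$ detecting rank-one projections, and Wigner's theorem) your Steps 1 and 2 do reproduce. One local error in Step 1: the domination $(a-b)_+\leq a$ is false in general for positive $a,b$. Take $a=\mathrm{diag}(1,0)$ and $b=\frac12\left(\begin{smallmatrix}1&1\\1&1\end{smallmatrix}\right)$; the eigenvector of $a-b$ for the positive eigenvalue $1/\sqrt{2}$ is proportional to $(1,1-\sqrt{2})$, so the support of $(a-b)_+$ is not contained in the support of $a$, and $0\leq c\leq a$ would force $\ker a\subseteq\ker c$. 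The inequality $\|a-b\|_p^p\leq\|a\|_p^p+\|b\|_p^p$ is nevertheless correct: from $a-b\leq a$ and $b-a\leq b$, Weyl's monotonicity principle bounds the list of positive (respectively, negative) eigenvalues of $a-b$ termwise by the nonzero eigenvalues of $a$ (respectively, of $-b$), and summing $p$-th powers gives the claim; the equality case then forces termwise equality, hence $\mathrm{tr}\bigl((a-b)_+\bigr)=\mathrm{tr}(a)$, hence $PbP=0$ and $(1-P)a(1-P)=0$ for $P$ the support projection of $(a-b)_+$, i.e.\ $ab=0$. So Step 1 survives with the corrected justification, and Step 2 is fine.

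The genuine gap is Step 3, and you flag it yourself. For $p\notin\{1,2\}$ the entire content of the theorem is precisely the assertion that an element $a\in S(C_p(H)^+)$ is determined by its distance profile $P\mapsto\|a-P\|_p$ over rank-one projections; your text says the spectral expansion of $\|a-P\|_p^p$ ``should allow one to recover the spectral data of $a$'', which restates what must be proved rather than proving it. For $p=2$ the profile linearizes to the quadratic form $\xi\mapsto\langle\xi,a\xi\rangle$, but for general $p$ the function $\xi\mapsto\|a-\xi\xi^*\|_p^p$ is not a quadratic form and no polarization argument is available; you offer no mechanism for extracting the eigenvalues and spectral projections of $a$ from it, nor even an argument that two distinct elements of $S(C_p(H)^+)$ cannot share the same profile. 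Until this injectivity --- or some substitute, such as identifying eigenvectors of $a$ through extremal or degeneracy properties of the profile and peeling off spectral data inductively, which is closer to how Moln{\'a}r and Nagy actually proceed --- is established, the proof is incomplete at its central step. The final uniqueness remark is correct, since rank-one projections real-linearly span the self-adjoint matrices.
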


Let us observe that the mapping $\Delta$ in the above theorem is not assumed to be surjective a priori. However, as a consequence of the result $\Delta$ is surjective.\smallskip

Theorem \ref{t Nagy Molnar sphere positive Cp finite dim} was extended by G. Nagy to arbitrary complex Hilbert spaces in \cite{Nagy2013}.

\begin{theorem}\label{t Nagy Cp arbitrary}\cite[Theorem 1]{Nagy2013} Let $H$ be an arbitrary complex Hilbert space, and let $p\in (1,\infty)$. Then every isometry $\Delta : S(C_p(H)^+) \to S(C_p(H)^+)$ admits a unique extension to a surjective complex linear isometry on $C_p(H)$.
\end{theorem}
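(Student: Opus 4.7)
My approach is to reduce Tingley's problem for $S(C_p(H)^+)$ to the finite-dimensional result Theorem \ref{t Nagy Molnar sphere positive Cp finite dim} of Moln{\'a}r and Nagy, by first forcing $\Delta$ to preserve rank-one projections, then applying Wigner's theorem to align $\Delta$ with a unitary or antiunitary on $H$, and finally using a finite-dimensional restriction together with a density argument.

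First I would establish two metric facts. For $\rho,\sigma\in S(C_p(H)^+)$, writing $A=\rho-\sigma$ and using the Jordan decomposition (with $A_+\le \rho$ and $A_-\le\sigma$ in the operator order), the monotonicity of Schatten norms yields $\|\rho-\sigma\|_p^p=\|A_+\|_p^p+\|A_-\|_p^p\le 2$, with equality if and only if $A_+=\rho$ and $A_-=\sigma$, equivalently, iff $\rho$ and $\sigma$ have mutually orthogonal supports. Hence orthogonality of supports is encoded by the purely metric condition $\|\rho-\sigma\|_p=2^{1/p}$ and is preserved by $\Delta$. The second, more delicate fact is an intrinsic metric characterization of rank-one projections inside $S(C_p(H)^+)$; a natural candidate uses the ``maximality'' of the orthogonal set $\{\sigma\in S(C_p(H)^+):\sigma\perp\rho\}$, expressed through nested orthogonality chains whose maximal length distinguishes rank one from higher ranks. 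Combined, these two facts ensure that $\Delta$ maps rank-one projections to rank-one projections.

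Second, a direct computation on the two-dimensional plane spanned by unit vectors $\xi$ and $\eta$ yields $\|\xi\xi^*-\eta\eta^*\|_p=2^{1/p}(1-|\langle\xi,\eta\rangle|^2)^{1/2}$, so $\Delta$ restricted to the set $\mathcal{P}_1(H)$ of rank-one projections preserves the transition probabilities $|\langle\xi,\eta\rangle|^2$. Wigner's theorem, in its version valid for arbitrary complex Hilbert spaces, then produces a unitary or antiunitary $U$ on $H$ satisfying $\Delta(\xi\xi^*)=(U\xi)(U\xi)^*$ for every unit vector $\xi$. Conjugation by $U$ (interpreted via the transpose construction in the antiunitary case) provides a surjective complex linear isometry of $C_p(H)$ preserving $S(C_p(H)^+)$; after replacing $\Delta$ by this composition we may assume $\Delta$ fixes every rank-one projection. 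For each finite-dimensional subspace $K\subseteq H$, the set $S(C_p(K)^+)$ coincides with those density operators in $S(C_p(H)^+)$ orthogonal to every rank-one projection in $K^\perp$, so $\Delta$ preserves $S(C_p(K)^+)$ setwise. The restriction $\Delta|_{S(C_p(K)^+)}$ is then an isometry fixing every rank-one projection of this finite-dimensional piece, and Theorem \ref{t Nagy Molnar sphere positive Cp finite dim} together with uniqueness of its complex linear extension forces it to be the identity. Norm-density of finite-rank density operators in $S(C_p(H)^+)$ (valid for $p<\infty$) and continuity of $\Delta$ extend this identification to all of $S(C_p(H)^+)$, and reversing the earlier reduction yields the desired surjective complex linear isometry $T$ on $C_p(H)$; uniqueness is automatic because the complex linear span of $S(C_p(H)^+)$ exhausts $C_p(H)$.

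The decisive obstacle is the intrinsic metric characterization of rank-one projections in the opening step. Since $S(C_p(H)^+)$ is not convex, and since uniform convexity of $C_p(H)$ implies that every element of $S(C_p(H)^+)$ is extreme in the positive unit ball, ranks cannot be distinguished by classical extreme-point reasoning. A successful characterization must instead exploit strict convexity of the $p$-Schatten norm via a subtle equality case, or build an invariant from the orthogonality relation alone. This is where the hypothesis $p\in(1,\infty)$ is genuinely needed: the endpoints require entirely different strategies, supplied by Theorem \ref{t Molnar Timmermann sphere positive C1 arbitrary H} of Moln{\'a}r--Timmermann for $p=1$ and by the $B(H)$-results discussed later in the paper.
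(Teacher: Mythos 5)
The survey states this result only as a citation of \cite[Theorem 1]{Nagy2013} and contains no proof of it, so your proposal can only be judged on its own terms. Its overall architecture (orthogonality, then rank-one projections, then Wigner's theorem, then reduction to Theorem \ref{t Nagy Molnar sphere positive Cp finite dim} and a density argument) is reasonable and close in spirit to the techniques surveyed in Section \ref{sec: Tingleys positive}, but the step you yourself single out as decisive is genuinely missing, and the candidate you offer for it would fail. If $H$ is infinite dimensional and $a,b\in S(C_p(H)^+)$ have ranks $1$ and $2$ respectively, the orthogonal sets $\{\sigma : \sigma\perp a\}$ and $\{\sigma : \sigma\perp b\}$ are both isometric to $S(C_p(\ell^2)^+)$, being the positive unit spheres of $C_p$ of the infinite-dimensional subspaces $\ker a$ and $\ker b$; hence no invariant built from lengths of nested orthogonality chains inside these sets can separate rank one from rank two. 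The invariant that does work is the double orthocomplement, exactly in the spirit of Theorem \ref{t characterization of projection in terms of the sphere around a positive element}: writing $x\perp y$ for the metric relation $\|x-y\|_p = 2^{1/p}$, one checks that $\{b : b\perp c \hbox{ for all } c\perp a\} = S(C_p(\overline{\mathrm{ran}}(a))^+)$ whenever $\ker a\neq\{0\}$, and this set reduces to $\{a\}$ precisely when $a$ is a rank-one projection. With that characterization in hand the remainder of your outline can be carried through.

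Two further points need repair. First, the operator inequality $A_+\leq\rho$ you invoke is false: for two non-commuting rank-one projections $\rho,\sigma$, the positive part $(\rho-\sigma)_+$ is a positive multiple of a rank-one projection whose range differs from that of $\rho$, so it cannot be dominated by $\rho$. What is true, and what you actually need, is the norm inequality $\|A_+\|_p\leq\|\rho\|_p$: if $P$ denotes the support projection of $A_+$ then $0\leq A_+ = P(\rho-\sigma)P\leq P\rho P$ and $\|P\rho P\|_p\leq\|\rho\|_p$, and the equality analysis along these lines does yield orthogonality of the supports. Second, since $\Delta$ is not assumed surjective, the non-surjective form of Wigner's theorem only produces a linear or conjugate-linear isometry $V$ of $H$, not necessarily onto. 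This is not a cosmetic issue: for $V$ the unilateral shift on $\ell^2$, the map $a\mapsto VaV^*$ is an isometry of $S(C_p(\ell^2)^+)$ into itself whose only linear extension is $x\mapsto VxV^*$, which is not surjective. So at some stage you must either prove that $\Delta$ maps the set of rank-one projections onto itself or invoke surjectivity of $\Delta$ as a hypothesis; this is the point from which the surjectivity of the extension genuinely comes, and your write-up passes over it silently.
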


Problem \ref{problem positive Tingley's problem} has been explored, in a very recent paper due G. Nagy, for surjective isometries $\Delta : S(B(H)^+) \to S(B(H)^+)$ under the hypothesis of $H$ being finite dimensional. In the paper \cite{Nagy2017} we can find the following result.

\begin{theorem}\label{t Nagy B(H) positive finite dim}\cite[Theorem]{Nagy2017} Let $H$ be a finite dimensional complex Hilbert space, and let $\Delta : S(B(H)^+)\to S(B(H)^+)$ be an isometry. Then $\Delta$ is surjective and there exists a (unique) surjective complex linear isometry $T : B(H) \to B(H)$ satisfying $T(x) = \Delta(x),$ for all $x\in S(B(H)^+)$.
\end{theorem}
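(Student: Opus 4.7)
The plan is to identify $\Delta$ on projections by classical means and then to extend via spectral decomposition.

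First, because $\dim H < \infty$, the set $S(B(H)^+)$ is a compact metric space, so any isometry from it into itself is automatically surjective: iterating $\Delta$ on a given $a$ produces a sequence whose cluster points lie in $\Delta(S(B(H)^+))$, forcing $a$ to lie there. Hence $\Delta$ is a surjective isometry. Applying the geometric characterization of projections from Theorem \ref{t characterization of projection in terms of the sphere around a positive element} to the atomic von Neumann algebra $M = B(H)$, and using that surjective isometries intertwine the operator $Sph^+$, I conclude that $\Delta$ restricts to a bijection on the set of projections of $S(B(H)^+)$.

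Next, I would analyze $\Delta$ on projections. A short eigenvalue calculation shows that for any $a \in S(B(H)^+)$ the set $\{q \in \mathrm{Proj}(B(H)) : \|a - q\| < 1\}$ equals $\{q : q \geq q_1(a)\}$, where $q_1(a)$ is the spectral projection of $a$ for the eigenvalue $1$. Taking $a$ to be a projection $p$ itself, one has $q_1(p) = p$, so the partial order on projections is encoded in the metric; hence $\Delta$ is order-preserving and rank-preserving on the projection lattice. Combined with the identity $\|p_v - p_w\|^2 = 1 - |\langle v, w\rangle|^2$ for rank-one projections $p_v, p_w$, this shows that $\Delta$ maps rank-one projections bijectively onto rank-one projections and preserves their orthogonality.

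The rank-one projections of $B(H)$ form the projective space of $H$, on which the induced map is a bijection preserving the orthogonality relation. Uhlhorn's refinement of Wigner's theorem (for $\dim H \geq 3$; the cases $\dim H \leq 2$ can be handled directly) now yields a unitary or anti-unitary operator $U$ on $H$ satisfying $\Delta(p_v) = p_{Uv}$ for every unit vector $v$. Let $T$ denote the associated $^*$-automorphism (in the unitary case) or $^*$-anti-automorphism (in the anti-unitary case) of $B(H)$; in either case $T$ is a surjective complex linear isometry with $T(p_v) = \Delta(p_v)$ for every rank-one projection. Since every projection $q$ of higher rank decomposes as an orthogonal sum of rank-one projections $q = \sum_i p_i$, and since $\Delta(q)$ and $\sum_i \Delta(p_i)$ are both rank-$k$ projections with $\Delta(p_i) \leq \Delta(q)$ and the $\Delta(p_i)$ mutually orthogonal, they must coincide. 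Thus $T = \Delta$ on the full projection lattice.

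The final and hardest step is to show $T(a) = \Delta(a)$ for an arbitrary $a \in S(B(H)^+)$. Composing with $T^{-1}$ reduces the task to the following: an isometry $\Psi$ of $S(B(H)^+)$ fixing every projection must be the identity. Given $a \in S(B(H)^+)$, the equalities $\|\Psi(a) - p_v\| = \|a - p_v\|$ for every unit vector $v$ should determine $a$. The route I would take is to extract from these distances the quadratic form $v \mapsto \langle a v, v\rangle$ via a rank-one perturbation computation, and then to recover the self-adjoint operator $a$ from its numerical form by polarization. I expect this last uniqueness claim --- pinning down a positive norm-one element from its distances to all rank-one projections --- to be the main technical obstacle of the proof.
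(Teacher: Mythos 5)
Your overall architecture (surjectivity from compactness of $S(B(H)^+)$, the $Sph^+\circ Sph^+$ test from Theorem \ref{t characterization of projection in terms of the sphere around a positive element} to see that $\Delta$ preserves projections, a Wigner--Uhlhorn step on rank-one projections, and a final reduction to an isometry fixing every projection) is consistent with the route the survey attributes to Nagy, and the first two steps are sound. But there are two genuine gaps. First, the asserted identity $\{q\in \mathrm{Proj}(B(H)) : \|a-q\|<1\}=\{q : q\ge q_1(a)\}$ is false: if $p$ is a projection and $q>p$, then $q-p$ is a nonzero projection, so $\|p-q\|=1$; thus $q\ge q_1(p)=p$ while $q$ does not belong to the left-hand set. (In fact $\|a-q\|<1$ is equivalent to $\mathrm{ran}\, q_1(a)\cap\ker q=\{0\}$ together with $\ker a\cap \mathrm{ran}\, q=\{0\}$, which for $a$ itself a projection forces $\mathrm{rank}\, q=\mathrm{rank}\, a$ and generic position, not comparability.) Since your order- and rank-preservation of $\Delta$ on the projection lattice --- and hence the hypotheses you feed into Uhlhorn's theorem and into the identification of $\Delta$ on higher-rank projections --- rest on this identity, that portion of the argument is unsupported as written. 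It is repairable (for instance, the same-rank classes are the connected components of the relation $\|p-q\|<1$, and on rank-one projections orthogonality is exactly $\|p_v-p_w\|=1$), but a correct argument must be supplied.

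Second, the last step --- that an isometry of $S(B(H)^+)$ fixing every projection is the identity --- is announced rather than proved, and the specific mechanism you propose does not work as stated: the single distance $\|a-p_v\|$ does not determine $\langle av,v\rangle$. Take $a=\mathrm{diag}(1,t)$ in $M_2(\mathbb{C})$ and $v=e_2$; then $\|a-p_v\|=1$ for every $t\in[0,1]$, while $\langle av,v\rangle=t$. You correctly flag this as the main technical obstacle, but it is precisely the content a proof must deliver; this is where the hard spectral and geometric work of Nagy's paper (and of \cite{Per2017} in the infinite-dimensional generalizations) is concentrated. As it stands, the proposal is an outline resting on one false lemma and one missing core step, not a complete proof.
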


The arguments developed by Nagy in the paper \cite{Nagy2017} develop some interesting tools and results in the finite dimensional setting. Some of them have been successfully extended to arbitrary dimensions. Let $E$ and $P$ be subsets of a Banach space $X$. Following the notation employed in the recent paper \cite{Per2017}, the \emph{unit sphere around $E$ in $P$} is defined as the set $$Sph(E;P) :=\left\{ x\in P : \|x-b\|=1 \hbox{ for all } b\in E \right\}.$$ To simplify the notation, given a C$^*$-algebra $A$, and a subset $E\subset A$ we shall write $Sph^+ (E)$ or $Sph_A^+ (E)$ for the set $Sph(E;S(A^+))$.\smallskip

In \cite[Proof of Claim 1]{Nagy2017} G. Nagy proves that if $H$ is a finite dimensional complex Hilbert space, and $a$ is a positive norm-one element in $B(H)=M_n(\mathbb{C})$, then $$ \hbox{$a$ is a projection} \hbox{ if, and only if, } Sph^+_{M_n(\mathbb{C})} \left( Sph^+_{M_n(\mathbb{C})}(a) \right) =\left\{ a \right\}.$$

We have recently generalized Nagy's result to the setting of atomic von Neumann algebras. We recall that a von Neumann algebra $M$ is called \emph{atomic} if it coincides with the weak$^*$ closure of the linear span of its minimal projections. It is known that every atomic von Neumann algebra $M$ can be written in the form $\displaystyle M = \bigoplus_j^{\ell_{\infty}} B(H_{j}),$ where each $H_j$ is a complex Hilbert space (compare \cite[\S V.1]{Tak} or \cite[\S 2.2]{S}).\smallskip

\begin{theorem}\label{t characterization of projection in terms of the sphere around a positive element}\cite[Theorem 2.3]{Per2017} Let $M$ be an atomic von Neumann algebra, and let $a$ be a positive norm-one element in $M$. Then the following statements are equivalent: \begin{enumerate}
[$(a)$] \item $a$ is a projection; \item $Sph^+_{M} \left( Sph^+_{M}(a) \right) =\{a\}$.
\end{enumerate}
\end{theorem}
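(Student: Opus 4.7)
The plan is to prove the two implications separately. The direction (b)~$\Rightarrow$~(a) is valid in an arbitrary C$^*$-algebra, being exactly \cite[Proposition 2.2]{Per2017} (already cited in the paragraph preceding the theorem), so I would simply invoke that statement; no use of atomicity is required here. All the substantive work is in the converse.

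For (a)~$\Rightarrow$~(b), let $a=p$ be a projection and write $M = \bigoplus^{\ell_\infty}_j B(H_j)$. The inclusion $p \in Sph^+_M(Sph^+_M(p))$ is immediate from the definitions, since every $b \in Sph^+_M(p)$ satisfies $\|p-b\|=1$ by construction. It therefore suffices to show, in contrapositive form, that for every $x \in S(M^+)$ with $x \neq p$ there exists a witness $b \in Sph^+_M(p)$ with $\|x-b\|<1$. My strategy is to exploit the generous supply of test elements that atomicity provides. Two basic families lie in $Sph^+_M(p)$ unconditionally: every non-zero projection $q \leq 1-p$ (because $p-q$ is then a non-zero partial symmetry with spectrum in $\{-1,0,1\}$ and hence norm $1$), and every projection $q \leq p$ with $q \neq p$ (because $p-q$ is then a non-zero projection). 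Atomicity ensures that the minimal projections inside these two families, together with their convex combinations of norm $1$, are abundant in the corners $(1-p)M(1-p)$ and $pMp$.

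Given $x \neq p$, the block decomposition of $x$ with respect to $p$ guarantees that at least one of $(1-p)x(1-p)$, $p-pxp$, or the off-diagonal corner $(1-p)xp$ is non-zero. If $(1-p)x(1-p)\neq 0$, one locates a summand $B(H_{j_0})$ and a unit vector $\xi \in (1-p)H_{j_0}$ with $\langle x\xi,\xi\rangle >0$, and then chooses $b \in Sph^+_M(p)$ essentially supported on $\xi$ with a small correction designed to absorb the unit-norm condition; the strict positivity of $\langle x\xi,\xi\rangle$ forces $\|x-b\|<1$ through a direct spectral calculation inside $B(H_{j_0})$, while $\|p-b\|=1$ is guaranteed by $b \leq 1-p$. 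A symmetric argument with projections $q \leq p$ handles the case $p - pxp \neq 0$. The off-diagonal case is reduced to these two by invoking the positivity of $x$: a non-trivial corner $(1-p)xp$ propagates (via the $2 \times 2$ compression of the positive element $x$) into non-trivial mass in both $pxp - p$ and $(1-p)x(1-p)$.

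The main obstacle is the case in which some component $p_j$ equals $1_{B(H_j)}$. There, no non-zero projection is orthogonal to $p_j$, so the first family of test elements degenerates in that summand; they have to be replaced by positive norm-one elements of $B(H_j)$ possessing $0$ as an approximate eigenvalue, which still belong to $Sph^+_{B(H_j)}(1_{B(H_j)})$. Atomicity guarantees the abundance of such substitutes, but the spectral computation must be recast in terms of approximate kernels rather than strict orthogonality, and the local separating elements produced summand by summand must be assembled into a single global $b \in Sph^+_M(p)$ that still satisfies $\|x-b\|<1$ with respect to the $\ell_\infty$-norm on the direct sum. This coordination across all summands is the technically most delicate step of the argument.
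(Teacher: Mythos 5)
The survey itself does not reproduce a proof of this theorem (it is imported verbatim from \cite[Theorem 2.3]{Per2017}), so your proposal must stand on its own. The direction $(b)\Rightarrow(a)$ is fine: it is precisely \cite[Proposition 2.2]{Per2017}, valid in any C$^*$-algebra, and the survey already records this, so citing it is legitimate. The problem is in $(a)\Rightarrow(b)$, where your choice of test elements is structurally inadequate.

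Concretely, take $M=M_2(\mathbb{C})$, $p=e_{11}$ and $x=p+\tfrac12 e_{22}$. Then $(1-p)x(1-p)=\tfrac12 e_{22}\neq 0$, so your Case A applies, and $\xi=e_2$ gives $\langle x\xi,\xi\rangle=\tfrac12>0$; yet for \emph{every} $b\in S(M^+)$ with $b\leq 1-p$ one has $p(x-b)p=pxp=p$, hence $\|x-b\|\geq 1$, and since the difference of two positive contractions always has norm at most $1$, in fact $\|x-b\|=1$. So no element of your first family --- indeed no $b$ supported in the corner $(1-p)M(1-p)$, however it is corrected near $\xi$ --- can serve as a witness, and the assertion that strict positivity of $\langle x\xi,\xi\rangle$ forces $\|x-b\|<1$ is false. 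Your second family is equally powerless here ($p$ is minimal, and any $b\leq p$ gives $\|x-b\|=\max\{\|p-b\|,\tfrac12\}=1$), and the symmetric Case B suffers from the same defect whenever $\|(1-p)x(1-p)\|=1$. The obstruction is general: a witness confined to one corner only sees that corner of $x$, so it fails whenever $1$ lies in the spectrum of the complementary corner. A correct argument needs \emph{mixed} test elements such as $b=tp'+q$ with $0<t<1$, $p'\leq p$ and $q$ a non-zero (spectral) projection in $(1-p)M(1-p)$; these still satisfy $\|b-p\|=\max\{1-t,1\}=1$, and for the example above $b=\tfrac12 p+e_{22}$ gives $\|x-b\|=\tfrac12$. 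Two further remarks: you miss the cheap but useful reduction that any $x$ with $\|x-p\|=1$ lies in $Sph^+_M(p)$ and is therefore its own witness, so one may assume $\|x-p\|<1$, which already makes $pxp$ invertible in $pMp$ and $\|(1-p)x(1-p)\|<1$; and your sketch never pins down where atomicity is actually indispensable, even though the paper explicitly leaves the general von Neumann algebra case open (Open Problem \ref{open problem charact of projection}), so any complete proof must make that dependence visible.
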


Actually, if $a$ is a positive norm-one element in an arbitrary C$^*$-algebra $A$ satisfying $Sph^+_{A} \left( Sph^+_{A}(a) \right) =\{a\}$, then $a$ is a projection (see \cite[Proposition 2.2]{Per2017}).\smallskip

\begin{open problem}\label{open problem charact of projection} Does the equivalence in Theorem \ref{t characterization of projection in terms of the sphere around a positive element} hold when $M$ is a general von Neumann algebra or a C$^*$-algebra?
\end{open problem}

For a separable infinite dimensional complex Hilbert space $H_3$ and the C$^*$-algebra $K(H_3)$, of compact operators on $H_3$, we have actually established a more general result, whose finite dimensional version was given by G. Nagy in \cite[Proof of Claim 1]{Nagy2017}.

\begin{theorem}\label{t bi spherical set K(H)}\cite[Theorem 3.3]{Per2017} Let $H_3$ be a separable infinite dimensional complex Hilbert space. Then the identity $$Sph^+_{K(H_3)} \left( Sph^+_{K(H_3)}(a) \right) =\left\{ b\in S(K(H_3)^+) : \!\! \begin{array}{c}
    s_{_{K(H_3)}} (a) \leq s_{_{K(H_3)}} (b),  \hbox{ and }\\
     \textbf{1}-r_{_{B(H_3)}}(a)\leq \textbf{1}-r_{_{B(H_3)}}(b)
  \end{array}\!\!
 \right\},$$ holds for every $a$ in the unit sphere of $K(H_3)^+$.
\end{theorem}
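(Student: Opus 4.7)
The argument rests on a sharp lemma characterising when two positive norm-one compact operators are at distance exactly $1$, followed by explicit witness constructions for the harder inclusion. For $a,c\in S(K(H_3)^+)$, both being positive contractions gives $-\mathbf{1}\leq a-c\leq\mathbf{1}$, hence $\|a-c\|\leq 1$; compactness forces $\pm 1$ to be an eigenvalue of the self-adjoint compact operator $a-c$ whenever its norm equals $1$. An eigenvector $\xi$ for $+1$ yields $\langle(\mathbf{1}-a)\xi,\xi\rangle+\langle c\xi,\xi\rangle=0$ with both summands non-negative, forcing $a\xi=\xi$ and $c\xi=0$; dually for $-1$. Writing $s(\cdot)=s_{K(H_3)}(\cdot)$ and $r(\cdot)=r_{B(H_3)}(\cdot)$, this gives the key equivalence
\[
\|a-c\|=1 \iff s(a)\wedge(\mathbf{1}-r(c))\neq 0 \text{ or } s(c)\wedge(\mathbf{1}-r(a))\neq 0.
\]
The inclusion $\supseteq$ then follows immediately: if $s(a)\leq s(b)$ and $r(b)\leq r(a)$, any witness $\xi$ or $\eta$ for $\|c-a\|=1$ transfers to $b$ via the two hypotheses and shows $\|c-b\|=1$.

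For the inclusion $\subseteq$ I proceed by contrapositive: assuming $s(a)\not\leq s(b)$ or $r(b)\not\leq r(a)$, I construct $c\in Sph^+_{K(H_3)}(a)$ making both meet conditions of the key equivalence, now applied to $b$ and $c$, vanish, so that $\|c-b\|<1$. The cleanest case is $s(a)\wedge(\mathbf{1}-s(b))\neq 0$: since $b$ is compact with $\|b\|=1$, the projection $s(b)$ is non-zero and finite-rank, hence lies in $K(H_3)$. Choosing $c:=s(b)$ together with a unit $\xi$ in the meet gives $c\xi=0$ (so $\|c-a\|=1$), while $s(c)=r(c)=s(b)$ kills both $s(c)\wedge(\mathbf{1}-r(b))=s(b)\wedge(\mathbf{1}-r(b))=0$ and $s(b)\wedge(\mathbf{1}-r(c))=s(b)\wedge(\mathbf{1}-s(b))=0$, so $\|c-b\|<1$. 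A symmetric construction using a rank-one $c=|\eta\rangle\langle\eta|$ with $\eta\in\ker a$ chosen to have non-zero inner product with a $1$-eigenvector of $b$ deals with the analogous clean failure of $r(b)\leq r(a)$.

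The technical heart lies in the residual configurations where $\mathrm{range}(s(a))$ and $\mathrm{range}(s(b))$ interact non-trivially without any inclusion between them, and the analogous dual situation for $r$. Neither $c=s(b)$ nor a naive rank-one $c$ works here, and the witness must be tailored to the relative geometry of the two finite-rank projections. A natural candidate is the projection $c$ onto $\mathrm{range}(s(b))+\mathbb{C}\eta$ for a carefully chosen $\eta\in\ker a\cap\mathrm{range}(s(b))^{\perp}$ with $b\eta\neq 0$; a direct calculation shows that $\mathrm{range}(c)\cap\ker b=\{0\}$ holds exactly when $b\eta\notin\mathrm{range}(s(b))$, which by positivity of $b$ on its support is equivalent to $\eta\notin\ker b$. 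The separability and infinite-dimensionality of $H_3$ are invoked to guarantee the existence of such $\eta$, especially when $s(b)$ or $r(b)$ has rank $\geq 2$. Verifying uniformly across all residual configurations that the constructed $c$ achieves $\|c-a\|=1$ together with both meets vanishing for $b$ and $c$ is the most intricate step, and is what carries Nagy's finite-dimensional analysis in \cite{Nagy2017} over to this infinite-dimensional separable setting.
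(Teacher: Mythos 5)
The survey states this theorem without proof (it is quoted verbatim from \cite[Theorem 3.3]{Per2017}), so your argument can only be judged on its own terms. Your key lemma is correct and is certainly the right starting point: for $a,c\in S(K(H_3)^+)$ one has $\|a-c\|\leq 1$, and compactness plus your eigenvector computation shows that $\|a-c\|=1$ exactly when $E_1(a)\cap\ker c\neq\{0\}$ or $E_1(c)\cap\ker a\neq\{0\}$, where $E_1(x)$ denotes the eigenspace of $x$ for the eigenvalue $1$, i.e.\ the range of $s_{_{K(H_3)}}(x)$. The inclusion $\supseteq$ then follows exactly as you say.

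The inclusion $\subseteq$ has two genuine gaps, both traceable to your insistence that the witness $c$ be a projection. First, the vector $\eta\in\ker a\cap E_1(b)^{\perp}$ with $b\eta\neq 0$ need not exist, and neither separability nor infinite-dimensionality produces it: in $K(\ell^2)$ take $a$ to be the rank-one projection onto $\mathbb{C}(e_1-e_2)$ and $b$ the rank-one projection onto $\mathbb{C}e_1$; then $e_1+e_2\in\ker a\setminus\ker b$, so $b$ violates the condition $\mathbf{1}-r(a)\leq \mathbf{1}-r(b)$ and must be excluded, yet every vector of $\ker a$ orthogonal to $E_1(b)=\mathbb{C}e_1$ already lies in $\ker b$, so your witness does not exist. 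Second, for the failure of $s(a)\leq s(b)$ you only treat the case in which some $1$-eigenvector of $a$ is orthogonal to all of $E_1(b)$ (there $c=s(b)$ does work, since $s(b)\wedge(\mathbf{1}-r(b))=0$); in the remaining case one merely has some $\xi\in E_1(a)\setminus E_1(b)$ with $E_1(a)\cap E_1(b)^{\perp}=\{0\}$, and the residual witness you sketch, being built from a vector of $\ker a$, says nothing about this condition --- indeed $\ker a$ may be trivial while $s(a)\not\leq s(b)$.

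Both gaps close once $c$ is allowed to be a general positive compact norm-one operator with a prescribed one-dimensional peak eigenspace, a prescribed kernel of dimension at most one, and all remaining eigenvalues in $(0,1)$ accumulating only at $0$ (such operators exist precisely because $H_3$ is separable; this, rather than the existence of your $\eta$, is where separability enters). Given $\eta\in\ker a\setminus\ker b$, choose $c$ with $E_1(c)=\mathbb{C}\eta$ and $\ker c=\{0\}$: then $\eta\in E_1(c)\cap\ker a$ gives $c\in Sph^+(a)$, while $E_1(c)\cap\ker b=\{0\}$ and $\mathbf{1}-r(c)=0$, so $\|b-c\|<1$. Given $\xi\in E_1(a)\setminus E_1(b)$, choose $\zeta\perp\xi$ with $b\zeta\neq 0$ (possible unless $b$ is the rank-one projection onto $\mathbb{C}\xi$, which would force $\xi\in E_1(b)$) and take $c$ with $\ker c=\mathbb{C}\xi$ and $E_1(c)=\mathbb{C}\zeta$: then $\xi\in E_1(a)\cap\ker c$ gives $c\in Sph^+(a)$, while $E_1(c)\cap\ker b=\{0\}$ and $E_1(b)\cap\ker c=E_1(b)\cap\mathbb{C}\xi=\{0\}$, so again $\|b-c\|<1$. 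With these two witnesses replacing yours, the contrapositive argument is complete.
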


A consequence of the above theorem gives an appropriate version of Theorem \ref{t characterization of projection in terms of the sphere around a positive element} for $K(H_3)$.

\begin{theorem}\label{t characterization of projection in terms of the sphere around a positive element k(H)}\cite[Theorem 2.5]{Per2017} Let $a$ be a positive norm-one element in $K(H_3)$, where $H_3$ is a separable complex Hilbert space. Then the following statements are equivalent: \begin{enumerate}
[$(a)$] \item $a$ is a projection; \item $Sph^+_{K(H_3)} \left( Sph^+_{K(H_3)}(a) \right) =\{a\}$.
\end{enumerate}
\end{theorem}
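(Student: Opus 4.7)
The plan is that both implications collapse to results already recorded just before the statement, so essentially no new technology is required.

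The implication $(b)\Rightarrow(a)$ needs nothing beyond a quote: the paragraph immediately preceding Theorem \ref{t characterization of projection in terms of the sphere around a positive element k(H)} recalls \cite[Proposition 2.2]{Per2017}, which says that for \emph{any} C$^*$-algebra $A$ and any positive norm-one $a\in A$ the condition $Sph^+_{A}(Sph^+_{A}(a))=\{a\}$ already forces $a$ to be a projection. Applying this with $A=K(H_3)$ gives the reverse direction without further effort.

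For $(a)\Rightarrow(b)$ the plan is to feed the assumption that $a$ is a projection into Theorem \ref{t bi spherical set K(H)} and check that the right-hand side collapses to $\{a\}$. When $a$ is a projection, both the ``peak'' projection $s_{K(H_3)}(a)$ (the spectral projection of $a$ at the eigenvalue $1$) and the range projection $r_{B(H_3)}(a)$ coincide with $a$ itself; hence Theorem \ref{t bi spherical set K(H)} yields
\begin{equation*}
Sph^+_{K(H_3)}\!\left(Sph^+_{K(H_3)}(a)\right)=\left\{b\in S(K(H_3)^+):\ a\le s_{K(H_3)}(b),\ r_{B(H_3)}(b)\le a\right\}.
\end{equation*}
The standing containment $s_{K(H_3)}(b)\le r_{B(H_3)}(b)$ (the $1$-eigenspace of a positive norm-one operator is contained in the closure of its range) then squeezes these inequalities into the chain $a\le s_{K(H_3)}(b)\le r_{B(H_3)}(b)\le a$, forcing $s_{K(H_3)}(b)=r_{B(H_3)}(b)=a$.

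The final step is to deduce that any $b\in S(K(H_3)^+)$ with peak projection and range projection both equal to $a$ must itself be $a$. This is an elementary application of the spectral theorem for the self-adjoint compact operator $b$: writing $b=\sum_{n}\lambda_n p_n$ with $1=\lambda_1\ge\lambda_2\ge\dots\ge 0$ and mutually orthogonal nonzero spectral projections $p_n$ whose sum is $r_{B(H_3)}(b)=a$, the coincidence of the peak projection with the range projection forces $\lambda_n=1$ for every $n$, so $b=\sum_n p_n=a$. I do not expect a real obstacle: Theorem \ref{t bi spherical set K(H)} has already absorbed all of the geometric content, and what remains is only routine bookkeeping with spectral projections, together with the (immediate) observation that for a projection $a$ both $s_{K(H_3)}(a)$ and $r_{B(H_3)}(a)$ are $a$.
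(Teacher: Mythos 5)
Your proposal follows exactly the route the survey itself indicates: the direction $(b)\Rightarrow(a)$ is precisely the quoted \cite[Proposition 2.2]{Per2017}, and $(a)\Rightarrow(b)$ is extracted from Theorem \ref{t bi spherical set K(H)} by observing that for a projection $a$ one has $s_{_{K(H_3)}}(a)=r_{_{B(H_3)}}(a)=a$, squeezing $a\le s_{_{K(H_3)}}(b)\le r_{_{B(H_3)}}(b)\le a$, and finishing with routine spectral bookkeeping for the compact positive operator $b$; all of those steps are correct. The one point you should patch is a hypothesis mismatch: Theorem \ref{t bi spherical set K(H)} is stated only for $H_3$ separable and \emph{infinite dimensional}, whereas the statement you are proving allows $H_3$ to be finite dimensional, so your appeal to that theorem does not literally apply in the finite-dimensional case. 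This is harmless: when $\dim H_3<\infty$ one has $K(H_3)=B(H_3)=M_n(\mathbb{C})$, an atomic von Neumann algebra, so the equivalence is already contained in Theorem \ref{t characterization of projection in terms of the sphere around a positive element} (equivalently, in Nagy's original computation in \cite[Proof of Claim 1]{Nagy2017}), and your argument covers the remaining infinite-dimensional case. With that one-line case distinction added, the proof is complete and coincides with the paper's intended derivation.
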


Thanks to Theorems \ref{t characterization of projection in terms of the sphere around a positive element} and \ref{t characterization of projection in terms of the sphere around a positive element k(H)} it can be concluded that given two atomic von Neumann algebras $M$ and $N$ (respectively, separable complex Hilbert spaces $H_3$ and $H_4$), and a surjective isometry $\Delta : S(M^+)\to S(N^+)$ (respectively, $\Delta : S(K(H_3)^+)\to S(K(H_4)^+)$), then $\Delta$ maps $\mathcal{P}roj(M)\backslash\{0\}$ onto $\mathcal{P}roj(N)\backslash\{0\}$ (respectively, $\mathcal{P}roj(K(H_3))\backslash\{0\}$ onto $\mathcal{P}roj(K(H_4))\backslash\{0\}$), and the restriction $$\Delta|_{\mathcal{P}roj(M)\backslash\{0\}} : \mathcal{P}roj(M)\backslash\{0\}\to \mathcal{P}roj(N)\backslash\{0\}$$ (respectively, $\Delta|_{\mathcal{P}roj(K(H_3))\backslash\{0\}} : \mathcal{P}roj(K(H_3))\backslash\{0\}\to \mathcal{P}roj(K(H_4))\backslash\{0\}$) is a surjective isometry.

These are some of the tools that combined with many other technical arguments are applied to give a partial solution to Problem \ref{problem positive Tingley's problem} in the setting of compact operators.

\begin{theorem}\label{t Nagy for K(ell2)}\cite[Theorem 3.7]{Per2017} Let $H_3$ and $H_4$ be separable complex Hilbert spaces. Let us assume that $H_3$ is infinite dimensional. We suppose that $\Delta : S(K(H_3)^+)\to S(K(H_4)^+)$ is a surjective isometry. Then there exists a surjective complex linear isometry $T: K(H_3)\to K(H_4)$ satisfying $T(x) = \Delta(x),$ for all $x\in S(K(H_3)^+)$. We can further conclude that $T$ is a $^*$-isomorphism or a $^*$-anti-isomorphism.
\end{theorem}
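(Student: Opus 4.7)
The plan is to leverage Theorem~\ref{t characterization of projection in terms of the sphere around a positive element k(H)} to translate the study of $\Delta$ into one on the lattice of nonzero projections, and then to synthesize $T$ through a Wigner-type argument. Since any surjective isometry intertwines the sphere operation, $\Delta\bigl(Sph^+_{K(H_3)}(E)\bigr)=Sph^+_{K(H_4)}(\Delta(E))$ for every $E\subseteq S(K(H_3)^+)$; in combination with Theorem~\ref{t characterization of projection in terms of the sphere around a positive element k(H)}, this forces $\Delta$ to send nonzero projections of $K(H_3)$ bijectively onto nonzero projections of $K(H_4)$, so $\Delta_0:=\Delta|_{\mathcal{P}roj(K(H_3))\setminus\{0\}}$ is a surjective isometry between these projection lattices.

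The delicate step is to upgrade $\Delta_0$ to an orthogonality-preserving bijection on minimal projections. Using the explicit formula for $Sph^+(Sph^+(\cdot))$ provided by Theorem~\ref{t bi spherical set K(H)}, algebraic invariants such as the rank of a projection can be read off purely metrically from the structure of nested sphere sets; since $\Delta$ intertwines $Sph^+$, these invariants transfer along $\Delta_0$, and in particular minimality is preserved. A parallel argument involving countably infinite orthogonal families of minimal projections in $K(H_3)$ simultaneously forces $H_4$ to be infinite-dimensional. Once minimality is in place, the elementary identity $\|p-q\|=\sqrt{1-|\langle\xi,\eta\rangle|^2}$ for rank-one $p=\xi\xi^*$, $q=\eta\eta^*$ gives $\|p-q\|=1$ iff $p\perp q$, so $\Delta_0$ restricts to an orthogonality-preserving bijection between the sets of minimal projections of $K(H_3)$ and $K(H_4)$.

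Translating along $\xi\xi^*\mapsto[\xi]$, this yields an orthogonality-preserving bijection $\mathbb{P}(H_3)\to\mathbb{P}(H_4)$. Since $\dim H_3\geq 3$, Uhlhorn's refinement of Wigner's theorem produces a unitary or anti-unitary operator $U:H_3\to H_4$, unique up to a unimodular scalar, implementing this bijection. I would then define $T(a):=UaU^*$ when $U$ is unitary and $T(a):=Ua^*U^*$ when $U$ is anti-unitary; a routine verification shows that $T:K(H_3)\to K(H_4)$ is a surjective complex linear isometry which is a $^*$-isomorphism in the unitary case and a $^*$-anti-isomorphism in the anti-unitary case, and that in either case $T(p)=\Delta(p)$ holds on every minimal projection $p$.

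Finally, I would verify $T(x)=\Delta(x)$ for every $x\in S(K(H_3)^+)$. Each such $x$ admits a norm-convergent spectral decomposition $x=\sum_j\lambda_j p_j$ with $(p_j)$ pairwise orthogonal minimal projections, $\lambda_j>0$ and $\max_j\lambda_j=1$. Consider $\widetilde\Delta:=T^{-1}\circ\Delta$: a surjective isometry of $S(K(H_3)^+)$ which, by construction, fixes every minimal projection. Working inside a fixed finite-rank corner $pK(H_3)p\cong M_n(\mathbb{C})$ with $p=\sum_{j=1}^n p_j$, one shows that $\widetilde\Delta$ fixes every finite positive combination $\sum_{j=1}^n\lambda_j p_j$ by combining Mankiewicz's Theorem~\ref{t Mankiewicz}, applied to the compact convex body $\{y\in pK(H_3)p: y\geq 0,\ \|y\|\leq 1\}$ (which has nonempty interior), with the fact that $\widetilde\Delta$ respects the $Sph^+$ operation and already fixes an abundant family of extreme elements, namely the rank-one subprojections of $p$. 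Norm continuity of $\widetilde\Delta$ together with the norm density of such finite combinations in $S(K(H_3)^+)$ then forces $\widetilde\Delta=\mathrm{id}$, whence $T|_{S(K(H_3)^+)}=\Delta$.

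I expect the chief obstacle to be the rank-preservation step: the operator norm distance between projections in $K(H_3)$ does not transparently encode the rank, and it is only through the fine combinatorial encoding supplied by Theorems~\ref{t characterization of projection in terms of the sphere around a positive element k(H)} and~\ref{t bi spherical set K(H)} that this algebraic invariant can be extracted from the metric geometry which $\Delta$ preserves.
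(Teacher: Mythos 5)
Your overall architecture is the one the paper follows: use Theorem~\ref{t characterization of projection in terms of the sphere around a positive element k(H)} together with the fact that $\Delta$ intertwines the $Sph^+$ operation to see that $\Delta$ restricts to a surjective isometry between the nonzero projections, reduce to minimal projections, and then invoke a Wigner--Uhlhorn theorem to synthesize $T$ as $a\mapsto UaU^*$ or $a\mapsto Ua^*U^*$. (In fact, since $\|p-q\|=\sqrt{1-|\langle\xi,\eta\rangle|^2}$ for rank-one $p=\xi\xi^*$, $q=\eta\eta^*$, the restriction of $\Delta$ to minimal projections preserves transition probabilities, so the classical Wigner theorem already applies and Uhlhorn's orthogonality-only version is not needed.) Two of your steps, however, are not actually carried by the tools you cite. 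First, the rank/minimality preservation: Theorem~\ref{t bi spherical set K(H)} gives $Sph^+_{K(H_3)}\bigl(Sph^+_{K(H_3)}(p)\bigr)=\{p\}$ for \emph{every} nonzero projection $p$, irrespective of its rank, so the nested sphere sets you invoke do not by themselves encode the rank; a separate metric invariant is required (for instance, that $\|p-q\|<1$ forces two finite-rank projections to have equal rank, plus a connectedness and counting argument to pin down which rank class is sent to which). You correctly flag this as the chief obstacle, but the proposal does not supply the argument.

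The more serious gap is the final step. Mankiewicz's Theorem~\ref{t Mankiewicz} requires a bijective isometry between convex sets with nonempty interior, whereas $\widetilde\Delta=T^{-1}\circ\Delta$ is defined only on $S(K(H_3)^+)$, and its intersection with a corner $pK(H_3)p$ is the set of positive \emph{norm-one} elements of $M_n(\mathbb{C})$ -- a non-convex subset of the boundary of the positive unit ball. No isometric extension of $\widetilde\Delta$ to the convex body $\{y\in pK(H_3)p: y\geq 0,\ \|y\|\leq 1\}$ is available, so Mankiewicz's theorem simply does not apply, and fixing the extreme points (the subprojections of $p$) does not determine an isometry of the positive sphere of the corner. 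This is exactly the point where the facial-structure machinery that settles the full-sphere and hermitian-sphere cases is unavailable (as stressed at the opening of Section~\ref{sec: Tingleys positive}: $A^+$ is not a Banach space, so Theorems~\ref{t faces ChengDong11} and~\ref{t ChengDong for general faces} cannot be used). The argument in \cite{Per2017} instead proceeds by explicit distance computations on $S(K(H)^+)$ which recover, from metric data alone, the peak projection and then the successive spectral projections and eigenvalues of $a$, yielding $\Delta(a)=\sum_n\lambda_n\Delta(p_n)$ directly; some replacement of this kind is needed before your density argument can close the proof.
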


Additional technical results are given in \cite[\S 4]{Per2017} to give a complete solution to Problem \ref{problem positive Tingley's problem} in the setting of atomic von Neumann algebras. For brevity we shall not comment some of the deep technical results required to establish this solution. The final result reads as follows:

\begin{theorem}\label{t positive Tigley for B(H)}\cite[Theorem 4.5]{Per2017} Let $\Delta : S(B(H_1)^+)\to S(B(H_2)^+)$ be a surjective isometry, where $H_1$ and $H_2$ are complex Hilbert spaces. Then there exists a surjective complex linear isometry {\rm(}actually, a $^*$-isomorphism or a $^*$-anti-automorphism{\rm)} $T: B(H_1)\to B(H_2)$ satisfying $\Delta (x) = T(x),$ for all $x\in S(B(H_1)^+)$.
\end{theorem}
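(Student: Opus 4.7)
The plan starts by exploiting the geometric characterization of projections furnished by Theorem \ref{t characterization of projection in terms of the sphere around a positive element}. Since $B(H_1)$ and $B(H_2)$ are atomic von Neumann algebras, a positive norm-one element $a$ is a projection if and only if $Sph_{B(H_i)}^{+}(Sph_{B(H_i)}^{+}(a)) = \{a\}$. Because $\Delta$ is a surjective isometry between $S(B(H_1)^+)$ and $S(B(H_2)^+)$, it commutes with the operator $Sph^{+}(\cdot)$, so applying the characterization in both directions restricts $\Delta$ to a bijection $\phi : \mathcal{P}roj(B(H_1)) \setminus \{0\} \to \mathcal{P}roj(B(H_2)) \setminus \{0\}$, which is isometric in the operator norm.

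The next step is to promote $\phi$ to an orthoisomorphism between the projection lattices. For nonzero projections $p, q \in B(H)$, one has $p \perp q$ if and only if $p+q$ is again a projection of norm one, a property detectable by the isometric data since $p+q \in S(B(H)^+)$ can be recognized via $Sph^{+}$. I would use this criterion, together with the fact that $\Delta$ preserves $S(B(H)^+) \cap \mathcal{P}roj$, to show that $\phi(p+q) = \phi(p) + \phi(q)$ whenever $p \perp q$, and hence that $\phi$ preserves the orthocomplementation and the lattice order. At this point Dye's classical theorem on orthoisomorphisms yields a Jordan $^*$-isomorphism $J : B(H_1) \to B(H_2)$ with $J|_{\mathcal{P}roj} = \phi$, provided the type $I_2$ factor is avoided; because $B(H_i)$ are factors, $J$ must be either a $^*$-isomorphism or a $^*$-anti-isomorphism, and in particular a surjective complex linear isometry.

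The concluding step is to verify that $T := J$ agrees with $\Delta$ on every element of $S(B(H_1)^+)$. First I would establish a positive-additivity property: for mutually orthogonal nonzero projections $p_1, \ldots, p_n \in B(H_1)$ and scalars $\lambda_1, \ldots, \lambda_n \in [0,1]$ with $\max_j \lambda_j = 1$,
\[
\Delta\!\left(\sum_{j=1}^{n} \lambda_j p_j\right) \;=\; \sum_{j=1}^{n} \lambda_j \Delta(p_j) \;=\; T\!\left(\sum_{j=1}^{n} \lambda_j p_j\right),
\]
extracted by iterated use of the isometry hypothesis on $\Delta$, the distance-two lemma (Theorem \ref{t FaWang distance 2}), and the already-established action of $\Delta$ on each projection $p_j$. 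By the spectral theorem, elements of this form are norm-dense in $S(B(H_1)^+)$, so the continuity of both $\Delta$ and $T$ yields $T = \Delta$ on the whole positive sphere.

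The hard part will be twofold. The principal obstacle is the type $I_2$ case $H_1 \cong \mathbb{C}^2$, where Dye's theorem does not apply and the synthesis of $J$ requires a completely separate argument; I expect to handle it by explicit parametrization of the rank-one projections of $M_2(\mathbb{C})$ in the spirit of Proposition \ref{p Tingley hermitian M2}. The second subtle point is the positive-additivity step itself: in contrast with the hermitian setting of Theorem \ref{t Tingley hermitian von Neumann}, where the Bunce-Wright-Mackey-Gleason theorem produces a bounded linear extension directly from a projection-valued map, here we have no antipodal symmetry on $S(B(H_1)^+)$ and must derive additivity purely from isometric information available on the positive cone, so the argument must rely on a careful combinatorial handling of orthogonal decompositions together with the already proven coincidence $\Delta|_{\mathcal{P}roj} = T|_{\mathcal{P}roj}$.
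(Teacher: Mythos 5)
Your skeleton (double-sphere characterization of projections, orthoisomorphism of the projection lattices, Dye/Wigner, then density) is the right general shape, but three of its load-bearing steps are not actually supported by the tools you cite. First, the orthogonality-preservation step is circular as written: the criterion ``$p\perp q$ iff $p+q$ is a norm-one projection'' cannot be transported through $\Delta$ unless you already know $\Delta(p+q)=\Delta(p)+\Delta(q)$, which is precisely what you are trying to prove. What you need is a characterization of orthogonality (and of the order $p\leq q$) expressed purely in terms of distances inside $S(B(H)^+)$ and the operator $Sph^+(\cdot)$; this is exactly what the finer description of the double-sphere sets (the $B(H)$ analogue of Theorem \ref{t bi spherical set K(H)}, which computes $Sph^+(Sph^+(a))$ in terms of the peak projection $s(a)$ and the support $r(a)$) is for, and it is the content of the ``many other technical arguments'' of \cite[\S 4]{Per2017} that the survey deliberately omits. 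Without such a lemma, Dye's theorem has nothing to act on.

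Second, your appeal to Theorem \ref{t FaWang distance 2} is invalid: that result is stated for surjective isometries between the \emph{full} unit spheres of two normed spaces, and $S(B(H)^+)$ is not the unit sphere of any normed space. The paper explicitly warns that the full-sphere machinery (Theorems \ref{t faces ChengDong11} and \ref{t ChengDong for general faces}) is non-viable in the positive setting for exactly this reason, and the same objection applies to the Fang--Wang--Ding lemma. This removes the only concrete tool you offer for the positive-additivity identity $\Delta(\sum_j\lambda_j p_j)=\sum_j\lambda_j\Delta(p_j)$, which is the technical heart of the theorem; ``careful combinatorial handling of orthogonal decompositions'' is a placeholder, not an argument. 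Note also that unlike the hermitian case (Theorem \ref{t Tingley hermitian von Neumann}), you cannot fall back on the Bunce--Wright--Mackey--Gleason theorem here, since finite additivity of $p\mapsto\Delta(p)$ on orthogonal projections is again exactly the unproved point, and the antipodal and facial arguments used elsewhere in the survey (Theorems \ref{t A} and \ref{t C}, Proposition \ref{p algebraic elements}) are unavailable on the positive cone. Until you supply (i) a metric detection of orthogonality and order inside $S(B(H)^+)$ and (ii) a genuine proof of the additivity on spectral-type elements, the proposal is an outline of the known strategy rather than a proof.
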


\smallskip\smallskip

\begin{open problem}\label{open problem Tingley positive} Let $\Delta : S(A^+)\to S(B^+)$ be a surjective isometry, where $A$ and $B$ are C$^*$-algebras. Does $\Delta$ admits an extension to a surjective complex linear isometry from $A$ onto $B$?
\end{open problem}

\begin{open problem}\label{open problem Tingley Cp} Let $H$ be an arbitrary complex Hilbert space, and let $p\in (1,\infty)$.
Suppose $\Delta : S(C_p(H)^+) \to S(C_p(H)^+)$ is a surjective isometry. Does $\Delta$ admit a unique extension to a surjective real linear isometry on $C_p(H)$.
\end{open problem}

A more general version has been also posed by M. Mori in \cite[Problem 6.3]{Mori2017}.

\begin{open problem}\label{open problem Tingley noncommutative Lp} Let $1 < p < \infty$, $p\neq 2$, let $M$, $N$ be von Neumann algebras and $\Delta : S(L^p(M))\to S(L^p(N))$ be a surjective isometry between the unit spheres of two noncommutative $L^p$-spaces (with respect to fixed normal semifinite faithful weights). Does $\Delta$ admit an extension to a real linear surjective isometry $T : L^p(M)\to L^p(N)$?
\end{open problem}

\medskip

\textbf{Acknowledgements} Authors partially supported by the Spanish Ministry of Economy and Competitiveness (MINECO) and European Regional Development Fund project no. MTM2014-58984-P and Junta de Andaluc\'{\i}a grant FQM375.\smallskip

I thank the organizers of the meeting \emph{``Preservers Everywhere, Szeged-2017''} for a successful and fruitful initiative.

\end{document}